\title{\vspace*{-1pc} Untwisting twisted spectral triples}
\author{Magnus Goffeng*, Bram Mesland\dag, Adam Rennie\ddag
\thanks{email: 
\texttt{goffeng@chalmers.se}, \texttt{b.mesland@math.leidenuniv.nl}, \texttt{renniea@uow.edu.au}
}
\\[3pt]
${}^*$ Department of Mathematical Sciences,\\ 
Chalmers University of Technology and
University of Gothenburg,\\ 
Gothenburg, Sweden\\[3pt]
\dag Mathematical Institute, Leiden University, Leiden, Netherlands
\\[3pt]
\ddag School of Mathematics and Applied Statistics, University of Wollongong\\
Wollongong, Australia\\
}
\def\XXint#1#2#3{{\setbox0=\hbox{$#1{#2#3}{\int}$}
    \vcenter{\hbox{$#2#3$}}\kern-.5\wd0}}
\def\section{\@startsection{section}{1}{\z@}{-3.5ex plus -1ex minus
  -.2ex}{2.3ex plus .2ex}{\large\bf}}
\def\subsection{\@startsection{subsection}{2}{\z@}{-3.25ex plus -1ex
  minus -.2ex}{1.5ex plus .2ex}{\normalsize\bf}}
\numberwithin{equation}{section} 
\theoremstyle{plain} 
\newtheorem{thm}{Theorem}[section]
\newtheorem{thm*}{Theorem}
\newtheorem{lemma}[thm]{Lemma}
\newtheorem{prop}[thm]{Proposition}
\newtheorem{corl}[thm]{Corollary}
\theoremstyle{definition} 
\newtheorem{defn}[thm]{Definition}
\theoremstyle{remark} 
\newtheorem{rmk}[thm]{Remark}
\newtheorem{ex}[thm]{Example}
\DeclareMathOperator{\coker}{coker} 
\DeclareMathOperator{\Dom}{Dom}   
\DeclareMathOperator{\End}{End}   
\DeclareMathOperator{\Tr}{Tr}     
\newcommand{\Sd}{\textnormal{Sd}}
\newcommand{\rd}{\mathrm{d}}
\newcommand{\sgnlog}{\textnormal{sgnlog}}
\newcommand{\A}{\mathcal{A}}  
\newcommand{\B}{\mathcal{B}}  
\newcommand{\C}{\mathbb{C}}   
\newcommand{\D}{D}  
\renewcommand{\d}{\mathrm{d}} 
\renewcommand{\H}{H}  
\newcommand{\K}{\mathbb{K}}  
\newcommand{\Lip}{\textnormal{Lip}}
\newcommand{\N}{\mathbb{N}}   
\newcommand{\sgn}{\textnormal{sgn}}
\newcommand{\R}{\mathbb{R}}   
\newcommand{\Z}{\mathbb{Z}}   
\newcommand{\alg}{\textnormal{alg}}
\newcommand{\stroke}{\mathbin|}     
\def\pairL_#1(#2|#3){{}_{#1}(#2\stroke#3)} 
\def\pairR(#1|#2)_#3{(#1\stroke#2)_{#3}} 
\def\scal<#1|#2>{\langle#1\stroke#2\rangle} 
\newbox\ncintdbox \newbox\ncinttbox 
\begin{document}

\maketitle

\vspace{-2pc}

\begin{abstract}
We examine the 
index data associated to twisted spectral
triples and higher order spectral triples. 
In particular, we show that a Lipschitz regular twisted spectral triple 
can always be ``logarithmically dampened'' through functional calculus, to obtain an ordinary (i.e. untwisted) spectral triple. 
The same procedure turns higher order spectral triples
into spectral triples. 
We provide examples of highly regular twisted spectral triples 
with non-trivial index data for which Moscovici's ansatz for 
a twisted local index formula is identically zero.
\end{abstract}

\tableofcontents

\parskip=6pt
\parindent=0pt

\section*{Introduction}

\label{sec:intro}
The original definition of spectral triple \cite{Connes,C4,CM}
is motivated by the index theory of first order 
elliptic operators on manifolds. The general 
non-commutative and bivariant
definition of unbounded Kasparov module was formalised in \cite{BJ},
and has been shown to capture a wide variety of 
geometric-dynamic examples. Despite this success, 
there are numerous
examples which motivate more general definitions. 

In
this paper we consider index theoretic 
questions about twisted spectral triples 
and higher order spectral triples. Briefly, twisted spectral triples have 
bounded twisted commutators relative to an auxiliary algebra automorphism, 
while higher order spectral triples are analogues of elliptic operators 
of any order $>0$ on a manifold.

Developing a consistent non-commutative geometry in a way 
that is compatible with index theory turns out to be 
hard, even for the most basic dynamical 
examples (see for instance \cite{DGMW, GM, MS}).
Since index theory, and the underlying machinery of 
$KK$-theory, is our only tool for noncommutative 
algebraic topology, discussing geometry in a manner directly relatable 
to index theory is imperative. 

An appropriate analogy is that the geometric meaning 
of differential forms in calculus can be directly related 
to topological structures via
the close relationship between de Rham's differential 
topology and singular or \v{C}ech cohomology. While spectral
triples (or more generally unbounded Kasparov modules)
provide a geometric object, to obtain a connection to 
topology (via index theory) we need to know 
that the bounded transform
$$
(\A,X_B,\D)\mapsto (A,X_B,F_\D:=\D(1+\D^2)^{-1/2})
$$
yields a Kasparov module and so a (preferably non-zero) $KK$-class. Under suitable conditions the bounded transform
of
both twisted and higher order variations of the notion of 
unbounded Kasparov module
yields a Kasparov module.

Our first general result states that functional calculus using the $C^{1}$-function 
$$\sgnlog:\mathbb{R}\to\mathbb{R},\quad x\mapsto \sgn(x)\log(1+|x|),$$ 
can be used to turn both 
twisted and higher order spectral triples into ordinary spectral triples without changing the $K$-homology class (for details on the twisted case, see Theorem \ref{logdampoftwist} below). This 
logarithmic dampening has been used before in specific examples \cite{BM,DGMW,GM,GRU,MS, Pierrot}, and in Section 1 of the present paper we formalise the procedure.

Thus, if a twisted or higher order spectral triple encodes non-trivial index theoretic
data, then the same information can be recovered from 
an ordinary spectral triple, constructed through a 
well-defined procedure, albeit possibly less geometric 
than the original twisted spectral triple. 
Logarithmic dampening will for instance transform an elliptic differential operator 
to a pseudodifferential operator.
For large portions of the paper, we work in the bivariant 
context, proving our general statements for unbounded 
Kasparov modules. 

The motivation for introducing twisted spectral triples comes from situations where twisted commutators
with a natural geometric differential operator are bounded whereas ordinary commutators are not \cite{CoM}. 
This observation suggests that we seek computationally tractable representatives of $K$-homology classes using twisted commutators. 
Early successes of this philosophy concern conformal diffeomorphisms on manifolds \cite{CM, PW, PW2}, a class identified by Connes as tractable. Additionally, twists are well-known to improve 
`dimension drop' problems
in the cyclic homology of $q$-deformed 
algebras, \cite{HK1,HK2}.

Another widely held hope is that while many $C^*$-algebras (such as those that arise in examples coming from dynamical systems) do not admit \emph{finitely summable} ordinary spectral triples, by \cite{Co3}, there could be finitely summable twisted spectral triples. One reason for the interest in finite 
summability is the possibility of producing
a computable cyclic cocycle formula for the index pairing.

An
ansatz for computing such (twisted) index 
pairings was proposed
by Moscovici \cite{twistedmoscovici}, and shown to 
work for twists coming from ``scaling automorphisms''
\cite{twistedmoscovici, PW, PW2}. 
Moscovici's proposed formula was an analogue of
the known
local index formulae, adapted for regular twisted spectral triples, 
and computed in 
terms of residues of $\zeta$-functions.

Our second main result concerns the existence of twisted spectral triples that pair non-trivially with $K$-theory, but for which all twisted higher residue cochains appearing in Moscovici's ansatz vanish. 
The proof consists of examples where all $\zeta$-functions coming from operators containing a twisted commutator as a factor are entire functions. This leaves little hope for a twisted higher residue cochain to compute the index pairing in 
general. For details see Theorem \ref{counteexmosc} (on page \pageref{counteexmosc}). 

We produce
several such examples. The first example arises by introducing 
a twist on the usual spectral triple on the circle. 
Next, we extend this twisted spectral triple on the circle to the crossed product by 
a group of M\"obius transformations, thereby including examples of purely infinite $C^{*}$-algebras. The last class of
examples comes from the action of the free group on the boundary of its 
Cayley graph. These are again purely infinite (and more involved), 
but the main idea in each of the examples is the same.

Our two main results indicate that the analytic aspect of index theory for finitely 
summable twisted spectral triples is quite involved. The proofs of these results
indicate that the appropriate index theory is in 
fact closely related to the index theory for 
$\mathrm{Li}_1$-summable spectral triples 
(recently studied in \cite{GRU}). 

\subsection*{Main results}

Let us discuss the main results in a bit more detail. It has 
been known since their introduction \cite{CoM} that a 
twisted spectral triple $(\A,\H,\D,\sigma)$ defines a 
$K$-homology class only under some extra 
regularity assumption. Lipschitz regularity, namely that for 
all $a\in\A$ the twisted commutator
$$
[|D|,a]_{\sigma}:=|D|a-\sigma(a)|D|
$$ 
extends to a bounded operator, is a sufficient 
condition\footnote{Other conditions are known: in \cite{Ka}, an elaborate set of conditions (adapted to Kasparov products) guaranteeing topological content is used.}. See more below in Proposition \ref{bddtwistedtransform} on page \pageref{bddtwistedtransform}. From the perspective of the local index formula, Lipschitz regularity is a necessary requirement and we adopt it here. We will in the first half of the paper work with \emph{weakly} twisted spectral triples, meaning that the homomorphism $\sigma$ need not preserve the algebra (for more details, see Remark \ref{weaklytwisksdl} on page \pageref{weaklytwisksdl}). The first of our main results 
untwists twisted spectral triples.

\begin{thm*}
\label{logdampoftwist}
Let $(\A,\H,\D,\sigma)$ be a Lipschitz regular 
weakly twisted spectral triple such that $D$ has compact resolvent. Define the self-adjoint operator 
$$
\D_{\log}:=\sgnlog(D)=\D|\D|^{-1}\log(1+|\D|).
$$
Then $(\A,\H,\D_{\log})$ is a spectral triple defining the same class in $K$-homology as $(\A,\H,\D,\sigma)$. 
The weakly twisted spectral triple 
$(\A,\H,\D,\sigma)$ is finitely summable if and only if the spectral triple $(\A,\H,\D_{\log})$ 
is $\mathrm{Li}_1$-summable.
\end{thm*}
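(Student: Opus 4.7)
The plan is to verify the three assertions in order: that $(\A,\H,\D_{\log})$ is a spectral triple, that it defines the same $K$-homology class as $(\A,\H,D,\sigma)$, and that the summability conditions correspond.

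For the spectral triple axioms, self-adjointness and compact resolvent of $\D_{\log}$ follow at once from the Borel functional calculus on $D$, since $\sgnlog\colon\R\to\R$ is continuous and $|\sgnlog(x)|\to\infty$ as $|x|\to\infty$. The main technical obstacle is showing that $[\D_{\log},a]$ is bounded for $a\in\A$. I would work from the integral representation
$$\log(1+|D|)=\int_0^\infty\!\left(\frac{1}{1+s}-\frac{1}{1+s+|D|}\right)\!ds,$$
which yields, writing $R_s=(1+s+|D|)^{-1}$,
$$[\log(1+|D|),a]=\int_0^\infty R_s[|D|,a]R_s\,ds,$$
and then split $[|D|,a]=[|D|,a]_\sigma+(\sigma(a)-a)|D|$. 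The twisted piece $R_s[|D|,a]_\sigma R_s$ has norm at most $\|[|D|,a]_\sigma\|(1+s)^{-2}$ by Lipschitz regularity and integrates in norm. The remainder $R_s(\sigma(a)-a)|D|R_s$ only decays like $(1+s)^{-1}$, so is not integrable on its own; to absorb it I would use the factorisation $\sgnlog(D)=F\log(1+|D|)$ with $F=\sgn(D)$, so that
$$[\sgnlog(D),a]=F[\log(1+|D|),a]+[F,a]\log(1+|D|),$$
and invoke the twisted control of $[F,a]$ supplied by Proposition~\ref{bddtwistedtransform} to cancel the divergent tail against the second summand. This cancellation is the crux of the proof, and I expect it to require a simultaneous symmetric treatment of the integral representation of $\log(1+|D|)$ together with the bounded transform $F$, where the factor $F$ effectively converts the offending $(1+s)^{-1}$ tail into an integrable expression with an extra resolvent.

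For the $K$-homology claim, once $[\D_{\log},a]$ is bounded, the bounded transform of $\D_{\log}$ equals $g(D)$ with $g(x)=\sgnlog(x)/\sqrt{1+\sgnlog(x)^2}$ an odd continuous normalising function of $D$ with limits $\pm 1$ at $\pm\infty$. By Proposition~\ref{bddtwistedtransform}, the class of $(\A,\H,D,\sigma)$ is represented by the Fredholm module $(\A,\H,F_D)$ with $F_D=D(1+D^2)^{-1/2}$, and the straight-line operator homotopy $t\mapsto(1-t)F_D+t\,g(D)=f_t(D)$ remains within Fredholm modules over $\A$: continuity of $f_t$ is immediate, $f_t(D)^2-1$ is a continuous function of $D$ vanishing at infinity (hence compact), and $[f_t(D),a]$ is a convex combination of two compact commutators. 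This yields equality of $K$-homology classes.

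For the summability equivalence, the spectral identity $\mu_n(|\D_{\log}|)=\log\bigl(1+\mu_n(|D|)\bigr)$ gives, via Borel functional calculus,
$$\Tr\bigl(e^{-t|\D_{\log}|}\bigr)=\Tr\bigl((1+|D|)^{-t}\bigr)$$
for every $t>0$. By the definition of $\mathrm{Li}_1$-summability in \cite{GRU}, finiteness of the left-hand trace for some $t$ is precisely $\mathrm{Li}_1$-summability of $(\A,\H,\D_{\log})$, while finiteness of the right-hand trace for some $t$ is equivalent to $(1+D^2)^{-t/2}\in\L^1(\H)$, i.e.\ finite summability of the weakly twisted triple $(\A,\H,D,\sigma)$.
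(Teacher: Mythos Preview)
Your argument for the $K$-homology class and for the summability equivalence is correct and essentially matches the paper (Proposition~\ref{sumoflogdamp} and the normalising function argument). The gap is in the commutator bound for $[\D_{\log},a]$.

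Your integral approach runs into the non-integrable $(1+s)^{-1}$ tail from $R_s(\sigma(a)-a)|D|R_s$, and your proposed remedy---a cancellation against $[F,a]\log(1+|D|)$---is not worked out and in fact does not materialise in the way you suggest. The term $[F,a]\log(1+|D|)$ is already bounded on its own: from the twisted commutator hypotheses one computes (for invertible $D$) that $[F,a]|D|$ extends to a bounded operator, and $|D|^{-1}\log(1+|D|)$ is bounded. So there is no divergence on that side to absorb the bad tail from the integral, and you are left needing to show directly that $[\log(1+|D|),a]$ is bounded---which your integral estimate does not give.

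The paper's route (Lemma~\ref{logbdd} and Theorem~\ref{logdampofregtwisted}) avoids the integral representation entirely at this step. After reducing to invertible $D$, one writes $D_{\log,0}=TD$ with $T=|D|^{-1}\log|D|$ and computes
\[
[TD,a]=T[D,a]_\sigma+(T\sigma(a)-aT)D.
\]
The first term is bounded since $T$ is. Algebraic manipulation using the twisted Lipschitz identity $|D|^{-1}\sigma(a)-a|D|^{-1}=-|D|^{-1}[|D|,a]_\sigma|D|^{-1}$ reduces the second term, modulo bounded operators, to $[\log|D|,a]F$. The crux is then that $[\log|D|,a]$ is bounded whenever $a$ preserves $\Dom(|D|)$, and this is proved not by an integral formula but by complex interpolation: $z\mapsto|D|^{-z}a|D|^{z}$ is bounded holomorphic on the strip $|\mathrm{Re}\,z|\le1$ (by the closed graph theorem at $z=\pm1$ and Hadamard's three lines theorem), and its derivative at $z=0$ is $[\log|D|,a]$. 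This is the key idea your proposal is missing.
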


The reader should note that $\D_{\log}$ is well-defined also for non-invertible $D$ as it is defined from functional calculus with the $C^{1}$-function $x|x|^{-1}\log(1+|x|)$. This result appears as Theorem \ref{logdampofregtwisted} 
(see page \pageref{logdampofregtwisted}), where it is proven 
in the larger generality of compact Lipschitz regular 
weakly twisted Kasparov modules. The summability statement is found in Proposition \ref{sumoflogdamp} (see page \pageref{sumoflogdamp}).
In Theorem \ref{logtrans} (see page \pageref{logtrans}) we 
prove that the same logarithmic transform turns a higher 
order Kasparov module into an ordinary Kasparov module.
For technical reasons, we restrict to the case of compact resolvent 
when considering twisted Kasparov modules 
but our results in the higher order case are proved in general.

The index theoretic content of Theorem \ref{logdampoftwist} 
is as follows. If a $K$-homology class is represented 
by a weakly twisted spectral triple, then the $K$-homology class is also represented by an ordinary, 
i.e. untwisted, spectral triple, that can be constructed through a 
definite procedure.

We illustrate both higher order and twisted spectral triples by means of constructing various $K$-homologically non-trivial exotic spectral triples
on the crossed product algebra arising from a non-isometric diffeomorphism on the circle. As a simple application, we show that the boundary map in the Pimsner-Voiculescu sequence can be computed at an unbounded level (under mild assumptions) using a combination of logarithmic dampening and higher order spectral triples. These results can be found in Subsection \ref{subsec:bdd-trans}. This is an example of a setting in which twisted spectral triples do not provide a solution.

There is a left inverse (modulo bounded perturbations) to untwisting 
given by exponentiating an ordinary spectral triple satisfying further assumptions.
Let $(\A,\H,\D)$ be a 
spectral triple and write $F:=\D|\D|^{-1}$. 
If  for all $a\in \A$, we have
\label{expdefnein}
\[
a\Dom(\mathrm{e}^{|\D|})\subseteq 
\Dom(\mathrm{e}^{|\D|}), \,\textnormal{and}\quad 
[F,a]\H\subseteq \Dom(\mathrm{e}^{|\D|}),
\]
then $(\A,\H,F\mathrm{e}^{|D|},\sigma)$ is a 
weakly twisted spectral triple. The twist $\sigma$ used in the exponentiation procedure is defined as
$\sigma(a):=\mathrm{e}^{|\D|}a\mathrm{e}^{-|\D|}$. 
This way of twisting untwisted spectral triples preserves 
several pathological properties, and can be exploited
to construct twisted spectral triples with exotic features. 

In the second half of the paper we use 
the exponentiation process to construct 
twisted 
spectral triples that are regular, finitely summable, have 
finite discrete dimension spectrum, and pair non-trivially with $K$-theory. 
For the examples we construct, the cochain given by
Moscovici's ansatz for a local index 
cocycle \cite{twistedmoscovici} vanishes identically, and therefore fails to compute the index pairing. 
These examples thus
show that Moscovici's ansatz does not compute the index pairing in general.
The details of the construction make clear that any
index formula based on residues of traces is implausible for 
examples similar to ours.

The key to building such examples is the existence of ordinary 
$\textnormal{Li}_1$-summable spectral triples
$(\A,H,D)$ such that the commutators $[F,a]$ are finite rank or smoothing for 
all $a\in\A$. Then the conditions for exponentiation can be met,
and the exponentiated triple $(\A,H,F\mathrm{e}^{|D|},\sigma)$ is a finitely summable twisted spectral triple.

The fact that the commutators $[F,a]$ are smoothing or of finite rank  is also used to show that the
functionals appearing in Moscovici's ansatz (see Section \ref{sec:LIF} for 
detailed notation)
$$
a_0,a_1,\dots,a_m\mapsto \Tr(a_0[F\mathrm{e}^{|D|},a_1]_\sigma^{(k_1)}\cdots[F\mathrm{e}^{|D|},a_m]_\sigma^{(k_m)}
\mathrm{e}^{-(2|k|+m+2z)|D|})
$$
extend to entire functions of $z$. The cochain $(\phi_m)_{m\geq1,{\rm odd}}$ appearing in Moscovici's ansatz is a linear combination of residues of such expressions (for details, see Definition \ref{defnofloclalass} on page \pageref{defnofloclalass}).

The simplest example is given by the standard spectral triple $\left(C^{\infty}(S^1),L^2(S^1),-i\frac{\d}{\d x}\right)$, which can be turned into a twisted spectral triple 
on the circle. Here we parametrise the circle $S^1=\{z\in \C: |z|=1\}$ by $x\mapsto \mathrm{e}^{ix}$ and $-i\frac{\d}{\d x}$ is the ordinary Dirac operator on $S^1$.
An example on $S^1$ might seem too nice and forcing a twist on it as somewhat artificial. 
However, we show
that the same method extends to crossed products by groups
$\Gamma$  of conformal
diffeomorphisms
$$
\left(C^{\infty}(S^1)\rtimes^{{\rm alg}}\Gamma,L^2(S^1),-i\frac{\d}{\d x},\sigma\right)
$$
with appropriate twist $\sigma$ defined from $-i\frac{\d}{\d x}$. For Fuchsian groups of the first kind the crossed product is
purely infinite, and in this case finitely summable 
spectral triples do not exist.

Ponge and Wang 
studied the case of twisted spectral triples arising from 
groups of conformal diffeomorphisms in great detail \cite{PW,PW2}.
While they prove a local index formula, this is achieved by 
reducing to the case of trivial twist and using the local index formula
for untwisted spectral triples.
They noted in \cite[Remark 4.1]{PW} that the question of whether Moscovici's ansatz computes the index for the standard conformal twist was open. 
Our example uses a twisting automorphism 
that is different from the conformal twist, leaving the question of Ponge-Wang unanswered. What our example does show is
that Moscovici's ansatz does not compute the index pairing for all
twisted spectral triples coming from conformal diffeomorphisms. 

Our final set of examples are purely infinite Cuntz-Krieger algebras
arising from the action of the free group $\mathbb{F}_d$ on its 
boundary $\partial\mathbb{F}_d$, $d>1$. The spectral triples we start
with here were first studied in \cite{GM}, where all the necessary index pairings
and analytic behaviour were determined. It is nevertheless a lengthy 
calculation to prove the discrete dimension spectrum condition required. We postpone the more onerous details in this example to the appendix.

The above counterexamples can be summarised as our second main result.

\begin{thm*}
\label{counteexmosc}
There exist unital odd twisted spectral triples $(\A,H,Fe^{|D|},\textnormal{Ad}_{e^{|D|}})$
that are  
regular, finitely summable, have discrete dimension spectrum, and
pair non-trivially with $K_1(\A)$ such that 
the cochain $(\phi_m)_{m\geq1,{\rm odd}}$ 
provided by Moscovici's ansatz \cite{twistedmoscovici} 
for a twisted local index formula is identically zero.  
\end{thm*}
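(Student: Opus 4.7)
The plan is to construct the required triples via the exponentiation procedure of page \pageref{expdefnein}, starting from ordinary spectral triples $(\A,H,D)$ that are $\mathrm{Li}_1$-summable and satisfy the strong structural constraint that every principal commutator $[F,a]$, with $F = D|D|^{-1}$, is of finite rank (or, for the purely infinite examples, rapidly smoothing enough that $e^{|D|}[F,a]$ remains bounded). The simplest such triple is the standard Dirac triple on the circle $(C^\infty(S^1), L^2(S^1), -i\d/\d x)$: a direct Fourier computation shows $[F, e^{ikx}]$ has rank $|k|$ with matrix elements confined to a bounded spectral window, and $\Tr(e^{-z|D|}) = \coth(z/2)$ exhibits $\mathrm{Li}_1$-summability. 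The crossed-product examples by conformal groups and the Cuntz-Krieger examples from $\mathbb{F}_d \curvearrowright \partial\mathbb{F}_d$ realise the same idea on purely infinite algebras.

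With such a starting triple in hand, I would verify the exponentiation hypotheses on an appropriate dense smooth subalgebra $\A$, so that $(\A, H, Fe^{|D|}, \mathrm{Ad}_{e^{|D|}})$ is a weakly twisted spectral triple. Finite summability is automatic since $|Fe^{|D|}|^{-s} = e^{-s|D|}$ away from the kernel of $D$, and Lipschitz regularity is trivial because $[|Fe^{|D|}|, a]_\sigma = e^{|D|}a - \sigma(a)e^{|D|} = 0$. Regularity and the discrete dimension spectrum condition reduce, via $[e^{|D|}, F] = [e^{|D|}, |D|] = 0$, to analogous statements about iterated derivations of $[F,a]$, which in the circle case follow immediately from the finite-rank structure. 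Non-triviality of the $K_1$-pairing is supplied by Theorem \ref{logdampoftwist}: applying $\sgnlog$ to $Fe^{|D|}$ recovers, modulo a bounded perturbation, the original triple $(\A, H, D)$, whose pairing with $[e^{ix}]$ is the winding number $1$.

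The crux is the identical vanishing of Moscovici's cochain. The key identity is
\[
[Fe^{|D|}, a]_\sigma = Fe^{|D|}a - e^{|D|}ae^{-|D|}Fe^{|D|} = e^{|D|}[F, a],
\]
which uses only that $F$ commutes with $e^{|D|}$. Each iterated derivation $[Fe^{|D|}, a]_\sigma^{(k)}$ is therefore $e^{|D|}$ times an iterated commutator of $[F,a]$ with $|D|$ (or with $|D|^2$), and this iteration preserves both finite rank and the bounded spectral support. Consequently the full product
\[
a_0[Fe^{|D|}, a_1]_\sigma^{(k_1)}\cdots[Fe^{|D|}, a_m]_\sigma^{(k_m)}
\]
is a finite-rank operator whose image lies in a fixed finite-dimensional spectral subspace of $|D|$. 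Multiplying by $e^{-(2|k|+m+2z)|D|}$ and tracing yields a finite sum of the form $\sum_\lambda c_\lambda e^{-2z\lambda}$, which is entire in $z$. Since each $\phi_m$ is a finite linear combination of residues at $z=0$ of such expressions, $\phi_m \equiv 0$ for every $m$.

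The principal technical obstacle is to execute this scheme in the purely infinite Cuntz-Krieger examples, where finite rank must be replaced by a quantitative smoothing condition strong enough to keep $e^{|D|}[F,a]$ bounded and to ensure the relevant traces are still defined and still yield entire functions of $z$. The discrete dimension spectrum condition is the most delicate part here: it demands meromorphic continuation of a whole family of zeta functions indexed by combinatorial data coming from the action of $\mathbb{F}_d$ on $\partial\mathbb{F}_d$, and the combinatorial book-keeping required to control the smoothing behaviour of $[F,a]$ under iterated derivations is substantial enough to justify relegating the details to an appendix.
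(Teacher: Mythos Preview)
Your approach is correct and is essentially the paper's: the identity $[Fe^{|D|},a]_\sigma = e^{|D|}[F,a]$ (Equation \eqref{twistide}) together with the finite-rank or smoothing nature of $[F,a]$ forces every zeta function containing a twisted-commutator factor to be entire, and the paper packages exactly this as Lemma \ref{asstoprovtauzero} and Theorem \ref{counterexthm} before verifying the hypotheses in Theorems \ref{thm:the-good}, \ref{badcounterexthm} and \ref{maintwo}. One simplification you overlook is that for the twist $\sigma=\mathrm{Ad}_{e^{|D|}}=\mathrm{Ad}_{|\D|}$ the higher derivations $T^{(k)}$ with $k\geq 1$ vanish identically (since $[\D^2,T]_{\sigma^2}=\D^2T-\D^2T\D^{-2}\D^2=0$, cf.\ Proposition \ref{strongreforsimsi}), so only the multi-index $\mathbbm{k}=0$ survives in Moscovici's ansatz and your remark about ``iterated commutators of $[F,a]$ with $|D|$'' is moot.
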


The precise statements for the three types of counterexample  
appear as Theorems \ref{thm:the-good}, 
\ref{badcounterexthm}, \ref{maintwo}.
It should be noted that all three examples considered share the property that $[F,a]$ is smoothing. 
This property is (apparently) quite rare. The circle is the only connected closed manifold admitting a metric for which the phase of the Dirac operator commutes with $C^\infty$-functions up to smoothing operators. 
Cuntz-Krieger algebras share several geometric features with the circle, and in fact, the circle and its crossed product by a Fuchsian group of the first kind are 
Cuntz-Krieger algebras (see \cite{adelaroche}).

\subsection*{Acknowledgments}

The authors would like to thank Heath Emerson and Bogdan Nica for several interesting discussions over the years that inspired the work in this paper, and Bob Yuncken 
for helpful comments.
The authors acknowledge the support of 
the Erwin Schr\"{o}dinger Institute where some of this work was conducted. 
A. R. and B. M. thank the Gothenburg Centre for Advanced Studies in Science and Technology 
for funding and the University of Gothenburg and Chalmers University of Technology 
for their hospitality in November 2017 when this project took shape.
M. G. was supported by the Swedish Research Council Grant 2018-03509. 
B. M. thanks the Hausdorff Center for Mathematics and the Max Planck Institute for Mathematics in Bonn and the University of Wollongong for their hospitality and support. Finally, the authors are grateful to the referee for a close reading which resulted in numerous improvements.

\section{Kasparov modules and index theory}
\label{sec:pre}

In this section we recall the definition of two 
non-standard notions of unbounded
Kasparov module, and hence of spectral triple. The 
first notion is that of twisted Kasparov module 
and is based on the behaviour 
of conformal diffeomorphisms on manifolds. 
The second notion is that of higher order 
Kasparov module and is inspired by 
elliptic pseudodifferential operators on manifolds of 
order $\geq 1$.  

Both variations of Kasparov modules arise when one attempts to construct 
$K$-homologically non-trivial spectral triples for crossed products by 
non-measure preserving dynamics on metric measure spaces \cite{BMR, CM,  GM, GMR, MS,PW, PW2}.

We first recall the fact that the bounded transform still produces 
bounded Kasparov modules in this generality. For twisted 
Kasparov modules we need to assume Lipschitz regularity, a mild condition
used by several authors. Subsequently we 
prove that, using the logarithm function, these exotic Kasparov 
modules can be turned into ordinary Kasparov modules via the 
functional calculus, without changing the $KK$-class.

\subsection{Twisted Kasparov modules}

Let $\A\subset A$ be a dense $*$-subalgebra of a 
unital $C^*$-algebra.
Let $\sigma:\,\A\to\A$ be an algebra homomorphism that is compatible with the involution on $A$ in the sense that $\sigma(a)^*=\sigma^{-1}(a^*)$ (called a {\em regular} automorphism).
For a $B$-Hilbert $C^*$-module $X_B$, we let $\End^*_B(X_B)$ denote the $C^*$-algebra 
of $B$-linear adjointable operators on $X_B$ and $\K_B(X_B)\subseteq \End^*_B(X_B)$ the  
$C^*$-subalgebra of $B$-compact operators.

\begin{defn}
\label{twisteddefn}
A twisted Kasparov module $(\A,X_B,D,\sigma)$ is given by a
$*$-algebra $\A$ with a regular automorphism $\sigma$, represented on a $B$-Hilbert $C^*$-module $X_B$ via the
restriction of a $*$-homomorphism
$$
\pi:A\to\End^{*}_{B}(X_B).
$$
The operator $\D$ is a densely defined regular 
self-adjoint operator
$$
D:\Dom(D)\subset X_B\to X_B
$$
such that for all $a\in\A$ the following conditions are satisfied:
\begin{enumerate}
\item  $\pi(a) \Dom(D)\subset\Dom(D)$ and
the densely defined twisted commutator 
$$[\D,\pi(a)]_\sigma:=D \pi(a)-\pi(\sigma(a))D,$$ 
is
bounded (and so extends to a bounded adjointable operator on all of $X_B$ by continuity);
\item the operator $\pi(a)(1+D^2)^{-1/2}$ is a
$B$-compact operator.
\end{enumerate}

If we in addition have prescribed an operator $\gamma\in \End^{*}_{B}(X_B)$ with
$\gamma=\gamma^*$, $\gamma^2=1$, $ D\gamma+\gamma D=0$, and for
all $a\in\A$ $\gamma\pi(a)=\pi(a)\gamma$, we call the spectral
triple {\bf even} or {\bf graded}. Otherwise we say that it is {\bf odd} or {\bf ungraded}.

If $\A$ is unital and $\pi(1)=1$ we say that $(\A,X_B,D,\sigma)$ is 
{\bf unital}. If $D$ has $B$-compact resolvent (i.e. $(D\pm i)^{-1}\in \K_B(X_B)$) 
we say that $(\A,X_B,D,\sigma)$ is {\bf compact}.
\end{defn}

\begin{rmk}
\label{droppingpi}
We will nearly always dispense with the
representation $\pi$, treating $\A$ as a subalgebra of $\End_B^*(X_B)$.
In general, $\pi$ need not be faithful but for our purposes this issue 
does not play a role and will be disregarded.
\end{rmk}

\begin{rmk}
\label{weaklytwisksdl}
It is not always the case that $\sigma$ is an automorphism of $\A$. For instance, Kaad \cite{Ka} and Matassa-Yuncken \cite{MY} consider situations where $\sigma$ fails to preserve $\A$. We consider the following situation. Let $\A\subset \End^{*}_{B}(X_B)$ be a $*$-subalgebra of bounded operators and $\sigma$ a partially defined automorphism of $\End^{*}_{B}(X_B)$ such that $\A\subseteq \Dom(\sigma)$. Here
$\sigma:\,\Dom(\sigma)\to \End^*_B(X_B)$
is multiplicative and regular, where
$\Dom(\sigma)$ is some (possibly proper) $*$-subalgebra
of $\End^*_B(X_B)$ on which  $\sigma$ is defined.
Under these assumptions, we say that $(\A,X_B,D,\sigma)$ is a {\bf weakly twisted Kasparov module} if the remaining conditions of Definition \ref{twisteddefn} are satisfied. The next result follows immediately from the 
bounded twisted commutator condition. 
\end{rmk}

\begin{prop}
\label{saturateprops}
Let $(\A,X_B,D,\sigma)$ be a weakly twisted Kasparov module with $\A$ contained
in the domain $\cap_{k\in \Z}\Dom(\sigma^k)$. We define $\A_\sigma$ as the $*$-algebra generated by $\cup_{k\in \Z}\sigma^k(\A)$ and extend $\sigma$ to an automorphism of $\A_\sigma$ by multiplicativity. Assume that for all $a\in \A$,
\begin{itemize}
\item $\sigma^k(a)\Dom(D)\subseteq \Dom(D)$ and $[D,\sigma^k(a)]_\sigma$ is bounded,
\item $\sigma^k(a)(1+D^2)^{-1/2}$ is $B$-compact for all $k\in \Z$. 
\end{itemize} 
Then $(\A_\sigma,X_B,D,\sigma)$ is a twisted Kasparov module. 
\end{prop}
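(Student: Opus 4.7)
The plan is to verify each defining property of a twisted Kasparov module for $(\A_\sigma, X_B, D, \sigma)$, reducing each condition to the generators $\sigma^k(a)$, $a\in\A$, $k\in\Z$, via bilinearity and the twisted Leibniz rule. By construction, every element of $\A_\sigma$ is a finite linear combination of monomials of the form $\sigma^{k_1}(a_1)\sigma^{k_2}(a_2)\cdots\sigma^{k_n}(a_n)$ with $a_i\in\A$, so it suffices to check each condition on such monomials and on their adjoints.

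First I would verify that $\sigma$ extends to a regular automorphism of $\A_\sigma$. Multiplicative extension is automatic from the definition $\sigma(\sigma^k(a)):=\sigma^{k+1}(a)$. Since $\sigma$ is regular on its original domain, a short induction gives $\sigma^k(a)^*=\sigma^{-k}(a^*)$, so $\A_\sigma$ is $*$-closed (because $a^*\in\A$ and hence $\sigma^{-k}(a^*)\in\A_\sigma$). A direct calculation on a monomial $c=\sigma^{k_1}(a_1)\cdots\sigma^{k_n}(a_n)$ then yields
\[
\sigma(c)^*=\sigma^{-k_n-1}(a_n^*)\cdots\sigma^{-k_1-1}(a_1^*)=\sigma^{-1}(c^*),
\]
which gives regularity on $\A_\sigma$, and extending by linearity confirms that $\sigma$ is a regular automorphism of $\A_\sigma$.

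Next I would handle the two analytic conditions. Preservation of $\Dom(D)$ is immediate for monomials because each generator $\sigma^{k_i}(a_i)$ preserves $\Dom(D)$ by hypothesis, so their composition does as well. For the bounded twisted commutator, the twisted Leibniz identity
\[
[D,c_1 c_2]_\sigma=[D,c_1]_\sigma c_2+\sigma(c_1)\,[D,c_2]_\sigma,
\]
combined with induction on the length of the monomial and the fact that $\sigma(c_1)\in\A_\sigma\subset\End_B^*(X_B)$ is bounded, reduces the boundedness of $[D,c]_\sigma$ to the generator case, which is assumed. For $B$-compactness of $c(1+D^2)^{-1/2}$, write $c=c'\sigma^{k_n}(a_n)$; the factor $\sigma^{k_n}(a_n)(1+D^2)^{-1/2}$ is $B$-compact by hypothesis, while $c'\in\A_\sigma$ is bounded, so the product lies in $\K_B(X_B)$.

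I do not expect any serious obstacle: the statement is essentially a formal consequence of the stated hypotheses once the twisted Leibniz rule is in hand. The only mildly delicate step is the bookkeeping needed to verify that the identity $\sigma(c)^*=\sigma^{-1}(c^*)$ passes from generators to arbitrary monomials, but this follows from $\sigma^k(a)^*=\sigma^{-k}(a^*)$ and the fact that taking adjoints reverses products.
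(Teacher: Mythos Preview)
Your proof is correct and follows the same approach as the paper, which simply remarks that the result ``follows immediately from the bounded twisted commutator condition'' without giving further details. Your write-up is a faithful expansion of that remark via the twisted Leibniz rule and the obvious reduction to generators.
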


We call  $(\A_\sigma,X_B,D,\sigma)$ the {\bf saturation} 
of the weakly twisted Kasparov module $(\A,X_B,\D,\sigma)$. 
Whenever referring to the saturation of a weakly twisted Kasparov 
module $(\A,X_B,\D,\sigma)$, we tacitly assume that 
$\A\subseteq \cap_{k\in \Z}\Dom(\sigma^k)$ and that all the assumptions of Proposition \ref{saturateprops} hold.

It is currently unclear if  twisted Kasparov modules carry index 
theoretic information in all generality. They do if they
satisfy the following mild condition (\cite{CM}, see Proposition 
\ref{bddtwistedtransform} below).

\begin{defn}[\cite{CoM}]
A weakly twisted Kasparov module $(\A,X_B,\D,\sigma)$
satisfies the twisted Lipschitz 
condition if 
$$
[|D|,a]_{\sigma}=|\D|a-\sigma(a)|\D|,\quad \mbox{is bounded for all }a\in\A.
$$
In short, we say that $(\A,X_B,D,\sigma)$ is {\bf  Lipschitz regular}.
\end{defn}

\begin{ex}[Non-isometric diffeomorphism on the circle]
\label{circleexdef}
We describe the simplest case of a twisted spectral triple associated to a conformal diffeomorphism on a manifold (see \cite{CM, twistedmoscovici, PW, PW2}). 
Denote by $S^{1}\subset\mathbb{C}$ the unit circle and let $\gamma:S^{1}\to S^{1}$ be a diffeomorphism, which generates an action of $\mathbb{Z}$ on $S^{1}$. A fact that we implicitly use in this example is that all diffeomorphisms of $S^1$ act conformally.
Write $|\gamma'(z)|$ for the pointwise absolute value of the derivative $\frac{\d\gamma}{\d x}(z)$. Consider the unitary
representation of $\mathbb{Z}$ generated by the operator
\begin{equation}
\label{unirep}
\pi(\gamma)\in \B(L^{2}(S^{1})),\quad \pi(\gamma)\phi(z):=|\gamma'(z)|^{-\frac{1}{2}}\phi(\gamma(z)).
\end{equation}
Define $\sigma:C^{\infty}(S^{1})\rtimes^{\alg}\mathbb{Z}\to C^{\infty}(S^{1})\rtimes^{\alg}\mathbb{Z}$ by $\sigma(f\gamma^{n})(z)=f|\gamma'|^{n}\gamma^{n}$. Then one
easily checks that the data
$(C^{\infty}(S^{1})\rtimes^{\alg}\mathbb{Z}, L^{2}(S^{1}), i\frac{\d}{\d x},\sigma)$ 
defines a Lipschitz regular twisted spectral triple. Setting $\iota:C(S^{1})\to C(S^{1})\rtimes\mathbb{Z}$ to be the inclusion, it holds that 
\[
\iota ^{*} \left[\left(C^{\infty}(S^{1})\rtimes^{\alg}\mathbb{Z}, L^{2}(S^{1}), i\frac{\d}{\d x},\sigma\right)\right]
=\left[\left(C^{\infty}(S^{1}), L^{2}(S^{1}), i\frac{\d}{\d x}\right)\right]\in K^{1}(C(S^{1})).
\]
This proves that 
\[
0\neq 
\left[\left(C^{\infty}(S^{1})\rtimes^{\alg}\mathbb{Z}, L^{2}(S^{1}), i\frac{\d}{\d x},\sigma\right)\right]
\in K^{1}(C(S^{1})\rtimes\mathbb{Z}).
\]
By adding to $i\frac{\d}{\d x}$ the projection onto the constant functions in $L^{2}(S^{1})$ we obtain an invertible self-adjoint operator $D$ in $L^{2}(S^{1})$ for which $(C^{\infty}(S^{1})\rtimes^{\alg}\mathbb{Z}, L^{2}(S^{1}),D,\sigma)$ is a twisted spectral triple representing the same $K$-homology class. Then applying \cite[Proposition 3.3]{twistedmoscovici} we find that the difference
\[
\sigma(a)-|D|a|D|^{-1}\in\mathcal{L}^{1,\infty}(L^{2}(S^{1})),
\]
and $(\sigma(a)-|D|a|D|^{-1})D$ extends to a bounded operator. Here $\mathcal{L}^{1,\infty}(L^{2}(S^{1}))$ denotes the weak trace ideal inside the algebra of bounded operators on $L^2(S^1)$ and consists of operators $T$ such that the $k$-th largest eigenvalue of $\sqrt{T^*T}$ is $O(k^{-1})$. Writing $\tilde{\sigma}(a):=|D|a|D|^{-1}$, we find 
that $(C^{\infty}(S^{1})\rtimes^{\alg}\mathbb{Z}, L^{2}(S^{1}),D,\tilde{\sigma})$ is a weakly twisted spectral triple, again representing the same $K$-homology class.
\end{ex}

\subsection{Invertible amplifications}

In order to establish the link with $KK$-theory for Lipschitz regular twisted Kasparov modules, we first need a technical construction to rid possible problems related to (non)invertibility of the operator $\D$. 
\begin{defn}
\label{defn:sign}
Let $D$ be a self-adjoint operator on a Hilbert space $H$. We define 
the operator $\sgn(D):=D|D|^{-1}$  by setting $|D|^{-1}$ to be $0$ on $\ker D$. 
\end{defn}
On Hilbert $C^{*}$-modules, $\ker D$ need not be a complemented submodule and we need to pass to an \emph{invertible amplification}, which we describe in this section. The proof of the following lemma is analogous to that of \cite[Lemma 3.6]{DGMBor}.

\begin{lemma}
\label{invamp}
Let $X_B$ be a Hilbert $C^{*}$-module over $B$ and $D:\Dom(D)\to X_B$ a self-adjoint regular operator with $(D\pm i)^{-1}\in \mathbb{K}_B(X_B)$. Set $\tilde{D}:=D\oplus (-D)$ defined on $\Dom(D)\oplus \Dom(D)\subseteq X_B\oplus X_B$. Then there exists a self-adjoint $B$-compact operator $R\in \mathbb{K}_B(X_B\oplus X_B)$ such that
\begin{enumerate}
\item $R(X_B\oplus X_B)\subseteq \Dom(\tilde{D})$ and the operators $\tilde{D}R$ and $R\tilde{D}$ extend to $B$-compact operators on $X_B\oplus X_B$;
\item the operator $D_{\rm amp}:=\tilde{D}+R$ is invertible with $D_{\rm amp}^{-1}\in \mathbb{K}_B(X_B\oplus X_B)$. 
\end{enumerate} 
If $X_B$ is $\mathbb{Z}/2$-graded by $\gamma$ and $D$ is odd, we can take $R$ odd for the grading $\gamma\oplus -\gamma$ on $X_B\oplus X_B$.  
\end{lemma}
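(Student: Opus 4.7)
My strategy is to construct $R$ via the functional calculus of $D$ as an off-diagonal compact operator that forces $D_{\rm amp}^2$ to be diagonal and strictly positive. Choose an even function $\chi\in C_c(\R)$ with $\chi(0)=1$ and $0\leq \chi\leq 1$, and set
$$R:=\begin{pmatrix} 0 & \chi(D) \\ \chi(D) & 0\end{pmatrix}\in \End^*_B(X_B\oplus X_B).$$
Since $(D\pm i)^{-1}\in \K_B(X_B)$, the Borel functional calculus for the self-adjoint regular operator $D$ yields $f(D)\in \K_B(X_B)$ for every $f\in C_0(\R)$. In particular $\chi(D)$ is $B$-compact, and hence so is $R$.

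For item (1), the function $x\mapsto x\chi(x)$ also lies in $C_c(\R)$, so the functional calculus gives $\chi(D)X_B\subseteq \Dom(D)$ and $D\chi(D)=(x\chi(x))(D)\in \K_B(X_B)$. Consequently $R(X_B\oplus X_B)\subseteq \Dom(\tilde D)$, and both $\tilde D R$ and $R\tilde D$ extend from the dense domain $\Dom(\tilde D)$ to $B$-compact operators with off-diagonal entries $\pm D\chi(D)$.

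For item (2), since $D$ and $\chi(D)$ commute, the cross term $\tilde D R+R\tilde D$ vanishes, so
$$D_{\rm amp}^2=\tilde D^2+R^2=(D^2+\chi(D)^2)\otimes I_2.$$
The scalar function $x\mapsto x^2+\chi(x)^2$ is continuous, strictly positive (it equals $1$ at $x=0$ and is bounded below by $x^2$ elsewhere), and tends to infinity at infinity, so a compactness argument yields a lower bound $c>0$. Hence $D_{\rm amp}^2\geq c\cdot I$ and $D_{\rm amp}$ is invertible. Moreover $D_{\rm amp}^{-2}=g(D)\otimes I_2$ with $g(x):=(x^2+\chi(x)^2)^{-1}\in C_0(\R)$, so $D_{\rm amp}^{-2}$ is $B$-compact. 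Since $D_{\rm amp}^{-1}$ is self-adjoint with $B$-compact square, it is itself $B$-compact (by passing to the Calkin quotient: a self-adjoint element whose square vanishes must vanish).

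For the graded statement, choosing $\chi$ even ensures $\chi(D)=\chi(|D|)$ is a function of $D^2$, which commutes with $\gamma$ (since $\gamma$ anti-commutes with $D$, hence commutes with $D^2$); a direct matrix computation then shows $R$ anti-commutes with $\gamma\oplus(-\gamma)$. The main analytical concern is justifying the functional-calculus statements --- particularly $\chi(D)X_B\subseteq \Dom(D)$ and the operator inequality $D_{\rm amp}^2\geq c\cdot I$ --- in the Hilbert $C^*$-module setting, but these are standard consequences of regularity of $D$ and are precisely the reason for passing to the amplification $D\oplus(-D)$ (since in general $\ker D$ need not be a complemented submodule).
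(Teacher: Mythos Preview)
Your proof is correct and follows essentially the same approach as the paper's: construct $R$ as an off-diagonal matrix whose entries are an even $C_0$-function of $D$, so that the cross terms $\tilde D R+R\tilde D$ vanish and $D_{\rm amp}^2$ becomes a diagonal operator given by a strictly positive function of $D$ with $C_0$ reciprocal. The only difference is cosmetic: the paper uses the explicit choice $\chi(x)=(1+x^2)^{-1}$, whereas you allow any even $\chi\in C_c(\R)$ with $\chi(0)=1$; your added remark that $D_{\rm amp}^{-1}$ is compact via the Calkin-quotient argument (self-adjoint with compact square) is a clean way to phrase what the paper obtains directly from $1/f\in C_0(\R)$.
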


\begin{proof}
The operator 
$$R:=\begin{pmatrix} 
0& (1+D^2)^{-1}\\ 
(1+D^2)^{-1}& 0\end{pmatrix},$$
satisfies all the necessary requirements. Indeed, $D_{\rm amp}$ is a bounded perturbation of $\tilde{D}$ and thus has $B$-compact resolvent. Now $D_{\rm amp}^2=f(D)\oplus f(D)$, where $f(x)=x^2+(1+x^2)^{-2}$, which is a strictly positive function. Since $f\geq1$ and $f(x)\to \infty$ as $|x|\to \infty$, it follows that $\frac{1}{f}$ is a $C_0$-function, so $D_{\rm amp}$ has a $B$-compact inverse.
\end{proof}

\begin{defn}
Let $(\A,X_B,D,\sigma)$ be a weakly twisted Kasparov module and suppose that $(D\pm i)^{-1}\in\mathbb{K}_{B}(X_B)$. An {\bf invertible amplification} of $(\A,X_B,D,\sigma)$ is a weakly twisted Kasparov module $(\A, X_B\oplus X_B, D_{\rm amp}, \sigma_{\rm amp})$ with $\D_{\rm amp}$ as in Lemma \ref{invamp}. Here $X_B\oplus X_B$ is equipped with the $A$-action $a(x\oplus x'):=ax\oplus 0$ and the partially defined automorphism $\sigma_{\rm amp}(T_1\oplus T_2):=\sigma(T_1)\oplus T_2$ for $T_1\in \Dom(\sigma)$ and $T_2\in \End^*_B(X_B)$. 
\end{defn}

If $(\A,X_B,D,\sigma)$ is an even cycle with grading $\gamma$, we grade the module $X_B\oplus X_B$ in the cycle $(\A, X_B\oplus X_B, D_{\rm amp}, \sigma_{\rm amp})$ by $\gamma\oplus -\gamma$, and tacitly assume that $R$ from Lemma \ref{invamp} is chosen to be an odd operator. Invertible amplifications will not play any conceptually important role in the statements this paper, but rather a technical role as it allows us to reduce proofs for general $D$ to the case that $D$ is invertible. 

One of our main tools is the following 
integral formula for fractional powers of the invertible operator $1+D^2$.

\begin{lemma}
\label{intes} 
Let $D$ be a regular self-adjoint operator on a Hilbert $C^*$-module 
$X_{B}$. For any $0<r<1$
\begin{equation}
\label{magicint} 
(1+D^{2})^{-r}=\frac{\sin(r\pi)}{\pi}\int_{0}^{\infty}
\lambda^{-r}(1+D^{2}+\lambda)^{-1}\rd\lambda,
\end{equation}
is a norm convergent integral. Moreover we have the estimates
\begin{align*} 
\|(1+D^{2}+\lambda)^{-s}\|_{\End^*_B(X_B)}&\,\leq (1+\lambda)^{-s},\\
\|D(1+D^{2}+\lambda)^{-\frac{1}{2}}\|_{\End^*_B(X_B)}\leq 1
\quad\mbox{and}&\quad \|D^{2}(1+D^{2}+\lambda)^{-1}\|_{\End^*_B(X_B)}\leq 1.
\end{align*}
\end{lemma}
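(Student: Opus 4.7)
The three norm estimates are proved first, as they drive everything else. Each is an elementary scalar inequality transferred via the bounded Borel functional calculus for the regular self-adjoint operator $D$: for $t\in\R$ and $\lambda\geq 0$ one has
\[
(1+t^{2}+\lambda)^{-s}\leq (1+\lambda)^{-s},\qquad |t|(1+t^{2}+\lambda)^{-1/2}\leq 1,\qquad t^{2}(1+t^{2}+\lambda)^{-1}\leq 1,
\]
since $1+t^{2}+\lambda\geq 1+\lambda$ and $t^{2}\leq 1+t^{2}+\lambda$. Applying functional calculus for $D$ transfers these scalar bounds to the three operator norm inequalities claimed in the lemma.

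For the integral formula I would begin from the scalar identity
\[
x^{-r}=\frac{\sin(r\pi)}{\pi}\int_{0}^{\infty}\lambda^{-r}(x+\lambda)^{-1}\rd\lambda,\qquad x>0,\;0<r<1,
\]
obtained via the substitution $\lambda=xt$ together with the Beta integral $\int_{0}^{\infty}t^{-r}(1+t)^{-1}\rd t=\pi/\sin(r\pi)$. The first norm estimate with $s=1$ gives $\lVert\lambda^{-r}(1+D^{2}+\lambda)^{-1}\rVert\leq \lambda^{-r}(1+\lambda)^{-1}$, which is integrable on $(0,\infty)$ for $0<r<1$, while norm-continuity of the integrand in $\lambda$ follows from the resolvent identity $(1+D^{2}+\lambda)^{-1}-(1+D^{2}+\mu)^{-1}=(\mu-\lambda)(1+D^{2}+\lambda)^{-1}(1+D^{2}+\mu)^{-1}$. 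Hence the right-hand side of \eqref{magicint} converges as a Bochner integral in $\End^{*}_{B}(X_{B})$.

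To close the argument I would note that $\lambda^{-r}(1+D^{2}+\lambda)^{-1}$ is the image under functional calculus for $D$ of the bounded continuous scalar function $t\mapsto \lambda^{-r}(1+t^{2}+\lambda)^{-1}$, and that Bochner integration commutes with the continuous functional calculus, the latter being a norm-contractive $*$-homomorphism into $\End^{*}_{B}(X_{B})$. Integrating the pointwise scalar identity applied at $x=1+t^{2}$ therefore produces $(1+D^{2})^{-r}$ on the left-hand side and the Bochner integral on the right, yielding \eqref{magicint}. The only mildly delicate point is this interchange of functional calculus and Bochner integration; once norm convergence and norm continuity of the integrand have been established, it is standard, and no serious obstacle is anticipated.
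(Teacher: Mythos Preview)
Your proof is correct and follows the standard route. The paper does not actually supply its own proof of this lemma: it simply cites Baaj--Julg \cite{BJ} for the integral formula in the Hilbert $C^{*}$-module setting and \cite[Appendix A, Remarks 3 and 5]{CP1} for a detailed treatment of both the formula and the estimates. Your argument --- scalar inequalities pushed through the functional calculus for the estimates, the Beta integral identity for the scalar formula, norm convergence via the first estimate, and commutation of functional calculus with the Bochner integral --- is exactly the standard derivation one finds in those references, so there is nothing to compare.
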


The integral formula has been used in the Hilbert 
$C^*$-module context since the work of Baaj-Julg \cite{BJ}.
A detailed treatment can be found in \cite[Appendix A, Remark $3$]{CP1}. 
The estimates can be found in \cite[Appendix A, Remark $5$]{CP1}.

For a closed operator $D:\Dom(D)\to X_B$ we view $\Dom(D)$ as a Hilbert $C^*$-module over $B$ equipped with the graph inner product. The following Lemma is similar to results obtained in \cite[Appendix A]{CP1}.

\begin{lemma}
\label{perturbbddtr}
Let $X_B$ be a $B$-Hilbert $C^{*}$-module, $D:\Dom(D)\to X_B$ a self-adjoint regular operator and $R\in\mathbb{K}_{B}(X_B)$ is a self-adjoint operator such that 
\[R:X_B\to \Dom(D),\quad\textnormal{and} \quad DR\in\mathbb{K}_{B}(X_B).\]
Then the bounded adjointable operator 
$$(1+D^2)^{-1/2}-(1+(D+R)^2)^{-1/2}$$ 
on $X_B$ defines a $B$-compact adjointable operator $X_B\to \Dom(D^{2})$.  In particular $|D|-|D+R|$ has a bounded extension to $X_B$ which is $B$-compact whenever $(D\pm i)^{-1}\in\mathbb{K}_{B}(X_B)$.
\end{lemma}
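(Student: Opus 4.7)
My plan is to use the integral formula of Lemma \ref{intes} to represent the difference $(1+D^2)^{-1/2}-(1+(D+R)^2)^{-1/2}$ and to control the integrand in the graph norm of $\Dom(D^2)$. First I would set $V:=(D+R)^2-D^2 = DR+RD+R^2$, and verify that $V$ is a self-adjoint $B$-compact operator on $X_B$: $DR\in\mathbb{K}_{B}(X_B)$ by hypothesis; since $R$ and $D$ are self-adjoint, $RD$ extends the adjoint $(DR)^*$ and so is $B$-compact; and $R^2$ is $B$-compact because $R$ is. Moreover $(D+R)^2\ge 0$, which together with $D^2\ge 0$ gives $\|A_\lambda^{-1}\|,\|B_\lambda^{-1}\|\le (1+\lambda)^{-1}$ for $A_\lambda:=1+D^2+\lambda$ and $B_\lambda:=1+(D+R)^2+\lambda$, $\lambda\ge 0$. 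I would also note that $\Dom((D+R)^2)=\Dom(D^2)$ under the hypothesis $R:X_B\to\Dom(D)$, so $V$ is a genuine bounded perturbation.

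With this in place, applying Lemma \ref{intes} with $r=1/2$ to both $D$ and $D+R$ and using the resolvent identity $A_\lambda^{-1}-B_\lambda^{-1}=A_\lambda^{-1}VB_\lambda^{-1}$ yields the key representation
\[
(1+D^2)^{-1/2}-(1+(D+R)^2)^{-1/2} = \frac{1}{\pi}\int_0^\infty \lambda^{-1/2}\,A_\lambda^{-1}VB_\lambda^{-1}\,d\lambda.
\]
The bounds $\|A_\lambda^{-1}VB_\lambda^{-1}\|\le\|V\|(1+\lambda)^{-2}$ and, using $\|D^2A_\lambda^{-1}\|\le 1$ from Lemma \ref{intes}, $\|D^2A_\lambda^{-1}VB_\lambda^{-1}\|\le\|V\|(1+\lambda)^{-1}$, make the integrand norm-integrable when interpreted as an operator $X_B\to\Dom(D^2)$ with the graph norm. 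To establish that each $A_\lambda^{-1}VB_\lambda^{-1}$ lies in $\mathbb{K}_{B}(X_B,\Dom(D^2))$, I would approximate $V$ by finite-rank operators $V_n=\sum_i\ketbra{y_i}{x_i}$, so that $A_\lambda^{-1}V_nB_\lambda^{-1}=\sum_i\ketbra{A_\lambda^{-1}y_i}{B_\lambda^{-1}x_i}$ is finite-rank with range in $\Dom(D^2)$; the same estimates transfer to give convergence in the graph-norm operator norm. A norm-convergent integral of $B$-compact operators is $B$-compact, which proves the first claim.

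For the second claim, I would run an analogous argument for the square roots. From the identity $T(T+\lambda)^{-1}=1-\lambda(T+\lambda)^{-1}$ applied to the (formally divergent) integral representation of $T^{1/2}$, the subtracted version
\[
(1+D^2)^{1/2}-(1+(D+R)^2)^{1/2} = -\frac{1}{\pi}\int_0^\infty\lambda^{1/2}\,A_\lambda^{-1}VB_\lambda^{-1}\,d\lambda
\]
converges absolutely in norm (integrand bounded by $\lambda^{1/2}\|V\|(1+\lambda)^{-2}$) and is $B$-compact on $X_B$ by the same finite-rank approximation. When $(D\pm i)^{-1}\in\mathbb{K}_{B}(X_B)$, the operator $D+R$ inherits $B$-compact resolvent via $(D+R\pm i)^{-1}-(D\pm i)^{-1}=-(D\pm i)^{-1}R(D+R\pm i)^{-1}$, and the function $h(x)=(1+x^2)^{1/2}-|x|=((1+x^2)^{1/2}+|x|)^{-1}$ lies in $C_0(\mathbb{R})$, so both $h(D)$ and $h(D+R)$ are $B$-compact by functional calculus. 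Writing
\[
|D|-|D+R| = \bigl[(1+D^2)^{1/2}-(1+(D+R)^2)^{1/2}\bigr] - h(D) + h(D+R)
\]
then exhibits the bounded extension as a sum of three $B$-compact operators. The main obstacle is upgrading convergence of the integral from the norm of $\End^{*}_{B}(X_B)$ to the graph norm of $X_B\to\Dom(D^2)$; this relies on the estimate $\|D^2A_\lambda^{-1}\|\le 1$ and on carrying out the finite-rank approximation of $V$ in the stronger graph-norm operator norm.
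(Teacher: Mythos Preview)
Your argument for the first claim is essentially the paper's: set $V=DR+RD+R^2\in\mathbb{K}_B(X_B)$, use the integral formula of Lemma~\ref{intes} and the resolvent identity $A_\lambda^{-1}-B_\lambda^{-1}=A_\lambda^{-1}VB_\lambda^{-1}$, and control the $\Dom(D^2)$-graph norm via $\|D^2A_\lambda^{-1}\|\le 1$. Your explicit finite-rank approximation is not needed (the integrand is visibly of the form bounded$\cdot$compact$\cdot$bounded in $\mathbb{K}_B(X_B,\Dom(D^2))$), but it does no harm.

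For the second claim you take a different route from the paper. The paper deduces boundedness and $B$-compactness of $|D|-|D+R|$ \emph{from the first part}: since $x\mapsto |x|-x^2(1+x^2)^{-1/2}\in C_0(\R)$, it suffices to show $D^2(1+D^2)^{-1/2}-(D+R)^2(1+(D+R)^2)^{-1/2}$ is $B$-compact, and this follows by writing it as $D^2\bigl[(1+D^2)^{-1/2}-(1+(D+R)^2)^{-1/2}\bigr]-V(1+(D+R)^2)^{-1/2}$, where the first summand is $B$-compact because the bracket is $B$-compact \emph{into $\Dom(D^2)$}. You instead prove directly that $(1+D^2)^{1/2}-(1+(D+R)^2)^{1/2}$ is $B$-compact via the subtracted integral representation $-\tfrac{1}{\pi}\int_0^\infty \lambda^{1/2}A_\lambda^{-1}VB_\lambda^{-1}\,d\lambda$. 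This is correct and arguably cleaner, but note that it is logically independent of the first claim, and the formula does require a word of justification beyond Lemma~\ref{intes} (which only covers negative fractional powers). A safe way to make it rigorous: for each $N>0$ the truncated operators $f_N(T):=\tfrac{1}{\pi}\int_0^N\lambda^{-1/2}T(T+\lambda)^{-1}\,d\lambda$ are bounded, satisfy $f_N(T_1)-f_N(T_2)=-\tfrac{1}{\pi}\int_0^N\lambda^{1/2}A_\lambda^{-1}VB_\lambda^{-1}\,d\lambda$ exactly, converge in norm to your integral as $N\to\infty$, and converge strongly to $T^{1/2}$ on $\Dom(|D|)$ by functional calculus (via the local--global principle in the module setting). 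Your ``formally divergent'' remark hints at this but does not quite say it.
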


\begin{proof}
As $DR$ is everywhere defined and $B$-compact, its densely defined adjoint $RD$ extends to a $B$-compact operator as well. Moreover we have
\[(D+R)^{2}=D^{2}+RD+DR+R^{2}:\Dom(D^{2})\to X_B,\]
so $(D+R)^{2}$ is a $B$-compact perturbation of $D^{2}$ and $\Dom(D^{2})=\Dom((D+R)^{2})$.
Using the integral formula \eqref{magicint}, we write 
\begin{align*}D^2((1+D^2)^{-1/2}&-(1+(D+R)^2)^{-1/2})\\
&=\frac{1}{\pi}\int_{0}^{\infty}
\lambda^{-1/2}D^2(1+D^{2}+\lambda)^{-1}(DR+RD+R^2)(1+(D+R)^2+\lambda)^{-1}\rd\lambda,
\end{align*}
an identity valid on $\Dom(D)$. Using that $DR+RD+R^2$ is $B$-compact,  the integrand is $B$-compact and by Lemma \ref{intes} it is norm bounded by $\|DR+RD+R^2\|\lambda^{-1/2}(1+\lambda)^{-1}$. Therefore the integral is norm convergent, and hence $(1+D^2)^{-1/2}-(1+(D+R)^2)^{-1/2}$ defines a $B$-compact operator $X_B\to \Dom(D^{2})$. 
Now consider $|D|-|D+R|$ and observe that the function $x\mapsto |x|-x^2(1+x^2)^{-1/2}$ belongs to $C_0(\R)$. It thus suffices to prove that $D^2(1+D^2)^{-1/2}-(D+R)^2(1+(D+R)^2)^{-1/2}$ is $B$-compact. This now follows from $B$-compactness of $DR+RD+R^{2}$, $B$-compactness of $(1+D^2)^{-1/2}-(1+(D+R)^2)^{-1/2}$ as an operator into $\Dom(D^{2})$ and boundedness of $D^{2}:\Dom(D^{2})\to X_B$.
\end{proof}

\begin{prop}
\label{inveramamamap}
Any compact weakly twisted Kasparov module $(\A,X_B,D,\sigma)$ admits an invertible amplification. Any invertible amplification $(\A,X_B\oplus X_B,D_{\rm amp},\sigma_{\rm amp})$ is a compact weakly twisted Kasparov module with $D_{\rm amp}$ invertible. Moreover $(\A,X_B,D,\sigma)$ is Lipschitz regular if and only if $(\A,X_B\oplus X_B,D_{\rm amp},\sigma_{\rm amp})$ is Lipschitz regular.
\end{prop}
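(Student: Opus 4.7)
The plan is to prove the three assertions in order, using Lemmas \ref{invamp} and \ref{perturbbddtr} as the main analytic input, and reducing the verification to matrix computations.

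For existence (part one), I would simply invoke Lemma \ref{invamp}: the hypothesis $(D\pm i)^{-1}\in\K_B(X_B)$ is exactly the hypothesis of that lemma, so taking $R$ as there, $D_{\rm amp}=\tilde D+R$ is well-defined, self-adjoint, regular, and invertible with $D_{\rm amp}^{-1}\in\K_B(X_B\oplus X_B)$. In the even case I would take $R$ odd for $\gamma\oplus-\gamma$ as allowed by the lemma.

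Next I would check that $(\A,X_B\oplus X_B,D_{\rm amp},\sigma_{\rm amp})$ is a compact weakly twisted Kasparov module. Writing $a$ for the operator $a\oplus 0$ on $X_B\oplus X_B$, the condition $a\Dom(D_{\rm amp})\subseteq\Dom(D_{\rm amp})=\Dom(D)\oplus\Dom(D)$ is immediate from $a\Dom(D)\subseteq\Dom(D)$. That $\sigma_{\rm amp}$ is a regular partially defined automorphism of $\End^*_B(X_B\oplus X_B)$ containing $a\oplus 0$ in its domain, with $\sigma_{\rm amp}(a\oplus 0)=\sigma(a)\oplus 0$, follows directly from the definition. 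The twisted commutator unwinds to
\[
[D_{\rm amp},a]_{\sigma_{\rm amp}}=\begin{pmatrix}[D,a]_\sigma & -\sigma(a)(1+D^2)^{-1}\\ (1+D^2)^{-1}a & 0\end{pmatrix},
\]
which is bounded because the diagonal entry is bounded by hypothesis and the off-diagonal entries are products of bounded and $B$-compact operators. Compactness of $a(1+D_{\rm amp}^2)^{-1/2}$ is immediate from $D_{\rm amp}^{-1}\in\K_B(X_B\oplus X_B)$.

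For the Lipschitz regularity equivalence (part three), the key observation is that Lemma \ref{perturbbddtr} applies to the pair $\tilde D$ and $\tilde D+R$: indeed $R\in\K_B(X_B\oplus X_B)$ is self-adjoint with $R(X_B\oplus X_B)\subseteq\Dom(\tilde D)$ and $\tilde D R\in\K_B(X_B\oplus X_B)$ by Lemma \ref{invamp}(1). Therefore $|D_{\rm amp}|-|\tilde D|$ extends to a $B$-compact, in particular bounded, operator. A direct matrix calculation gives
\[
[|\tilde D|,a]_{\sigma_{\rm amp}}=[|D|,a]_\sigma\oplus 0,
\]
so writing $[|D_{\rm amp}|,a]_{\sigma_{\rm amp}}=[|\tilde D|,a]_{\sigma_{\rm amp}}+(|D_{\rm amp}|-|\tilde D|)(a\oplus 0)-(\sigma(a)\oplus 0)(|D_{\rm amp}|-|\tilde D|)$ immediately yields that $[|D|,a]_\sigma$ is bounded if and only if $[|D_{\rm amp}|,a]_{\sigma_{\rm amp}}$ is bounded.

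The only real subtlety I anticipate is verifying that the hypotheses of Lemma \ref{perturbbddtr} indeed apply in the amplified setting; once that is established, each of the three parts is routine bookkeeping with $2\times 2$ matrix operators. The potentially delicate point is keeping track of the definition of $\sigma_{\rm amp}$ on the off-diagonal entries of the commutator matrices, but these are either zero or of the form (bounded)$\cdot$(compact), so they contribute nothing substantive to either the bounded-commutator or the Lipschitz-regularity verification.
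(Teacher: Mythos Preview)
Your proposal is correct and follows the same approach as the paper: invoke Lemma~\ref{invamp} for existence, do the matrix bookkeeping for the twisted commutator, and use Lemma~\ref{perturbbddtr} to compare $|\tilde D|$ and $|D_{\rm amp}|$ for the Lipschitz equivalence. The paper's proof is simply terser, calling the middle step ``a short algebraic verification'' rather than writing out the $2\times 2$ matrices; your explicit computations are a correct expansion of that phrase. One cosmetic point: the statement covers \emph{any} invertible amplification, i.e.\ any $R$ satisfying the conditions of Lemma~\ref{invamp}, not just the specific $R=\bigl(\begin{smallmatrix}0&(1+D^2)^{-1}\\(1+D^2)^{-1}&0\end{smallmatrix}\bigr)$ from its proof, so your off-diagonal entries should strictly be the general entries of $R$; since $R$, $a$ and $\sigma(a)$ are all bounded this changes nothing in the argument.
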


\begin{proof}
While we assume that $(\A,X_B,D,\sigma)$ is compact, $D$ is Fredholm and thus $(\A,X_B,D,\sigma)$ admits an invertible amplification using Lemma \ref{invamp}. It is a short algebraic verification to check that $(\A,X_B\oplus X_B,D_{\rm amp},\sigma_{\rm amp})$ is a weakly twisted Kasparov module. Since the inverse of $D_{\rm amp}$ is $B$-compact, its resolvent is $B$-compact and therefore $(\A,X_B\oplus X_B,D_{\rm amp},\sigma_{\rm amp})$ is compact. Recall the notation $\tilde{D}:=D\oplus (-D)$ and note that $(\A,X_B,D,\sigma)$ is Lipschitz regular if and only if $(\A,X_B\oplus X_B,\tilde{D},\sigma_{\rm amp})$ is Lipschitz regular. The statement about Lipschitz regularity follows from that $|\tilde{D}|-|D_{\rm amp}|$ is $B$-compact by Lemma \ref{perturbbddtr}. 
\end{proof}

\begin{rmk}
It is clear from the construction that invertible amplifications commute with saturations in the following sense. Assume that $(\A,X_B,D,\sigma)$ is a unital weakly twisted Kasparov module satisfying all the assumptions of Proposition \ref{saturateprops} (see page \pageref{saturateprops}). Then $(\A,X_B\oplus X_B,D_{\rm amp},\sigma_{\rm amp})$ also satisifes all the assumptions of Proposition \ref{saturateprops} and clearly 
$$(\A_\sigma,X_B\oplus X_B,D_{\rm amp},\sigma_{\rm amp})=(\A_{\sigma_{\rm amp}},X_B\oplus X_B,D_{\rm amp},\sigma_{\rm amp}).$$
In this equation, the left hand side is the invertible amplification of the saturation, and the right hand side is the saturation of the invertible amplification.
\end{rmk}

\subsection{The bounded transform for Lipschitz regular twisted Kasparov modules}

We present a proof of the fact that the
bounded transform of a Lipschitz regular weakly twisted 
unbounded Kasparov module is a well-defined bounded Kasparov module. Our proof closely follows the proof in the special case of twisted spectral triples due to Connes-Moscovici \cite{CoM}. At the expense of a range of other assumptions, Kaad in \cite{Ka} proves a similar statement in the absence of the Lipschitz condition.

Recall that a \emph{normalising function} $\chi$ is a function $\chi:\mathbb{R}\to\mathbb{R}$, continuous on $\textnormal{Spec}(D)$, with the properties that  $\chi(-x)=-\chi(x)$ and $\lim_{x\to\pm \infty}\chi(x)=\pm 1$.

\begin{prop}[\cite{CM}]
\label{bddtwistedtransform}
Let $(\A,X_B,\D,\sigma)$ be a compact Lipschitz regular weakly twisted Kasparov module. Then for any choice of normalising function $\chi$, the triple $(\A,X_B,\chi(\D))$ is a bounded Kasparov module. 
\end{prop}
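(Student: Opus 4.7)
The plan is to follow the classical Connes--Moscovici argument, but with care taken for the Hilbert $C^*$-module setting and the weakly twisted hypothesis. The goal is to verify that $F:=\chi(D)$ is bounded self-adjoint with $(F^2-1)a$ and $[F,a]$ both $B$-compact for every $a\in\A$; self-adjointness and boundedness come for free from functional calculus since $\chi$ is real-valued and bounded.

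First I would reduce to a canonical choice of normalising function. Any two normalising functions differ by a function in $C_0(\mathbb{R})$ (continuous on $\mathrm{Spec}(D)$ in the relevant sense), and because $D$ has $B$-compact resolvent, the functional calculus sends such functions to $B$-compact operators on $X_B$. Hence it suffices to verify the Kasparov module axioms for one specific choice. The natural choice is $\chi=\sgn$, but this requires $0\notin\mathrm{Spec}(D)$, so I would invoke Proposition \ref{inveramamamap} to pass to an invertible amplification $(\A,X_B\oplus X_B,D_{\mathrm{amp}},\sigma_{\mathrm{amp}})$, which is still compact and Lipschitz regular. Once the result is established for the amplification, it transfers back to the original module because the amplification procedure, combined with the compact perturbation $R$ from Lemma \ref{invamp}, preserves the $KK$-class up to compact perturbations of $\chi(D)$.

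For the invertible model, put $F:=D|D|^{-1}$. Then $F^2=1$ trivially, so $(F^2-1)a=0$ is automatically $B$-compact. The key calculation is to show $[F,a]$ is $B$-compact for $a\in\A$. Since $D$ and $|D|$ commute under functional calculus, I would write
\[
[F,a]=|D|^{-1}Da-aD|D|^{-1}=|D|^{-1}[D,a]_{\sigma}+\bigl(|D|^{-1}\sigma(a)-a|D|^{-1}\bigr)D,
\]
using $Da=\sigma(a)D+[D,a]_{\sigma}$ and that $a|D|^{-1}D=aD|D|^{-1}$. Applying $|D|^{-1}\cdot\;\cdot|D|^{-1}$ to the identity $|D|a-\sigma(a)|D|=[|D|,a]_{\sigma}$ gives
\[
a|D|^{-1}-|D|^{-1}\sigma(a)=|D|^{-1}[|D|,a]_{\sigma}|D|^{-1},
\]
so that
\[
[F,a]=|D|^{-1}[D,a]_{\sigma}-|D|^{-1}[|D|,a]_{\sigma}\,F.
\]
Both $[D,a]_{\sigma}$ and $[|D|,a]_{\sigma}$ are bounded adjointable operators by the defining hypotheses and Lipschitz regularity; $F$ is bounded; and $|D|^{-1}$ is $B$-compact since $D_{\mathrm{amp}}^{-1}\in\mathbb{K}_B$ implies $|D_{\mathrm{amp}}|^{-1}\in\mathbb{K}_B$ by continuous functional calculus. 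Thus both summands are $B$-compact.

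The main obstacle is the second term $(|D|^{-1}\sigma(a)-a|D|^{-1})D$: naively it is an unbounded operator (bounded times $D$), and it is here that Lipschitz regularity is essential — without the bounded extension of $[|D|,a]_{\sigma}$ the expression $|D|^{-1}[|D|,a]_{\sigma}|D|^{-1}D$ could not be identified as a bounded $B$-compact operator. In the weakly twisted setting I never need $\sigma(a)\in\A$; only the bounded adjointable twisted commutators and the $B$-compactness of $a(1+D^2)^{-1/2}$ (used implicitly via $|D|^{-1}\in\mathbb{K}_B$ after amplification) enter, so the proof is insensitive to whether $\sigma$ preserves $\A$.
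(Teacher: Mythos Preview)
Your proof is correct and follows essentially the same approach as the paper: reduce to an invertible amplification via Proposition \ref{inveramamamap}, take $\chi=\sgn$, and derive the identity $[F,a]=|D|^{-1}\bigl([D,a]_\sigma-[|D|,a]_\sigma F\bigr)$, which the paper obtains by a slightly different but equivalent rearrangement. The only cosmetic difference is that the paper spells out more explicitly why the Kasparov-module property transfers back from the amplification (via $\chi(D)\oplus(-\chi(D))-\chi(D_{\mathrm{amp}})\in\K_B$), whereas you phrase this as ``preserving the $KK$-class''; the logic is the same.
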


\begin{proof}
Let $(\A,X_B\oplus X_B,D_{\rm amp},\sigma_{\rm amp})$ be an invertible amplification of $(\A,X_B,\D,\sigma)$ as in Proposition \ref{inveramamamap}. In the special case $\chi(x)=x(1+x^2)^{-1/2}$ we have that $\chi(D)\oplus (-\chi(D))-\chi(D_{\rm amp})\in \K_B(X_B\oplus X_B)$ by Lemma \ref{perturbbddtr}. Up to $B$-compact perturbations, $\chi(D)$ and $\chi(D_{\rm amp})$ are independent of the normalising function $\chi$, so $\chi(D)\oplus (-\chi(D))-\chi(D_{\rm amp})\in \K_B(X_B\oplus X_B)$ for any normalising function. It is clear that $(\A,X_B,\chi(\D))$ is a bounded Kasparov module if and only if $(\A,X_B\oplus X_B,\chi(\D)\oplus (-\chi(D)))$ is a bounded Kasparov module, which in turn is equivalent to $(\A,X_B\oplus X_B,\chi(\D_{\rm amp}))$ being a bounded Kasparov module by the preceeding argument. Moreover, $(\A,X_B\oplus X_B,D_{\rm amp},\sigma_{\rm amp})$ is Lipschitz regular since $(\A,X_B,\D,\sigma)$ is Lipschitz regular. We can in particular assume that $(\A,X_B,\D,\sigma)$ satisfies that $D$ is invertible. 

Since $(\A,X_B,\D,\sigma)$ is compact, the operator $\D$ has $B$-compact resolvent by definition, and it suffices to prove the proposition for the specific choice of function $\chi(x):=\mathrm{sgn}(x)$. We set $F:=\D|\D|^{-1}$ and compute for $a\in \A$ that 
\begin{align}
\nonumber
[F,a]&=D|D|^{-1}a-aD|D|^{-1}=|D|^{-1}(D a-|D|aF) =|D|^{-1}([D,a]_\sigma+\sigma(a)D-|D|aF)\\
\label{somecomlpos}
&=|D|^{-1}([D,a]_\sigma+(\sigma(a)|D|-|D|a)F)=|D|^{-1}([D,a]_\sigma-[|D|,a]_{\sigma}F).
\end{align}
The operator $[D,a]_\sigma-[|D|,a]_{\sigma}F$ has a bounded extension by the bounded twisted commutator condition and the twisted Lipschitz condition. By $B$-compactness of $|D|^{-1}$, we conclude that $[F,a]$ is $B$-compact for all $a\in \A$.
\end{proof}

We note that the operator $\chi(D)$ in Proposition \ref{bddtwistedtransform} is independent of the twist $\sigma$. That is, if there exists an automorphism $\tilde{\sigma}$ of $\A$ such that $D$ and $|D|$ have bounded twisted commutators with $\A$, this suffices to prove that the bounded transform is a Fredholm module. This observation is of crucial importance in the sequel.

\subsection{Logarithmic dampening of Lipschitz regular twisted Kasparov modules}
In this subsection we associate to a Lipschitz regular weakly twisted unbounded Kasparov module an ordinary unbounded Kasparov module in the same $KK$-class. To this end we use the functional calculus and the logarithm function. In \cite[Theorem 8]{Pierrot} Pierrot observed that this is possible for crossed products by a conformal diffeomorphism. In \cite[Theorem 9.14]{DGMW}, \cite[Section 10]{GU} and \cite[Section 2.2.1 and 5.1]{GRU} a related, but slightly broader set of examples is discussed. First, we require a series of lemmas.

\begin{lemma}
\label{logbdd}
Let $\Delta$ be a positive self-adjoint regular operator on a Hilbert $C^{*}$-module $X_B$ with $\Delta^{-1}\in\End^{*}_{B}(X_B)$. Let $a\in\End^{*}_{B}(X_B)$ be a self-adjoint bounded operator on $X_B$ such that $a\Dom(\Delta)\subseteq \Dom(\Delta)$. Then $a\Dom(\log(\Delta))\subseteq \Dom(\log(\Delta))$  and the operators $\Delta a\Delta^{-1}$, $\Delta^{-1}a\Delta$ and $[\log(\Delta),a]$ extend to bounded adjointable operators on $X_B$.
\end{lemma}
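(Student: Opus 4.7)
The lemma has three assertions, which I would prove in the order they are stated since each one prepares the next.

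For $\Delta a\Delta^{-1}$: endow $\Dom(\Delta)$ with the graph norm, turning it into a Hilbert $C^{*}$-module. The hypothesis $a\Dom(\Delta)\subseteq\Dom(\Delta)$, combined with closedness of $\Delta$ and boundedness of $a$ on $X_B$, makes $a\colon\Dom(\Delta)\to\Dom(\Delta)$ have closed graph in the graph norm; the closed graph theorem yields graph-norm continuity, so $\|\Delta ax\|\leq C(\|x\|+\|\Delta x\|)$ on $\Dom(\Delta)$. Substituting $x=\Delta^{-1}y$ for $y\in X_B$ gives $\|\Delta a\Delta^{-1}y\|\leq C(1+\|\Delta^{-1}\|)\|y\|$, so $\Delta a\Delta^{-1}$ extends to an element of $\End^{*}_{B}(X_B)$. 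A pairing computation using self-adjointness of $a$ and $\Delta$ identifies $(\Delta a\Delta^{-1})^{*}$ with the continuous extension of $\Delta^{-1}a\Delta$, establishing bounded adjointability of both operators.

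For $[\log\Delta,a]$: after rescaling $\Delta$ by a positive constant (which shifts $\log\Delta$ only by a scalar and leaves all commutators and domains unchanged) assume $\Delta\geq 1$. By the previous step, both $\Delta^{\pm 1}a\Delta^{\mp 1}$ are bounded, so the operator-valued function $F(z):=\Delta^{z}a\Delta^{-z}$ takes bounded values on the lines $\textnormal{Re}(z)\in\{-1,1\}$ (with norms $\|\Delta a\Delta^{-1}\|$ and $\|\Delta^{-1}a\Delta\|$) and on the imaginary axis (unitary conjugation of norm $\|a\|$). The plan is to extend $F$ to a holomorphic, bounded operator-valued function on the strip $-1\leq\textnormal{Re}(z)\leq 1$ and then use Cauchy's integral formula on a small contour around $0$ interior to the strip to bound $F'(0)=[\log\Delta,a]$ in operator norm.

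The main obstacle is this holomorphic extension: $\Delta^{z}$ is unbounded for non-imaginary $z$, so $F(z)$ must be constructed carefully. Here I would invoke the integral representation
\[
\log\Delta = \int_0^\infty\left(\frac{1}{1+s} - \frac{1}{\Delta+s}\right)\rd s,
\]
valid strongly on $\Dom(\log\Delta)$, to verify that for $y,w\in\Dom(\Delta)$ the scalar-valued matrix coefficient $\langle y\mid F(z)w\rangle$ extends holomorphically to the open strip and is bounded on the closed strip; the Hadamard three-lines theorem then supplies a uniform bound, and taking suprema over unit $y,w$ yields bounded operator values for $F(z)$. With $[\log\Delta,a]$ now bounded on the dense subspace $\Dom(\Delta)$ in the $X_B$-norm, a standard approximation of $y\in\Dom(\log\Delta)$ by vectors in $\Dom(\Delta)$ in the graph norm of $\log\Delta$ simultaneously shows $a\Dom(\log\Delta)\subseteq\Dom(\log\Delta)$ and that the bounded extension is the commutator.
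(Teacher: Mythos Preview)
Your overall strategy matches the paper's: bound $\Delta a\Delta^{-1}$ and $\Delta^{-1}a\Delta$ via the closed graph theorem, then study $z\mapsto \Delta^{z}a\Delta^{-z}$ on the strip $-1\leq\textnormal{Re}(z)\leq 1$, apply Hadamard three-lines, and identify the derivative at $0$ as $[\log\Delta,a]$. The paper also notes that $\Dom(\Delta)$ is a core for $\log\Delta$ and uses this to transfer the boundedness on the core to the domain inclusion, exactly as you do in your last paragraph.

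There is one genuine technical gap in how you handle the Hilbert $C^{*}$-module setting. You write that $\langle y\mid F(z)w\rangle$ is a ``scalar-valued matrix coefficient''; it is not---it takes values in $B$. More importantly, the three-lines theorem and the subsequent ``taking suprema over unit $y,w$'' step are standard on Hilbert spaces but require justification on Hilbert $C^{*}$-modules. The paper resolves this by localization: for every state $\varphi$ on $B$ one forms the Hilbert space $X_{\varphi}:=X_B\otimes_{\varphi}H_{\varphi}$, proves holomorphicity and the three-lines bound for $\Delta^{-z}a\Delta^{z}$ there (now a genuine Hilbert-space argument), differentiates at $0$ to bound $[\log\Delta,a]$ on each $X_{\varphi}$, and then invokes the local-global principle of Kaad--Lesch to lift the uniform bound back to $X_B$. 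Your proposal would go through if you either replace the ``scalar-valued'' argument with this localization step, or explicitly invoke a Banach-space-valued three-lines theorem and verify the requisite weak holomorphicity and boundary continuity for the $B$-valued map $z\mapsto\langle y\mid F(z)w\rangle$. The integral representation you introduce for $\log\Delta$ is not used in the paper and is not really needed once you have localized; holomorphicity of $z\mapsto\Delta^{z}x$ on the relevant domain is available directly from the functional calculus on each $X_{\varphi}$.
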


\begin{proof}
Since $\Dom(\Delta)=\textnormal{Ran }\Delta^{-1}$, 
the operator $\Delta a\Delta^{-1}$ is closed and defined on all of $X_B$, so by the closed graph theorem it is bounded. It has a densely defined adjoint $\Delta^{-1} a\Delta$, so $\Delta a\Delta^{-1}$ is bounded and adjointable. The adjoint of $\Delta a\Delta^{-1}$ is the closure of $\Delta^{-1} a\Delta$ which therefore extends to a bounded adjointable operator. 
Invertibility of $\Delta$ ensures that the operator $\log(\Delta)$ is defined through the functional calculus for self-adjoint regular operators \cite[Theorem 10.9]{Lancebook} and by definition the submodule
\[\left\{f(\Delta)x: x\in X_B, f\in C_{c}(\mathbb{R})\right\}\subseteq\Dom(\Delta),\]
is a core for $\log(\Delta)$. Since $\log(\Delta)$ is self-adjoint and regular and $\Delta^{-1}\log\Delta$ extends to a bounded operator that equals $(\log\Delta)\Delta^{-1}$, we have $\Dom(\Delta)\subseteq \Dom(\log(\Delta))$, so $\Dom(\Delta)$ is a
core for $\log(\Delta)$.  Therefore $a$ preserves
a core for $\log(\Delta)$. Provided that
$[\log(\Delta),a]$ is bounded on this core, we will see that in fact $a$ preserves
$\Dom(\log(\Delta))$.

For each state $\varphi:B\to\mathbb{C}$, consider the Hilbert space $X_{\varphi}:=X_B\otimes_{\varphi} H_{\varphi}$, with $H_{\varphi}$ the associated GNS representation of $B$. Applying the Hadamard three lines theorem, for any $z\in \C$ with $\mathrm{Re}(z)\in [-1,1]$ the operator $\Delta^{-z}a\Delta^z$ has a bounded extension to $X_{\varphi}$ and 
$$\| \Delta^{-z}a\Delta^z\|\leq \|\Delta^{-1} a\Delta\|^{(1+\mathrm{Re}(z))/2}\|\Delta a\Delta^{-1} \|^{(1-\mathrm{Re}(z))/2}.$$
Moreover, the function $z\mapsto \Delta^{-z}a\Delta^z\in \B(X_\varphi)$ is holomorphic. We compute its derivative at $0$ to be 
$$\frac{\mathrm{d}}{\mathrm{d}z}\bigg|_{z=0}\Delta^{-z}a\Delta^z=[\log(\Delta),a].$$
Holomorphicity of $z\mapsto \Delta^{-z}a\Delta^z\in \B(X_\varphi)$ allows us to deduce that $[\log(\Delta),a]$ extends to a bounded operator on $X_{\varphi}$. Since the state $\varphi$ is arbitrary, all statements are valid in the $C^{*}$-module $X_B$ by the
local-global principle, \cite{KaLe2}.
\end{proof}

\begin{lemma}
\label{perturblog}
Let $X_B$ be a $B$-Hilbert $C^{*}$-module, $D:\Dom(D)\to X_B$ a self-adjoint regular operator and $R\in\mathbb{K}_{B}(X_B)$  a self-adjoint operator such that 
\[R:X_B\to \Dom(D),\quad\textnormal{and} \quad DR\in\mathbb{K}_{B}(X_B).\]
Then $\log(1+|D|)-\log(1+|D+R|)$ has a bounded extension to $X_B$ which is $B$-compact whenever $(D\pm i)^{-1}\in\mathbb{K}_{B}(X_B)$.
\end{lemma}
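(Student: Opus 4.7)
The plan is to mirror the strategy of Lemma \ref{perturbbddtr}, but with the integral representation for $\log(1+x)$ in place of the Mellin integral \eqref{magicint}. For $t\geq 0$ one has the elementary identity
$$\log(1+t)=\int_0^\infty\left(\frac{1}{1+\lambda}-\frac{1}{1+t+\lambda}\right)\rd\lambda,$$
which by the functional calculus for positive self-adjoint regular operators on $X_B$ lifts to
$$\log(1+|D|)-\log(1+|D+R|)=\int_0^\infty\left[(1+|D+R|+\lambda)^{-1}-(1+|D|+\lambda)^{-1}\right]\rd\lambda.$$
First I would verify that this integral representation is norm-convergent in $\End^*_B(X_B)$ and agrees with the functional-calculus definition of the difference on a common core (for instance, apply it to spectral subspaces where everything is bounded and then pass to the limit, or invoke the local-global principle \cite{KaLe2} to reduce to the Hilbert-space case where the identity is classical).

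Next I would apply the resolvent identity on the dense domain $\Dom(|D|)\cap\Dom(|D+R|)$,
$$(1+|D+R|+\lambda)^{-1}-(1+|D|+\lambda)^{-1}=(1+|D+R|+\lambda)^{-1}\bigl(|D|-|D+R|\bigr)(1+|D|+\lambda)^{-1}.$$
By Lemma \ref{perturbbddtr} the operator $S:=|D|-|D+R|$ has a bounded adjointable extension to all of $X_B$, so the right-hand side is an everywhere-defined adjointable operator. Using the bound $\|(1+|T|+\lambda)^{-1}\|\leq(1+\lambda)^{-1}$ (analogous to Lemma \ref{intes}), the integrand is norm-bounded by $\|S\|(1+\lambda)^{-2}$, which is integrable on $(0,\infty)$. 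Hence
$$\log(1+|D|)-\log(1+|D+R|)=\int_0^\infty(1+|D+R|+\lambda)^{-1}S\,(1+|D|+\lambda)^{-1}\rd\lambda$$
is a norm-convergent integral of bounded adjointable operators, giving the desired bounded extension.

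Finally, assume $(D\pm i)^{-1}\in\mathbb{K}_B(X_B)$. Then by the second part of Lemma \ref{perturbbddtr}, $S$ is $B$-compact. The two-sided factors $(1+|D\pm R|+\lambda)^{-1}$ are bounded adjointable, so the integrand is $B$-compact for every $\lambda>0$. Because $\mathbb{K}_B(X_B)$ is closed in norm and the integral converges in norm (with the same integrable dominating function as above), the limit lies in $\mathbb{K}_B(X_B)$, and we conclude that $\log(1+|D|)-\log(1+|D+R|)$ is $B$-compact. The main technical step is the first one, namely justifying the integral representation rigorously at the level of regular self-adjoint operators on a Hilbert $C^*$-module; the remaining estimates are routine once the resolvent identity is in place.
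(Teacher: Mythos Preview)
Your proof is correct and takes a genuinely different route from the paper's. The paper first reduces to $\log(1+D^2)-\log(1+(D+R)^2)$ via the $C_0$-function $x\mapsto 2\log(1+|x|)-\log(1+x^2)$, then uses the finite-interval representation $\log(1+T)=\int_0^1 T(1+tT)^{-1}\,\rd t$ and works directly with the explicit $B$-compact perturbation $(D+R)^2-D^2=DR+RD+R^2$ in a resolvent expansion. You instead stay with $|D|$ and $|D+R|$, use the Frullani-type representation $\log(1+t)=\int_0^\infty[(1+\lambda)^{-1}-(1+t+\lambda)^{-1}]\,\rd\lambda$, and invoke the conclusion of Lemma~\ref{perturbbddtr} that $S=|D|-|D+R|$ is bounded (and $B$-compact under the extra hypothesis) as a black box.

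Your approach is more modular and arguably cleaner: once Lemma~\ref{perturbbddtr} is in hand, the resolvent identity plus the bound $\|(1+|T|+\lambda)^{-1}\|\le(1+\lambda)^{-1}$ give an integrand dominated by $\|S\|(1+\lambda)^{-2}$, and both boundedness and $B$-compactness follow at once from norm convergence. The paper's approach, by contrast, does not re-use the absolute-value statement from Lemma~\ref{perturbbddtr} but goes back to the algebraic perturbation $DR+RD+R^2$; this is slightly more self-contained but requires juggling $C_0$-corrections and a separate estimate for $\|tD^2(1+tD^2)^{-1}\|$. The one place you should tighten is the justification of the integral identity on a common core: it suffices to note that $\Dom(D)=\Dom(|D|)=\Dom(|D+R|)$ is dense and contained in both logarithmic domains, and that for $x\in\Dom(D)$ the integrand $[(1+\lambda)^{-1}-(1+|D|+\lambda)^{-1}]x=(1+\lambda)^{-1}(1+|D|+\lambda)^{-1}|D|x$ is norm-integrable, so the scalar identity transfers strongly.
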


\begin{proof}
Since $x\mapsto 2\log(1+|x|)-\log(1+x^2)$ is in $C_{0}(\mathbb{R})$, it suffices to prove that $\log(1+D^2)-\log(1+(D+R)^2)$ is $B$-compact. On the core $\Dom(D^2)$ we use the strongly convergent integral expression 
\[\log(1+D^2)=\int_0^1 D^2(1+tD^2)^{-1}\mathrm{d}t.\]
Since $\Dom(D^2) =\Dom((D+R)^2)$, we can write 
\begin{align*}
\log(1+D^2)-&\log(1+(D+R)^2)=\int_0^1 D^2(1+tD^2)^{-1}\mathrm{d}t-\int_0^1 (D+R)^2(1+t(D+R)^2)^{-1}\mathrm{d}t.
\end{align*}
As $DR+RD+R^2\in\mathbb{K}_{B}(X_B)$, $(i\pm D)^{-1}\in \mathbb{K}_B(X_B)$ and $\|(1+t(D+R)^2)^{-1}\|\leq 1$ , it suffices to prove $B$-compactness of 
\begin{align*}
\int_0^1 D^2\big((1+t(D+R)^2)^{-1}-&(1+tD^2)^{-1}\big)\,\mathrm{d}t\\
&=\int_0^1 tD^2(1+tD^2)^{-1}(DR+RD+R^2)(1+t(D+R)^2)^{-1})\,\mathrm{d}t.
\end{align*}
We have that $\|tD^2(1+tD^2)^{-1}\|\leq 1$  for all $t\in [0,1]$, so the integrand is norm-continuous and $B$-compact on $[0,1]$. We conclude that $\log(1+D^2)-\log(1+(D+R)^2)$ is $B$-compact.
\end{proof}

Consider the continuous function $\textnormal{sgnlog}(x):=x|x|^{-1}\log(1+|x|)\in C(\mathbb{R}),$ whose derivative 
 is the $C_0$-function $x\mapsto (1+|x|)^{-1}$. For a self-adjoint regular operator $D$ on a Hilbert $C^{*}$-module $X_B$, the self-adjoint regular operator $\sgnlog(D)$ satisfies $\Dom(D)\subset \Dom(\sgnlog(D))$ and $\Dom(D)$ is a core for $\sgnlog(D)$.

\begin{thm}
\label{logdampofregtwisted}
Assume that $(\A,X_B,D,\sigma)$ is a compact Lipschitz regular weakly twisted unbounded Kasparov module. Then the logarithmic transform
\[
D_{\log}:=\textnormal{sgnlog}(D),
\]
makes $(\A,X_B,D_{\log})$ into a compact unbounded Kasparov module which represents the same $KK$-class as $(\A,X_B,D,\sigma)$.
\end{thm}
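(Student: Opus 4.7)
The plan is to reduce to the case that $D$ is invertible, verify the unbounded Kasparov module axioms for $D_{\log}$ directly, and then match the $KK$-class by comparing bounded transforms. For the reduction, I would apply Proposition \ref{inveramamamap} to obtain an invertible amplification $(\A, X_B\oplus X_B, D_{\rm amp}, \sigma_{\rm amp})$, which remains compact and Lipschitz regular. A minor variant of Lemma \ref{perturblog} applied to $\sgnlog$ instead of $\log(1+|\cdot|)$ (with some care near kernels, where the compact resolvent hypothesis makes any ambiguity $B$-compact) shows that $\sgnlog(D_{\rm amp})$ is a $B$-compact perturbation of $D_{\log}\oplus(-D_{\log})$, so it suffices to treat the case $|D|^{-1}\in\End^*_B(X_B)$.

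Set $F:=\sgn(D)=D|D|^{-1}$ and write $D_{\log}=F\log(1+|D|)=\log(1+|D|)F$, using that both factors are commuting functions of $D$. Self-adjointness and regularity of $D_{\log}$ follow from functional calculus with the odd continuous function $\sgnlog$, while $(1+D_{\log}^2)^{-1/2}=\phi(|D|)$ with $\phi\in C_0(\mathbb{R})$ is $B$-compact since $D$ has $B$-compact resolvent. Lemma \ref{logbdd} applied with $\Delta:=1+|D|$ (invertible with bounded inverse, and $a\Dom(\Delta)\subseteq\Dom(\Delta)$) yields both the domain preservation $a\Dom(D_{\log})\subseteq\Dom(D_{\log})$ and boundedness of $[\log(1+|D|),a]$.

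The delicate step is the bounded commutator $[D_{\log},a]$. The naive factorisation $D_{\log}=F\log(1+|D|)$ produces an intractable term $[F,a]\log(1+|D|)$ (a bounded operator composed with an unbounded one). The key trick is to use the \emph{opposite} ordering $D_{\log}=\log(1+|D|)F$ and the Leibniz rule to obtain
\[
[D_{\log},a] = \log(1+|D|)\,[F,a] + [\log(1+|D|),a]\,F.
\]
The second summand is a product of bounded operators. For the first, I would invoke identity \eqref{somecomlpos} from the proof of Proposition \ref{bddtwistedtransform}, $[F,a]=|D|^{-1}\bigl([D,a]_\sigma-[|D|,a]_\sigma F\bigr)$, to rewrite
\[
\log(1+|D|)[F,a] = g(|D|)\bigl([D,a]_\sigma-[|D|,a]_\sigma F\bigr),
\]
where $g(x):=\log(1+x)/x$ extends to a continuous, uniformly bounded function on $[0,\infty)$ with $g(0)=1$. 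Lipschitz regularity bounds $[|D|,a]_\sigma$ and the twisted commutator condition bounds $[D,a]_\sigma$, exhibiting the right-hand side as a product of bounded operators.

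Finally, since $\sgn(D_{\log})=F$ in the invertible case, both $(\A,X_B,D_{\log})$ and $(\A,X_B,D,\sigma)$ descend via Proposition \ref{bddtwistedtransform} to the same bounded Kasparov module $(A,X_B,F)$ and hence to the same $KK$-class. The main obstacle is the bounded commutator step: the argument works only because $\log(1+x)/x$ is bounded, so the single factor of $|D|^{-1}$ that Lipschitz regularity produces inside $[F,a]$ is exactly enough to absorb one logarithm of $|D|$. This precise cancellation is what makes the $C^1$ function $\sgnlog$, neither growing faster nor slower, the right choice of dampening.
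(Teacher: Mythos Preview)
Your proof is correct and follows essentially the same route as the paper: reduce to invertible $D$ by amplification, bound $[D_{\log},a]$ using Lemma \ref{logbdd} together with the Lipschitz and twisted-commutator hypotheses, and identify the $KK$-class via the common phase $F$. The only cosmetic difference is the factorisation: the paper writes $D_{\log}$ (up to a bounded term) as $TD$ with $T=|D|^{-1}\log|D|$ and expands $[TD,a]=T[D,a]_\sigma+(T\sigma(a)-aT)D$ directly through the twist, whereas you factor as $\log(1+|D|)\,F$ and invoke the identity \eqref{somecomlpos} for $[F,a]$; both routes land on the same two bounded ingredients $\log(1+|D|)|D|^{-1}$ and $[\log(1+|D|),a]$.
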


\begin{proof}
It is clear that if the operation $(\A,X_B,D,\sigma)\mapsto (\A,X_B,D_{\log})$ is well defined, it preserves $KK$-classes (since $x(1+x^2)^{-1/2}-\textnormal{sgnlog}(x)(1+\log(x)^2)^{-1/2}\in C_0(\R)$). Consider an invertible amplification $(\A, X_B\oplus X_B, D_{\rm amp}, \sigma_{\rm amp})$ of $(\A, X_B, D, \sigma)$ as in Proposition \ref{inveramamamap}. We note that $x\mapsto \textnormal{sgnlog}(x)-x(1+x^2)^{-1/2}\log(1+|x|)$ is a $C_0$-function, so $\sgnlog(D)-D(1+D^2)^{-1/2}\log(1+|D|)$ and $\sgnlog(D_{\rm amp})-D_{\rm amp}(1+D_{\rm amp}^2)^{-1/2}\log(1+|D_{\rm amp}|)$ are $B$-compact. By Lemma \ref{perturbbddtr} and Lemma \ref{perturblog} the operator
$$(\tilde{D}(1+\tilde{D}^2)^{-1/2}\log(1+|\tilde{D}|))-D_{\rm amp}(1+D_{\rm amp}^2)^{-1/2}\log(1+|D_{\rm amp}|)$$ 
is $B$-compact. We conclude that $\sgnlog(\tilde{D})=\textnormal{sgnlog}(D)\oplus (-\textnormal{sgnlog}(D))$ is a $B$-compact perturbation of $\sgnlog(D_{\rm amp})$ on $X_B\oplus X_B$. Since $a\in\A$ preserves the domain $D$ it preserves the domain of $\log(1+|D|)$ (and so of $\textnormal{sgnlog}(D)$), and $[\textnormal{sgnlog}(\tilde{D}),a]=[\textnormal{sgnlog}(D),a]\oplus 0_X$, we can without loss of generality assume that $D$ is invertible.

The only thing to prove is that $D_{\log}$ has bounded commutators with $\A$. Consider
\[D_{\log,0}:=|D|^{-1}\log(|D|)D=TD\quad \textnormal{with}\quad T=|D|^{-1}\log(|D|).\]
Since $D_{\log}-D_{\log,0}$ is bounded, it suffices to prove that $D_{\log,0}=TD$ has bounded commutators with $\A$. 

The operator $T[D,a]_{\sigma}$ is bounded and
\begin{align*}
[TD,a]=T[D,a]_{\sigma}+(T\sigma(a)-aT)D.
\end{align*}
It therefore suffices to show that $(T\sigma(a)-aT)D$ is bounded. We look at
\begin{align}
\label{1stterm}
(T\sigma(a)-aT)D&=((|D|^{-1}\log|D|)\sigma(a)-a|D|^{-1}\log|D|)D.
\end{align}
Since
\[|D|^{-1}\sigma(a)-a|D|^{-1}=-|D|^{-1}[|D|,a]_{\sigma}|D|^{-1},\] 
it holds that 
\[(|D|^{-1}\log|D|)\sigma(a)=(\log|D|)a|D|^{-1}-(\log|D|) |D|^{-1}[|D|,a]_{\sigma}|D|^{-1}.\]
Thus up to a bounded operator \eqref{1stterm} equals
\[[\log|D|,a]|D|^{-1}D=[\log|D|,a]F.\]
Now $a\Dom(|D|)\subset\Dom(|D|)$ and Lemma \ref{logbdd} applied to $\Delta=|D|$ gives that $[\log |D|,a]$ is bounded, so we are done.
\end{proof}

\begin{prop}
\label{sumoflogdamp}
Let $(\A,\H,\D,\sigma)$ be a compact Lipschitz regular weakly twisted spectral triple. For $s>0$ we have $(1+D^2)^{-s/2}\in \mathcal{L}^1$ if and only if
$\mathrm{e}^{-s|D_{\log}|}\in \mathcal{L}^1$. Hence the weakly twisted spectral triple
$(\A,\H,\D,\sigma)$ is
finitely summable if and only if the 
spectral triple $(\A,\H,\D_{\log})$ 
is $\mathrm{Li}_1$-summable.
\end{prop}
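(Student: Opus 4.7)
The plan is to reduce the statement to a purely pointwise comparison of the functions $(1+x^2)^{-s/2}$ and $\mathrm{e}^{-s|\sgnlog(x)|}$ on the real line, and then to invoke functional calculus together with the ideal property of $\mathcal{L}^{1}$.

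First I would compute $|D_{\log}|$ explicitly. Since $\sgnlog(x)=\sgn(x)\log(1+|x|)$, we have $|\sgnlog(x)|=\log(1+|x|)$ for every $x\in\mathbb{R}$ (including $x=0$, where both sides vanish). By the functional calculus for the self-adjoint operator $D$, this yields
\[
|D_{\log}|=\log(1+|D|),\qquad \mathrm{e}^{-s|D_{\log}|}=(1+|D|)^{-s}.
\]
Thus the question becomes whether $(1+|D|)^{-s}$ and $(1+D^{2})^{-s/2}$ are simultaneously trace class.

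Next I would establish the two-sided pointwise comparison
\[
2^{-s/2}\,(1+x^{2})^{-s/2}\;\leq\;(1+|x|)^{-s}\;\leq\;(1+x^{2})^{-s/2},\qquad x\in\mathbb{R},
\]
which follows from the elementary inequalities $1+x^{2}\leq(1+|x|)^{2}\leq 2(1+x^{2})$. Since $D$ is self-adjoint with compact resolvent (so both operators are positive compact), applying the functional calculus turns these scalar inequalities into operator inequalities between the corresponding positive compact operators $(1+|D|)^{-s}$ and $(1+D^{2})^{-s/2}$. Because $\mathcal{L}^{1}$ is an order ideal among positive compact operators, $(1+D^{2})^{-s/2}\in\mathcal{L}^{1}$ if and only if $(1+|D|)^{-s}\in\mathcal{L}^{1}$, which is the first claim.

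For the second assertion, finite summability of $(\A,\H,D,\sigma)$ means that $(1+D^{2})^{-s/2}\in\mathcal{L}^{1}$ for some $s>0$, while $\mathrm{Li}_{1}$-summability of $(\A,\H,D_{\log})$ means $\mathrm{e}^{-s|D_{\log}|}\in\mathcal{L}^{1}$ for some $s>0$ (in the sense recalled from \cite{GRU}). These two conditions coincide for the same value of $s$ by what was just shown, so the two summability notions are equivalent. No substantive obstacle is expected here: the argument is essentially a scalar inequality plus functional calculus, the only point requiring mild care being that $\sgnlog$ vanishes at $0$ so the identification $|D_{\log}|=\log(1+|D|)$ is valid without any invertibility hypothesis on $D$.
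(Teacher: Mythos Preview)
Your proof is correct and follows the same approach as the paper. The paper's proof consists of the single displayed identity $\mathrm{e}^{-s|D_{\log}|}=\mathrm{e}^{-s\log(1+|D|)}=(1+|D|)^{-s}$ together with the remark that the equivalence then ``follows from the definitions''; you have simply made explicit the elementary comparison $1+x^{2}\leq(1+|x|)^{2}\leq 2(1+x^{2})$ that the paper leaves to the reader.
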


The proposition follows from the definitions, 
see \cite{GRU}, because
\[
\mathrm{e}^{-s|D_{\log}|}=\mathrm{e}^{-s\log(1+|D|)}=(1+|D|)^{-s}.\qedhere
\]

\begin{ex}
\label{loggedcirclediff}
Let us revisit the twisted spectral triple $(C^{\infty}(S^{1})\rtimes^{\alg}\mathbb{Z}, L^{2}(S^{1}),D,\sigma)$ of Example \ref{circleexdef}.
The logarithmic dampening $(C^{\infty}(S^{1})\rtimes^{\alg}\mathbb{Z}, L^{2}(S^{1}),D_{\log})$ is an ordinary spectral triple by Theorem \ref{logdampofregtwisted}. We can replace the operator $D_{\log}:=D|D|^{-1}\log (1+|D|)$ by the operator $\partial_{\log}$, given by
\[\partial_{\log}(\mathrm{e}^{2\pi int}):=(\sgn(n)\log |n| )\cdot \mathrm{e}^{2\pi int}, \quad \mbox{for}\ n\neq 0\quad\mbox{and}\quad \partial_{\log}(1)=0
\]
as the operator $\partial_{\log}-D_{\log}$ is bounded.
\end{ex}

\begin{corl}
\label{easycoroflog}
Let $(\A,X_B,\D)$ be a triple containing the following information:
\begin{itemize}
\item $X_B$ is a countably generated Hilbert $C^*$-module over $B$;
\item $D$ is a regular self-adjoint operator on $X_B$ with $B$-compact inverse;
\item $\mathcal{A}$ is a $*$-algebra represented on $X_B$ such that $\A\Dom(D)\subseteq \Dom(D)$.
\end{itemize}
Let $F:=\D|\D|^{-1}$ as in Definition \ref{defn:sign} and assume that $[F,a]:X_B\to \Dom(\D)$ for all $a\in \A$. Then with 
$$D_{\log}:=\sgnlog(D),$$
the collection $(\A,X_B,D_{\log})$ is an unbounded Kasparov module.
\end{corl}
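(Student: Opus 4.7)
The plan is to verify directly the defining conditions of an unbounded Kasparov module for $(\A, X_B, D_{\log})$. The hypotheses here do not quite match those of a weakly twisted Kasparov module---notably, neither $[D, a]$ nor $[|D|, a]$ is assumed bounded---so Theorem \ref{logdampofregtwisted} is not directly applicable and one must argue somewhat by hand, although guided by the same logic. Two preliminary observations are useful: since $D^{-1} \in \mathbb{K}_B(X_B)$ the operator $|D|$ is bounded below by a positive constant $c > 0$, so $F = D|D|^{-1}$ is a self-adjoint involution commuting with every regular function of $|D|$; and $D_{\log} = F\log(1+|D|)$ with $\Dom(D_{\log}) = \Dom(\log(1+|D|))$.

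First I would handle the easy conditions. The resolvent $(1+D_{\log}^2)^{-1/2}$ equals $g(|D|)$ for the $C_0$-function $g(x) = (1+\log(1+x)^2)^{-1/2}$, so it is $B$-compact (continuous $C_0$-functional calculus in a $B$-compact operator), and therefore so is $a(1+D_{\log}^2)^{-1/2}$ for every $a \in \A$. Applying Lemma \ref{logbdd} to $\Delta := 1+|D|$ (a positive self-adjoint regular operator with bounded adjointable inverse) and using $a\Dom(\Delta) = a\Dom(D) \subseteq \Dom(D) = \Dom(\Delta)$, we obtain that $a$ preserves $\Dom(\log(1+|D|))$ and that $[\log(1+|D|), a]$ is bounded.

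The core of the argument is to show $[D_{\log}, a]$ is bounded. I would begin with the expansion
\[
[D_{\log}, a] = F[\log(1+|D|), a] + [F,a]\log(1+|D|),
\]
where the first summand is bounded by the previous paragraph. For the second summand, a short manipulation using commutativity of $F$ with $\log(1+|D|)$ and boundedness of $[\log(1+|D|), a]$ yields the identity
\[
[F,a]\log(1+|D|) = \log(1+|D|)[F,a] + \bigl[[\log(1+|D|), a], F\bigr].
\]
The last term is a commutator of bounded operators, hence bounded, so the problem reduces to controlling $\log(1+|D|)[F,a]$.

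Here the hypothesis $[F,a]\colon X_B \to \Dom(D)$ finally enters: since $[F,a]$ is a bounded adjointable operator with range in $\Dom(D)$, the closed graph theorem upgrades $D[F,a]$ to a bounded adjointable operator on $X_B$, and then $|D|[F,a] = FD[F,a]$ is bounded as well. Factoring
\[
\log(1+|D|)[F,a] = \bigl(\log(1+|D|)|D|^{-1}\bigr)\,|D|[F,a],
\]
the first factor is a bounded function of $|D|$ (bounded because $|D|$ is bounded below, so $\log(1+x)/x$ is bounded on the spectrum of $|D|$), and the product is therefore bounded. The main obstacle is precisely this last step: the missing Lipschitz-type control on $[|D|, a]$ is replaced by the hypothesis $D[F,a]$ bounded, which---combined with the invertibility of $|D|$---supplies the extra factor of $|D|$ needed to absorb the logarithm. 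Once $[D_{\log}, a]$ is bounded and the other conditions are verified, $(\A, X_B, D_{\log})$ is an unbounded Kasparov module.
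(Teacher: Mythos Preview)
Your proof is correct, but you have misread the situation: Theorem \ref{logdampofregtwisted} \emph{is} directly applicable, and that is precisely the paper's strategy. The point is that for a weakly twisted Kasparov module it is the \emph{twisted} commutators $[D,a]_\sigma$ and $[|D|,a]_\sigma$ that must be bounded, not $[D,a]$ and $[|D|,a]$. The paper simply takes $\sigma(a):=|D|a|D|^{-1}$ (which is well-defined and bounded by Lemma \ref{logbdd}), computes $[D,a]_\sigma = F|D|a - |D|aF = |D|[F,a]$ (bounded by your closed-graph argument), and observes that $[|D|,a]_\sigma = |D|a - |D|a = 0$. This makes $(\A,X_B,D,\sigma)$ a compact Lipschitz regular weakly twisted Kasparov module, and Theorem \ref{logdampofregtwisted} finishes immediately.

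Your direct argument is a valid alternative and uses the same two ingredients---Lemma \ref{logbdd} for boundedness of $[\log(1+|D|),a]$, and the closed graph theorem for boundedness of $|D|[F,a]$---but assembles them by hand via the decomposition $[D_{\log},a]=F[\log(1+|D|),a]+[F,a]\log(1+|D|)$ rather than by invoking the general machinery. What you gain is self-containment; what the paper's route buys is that all the domain bookkeeping (core arguments, invertible amplification, etc.) has already been handled once in the proof of Theorem \ref{logdampofregtwisted}. One small point: Lemma \ref{logbdd} is stated for self-adjoint $a$, so strictly speaking you should split $a=\tfrac{1}{2}(a+a^*)+\tfrac{i}{2}\cdot\tfrac{1}{i}(a-a^*)$ and apply it to each piece, using that $\A$ is a $*$-algebra preserving $\Dom(D)$.
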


\begin{proof}
By Theorem \ref{logdampofregtwisted}, it suffices to prove that $(\A,X_B,\D,\sigma)$ is a Lipschitz regular weakly twisted Kasparov module for $\sigma(a):=|\D|a|\D|^{-1}$. We have that $\A\subseteq \Dom(\sigma)$ by Lemma \ref{logbdd} (see page \pageref{logbdd}) because $\A$ preserves $\Dom(\D)=\Dom(|\D|)$. The closed graph theorem and boundedness of $[F,a]$ on $X_B$ imply that if $[F,a]X_B\subseteq \Dom(\D)$ then $[F,a]:X_B\to \Dom(\D)$ is continuous. In particular, $|\D|[F,a]$ is a bounded adjointable operator. Moreover, for any $a\in \A$, $[\D,a]_\sigma$ is bounded because
$$[\D,a]_\sigma=\D a-\sigma(a)\D=F|\D|a-|\D|aF=|\D|[F,a].$$
Since $D$ has $B$-compact inverse, the preceeding argument shows that $(\A,X_B,\D,\sigma)$ is a weakly twisted spectral triple. Clearly, $[|D|,a]_\sigma=|D|a-|D|a=0$ is bounded so $(\A,X_B,\D,\sigma)$ is a Lipschitz regular weakly twisted unbounded Kasparov module.
\end{proof}

\subsection{Higher order Kasparov modules}

In this section we describe a weakening of the 
definition of Kasparov module, which as far as we 
know first appeared in the work of Wahl, \cite{Charlotte}. 
Key observations can be found in \cite[Lemma 51]{grensing} and the notion reappeared
in work by the first two listed authors on Cuntz-Krieger algebras \cite{GM}.
Related notions are anticipated in the literature, e.g. \cite{CP1}.
It allows for both higher order elliptic 
operators in classical settings, and provides a method 
for handling some of the difficulties that arise in dynamical examples. 
The main idea here is to relax the requirement that the 
commutators $[D,a]$ be bounded, by only asking for 
a weaker bound relative to $D$.

To introduce the concept we need to ensure that domain issues are
appropriately addressed, and so need some preliminary definitions.

\begin{defn}
Let $B$ be a $C^*$-algebra.
Let $X_B$ be a countably generated right $B$-Hilbert $C^*$-module, 
$T$ a densely defined operator on $X_B$, $D$ a  densely 
defined self-adjoint regular operator on $X_B$ and $\varepsilon>0$. 
We say that $T$ is $\varepsilon$-bounded with respect to 
$D$ if the operators $T(1+D^{2})^{-\frac{1-\varepsilon}{2}}$ 
and $(1+D^{2})^{-\frac{1-\varepsilon}{2}} T$ are densely defined and norm-bounded. 
\end{defn}

\begin{defn} 
Let $B$ be  a $C^*$-algebra. 
Let $X_B$ be a countably generated right 
$B$-Hilbert $C^*$-module. An operator $a\in \End_B^*(X_B)$ 
has $\varepsilon$-\emph{bounded commutators} with 
the self-adjoint regular operator $D:\,\Dom(D)\subset X_B\to X_B$ if 
\begin{enumerate} 
\item $a\Dom(D)\subset \Dom(D) $;
\item $[D,a]$ is $\varepsilon$-bounded with respect to $D$.
\end{enumerate}
In short we say that $[D,a]$ is $\varepsilon$-bounded. 
\end{defn}

\begin{ex}
Let us give a geometric example of $\varepsilon$-bounded commutators to explain the appearance of the parameter $\varepsilon>0$, and the name `higher order spectral triple'. Let $D$ be a self-adjoint elliptic pseudodifferential operator of order $m >0$ acting on a vector bundle $E\to M$ on a closed manifold $M$. 
The Hilbert space is $\H=L^2(M,E)$. The domain of $D$ is the Sobolev space $W^{m,2}(M,E)$. If $a\in C^\infty(M)$, then it is well-known that
$[D,a]$ is a pseudodifferential operator of order $m-1$. Hence $(1+D^2)^{\frac{1-m}{2m}}[D,a]$ and $[D,a](1+D^2)^{\frac{1-m}{2m}}$ are pseudodifferential operators of order $0$, thus bounded on $L^2(M,E)$. 

We conclude that any $a\in C^\infty(M)$ has $1/m$-bounded commutators with $D$. As such, one can consider the reciprocal $\varepsilon^{-1}$ as an ``order" of the operator $D$ appearing in an $\varepsilon$-bounded commutator.
\end{ex}

\begin{rmk}
Somewhat undermining the notion of order,  it is for most purposes not actually necessary for the value of $\varepsilon$ to be the same for all $a\in\A$. We will not carry this level of generality with us.
\end{rmk}

Wahl called these Kasparov modules 
`truly unbounded'. Goffeng-Mesland \cite{GM} 
dubbed them $\varepsilon$-unbounded Kasparov modules, with $\varepsilon$ the analogue of the reciprocal of the order. 
Due to 
examples arising from higher order differential operators, 
we feel that the adjective `higher order'
is most appropriate to describe the notion.

\begin{defn} 
\label{defn:higher-order}
Let $\A$ be a $*$-algebra, $m>0$ and set $\varepsilon=\frac{1}{m}$. An odd 
order $m$ Kasparov module is  a triple $(\A,X_B,D)$ where $X_B$ is a 
countably generated $B$-Hilbert $C^*$-module with a 
$*$-representation of $\A$ and $D$ is a self-adjoint regular operator such that
\begin{enumerate}
\item $a(1+D^{2})^{-\frac{1}{2}}\in \mathbb{K}_B(X_B)$ for all $a\in \A$ and
\item the image $\A$ in $\End^{*}_{B}(X_B)$ is contained in the space
\[\Lip^{\varepsilon}(X_B,D)
:=\{a\in \End^{*}_{B}(X_B):\  a\cdot\Dom(\D)\subset\Dom(\D),\ \ [D,a] \textnormal{ is $\varepsilon$-bounded}\}.\]
\end{enumerate}
If $X_B$ is a $\Z/2\Z$-graded $B$-Hilbert $C^*$-module,
and $(\A,X_B,D)$ is as above with $D$ odd in the grading 
on $X_B$ and $A$ acting as even operators, 
we say that $(\A,X_B,D)$ is an even order $m$ 
spectral triple. Otherwise, we say that $(\A,X_B,D)$ is odd. 
\end{defn}

As above (see Remark \ref{droppingpi}), we treat $\A$ as a subalgebra of $\End_B^*(X_B)$ despite it acting via a possibly non-faithful representation.

\begin{rmk}  
\label{arbiorderrem}
A higher order $m$ Kasparov module for  $m>0$ 
is a higher order $m'$ Kasparov module for $m'>0$ 
whenever $m'\geq m$. If we can take $m=1$ then we talk about an ordinary
unbounded Kasparov module, and if $B=\C$ so that $X_B$ is a Hilbert space
we speak about higher order spectral triples.
\end{rmk}

Apart from higher order (elliptic) differential operators,
higher order spectral triples arise in the construction of the unbounded Kasparov product. The context is typically that of a dynamical system on a metric measure space, for which the dynamics does not preserve the metric nor the measure. The phenomenon has been examined in detail for Cuntz-Krieger algebras \cite[Section 5 and 6]{GM}, Cuntz-Pimsner algebras of vector bundles \cite[Section 4]{GMR}, group $C^{*}$-algebras and boundary crossed products of groups of M\"{o}bius transformations \cite[Section 4]{MS} and Delone sets with finite local complexity \cite[Section 5]{BM}. For later reference in this context, we include the following general construction.

\begin{ex}
\label{raisebyone}
Let $(\A, X_B,D)$ be an ordinary unbounded Kasparov module and $0<s<1$. If the bounded operators $[D,a]$ satisfy the mapping property
\begin{equation}
\label{domaindoncncd}
[D,a]:\Dom(D)\to \Dom(|D|^{s}) \quad\mbox{ for all $a\in \A$},
\end{equation}
then the triple $(\A,X_B,D|D|^{s})$ is a higher order Kasparov module of order $m=\frac{1+s}{s}$. This can be seen by writing $\Dom(D)=(1+|D|^{2+2s})^{-\frac{1-\varepsilon}{2}}X_B$ with $\varepsilon:=\frac{s}{1+s}$. The operator
\begin{equation}
[D|D|^{s},a](1+|D|^{2+2s})^{-\frac{1-\varepsilon}{2}},
\label{eq:formerly-above}
\end{equation}
is bounded because on $\Dom(D)$ it holds that 
$$[D|D|^{s},a]=[|D|^{s}D,a]=|D|^{s}[D,a]+[|D|^{s},a]D,$$ 
and $[|D|^{s},a]$ is bounded for $0<s<1$ whereas $[D,a]:\Dom(D)\to \Dom(|D|^{s})$ by assumption. 

The notion of order carries some degree of arbitrariness due to Remark \ref{arbiorderrem}. In fact, if $D$ was a self-adjoint first order elliptic operator on a closed manifold we would expect the order of $D|D|^s$ to be $1+s$ rather than $\frac{1+s}{s}$. Under additional smoothness assumptions the order of $(\A,X_B,D|D|^{s})$ improves. For simplicity, we assume that $(\A, X_B,D)$ is an ordinary unbounded Kasparov module with $D$ invertible. Let $s\in (-1,1]$ and assume that 
\begin{equation}
\label{domaindoncncdcomm}
[|D|^s,a]|D|^{1-s} \quad\textnormal{and}\quad |D|^s[D,a]|D|^{-s}\quad \mbox{are bounded for all $a\in \A$}.
\end{equation}

These two conditions are indeed satisfied when $D$ is a self-adjoint first order elliptic operator on a closed manifold and are often easy to check in practice. Set $\varepsilon=\frac{1}{1+s}$ so $\frac{1-\varepsilon}{2}=\frac{s}{2+2s}$. Under the conditions \eqref{domaindoncncdcomm}, we can continue the computation 
from Equation \eqref{eq:formerly-above}, obtaining
\begin{align*}
[D|D|^{s},a]&(1+|D|^{2+2s})^{-\frac{1-\varepsilon}{2}}=\left(|D|^{s}[D,a]+[|D|^{s},a]D\right)(1+|D|^{2+2s})^{-\frac{s}{2+2s}}=\\
=&\left(|D|^{s}[D,a]|D|^{-s}+[|D|^{s},a]|D|^{1-s}F \right)|D|^s(1+|D|^{2+2s})^{-\frac{s}{2+2s}}
\end{align*} 
Since $|D|^s(1+|D|^{2+2s})^{-\frac{s}{2+2s}}$ is bounded, the conditions \eqref{domaindoncncdcomm} ensure that $[D,a]$ is $\varepsilon$-bounded for $\varepsilon=\frac{1}{1+s}$. In particular, $(\A,X_B,D|D|^{s})$ is of order $1+s$.
\end{ex}

\subsection{The bounded transform for higher order Kasparov modules}
\label{subsec:bdd-trans}
We now come to the main result about higher 
order Kasparov cycles, concerning the bounded 
transform and the relation to $KK$-theory.
The bounded transform,
\begin{equation}
\label{eq:bdd-trans}
F_D:=D(1+D^2)^{-1/2},
\end{equation}
of a higher order Kasparov module provides a Fredholm module.

\begin{thm}[cf. \cite{BJ}]
\label{epsilonFred} 
The bounded transform $(A,X_B, D(1+D^{2})^{-\frac{1}{2}})$ 
of a higher order Kasparov module $(\A,X_B,D)$ is an $(A,B)$ 
Kasparov module and hence defines a class in $KK_{*}(A,B)$. 
Moreover, for any choice of normalising function $\chi$ it holds that 
$a(D(1+D^{2})^{-\frac{1}{2}}-\chi(D))\in \K_B(X_B)$ 
is $B$-compact for all $a\in A$.
\end{thm}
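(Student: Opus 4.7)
The plan is to verify the three Kasparov module axioms for $F:=D(1+D^{2})^{-1/2}$ following the Baaj-Julg strategy, with the $\varepsilon$-bounded commutator condition replacing the usual bounded-commutator hypothesis. Self-adjointness of $F$ is immediate from the spectral theorem, and $a(F^{2}-1) = -a(1+D^{2})^{-1/2}\cdot (1+D^{2})^{-1/2}$ exhibits $a(F^{2}-1)$ as a $B$-compact operator times a bounded one. The main work is proving $[F,a]\in \K_B(X_B)$ for $a\in\A$.

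To this end I would invoke the integral formula from Lemma \ref{intes},
\[ [F,a]=\frac{1}{\pi}\int_{0}^{\infty}\lambda^{-1/2}[DR_\lambda,a]\,\rd\lambda,\qquad R_\lambda:=(1+D^{2}+\lambda)^{-1}, \]
and establish two things: each integrand is $B$-compact, and the integral converges in operator norm. The first point uses only the Kasparov module compactness hypothesis: the spectral bound $\|(1+D^{2})^{1/2}DR_\lambda\|\leq 1$ from Lemma \ref{intes} makes
\[ aDR_\lambda = a(1+D^{2})^{-1/2}\cdot (1+D^{2})^{1/2}DR_\lambda \]
a $B$-compact operator times a bounded one, so $aDR_\lambda\in\K_B(X_B)$; since $\A$ is a $*$-algebra and $DR_\lambda$ is self-adjoint, $(DR_\lambda a)^{*}=a^{*}DR_\lambda$ is $B$-compact by the same argument, hence so is $[DR_\lambda,a]$.

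The operator-norm convergence is where the $\varepsilon$-bounded commutator condition enters. A short computation on $\Dom D$, using $a\Dom D\subseteq\Dom D$, $R_\lambda D = DR_\lambda$ on $\Dom D$, and $D^{2}R_\lambda = 1-(1+\lambda)R_\lambda$ (the last two from the spectral theorem), yields the identity
\[ [DR_\lambda,a]=(1+\lambda)R_\lambda[D,a]R_\lambda - DR_\lambda[D,a]DR_\lambda \]
as bounded adjointable operators on $X_B$. Setting $L_a := [D,a](1+D^{2})^{-(1-\varepsilon)/2}$, which is bounded by the $\varepsilon$-bounded commutator hypothesis, the spectral estimates
\[ \|(1+D^{2})^{(1-\varepsilon)/2}R_\lambda\|\lesssim (1+\lambda)^{-(1+\varepsilon)/2},\qquad \|(1+D^{2})^{(1-\varepsilon)/2}DR_\lambda\|\lesssim (1+\lambda)^{-\varepsilon/2} \]
bound each of the two pieces by $C(1+\lambda)^{-(1+\varepsilon)/2}$. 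Hence $\lambda^{-1/2}\|[DR_\lambda,a]\|$ is integrable on $(0,\infty)$, and $[F,a]$ is a norm limit of $B$-compact operators, so $B$-compact. Extending by continuity from $\A$ to $A$ completes the verification that $(A,X_B,F)$ is a Kasparov module.

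For the final assertion, $g(x):=x(1+x^{2})^{-1/2}-\chi(x)$ is continuous on the spectrum of $D$ and vanishes at $\pm\infty$. Approximating $g$ uniformly on the spectrum of $D$ by compactly supported continuous $g_n$, the factorization $ag_n(D)=a(1+D^{2})^{-1/2}\cdot (1+D^{2})^{1/2}g_n(D)$ exhibits $ag_n(D)$ as $B$-compact times bounded; passing to the norm limit yields $a(F-\chi(D))\in\K_B(X_B)$ for all $a\in A$. The hardest step is the integrability estimate, which would fail at infinity without the $\varepsilon$-bounded commutator hypothesis.
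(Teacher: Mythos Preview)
Your proof is correct and follows exactly the approach the paper indicates: the Baaj--Julg integral formula \eqref{magicint}, with the $\varepsilon$-bounded commutator condition supplying the extra decay $(1+\lambda)^{-\varepsilon/2}$ needed for integrability at infinity. The paper does not spell out the details (it defers to \cite{Charlotte}, \cite{grensing}, and \cite{GM}), but your commutator identity $[DR_\lambda,a]=(1+\lambda)R_\lambda[D,a]R_\lambda - DR_\lambda[D,a]DR_\lambda$ and the ensuing spectral estimates are the standard ingredients; the only minor remark is that the bound $\|(1+D^{2})^{1/2}DR_\lambda\|\leq 1$ you attribute to Lemma~\ref{intes} is not literally stated there, though it follows immediately from the same spectral calculus.
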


For $0<\varepsilon<1$, this theorem was first proved in \cite[Discussion after Definition 2.4]{Charlotte} and independently in \cite[Lemma $51$]{grensing}. Several years later the result resurfaced in \cite[Appendix]{GM}, where several statements concerning the Kasparov product of higher order modules are discussed. The proof is based on the integral formula \eqref{magicint}, and is essentially identical to the proof for ordinary unbounded Kasparov modules \cite{BJ}.
\begin{ex} 
\label{PVextmod}
 Let $\gamma:S^{1}\to S^{1}$ be a (non-isometric) diffeomorphism, generating an action of $\mathbb{Z}$ on $C(S^{1})$. Consider the $C(S^1)$-Hilbert $C^{*}$-module $X_{C(S^1)}:=\ell^{2}(\mathbb{Z})\otimes C(S^{1})$, and write $e_{n}$ for the standard basis of $\ell^{2}(\mathbb{Z})$. Represent $C(S^{1})$ and $\gamma$ on $X_{C(S^1)}$ via
\[\pi(f)(e_n\otimes \psi):= e_{n}\otimes (f\circ \gamma^{-n})\cdot \psi, \quad \alpha(\gamma^{k})(e_n\otimes \psi):=e_{n+k}\otimes \psi.\]
Then we can view $\alpha:\mathbb{Z}\to \End^{*}_{C(S^{1})}(X_{C(S^1)})$ as a unitary representation and it holds that
 $$\alpha(\gamma^{k})\pi(f)\alpha(\gamma^{-k})=\pi(f\circ\gamma^{k}),\quad k\in\mathbb{Z}$$

and so $(\pi,\alpha)$ form a covariant representation as $C(S^1)$-linear adjointable endomorphisms of $(C(S^{1}),\gamma)$ on $X_{C(S^1)}$. We denote the corresponding representation by $\pi_X:C(S^{1})\rtimes\mathbb{Z}\to \End_{C(S^1)}^*(X_{C(S^1)})$. Define a self-adjoint regular operator $N$ on $X_{C(S^1)}$ by $N(e_{n}\otimes\psi):=ne_{n}\otimes \psi$. Then
\[(1+N^{2})^{-1}\in\mathbb{K}_{C(S^1)}(X_{C(S^1)}),\quad [N,\alpha(\gamma)]=\alpha(\gamma),\quad [N,\pi(f)]=0,\]
and $(C^{\infty}(S^{1})\rtimes^{\alg}_{\gamma}\mathbb{Z}, X_{C(S^1)}, N)$ is an unbounded Kasparov module. Its associated $KK$-class in $KK_1(C(S^{1})\rtimes_{\gamma}\mathbb{Z},C(S^{1}))$ coincides with that of the Pimnser-Voiculescu extension as its bounded transform defines the Toeplitz extension appearing in Cuntz' proof of the Pimsner-Voiculescu sequence \cite{cuntzandktheo}. 

Pick an $s\in (-1,1]$. We use the convention that $0^s=0$. It is easily verified that
$$
[|N|^s,\alpha(\gamma)]=(|N|^s-|N-1|^s)\alpha(\gamma).
$$
It follows that 
$$
[|N|^s,\alpha(\gamma)]|N|^{1-s}
=(|N|^s-|N-1|^s)|N-1|^{1-s}\alpha(\gamma),
$$
which is bounded. Moreover, 
$$
|N|^s[N,\alpha(\gamma)]|N|^{-s}=|N|^s\alpha(\gamma)|N|^{-s}
=|N|^s|N-1|^{-s}\alpha(\gamma),
$$
is also bounded. Therefore, $(C^{\infty}(S^{1})\rtimes^{\alg}_{\gamma}\mathbb{Z}, X_{C(S^1)}, N)$ satisfies \eqref{domaindoncncdcomm}. The argument in Example \ref{raisebyone} shows that
\[(C^{\infty}(S^{1})\rtimes^{\alg}_{\gamma}\mathbb{Z}, X_{C(S^1)}, N_{s}),\quad N_{s}:=N|N|^{s},\quad s\in (-1,1],\]
is an order $(1+s)$ Kasparov module representing the same class as $(C^{\infty}(S^{1})\rtimes^{\alg}_{\gamma}\mathbb{Z}, X_{C(S^1)}, N)$.
\end{ex}

The modification $N_{s}$ in Example \ref{PVextmod} may seem artificial. The remainder of this section is devoted to the discussion of an example that shows that higher order Kasparov modules arise naturally in a dynamical context, in which the operator $N_s$ plays an essential role. Theorem \ref{epsilonFred} above 
allows one to keep track of the index theoretic information in such cases. In particular, in Proposition \ref{PVprod} below, higher order spectral triples provide a solution to a problem where twisted spectral triples do not.

Consider the spectral triple $(C^{\infty}(S^{1}), L^{2}(S^{1}), \partial_{\log})$, with $\partial_{\log}$ defined as in Example \ref{loggedcirclediff}. Then the class 
\[[(C^{\infty}(S^{1}), L^{2}(S^{1}), \partial_{\log})]\in K^{1}(C(S^{1})),\]
generates $K^{1}(C(S^{1}))$. Let $H:=X_{C(S^1)}\otimes_{C(S^{1})}L^{2}(S^{1})\simeq \ell^{2}(\mathbb{Z})\otimes L^{2}(S^{1})$ and  write $\pi:=\pi_{X}\otimes 1$ for the representation of $C(S^{1})\rtimes_{\gamma}\mathbb{Z}$ on $H$ induced by the representation $\pi_{X}$ on $X_{C(S^1)}$ from Example \ref{PVextmod}.

Define a self-adjoint operator $\slashed{D}_{\log}$ on $\H$ by $\slashed{D}_{\log}(e_{n}\otimes\psi):=e_{n}\otimes \partial_{\log}\psi$, for $\psi\in\Dom(\partial_{\log})$ and write $N_{s}$ for the self-adjoint operator $N_{s}\otimes 1$. 

\begin{prop}
\label{PVprod}
Let $\gamma:S^{1}\to S^{1}$ be a (non-isometric) diffeomorphism. For any $0<s\leq 1$ 
\begin{equation*}
\left(C^{\infty}(S^{1})\rtimes^{\alg}_\gamma\mathbb{Z}, H\oplus H,  D:=\begin{pmatrix}0 & N_{s}+i\slashed{D}_{\log} \\ N_{s}-i\slashed{D}_{\log}&0\end{pmatrix}\right),
\end{equation*}
is a higher order spectral triple of order $\frac{1+s}{s}$. It represents the class 
$$
\partial \left[\left(C(S^{1}), L^{2}(S^{1}), i\frac{\d}{\d x}\right)\right]\in K^{0}(C(S^{1})\rtimes_{\gamma}\mathbb{Z}),
$$
where $\partial: K^{1}(C(S^{1}))\to K^{0}(C(S^{1})\rtimes_{\gamma}\mathbb{Z})$ is the boundary map in the Pimsner-Voiculescu exact sequence.
\end{prop}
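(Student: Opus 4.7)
The first step is to establish the basic structural properties of $D$. Since $N_s$ acts only on the $\ell^2(\Z)$ factor of $H\simeq\ell^2(\Z)\otimes L^2(S^1)$ while $\slashed{D}_{\log}$ acts only on the $L^2(S^1)$ factor, the two operators strongly commute; hence $N_s\pm i\slashed{D}_{\log}$ is closed and normal with adjoint $N_s\mp i\slashed{D}_{\log}$, and $D$ is self-adjoint and regular on $H\oplus H$ with $D^2=(N_s^2+\slashed{D}_{\log}^2)\oplus(N_s^2+\slashed{D}_{\log}^2)$. The joint spectrum of $N_s$ and $\slashed{D}_{\log}$ consists of pairs $(n|n|^s,\,\sgn(m)\log|m|)$ with $n,m\in\Z$, so the eigenvalues $n^{2(1+s)}+\log^2|m|$ of $D^2$ have finite multiplicities and tend to infinity. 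Consequently $(1+D^2)^{-1/2}\in\K(H\oplus H)$ and $\pi(a)(1+D^2)^{-1/2}\in\K(H\oplus H)$ for every $a\in C(S^1)\rtimes_\gamma\Z$.

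Next I verify that $[D,\pi(a)]$ is $\varepsilon$-bounded for $\varepsilon=s/(1+s)$, noting that $(1-\varepsilon)/2=1/(2(1+s))$ and $(1+D^2)^{-1/(2(1+s))}$ decays as $|n|^{-1}$ on $e_n\otimes L^2(S^1)$. Writing $D=\sigma_1 N_s-\sigma_2\slashed{D}_{\log}$ with $\sigma_1,\sigma_2$ the anti-diagonal Pauli matrices, I treat the two types of generator separately. For $f\in C^\infty(S^1)$, $[N_s,\pi(f)]=0$, so $[D,\pi(f)]=-\sigma_2[\slashed{D}_{\log},\pi(f)]$, which on $e_n\otimes L^2(S^1)$ acts as $[\partial_{\log},f\circ\gamma^{-n}]$. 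Factorising $\partial_{\log}=\sgn(D_0)\log(1+|D_0|)$ with $D_0=-i\tfrac{\d}{\d x}$, using that $[\sgn(D_0),h]$ is smoothing for smooth $h$ on $S^1$, and applying Lemma~\ref{logbdd} to $\Delta=1+|D_0|$ yields an estimate $\|[\partial_{\log},f\circ\gamma^{-n}]\|\leq C_f\bigl(1+\|(\gamma^{-n})'\|_\infty\bigr)$. Smoothness of $\gamma$ gives at most polynomial growth of $\|(\gamma^{-n})'\|_\infty$ in $n$, which combined with the $|n|^{-1}$-decay of the smoothing factor suffices for $\varepsilon$-boundedness. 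For $\alpha(\gamma^k)$, $[\slashed{D}_{\log},\alpha(\gamma^k)]=0$, so $[D,\alpha(\gamma^k)]=\sigma_1[N_s,\alpha(\gamma^k)]$, and its $\varepsilon$-boundedness with $\varepsilon=s/(1+s)$ follows from conditions \eqref{domaindoncncdcomm} (verified in Example~\ref{PVextmod}) combined with the argument of Example~\ref{raisebyone}.

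For the identification of the $KK$-class, Example~\ref{PVextmod} shows that $(C^\infty(S^1)\rtimes^{\alg}_\gamma\Z,X_{C(S^1)},N_s)$ represents the Pimsner--Voiculescu extension class $[\mathrm{Ext}]\in KK_1(C(S^1)\rtimes_\gamma\Z,C(S^1))$, and Kasparov product with $[\mathrm{Ext}]$ implements the boundary map $\partial$. Theorem~\ref{logdampofregtwisted} implies that $[(C^\infty(S^1),L^2(S^1),\partial_{\log})]=[(C^\infty(S^1),L^2(S^1),i\tfrac{\d}{\d x})]$ in $K^1(C(S^1))$, so it suffices to show that $(C^\infty(S^1)\rtimes^{\alg}_\gamma\Z,H\oplus H,D)$ represents $[\mathrm{Ext}]\otimes_{C(S^1)}[(C^\infty(S^1),L^2(S^1),\partial_{\log})]$. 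The form $D=\sigma_1\otimes N_s-\sigma_2\otimes\slashed{D}_{\log}$ on $H\oplus H\cong(X_{C(S^1)}\otimes_{C(S^1)}L^2(S^1))^{\oplus 2}$ is the standard pairing of two odd unbounded cycles yielding an even one, and I conclude by verifying a higher-order version of Kucerovsky's criterion along the lines of the Appendix of \cite{GM}: the connection condition follows from the fact that creation operators $T_x:L^2(S^1)\to H$, $v\mapsto x\otimes_{C(S^1)} v$, for $x$ in a dense submodule of $X_{C(S^1)}$ stable under $N_s$, have bounded graded commutator with $D$; the positivity condition is immediate from $D^2\geq N_s^2\otimes 1$.

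The main obstacle is the polynomial control of $\|[\partial_{\log},f\circ\gamma^{-n}]\|$ in $n$ needed for the $\varepsilon$-boundedness of the $\pi(f)$-commutators; this relies delicately on both the log-order character of $\partial_{\log}$ and the smoothness of $\gamma$. A secondary point is the appeal to a higher-order Kucerovsky criterion rather than the classical one, although one may alternatively compute the bounded transform via Theorem~\ref{epsilonFred} and match the resulting Fredholm module with a known bounded representative of the PV product, thus bypassing the unbounded product theorem altogether.
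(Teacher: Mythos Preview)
Your structural analysis of $D$ (self-adjointness, compact resolvent) and your treatment of the $\alpha(\gamma^k)$-commutators via Example~\ref{PVextmod} and Example~\ref{raisebyone} are fine and match the paper. The $KK$-class identification is also handled the same way (invoke the higher-order Kasparov product criterion from \cite[Appendix]{GM}).

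The genuine gap is in your estimate of $[\slashed{D}_{\log},\pi(f)]$. Your claim that ``smoothness of $\gamma$ gives at most polynomial growth of $\|(\gamma^{-n})'\|_\infty$ in $n$'' is false for a generic non-isometric diffeomorphism: if $\gamma$ has a hyperbolic fixed point $p$ with $|\gamma'(p)|=\lambda>1$, then $|(\gamma^n)'(p)|=\lambda^n$ grows exponentially. Even polynomial growth of degree $>1$ would be fatal, since the factor $(1+|N|^{2+2s})^{-(1-\varepsilon)/2}$ with $\varepsilon=s/(1+s)$ supplies only $|n|^{-1}$ decay. Your route through Lemma~\ref{logbdd} does give a bound of the form $\|[\partial_{\log},h]\|\lesssim\|[|D_0|,h]\|\sim\|h'\|_\infty$, but applied to $h=f\circ\gamma^{-n}$ this is exactly the quantity that blows up.

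The paper's resolution is to avoid the derivative of the iterate altogether. It writes the multiplication operator $f\circ\gamma^{-n}$ as the conjugate $\pi(\gamma^{\mp n})\pi(f)\pi(\gamma^{\pm n})$ using the \emph{unitary} $\pi(\gamma)$ of Example~\ref{circleexdef}, and then applies the Leibniz rule:
\[
[\partial_{\log},\pi(\gamma^{n})\pi(f)\pi(\gamma^{-n})]
=[\partial_{\log},\pi(\gamma^{n})]\pi(f)\pi(\gamma^{-n})+\pi(\gamma^{n})[\partial_{\log},\pi(f)]\pi(\gamma^{-n})+\pi(\gamma^{n})\pi(f)[\partial_{\log},\pi(\gamma^{-n})].
\]
The crucial input is that $[\partial_{\log},\pi(\gamma)]$ is bounded---this is precisely the content of Example~\ref{loggedcirclediff} (logarithmic dampening of the twisted spectral triple of Example~\ref{circleexdef})---so telescoping gives $\|[\partial_{\log},\pi(\gamma^{\pm n})]\|\leq C|n|$ with $C=\|[\partial_{\log},\pi(\gamma)]\|$. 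Unitarity of $\pi(\gamma)$ then yields the linear bound $2C|n|\|f\|_\infty+\|[\partial_{\log},\pi(f)]\|$, which is exactly what the $|n|^{-1}$ decay absorbs. In short, the derivative of the iterate is the wrong invariant; the right one is the commutator of $\partial_{\log}$ with the unitary implementer, whose boundedness encodes the Lipschitz regularity already established by the logarithmic dampening theorem.
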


\begin{proof}
 It is straightforward to check that $D$ is essentially self-adjoint with compact resolvent, as $D^{2}=\textnormal{diag} ( |N|^{2+2s}+\slashed{D}_{\log}^{2})$ on $C_{c}(\mathbb{Z})\otimes^{\alg}H^{1}(S^{1})\otimes\mathbb{C}^{2}$. Moreover, the hypotheses of \cite[Theorem A.7]{GM} are all trivially satisfied, so that if $D$ defines a higher order spectral triple, it represents the product of the classes $[N_{s}]$ and $[\partial_{\log}]$.

It thus remains to verify $\varepsilon$-bounded commutators. For now we assume that $s\in (-1,1]$ and consider an $\varepsilon\geq 0$. It holds that
\begin{equation}
\label{theeasycomm}
[\slashed{D}_{\log},\alpha(\gamma)]=[N_{s},\pi(f)]=0,
\end{equation}
and these are bounded operators. Since $\Dom(D)=\Dom(N_{s})\cap\Dom(\slashed{D}_{\log})$ the operator 
$$(1+N_{s})^{\frac{1-\varepsilon}{2}}(1+N_{s}^{2}+\slashed{D}_{\log}^{2})^{-\frac{1-\varepsilon}{2}}, $$ 
is bounded for $\varepsilon\in [0,1]$. Thus for any $T$ and $\varepsilon\in [0,1]$, boundedness of $T(1+N_{s}^{2})^{-\frac{1-\varepsilon}{2}}$ implies boundedness of $T(1+N_{s}^{2}+\slashed{D}_{\log}^{2})^{-\frac{1-\varepsilon}{2}}$. 

From Example \ref{PVextmod}, we know that
$$[N_s,\alpha(\gamma)](1+N_s^{2})^{-\frac{1-\varepsilon}{2}}, $$
is a bounded operator for $0\leq \varepsilon\leq \frac{1}{1+s}$. From this fact and Equation \eqref{theeasycomm} we deduce that $[D,\alpha(\gamma)]$ is $\varepsilon$-bounded with respect to $D$ for $0<\varepsilon\leq \min\left(1,\frac{1}{1+s}\right)$.

Lastly we address $[\slashed{D}_{\log},\pi(f)](1+ N^{2}_s)^{-\frac{1-\varepsilon}{2}}$.
Recall the notation $\pi(\gamma^{n})$ from Example \ref{circleexdef}. It holds that 
\begin{equation}
\label{controlthis}
\left\|[\slashed{D}_{\log},\pi(f)](1+ |N|^{2+2s})^{-\frac{1-\varepsilon}{2}}\right\|=\sup_{n\in\mathbb{Z}}\left\| [\partial_{\log},\pi(\gamma^{n})\pi(f)\pi(\gamma^{-n})](1+|n|^{2+2s})^{-\frac{1-\varepsilon}{2}}\right \|.
\end{equation}
From
\begin{align*}
[\partial_{\log},\pi(\gamma^{n})\pi(f)\pi(\gamma^{-n})]&=[\partial_{\log}, \pi(\gamma^{n})]\pi(f)\pi(\gamma^{-n})+\pi(\gamma^{n})[\partial_{\log},\pi(f)]\pi(\gamma^{-n})\\
&\quad \quad\quad +\pi(\gamma^{n})\pi(f)[\partial_{\log},\pi(\gamma^{-n})].
\end{align*}
Since $\|[\partial_{\log},\pi(\gamma^{-n})]\|\leq C|n|$ for $C:=\|[\partial_{\log},\pi(\gamma)]\|$, it follows that 
$$\|[\partial_{\log},\pi(\gamma^{n})\pi(f)\pi(\gamma^{-n})]\|\leq 2C|n|\|\pi(f)\|+\|[\partial_{\log},\pi(f)]\|.$$
The expression 
$$|n|(1+|n|^{2+2s})^{-\frac{1-\varepsilon}{2}},$$
is uniformly bounded in $n$ if and only if $\varepsilon \leq \frac{s}{1+s}$. From this we deduce that the supremum in \eqref{controlthis} is finite if and only if $\varepsilon \leq \frac{s}{1+s}$. By extension, Equation \eqref{theeasycomm} allows us to deduce that $[D,\pi(f)]$ is $\varepsilon$-bounded with respect to $D$ for $0<\varepsilon\leq \frac{s}{1+s}$.

Summarizing, we need $\varepsilon$ to satisfy $\varepsilon \in (0,1]$, $\varepsilon\leq \frac{1}{1+s}$ and $\varepsilon \leq \frac{s}{1+s}$. It is therefore required that $0<s\leq 1$. For $s\in [0,1]$, $\frac{s}{1+s}\leq \frac{1}{1+s}$. These conditions can be condensed to 
$$0<s\leq 1\quad\mbox{and}\quad 0<\varepsilon\leq \frac{s}{1+s}.$$ 
We conclude that \eqref{PVextmod} is a higher order Kasparov module of order $\frac{1+s}{s}$ whenever $s\in (0,1]$. 
\end{proof}
\begin{rmk}It should be noted that these estimates are not necessarily sharp. It can be verified that $[D,\alpha(\gamma)]$ is $\varepsilon$-bounded with respect to $D$ if and only if $0<\varepsilon\leq \min\left(1,\frac{1}{1+s}\right)$. It is however possible that a finer analysis of 
$$
\left\|\big[\partial_{\log},\pi(\gamma^{n})\pi(f)\pi(\gamma^{-n})\big]\left(1+|n|^{2+2s}+\partial_{\rm log}^2\right)^{-\frac{1-\varepsilon}{2}}\right\|
$$ 
would reveal that $[D,\pi(f)]$ is $\varepsilon$-bounded with respect to $D$ for some range of $\varepsilon>\frac{s}{1+s}$.
\end{rmk}
For an abstract $C^{*}$-algebra $A$ with a $*$-automorphism $\gamma$, the boundary map in the Pimsner-Voiculescu exact sequence 
is represented by the unbounded Kasparov module $(A\rtimes_{\gamma}^{\alg}\mathbb{Z}, X_{A}, N)$ (see \cite[Theorem 3.1]{RRS}), where $X_{A}:=\ell^{2}(\mathbb{Z})\otimes A$ and $N$ is the self-adjoint regular operator on $X_{A}$ defined by $N(e_{n}\otimes a):=ne_{n}\otimes a$. As above, $A$ and $\gamma$ are represented on $X_{A}$ via
\[
\pi(a)(e_n\otimes b):= e_{n}\otimes (\gamma^{n}(a))\cdot b, 
\quad \alpha(\gamma^{k})(e_n\otimes b):=e_{n+k}\otimes b.
\]
The operator $N_{s}$ can now be constructed as above, and we obtain the following general result.

\begin{thm}
\label{PVprodgen}
Let $(\A,H_0,D_0)$ be an odd spectral triple and set $F:=\textnormal{sgn}(D_0)=D_0|D_0|^{-1}$ as in Definition \ref{defn:sign}. Assume that $\gamma$ is a $*$-automorphism of $\A$ that is implemented by a unitary $U$ on $H_0$ such that 
$$U:\Dom D_0\to \Dom D_0,\quad [F,U]:H_0\to \Dom(D_0).$$
Let $(A\rtimes_{\gamma}^{\alg}\mathbb{Z}, X_{A}, N)$  be the Kasparov module 
defined by $\gamma$. Set $H:=X_{A}\otimes_{A}H_0=\ell^2(\Z)\otimes H_0$ and  $\slashed{D}_{\log}:=1_{\ell^2(\Z)}\otimes \mathrm{sgnlog}(D_0)$. For any $0<s\leq 1$ 
the data
\begin{equation*}
\left(\A\rtimes^{\alg}_\gamma\mathbb{Z},H\oplus H,  D:=\begin{pmatrix}0 & N_{s}+i\slashed{D}_{\log} \\ N_{s}-i\slashed{D}_{\log}&0\end{pmatrix}\right),
\end{equation*}
defines a higher order spectral triple of order $\frac{1+s}{s}$. It represents the class 
$$
\partial \left[\left(\A,H_0,D_0\right)\right]\in K^{0}(A\rtimes_{\gamma}\mathbb{Z}),
$$
where $\partial: K^{1}(A)\to K^{0}(A\rtimes_{\gamma}\mathbb{Z})$ is the boundary map in the Pimsner-Voiculescu exact sequence.
\end{thm}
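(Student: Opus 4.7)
The plan is to adapt the proof of Proposition \ref{PVprod} to the abstract setting, structured in four steps.

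\textbf{Self-adjointness and compact resolvent.} On the algebraic core $C_c(\Z) \otimes^{\alg} \Dom(D_0) \otimes \C^2$, one has $D^2 = \textnormal{diag}(|N|^{2+2s} + \slashed{D}_{\log}^2)$, a sum of strongly commuting self-adjoint regular operators. Both have compact resolvent (the first because $|N|^{-1}$ is $A$-compact in $X_A$, the second because $D_0$ has compact resolvent in $H_0$ and $\sgnlog$ is proper on $\spec(D_0)$), so $D$ is essentially self-adjoint with $A$-compact resolvent, giving property (1) of Definition \ref{defn:higher-order}.

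\textbf{Vanishing commutators.} Two commutators vanish identically: $[\slashed{D}_{\log}, \alpha(\gamma)] = 0$ since $\slashed{D}_{\log}$ acts as the identity on the $\ell^2(\Z)$ factor while $\alpha(\gamma)$ acts only there; and $[N_s, \pi(a)] = 0$ since $N_s$ is scalar on each summand $e_n \otimes H_0$, which is preserved by $\pi(a)$.

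\textbf{Non-trivial commutators.} The commutator $[N_s, \alpha(\gamma)]$ is $\varepsilon$-bounded for $\varepsilon \leq 1/(1+s)$ by the analysis of Example \ref{PVextmod}, which uses only the action of $N$ and $\alpha$ on $X_A$ and so transfers verbatim to the abstract setting. The commutator $[\slashed{D}_{\log}, \pi(a)]$ restricts on $e_n \otimes v$ to $e_n \otimes [\sgnlog(D_0), \gamma^n(a)]\,v$, and since $(1+D^2)^{-(1-\varepsilon)/2}$ is bounded on $e_n \otimes H_0$ by $(1+|n|^{2+2s})^{-(1-\varepsilon)/2}$, $\varepsilon$-boundedness for $\varepsilon \le s/(1+s)$ will follow from the linear estimate $\|[\sgnlog(D_0), \gamma^n(a)]\| \le C_a(1 + |n|)$. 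Writing $\gamma^n(a) = U^n a U^{-n}$, expanding by the Leibniz rule, and telescoping $[\sgnlog(D_0), U^n] = \sum_{k=0}^{n-1} U^k[\sgnlog(D_0), U]U^{n-1-k}$, this reduces to showing that $[\sgnlog(D_0), U]$ and $[\sgnlog(D_0), a]$ are bounded. The first uses the decomposition $\sgnlog(D_0) = F\log(1+|D_0|)$, which splits the commutator as $F[\log(1+|D_0|), U] + [F, U]\log(1+|D_0|)$; the second summand is controlled using the hypothesis $[F, U]: H_0 \to \Dom(D_0)$ via the adjoint identity $[F, U^*] = -[F, U]^*$ together with the estimate $\log(1+|D_0|) \le C(1 + |D_0|)$. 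The boundedness of $[\sgnlog(D_0), a]$ follows from the spectral triple hypothesis together with Corollary \ref{easycoroflog} applied to $(\A, H_0, D_0)$ with trivial twist.

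\textbf{Identification of the $KK$-class and main obstacle.} Once $D$ is shown to define a higher order spectral triple, \cite[Theorem A.7]{GM} applies (its hypotheses being trivially satisfied as in the proof of Proposition \ref{PVprod}) to identify the class with the Kasparov product $[N_s] \otimes_A [\slashed{D}_{\log}]$; by Theorem \ref{logdampofregtwisted} and \cite[Theorem 3.1]{RRS} this equals $\partial[(\A,H_0,D_0)]$, as claimed. The principal technical obstacle is controlling the summand $F[\log(1+|D_0|), U]$: the logarithm is a non-local function of $D_0$, so boundedness of this commutator does not follow directly from the algebraic hypotheses on $U$. Handling it likely requires either the integral representation $\log(1+|D_0|) = \int_0^1 |D_0|(1+u|D_0|)^{-1}\,du$ combined with control of $[|D_0|, U]$ extracted from $[F, U]$ via the polar decomposition $D_0 = F|D_0|$, or a direct harmonic-analysis estimate exploiting the slow growth of the logarithm, in either case amounting to the crucial regularity input that makes the abstract theorem work.
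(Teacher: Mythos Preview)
Your overall architecture matches the paper's, but there is a genuine gap: you correctly isolate the boundedness of $[\log(1+|D_0|),U]$ as the crux, and then leave it unresolved. The lemma you are missing is Lemma~\ref{logbdd}: if $\Delta$ is positive, invertible and self-adjoint, and $b$ is a bounded self-adjoint operator with $b\,\Dom(\Delta)\subseteq\Dom(\Delta)$, then $[\log\Delta,b]$ is bounded. The proof is precisely the ``harmonic-analysis estimate exploiting the slow growth of the logarithm'' you speculate about: one shows $\Delta^{-z}b\Delta^{z}$ is bounded and holomorphic on a strip via the Hadamard three lines theorem, and differentiates at $z=0$. Applied to $\Delta=|D_0|$ (once $D_0$ is made invertible) and $b=\tfrac{1}{2}(U+U^*)$, $\tfrac{1}{2i}(U-U^*)$, this bounds $[\log|D_0|,U]$ immediately from the sole hypothesis $U\,\Dom D_0\subseteq\Dom D_0$. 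The paper packages this (together with the $[F,U]\to\Dom D_0$ hypothesis for the other summand) as Corollary~\ref{easycoroflog}, after first replacing $D_0$ by the invertible $D_0+p_0$ and checking this perturbation preserves all hypotheses --- a reduction step you omit but which is needed for Lemma~\ref{logbdd} to apply.

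There is also a misattribution in your Step~3: you invoke Corollary~\ref{easycoroflog} for the boundedness of $[\sgnlog(D_0),a]$ with $a\in\A$, but that corollary requires $[F,a]:H_0\to\Dom(D_0)$, which is \emph{not} among the hypotheses for elements of $\A$. The correct source is Theorem~\ref{logtrans}: an ordinary spectral triple is in particular a higher order one, and its logarithmic transform is again a spectral triple. Once both $[\sgnlog(D_0),a]$ and $[\sgnlog(D_0),U]$ are bounded, your telescoping/Leibniz argument gives the linear growth estimate and the rest proceeds as in Proposition~\ref{PVprod}.
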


\begin{proof}
We first replace $D_0$ by $D_0+p_0$ where $p_0$ is the projection onto $\ker D_0$. Since 
\[(D_0+p_0)|D_0+p_0|^{-1}=D_0|D_0|^{-1}+p_0,\, p_0:H_0\to\ker D_0\subset\Dom D_0, \, U:\Dom D_0\to\Dom D_0,\]
it follows that $[p_0,U]:H_0\to \Dom D_0$. Thus $D_0+p_0$ is invertible and satisfies the same assumptions as $D_0$. Furthermore
\begin{align*}
\sgnlog(D_0+p_0)&=(D_0|D_0|^{-1}+p_0)\log(1+|D_0|+p_0)\\&=D_0|D_0|^{-1}\log(1+|D_0|+p_0)+p_0\log(1+|D_0|+p_0),
\end{align*}
and $p_0\log(1+|D_0|+p_0)$ is bounded. Since $p_0$ commutes with $D_0$ also
\begin{align*}
(\log(1+|D_0|+p_0)-\log(1+|D_0|))&=\log\left((1+|D_0|+p_0)(1+|D_0|)^{-1}\right)\\
&=\log(1+p_0(1+|D_0|)^{-1}),\end{align*}
is a bounded operator. Thus $\sgnlog(D_0+p_0)-\sgnlog(D_0)$ is bounded and it suffices to prove the theorem for invertible $D_0$.
The assumptions on $U$ and $[F,U]$ and Corollary \ref{easycoroflog} (see page \pageref{easycoroflog}) guarantee that we obtain a spectral triple $(\A\rtimes^{\alg}_\gamma\mathbb{Z},H_0,\sgnlog(D_0))$ when letting $\A\rtimes^{\alg}_\gamma\mathbb{Z}$ act on $H_0$ via $U$. The theorem is then proved ad verbatim to Proposition \ref{PVprod}.
\end{proof}

We note that we have used both the logarithmic dampening of $\partial$ (or $D_0$) and the higher order picture in order to achieve the last two results. 
If $[D_0,U]$ is bounded (i.e.$\gamma$ is isometric in the case of Proposition \ref{PVprod}), one can replace $N|N|^{s}$ by $N$ and $\slashed{D}_{\log}$ by $\slashed{D}=1\otimes D_0$.
This results in an ordinary spectral triple, as was studied in detail in \cite{BMR}. In general the operator $\slashed{D}$ gives rise to bounded twisted commutators and the operator $N$ to bounded commutators,
and this prevents their sum giving either a spectral triple or 
a twisted spectral triple. 
\begin{rmk}
A version of Theorem \ref{PVprodgen} for more general Pimsner exact sequences using the unbounded representative of the Cuntz-Pimsner extension constructed in \cite{GMR}
would be of great interest. Some further technical problems need to be addressed, centering around the construction of connections, which in the above cases could be chosen trivial and have thus been omitted from the discussion. These issues are present already for generalised crossed products and justify a separate study.
\end{rmk}
\subsection{Logarithmic dampening of higher order Kasparov modules}

From the perspective of index theory, the homotopy class of the bounded transform $F_{D}$ contains the relevant information. 
From a purely topological point of view, the unbounded representative of 
the $KK$-class of $F_D$ is largely irrelevant and somewhat arbitrary.
In this section we show that the logarithm function can be used to turn any higher order Kasparov module into an order $1$ (i.e. ordinary) Kasparov module, \emph{with $B$-compact commutators}. This logarithmic dampening supplies us with a class of unbounded Kasparov modules that are analytically very close to bounded Fredholm modules.

We first present a lemma concerning the integral representation of the logarithm function. Recall that given a $C^*$-subalgebra $A\subset \End^{*}_{B}(E)$ an operator $K\in\End^{*}_{B}(E)$ is $A$-{\bf locally $B$-compact} if $aK,Ka\in\mathbb{K}_B(E)$.

\begin{lemma} 
\label{triv}
Let $K$ be positive and $A$-locally $B$-compact. Then $\log(1+K)$ is $A$-locally $B$-compact.
\end{lemma}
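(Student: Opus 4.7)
The plan is to realize $\log(1+K)$ as a norm-convergent operator-valued integral whose integrand manifestly inherits $B$-compactness from $aK$ (resp.\ $Ka$). Specifically, I would start from the scalar identity
\[
\log(1+x)=\int_0^\infty\Bigl(\frac{1}{1+\lambda}-\frac{1}{1+\lambda+x}\Bigr)\rd\lambda=\int_0^\infty\frac{x}{(1+\lambda)(1+\lambda+x)}\rd\lambda,\qquad x\geq 0,
\]
and transport it to the functional calculus of the bounded positive operator $K\in\End_B^*(E)$ to obtain
\[
\log(1+K)=\int_0^\infty \frac{1}{1+\lambda}\, K(1+\lambda+K)^{-1}\rd\lambda.
\]

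Next I would verify that this integral converges in operator norm. Since $K\geq 0$ and $K$ is bounded, $\|K(1+\lambda+K)^{-1}\|\leq \min(\|K\|,1+\lambda)/(1+\lambda)$, so the integrand has norm at most $\|K\|/(1+\lambda)^2$ for $\lambda\geq \|K\|$, and at most $1/(1+\lambda)$ for $\lambda\in[0,\|K\|]$; both estimates together give an integrable majorant.

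The key step is then to multiply by $a\in A$ and push $a$ through the integrand. On the left,
\[
a\log(1+K)=\int_0^\infty\frac{1}{1+\lambda}\,(aK)(1+\lambda+K)^{-1}\rd\lambda,
\]
and since $aK\in \mathbb{K}_B(E)$ by hypothesis, each $(aK)(1+\lambda+K)^{-1}$ lies in $\mathbb{K}_B(E)$, with norm bounded by $\|aK\|/(1+\lambda)^2$ by the same positivity estimate (using that $\|(1+\lambda+K)^{-1}\|\leq 1/(1+\lambda)$). Norm-closedness of $\mathbb{K}_B(E)$ together with absolute integrability of the bound then yields $a\log(1+K)\in\mathbb{K}_B(E)$. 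For the right action, I would use that $K$ and $(1+\lambda+K)^{-1}$ commute to rewrite
\[
\log(1+K)a=\int_0^\infty\frac{1}{1+\lambda}\,(1+\lambda+K)^{-1}(Ka)\rd\lambda,
\]
and conclude by the symmetric argument, since $Ka\in\mathbb{K}_B(E)$.

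No step is really an obstacle; the only point requiring minor care is that the scalar integrand $\tfrac{1}{1+\lambda}-\tfrac{1}{1+\lambda+x}$ must be combined into the single fraction $\tfrac{x}{(1+\lambda)(1+\lambda+x)}$ before estimating, because neither of the two individual terms gives a convergent integral on its own. Once this combined form is in hand, the $B$-compactness of $aK$ (resp.\ $Ka$) propagates through the norm-convergent integral in the standard way.
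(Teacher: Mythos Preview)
Your argument is correct and follows essentially the same approach as the paper: express $\log(1+K)$ as a norm-convergent integral in which $K$ appears as an explicit factor, then use that $aK,Ka\in\mathbb{K}_B(E)$ together with norm-closedness of the compacts. The paper uses the slightly simpler representation $\log(1+K)=K\int_0^1 (1+tK)^{-1}\,\rd t$ over the compact interval $[0,1]$, which avoids the decay estimates you need at infinity, but the content is the same.
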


\begin{proof} 
The operator norm convergent integral expression
\begin{align*}
\log(1+K)=K\int_{0}^{1}(1+tK)^{-1}dt,
\end{align*}
immediately gives the statement.
\end{proof}

\begin{lemma}
\label{weaklip}
Let $0\leq \alpha<\frac{1}{2}$, $a\Dom(D)\subset\Dom(\D)$ and $[D,a](1+D^{2})^{-\alpha}$ be bounded. Then for all $\beta$ with $\alpha<\beta<1$, $D[(1+D^{2})^{-\beta},a]$ is bounded.
\end{lemma}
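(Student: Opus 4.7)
The plan is to work via the integral representation in Lemma~\ref{intes}. Since $0<\beta<1$ we may write
$$D[(1+D^2)^{-\beta},a]=\frac{\sin(\beta\pi)}{\pi}\int_{0}^{\infty}\lambda^{-\beta}\,D\bigl[(1+D^{2}+\lambda)^{-1},a\bigr]\,\rd\lambda,$$
so it suffices to produce a $\lambda$-integrable norm bound on the integrand.

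For the inner commutator I would apply the resolvent identity $[(1+D^2+\lambda)^{-1},a]=-(1+D^2+\lambda)^{-1}[D^2,a](1+D^2+\lambda)^{-1}$ together with $[D^2,a]=D[D,a]+[D,a]D$. Inserting this and moving the outer $D$ inside yields
$$D[(1+D^2+\lambda)^{-1},a]=-D^2(1+D^2+\lambda)^{-1}[D,a](1+D^2+\lambda)^{-1}-D(1+D^2+\lambda)^{-1}[D,a]D(1+D^2+\lambda)^{-1}.$$

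I would then estimate each factor via functional calculus, writing $[D,a]=b\,(1+D^2)^\alpha$ for a bounded $b$. Lemma~\ref{intes} together with elementary spectral estimates gives $\|D^2(1+D^2+\lambda)^{-1}\|\le 1$, $\|D(1+D^2+\lambda)^{-1}\|\le(1+\lambda)^{-1/2}$, $\|(1+D^2)^\alpha(1+D^2+\lambda)^{-1}\|\le C(1+\lambda)^{\alpha-1}$, and $\|(1+D^2)^\alpha D(1+D^2+\lambda)^{-1}\|\le C(1+\lambda)^{\alpha-1/2}$; each of the two displayed terms is therefore $O((1+\lambda)^{\alpha-1})$. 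The hypothesis $\alpha<\tfrac12$ is used precisely here, since $\sup_{x\in\R}(1+x^2)^{\alpha+1/2}/(1+x^2+\lambda)$ has the claimed decay only when $\alpha+\tfrac12<1$.

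Consequently the integrand is bounded in operator norm by $C\lambda^{-\beta}(1+\lambda)^{\alpha-1}$, which is integrable near $0$ because $\beta<1$ and near $\infty$ because $\alpha<\beta$; the integral is then norm convergent and the lemma follows. The main technical obstacle is to justify the resolvent identity rigorously: the hypotheses only give that $a$ preserves $\Dom(D)$ and that $[D,a](1+D^{2})^{-\alpha}$ extends boundedly, neither of which guarantees $a\,\Dom(D^2)\subset\Dom(D^2)$, so $[D^2,a]$ is not immediately meaningful. I would handle this by first verifying the identity on the dense subspace $(1+D^{2}+\lambda)^{-1}\Dom(D)$, using only that $a$ preserves $\Dom(D)$ and that $D$ commutes with $(1+D^{2}+\lambda)^{-1}$ on this subspace, and then closing up; an alternative is to regularise $a$ by $a_{\epsilon}:=(1+\epsilon D^2)^{-1}a(1+\epsilon D^2)^{-1}$, which honestly preserves $\Dom(D^2)$, derive the bound for $a_{\epsilon}$ with constants uniform in $\epsilon$, and let $\epsilon\to 0^+$.
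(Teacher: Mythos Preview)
Your proposal is correct and follows the same route as the paper: the paper likewise writes $(1+D^2)^{-\beta}$ via the integral formula \eqref{magicint}, applies the resolvent identity to obtain the expression $D(1+\lambda+D^2)^{-1}(D[D,a]+[D,a]D)(1+\lambda+D^2)^{-1}$, and observes that each term is $O(\lambda^{-\beta}(1+\lambda)^{\alpha-1})$, hence integrable for $\alpha<\beta<1$. Your write-up is in fact more careful than the paper's, which does not discuss the domain issue you raise in the last paragraph.
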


\begin{proof}
We use the integral formula \eqref{magicint}
\begin{align*}
\int_{0}^{\infty} D\lambda^{-\beta}(1+\lambda+D^{2})^{-1}(D[D,a]+[D,a]D)(1+\lambda +D^{2})^{-1}\rd \lambda
\end{align*}
and we see that the two terms behave like $\lambda^{-\beta}(1+\lambda)^{-1+\alpha}$ and are therefore integrable at $0$ and $\infty$. 
\end{proof}

\begin{lemma}
\label{absred}
Let $0\leq \alpha<\frac{1}{2}$, $a\Dom(D)\subset\Dom(\D)$ and $[D,a](1+D^{2})^{-\alpha}$ be bounded. Then for $\beta>\alpha$ the operator $[(1+D^{2})^{\frac{1}{2}-\beta},a]$ is bounded.
\end{lemma}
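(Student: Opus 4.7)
My strategy mirrors that of Lemma \ref{weaklip}: represent $(1+D^2)^{1/2-\beta}$ by an integral in $\lambda$, commute $a$ past the resolvents $(1+D^2+\lambda)^{-1}$ via the resolvent identity, and estimate the resulting integrand using the inequalities from Lemma \ref{intes}. The argument splits according to the sign of $1/2-\beta$.

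If $\beta\in[1/2,3/2)$, set $r=\beta-1/2\in[0,1)$ and use the formula
\[
(1+D^2)^{-r}=\frac{\sin(r\pi)}{\pi}\int_0^\infty \lambda^{-r}(1+D^2+\lambda)^{-1}\,\d\lambda
\]
from Lemma \ref{intes}. The resolvent identity together with $[D^2,a]=D[D,a]+[D,a]D$ yields
\[
[(1+D^2)^{-r},a]=-\frac{\sin(r\pi)}{\pi}\int_0^\infty \lambda^{-r}(1+D^2+\lambda)^{-1}\bigl(D[D,a]+[D,a]D\bigr)(1+D^2+\lambda)^{-1}\,\d\lambda.
\]
For $\alpha<\beta<1/2$ the operator $(1+D^2)^{1/2-\beta}$ is unbounded, so I instead employ the positive-power integral
\[
(1+D^2)^{s}=\frac{\sin(s\pi)}{\pi}\int_0^\infty \lambda^{s-1}(1+D^2)(1+D^2+\lambda)^{-1}\,\d\lambda,
\]
with $s=1/2-\beta\in(0,1/2)$, interpreted strongly on $\Dom((1+D^2)^{1/2-\beta})$. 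Rewriting $(1+D^2)(1+D^2+\lambda)^{-1}=1-\lambda(1+D^2+\lambda)^{-1}$ and using $[1,a]=0$, the commutator collapses to the same shape
\[
[(1+D^2)^{s},a]=\frac{\sin(s\pi)}{\pi}\int_0^\infty \lambda^{s}(1+D^2+\lambda)^{-1}\bigl(D[D,a]+[D,a]D\bigr)(1+D^2+\lambda)^{-1}\,\d\lambda,
\]
with weight $\lambda^{s}=\lambda^{1/2-\beta}$, matching the first case up to sign.

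In either case I then bound the integrand uniformly in $\lambda$. By factoring $[D,a]=[D,a](1+D^2)^{-\alpha}\cdot(1+D^2)^\alpha$ on one side and $(1+D^2)^\alpha\cdot(1+D^2)^{-\alpha}[D,a]$ on the other (the boundedness of $(1+D^2)^{-\alpha}[D,a]$ follows by applying the hypothesis to $a^*$ and taking adjoints, which is legitimate in the $*$-algebra context), the estimates $\|D(1+D^2+\lambda)^{-1/2}\|\le1$ and $\|(1+D^2)^\alpha(1+D^2+\lambda)^{-1}\|\le(1+\lambda)^{\alpha-1}$ from Lemma \ref{intes} bound each of the two summands in the integrand by $C(1+\lambda)^{\alpha-3/2}$. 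Hence the $\lambda$-integral is dominated by
\[
\int_0^\infty \lambda^{1/2-\beta}(1+\lambda)^{\alpha-3/2}\,\d\lambda,
\]
which converges at $\lambda=0$ precisely when $\beta<3/2$ and at $\lambda=\infty$ precisely when $\beta>\alpha$. This yields boundedness of $[(1+D^2)^{1/2-\beta},a]$ for $\alpha<\beta<3/2$; the range $\beta\ge3/2$ follows by iterating with the factorisation $(1+D^2)^{1/2-\beta}=(1+D^2)^{-1}(1+D^2)^{3/2-\beta}$ and the Leibniz rule.

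The principal technical obstacle is justifying the formal commutator manipulations on an appropriate core. In the case $\alpha<\beta<1/2$ the operator $(1+D^2)^{1/2-\beta}$ is genuinely unbounded, so $[(1+D^2)^{1/2-\beta},a]$ is a priori only defined on those $x\in\Dom((1+D^2)^{1/2-\beta})$ for which $ax$ is also in the domain. One must perform the calculation on the core $\Dom(D^2)$, where the resolvent identity is valid and the positive-power integral converges strongly, and then deduce from the uniform norm bound on the integrand (together with the local-global principle if needed) that the commutator extends to a bounded adjointable operator on $X_B$.
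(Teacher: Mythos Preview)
Your argument is correct and covers the full range $\beta>\alpha$ stated in the lemma. The paper takes a slightly different, more economical route in the key range $\alpha<\beta<\tfrac12$ (which is the only range actually used later, in Lemma~\ref{relcpt} and Theorem~\ref{logtrans}): instead of invoking a positive-power integral formula, the paper observes that
\[
[(1+D^{2})^{1/2-\beta},a]
=-(1+D^{2})^{1/2-\beta}\,[(1+D^{2})^{-(1/2-\beta)},a]\,(1+D^{2})^{1/2-\beta},
\]
and then applies the \emph{negative}-power formula of Lemma~\ref{intes} to $(1+D^{2})^{-(1/2-\beta)}$. This yields a single integral with weight $\lambda^{-1/2+\beta}$ and extra factors $(1+D^{2})^{1/2-\beta}$ on each side, whose integrand is estimated by $\lambda^{-1/2+\beta}(1+\lambda)^{-\beta}(1+\lambda)^{\alpha-1/2-\beta}$. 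The advantage of the paper's conjugation trick is that it avoids the case split and the positive-power formula (which is not stated in the paper); the advantage of your approach is that it treats all $\beta>\alpha$ uniformly, including $\beta\geq\tfrac12$, whereas the paper's proof as written only applies when $0<\tfrac12-\beta<1$.

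One small remark: you do not actually need the left-sided bound on $(1+D^{2})^{-\alpha}[D,a]$. For the term $(1+D^{2}+\lambda)^{-1}[D,a]D(1+D^{2}+\lambda)^{-1}$ you can equally well factor $[D,a]D(1+D^{2}+\lambda)^{-1}=[D,a](1+D^{2})^{-\alpha}\cdot (1+D^{2})^{\alpha}D(1+D^{2}+\lambda)^{-1}$ and estimate $\|(1+D^{2})^{\alpha}D(1+D^{2}+\lambda)^{-1}\|\leq C(1+\lambda)^{\alpha-1/2}$ together with $\|(1+D^{2}+\lambda)^{-1}\|\leq(1+\lambda)^{-1}$, giving the same $(1+\lambda)^{\alpha-3/2}$ bound without appealing to $a^{*}$.
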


\begin{proof}
We prove that $(1+D^{2})^{1/2-\beta}[(1+D^{2})^{-\frac{1}{2}+\beta},a](1+D^{2})^{\frac{1}{2}-\beta}$ is bounded using the integral formula. Expansion gives us
\begin{align*}
\int_{0}^{\infty} \lambda^{-1/2+\beta}(1+D^{2})^{\frac{1}{2}-\beta}(1+\lambda+D^{2})^{-1}(D[D,a]+[D,a]D)(1+\lambda +D^{2})^{-1}(1+D^{2})^{\frac{1}{2}-\beta}\rd \lambda,
\end{align*}
and both  terms behave like $\lambda^{-\frac{1}{2}+\beta}(1+\lambda)^{-\beta}(1+\lambda)^{-\frac{1}{2}-\beta+\alpha}$, so are integrable at $0$ and $\infty$.
\end{proof}

\begin{lemma}
\label{relcpt}
Let $0\leq \alpha<\frac{1}{2}$, $a\Dom(D)\subset\Dom(\D)$ and $[D,a](1+D^{2})^{-\alpha}$ be bounded. Then for $\alpha<\beta<\frac{1}{2}$ and $a,b\in\mathcal{A}$ the operator $D(1+D^{2})^{-\beta}[(1+D^{2})^{-\frac{1}{2}+\beta}\log(1+(1+D^{2})^{\frac{1}{2}-\beta}),a]b$ is $B$-compact.
\end{lemma}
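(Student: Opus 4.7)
The plan is to combine integral representations of $S$ and $[T,a]$ into a single norm-convergent double integral whose integrand is manifestly $B$-compact. Set $\gamma:=\tfrac{1}{2}-\beta$, $T:=(1+D^2)^{\gamma}$, $F:=D(1+D^2)^{-1/2}$ and $G_\lambda:=(1+D^2+\lambda)^{-1}$. The elementary identity $\log(1+x)/x=\int_0^1(1+tx)^{-1}\,\d t$ gives $S=\int_0^1(1+tT)^{-1}\,\d t$, hence
\[
[S,a]=-\int_0^1 t(1+tT)^{-1}[T,a](1+tT)^{-1}\,\d t.
\]
As functions of $D$ one has $D(1+D^2)^{-\beta}(1+tT)^{-1}=F\cdot T(1+tT)^{-1}$, so
\[
D(1+D^2)^{-\beta}[S,a]b=-\int_0^1 tF\,T(1+tT)^{-1}[T,a](1+tT)^{-1}b\,\d t.
\]

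Next, by the same integral formula of Lemma \ref{intes} that drives the proof of Lemma \ref{absred}, the commutator $[T,a]$ admits the norm-convergent representation
\[
[T,a]=\frac{\sin(\gamma\pi)}{\pi}\int_0^\infty \lambda^{\gamma} G_\lambda(D[D,a]+[D,a]D)G_\lambda\,\d\lambda,
\]
whose convergence at infinity requires $\gamma+\alpha<\tfrac{1}{2}$, i.e.\ $\beta>\alpha$. Substituting yields a double integral for $D(1+D^2)^{-\beta}[S,a]b$. Because $F$, $T$, $(1+tT)^{-1}$, $G_\lambda$ and $D$ are mutually commuting functions of $D$, the integrand can be rearranged so that $[D,a]$ is sandwiched by a bounded function of $D$ on the left and a $B$-compact operator involving $b$ on the right.

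The relevant compactness facts are as follows: $b\in\A$ and $b^*\in\A$ give $b(1+D^2)^{-1/2},\,(1+D^2)^{-1/2}b\in\K_B(X_B)$; hence $G_\lambda b=(b^*G_\lambda)^*$ is $B$-compact, and the factorisation $DG_\lambda=F\cdot(1+D^2)^{1/2}G_\lambda=Fg(D)(1+D^2)^{-1/2}$ for a bounded function $g$ yields $DG_\lambda b\in\K_B(X_B)$. Multiplying on the left by the commuting bounded operator $(1+tT)^{-1}$ preserves compactness, so each integrand has the form (bounded function of $D$)$\cdot[D,a]\cdot$(compact), and is itself $B$-compact.

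For norm control, Lemma \ref{intes} gives $\|DG_\lambda\|\leq(1+\lambda)^{-1/2}$ and $\|(1+D^2)^\alpha G_\lambda\|\leq C(1+\lambda)^{\alpha-1}$, so with the hypothesis $[D,a](1+D^2)^{-\alpha}$ bounded one obtains
\[
\|G_\lambda(D[D,a]+[D,a]D)G_\lambda\|\leq C'(1+\lambda)^{\alpha-3/2}.
\]
Together with the uniform bound $t\|FT(1+tT)^{-1}\|\leq 1$, the integrand norm is dominated by $C''\lambda^{\gamma}(1+\lambda)^{\alpha-3/2}\|b\|$, integrable on $(0,\infty)$ precisely when $\gamma+\alpha<\tfrac{1}{2}$, i.e.\ $\beta>\alpha$. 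The double integral thus converges in operator norm to a $B$-compact operator, which proves the lemma. The main obstacle is the careful bookkeeping required to rearrange commuting functions of $D$ around the unbounded factor $[D,a]$ while isolating a single compact factor (such as $G_\lambda b$ or $DG_\lambda b$) on which the integrability argument depends.
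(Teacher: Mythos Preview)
Your argument is correct and shares its backbone with the paper's proof: both use the representation $S=\int_0^1(1+tT)^{-1}\,\d t$ to write $D(1+D^2)^{-\beta}[S,a]b$ as an integral whose integrand carries the factor $[T,a]$ flanked by $(1+tT)^{-1}$'s, with compactness coming from the $b$-factor on the right. The difference is in how $[T,a]$ is handled. The paper simply invokes Lemma~\ref{absred} to conclude that $[(1+D^2)^{\frac{1}{2}-\beta},a]$ is bounded, then observes that $t\,T(1+tT)^{-1}$ is uniformly bounded and $(1+tT)^{-1}b$ is $B$-compact for $t\in(0,1]$ (as $(1+tT)^{-1}$ is a $C_0$-function of $D$), so a single uniformly bounded integral of compacts does the job. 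You instead unfold $[T,a]$ via the fractional-power integral, producing a double integral and tracking compactness through the resolvent factor $G_\lambda b$.

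Your route is self-contained and makes the role of the hypothesis $\alpha<\beta$ very explicit through the convergence condition $\gamma+\alpha<\tfrac12$, but it is longer than necessary: the second integral merely re-derives, inside the proof, the boundedness of $[T,a]$ that Lemma~\ref{absred} already supplies. One small point worth tightening in your write-up: when you say the integrand has the form ``(bounded)$\cdot[D,a]\cdot$(compact)'', you should make explicit that the $(1+D^2)^{-\alpha}$ needed to tame $[D,a]$ is absorbed into the neighbouring resolvent factor (e.g.\ $(1+D^2)^{\alpha}G_\lambda$ is still bounded and, multiplied by $(1+tT)^{-1}b$ or $G_\lambda^{1/2}b$, still $B$-compact), since $[D,a]$ itself is unbounded.
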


\begin{proof} 

Since $D(1+D^{2})^{-\beta}$ behaves like $(1+D^{2})^{\frac{1}{2}-\beta}$ we consider
\[
(1+D^{2})^{-\frac{1}{2}+\beta}  \log(1+(1+D^{2})^{\frac{1}{2}-\beta})=\int_{0}^{1}(1+t(1+D^{2})^{\frac{1}{2}-\beta})\,\rd t,
\]
and expand the expression
\begin{align}
\label{logexpansion}
(1+&D^{2})^{\frac{1}{2}-\beta}[(1+D^{2})^{-\frac{1}{2}+\beta}\log(1+(1+D^{2})^{\frac{1}{2}-\beta}),a]b\\
&=\int_{0}^{1} (1+D^{2})^{\frac{1}{2}-\beta}(1+t(1+D^{2}))^{-\frac{1}{2}+\beta}t[(1+D^{2})^{\frac{1}{2}-\beta},a](1+t(1+D^{2}))^{-\frac{1}{2}+\beta}b\,\rd t.
\end{align}

By Lemma \ref{absred} $[(1+D^{2})^{\frac{1}{2}-\beta},a]$ 
is a bounded operator and the operators
\[
t(1+D^{2})^{\frac{1}{2}-\beta}(1+t(1+D^{2}))^{-\frac{1}{2}+\beta}, \quad (1+t(1+D^{2}))^{-\frac{1}{2}+\beta}b,
\]
are uniformly bounded in $t$, and the latter operator is $B$-compact for $t\in (0,1]$. Thus the left hand side of \eqref{logexpansion} is the integral of a uniformly bounded function with values in the $B$-compact operators, hence it is $B$-compact.
\end{proof}

\begin{defn} 
For a higher order Kasparov module $(\mathcal{A}, X_B,D)$ and $0<\beta<\frac{1}{2}$ we define its $\beta$-\emph{logarithmic transform} $(\mathcal{A},X_B, D_{\beta,\log})$  by the self-adjoint regular operator
\[D_{\beta, \log}:=D(1+D^{2})^{-\frac{1}{2}}\log(1+(1+D^{2})^{\frac{1}{2}-\beta}).\]
\end{defn}

Recall our notation $D_{\log}:=\sgnlog(D)=D|D|^{-1}\log(1+|D|)$. We now arrive at the main result of this section.

\begin{thm} 
\label{logtrans}
Let $(\mathcal{A},X_B,D)$ be a higher order Kasparov module. Then the logarithmic transform $(\mathcal{A},X_B,D_{\log})$ is an ordinary unbounded Kasparov module representing the same $KK$-class. Moreover the commutators $[D_{\log},ab]$ are $B$-compact for $a,b\in\mathcal{A}$.
\end{thm}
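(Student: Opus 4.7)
The plan is to reduce the theorem to an auxiliary operator $D_{\beta,\log}$ for a suitable choice of $\beta$, and then apply the preparatory lemmas. Set $\alpha := (1-\varepsilon)/2 \in [0,1/2)$ where $\varepsilon$ controls the order of the higher-order Kasparov module, and pick any $\beta$ with $\alpha < \beta < 1/2$. A direct asymptotic computation shows that
$$f(x) := \sgnlog(x) - \frac{1}{1-2\beta}\, x(1+x^2)^{-1/2}\log\bigl(1+(1+x^2)^{1/2-\beta}\bigr)$$
defines a function in $C_0(\mathbb{R})$: both $\sgnlog(x)$ and the subtracted term are asymptotic to $\sgn(x)\log|x|$ as $|x|\to\infty$, and both vanish at $x=0$. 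Thus $D_{\log} = (1-2\beta)^{-1}D_{\beta,\log} + f(D)$ with $f(D)$ bounded. Moreover, since $a(1+D^2)^{-1/2}\in \K_B(X_B)$ for $a\in\mathcal{A}$, the usual approximation argument (using that $\{g\in C_b(\mathbb{R}) : a g(D)\in \K_B\}$ is norm-closed and contains $(1+x^2)^{-1/2}$) gives $ag(D), g(D)a\in \K_B(X_B)$ for every $g\in C_0(\mathbb{R})$ and $a\in\mathcal{A}$; in particular $[f(D), ab] = f(D)(ab) - (ab)f(D)$ is $B$-compact for $a,b\in\mathcal{A}$.

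Next I would verify the desired properties for $D_{\beta,\log}$ through the factorisation $D_{\beta,\log} = P_\beta Q_\beta$ where $P_\beta := D(1+D^2)^{-\beta}$ and $Q_\beta := (1+D^2)^{-1/2+\beta}\log(1+(1+D^2)^{1/2-\beta})$. These commute as functions of $D$, and $Q_\beta$ is bounded (a $C_0$-function of $D$, since $-1/2+\beta < 0$ dominates the logarithmic growth). By the Leibniz rule,
$$[D_{\beta,\log}, a] = [P_\beta, a]\, Q_\beta + P_\beta\, [Q_\beta, a].$$
The first summand is bounded: the decomposition $[P_\beta, a] = [D,a](1+D^2)^{-\beta} + D[(1+D^2)^{-\beta}, a]$ has bounded pieces, by $\varepsilon$-boundedness combined with $\beta > \alpha$ for the first piece and by Lemma \ref{weaklip} for the second, while $Q_\beta$ is bounded. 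The second summand is bounded by writing $P_\beta = F(1+D^2)^{1/2-\beta}$ with $F := D(1+D^2)^{-1/2}$ the bounded phase, and invoking the norm-convergent integral representation of $Q_\beta$ underlying the proof of Lemma \ref{relcpt}: its integrand is uniformly bounded in $t\in[0,1]$ (by Lemma \ref{absred}), so $(1+D^2)^{1/2-\beta}[Q_\beta, a]$ is bounded, and hence so is $P_\beta[Q_\beta, a]$. For $B$-compactness of $[D_{\beta,\log}, a]\,b$, the summand $[P_\beta, a]\,Q_\beta\,b$ equals a bounded operator times $Q_\beta b = (b^* Q_\beta)^*$, and $b^* Q_\beta \in \K_B(X_B)$ by the $C_0$-calculus of the first paragraph; the summand $P_\beta[Q_\beta, a]\,b$ is $B$-compact precisely by Lemma \ref{relcpt}. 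Taking adjoints shows $a[D_{\beta,\log}, b]\in\K_B(X_B)$, so $[D_{\beta,\log}, ab]$ is $B$-compact.

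Combining the previous two paragraphs yields $[D_{\log}, a] = (1-2\beta)^{-1}[D_{\beta,\log}, a] + [f(D), a]$ bounded and $[D_{\log}, ab] = (1-2\beta)^{-1}[D_{\beta,\log}, ab] + [f(D), ab]$ $B$-compact. Since $(1+D_{\log}^2)^{-1/2}$ is a $C_0(\mathbb{R})$ function of $D$, $a(1+D_{\log}^2)^{-1/2}\in\K_B(X_B)$, so $(\mathcal{A}, X_B, D_{\log})$ is an ordinary unbounded Kasparov module with the claimed additional compactness. For the $KK$-class equality, both bounded transforms $F_D = D(1+D^2)^{-1/2}$ and $F_{D_{\log}} = D_{\log}(1+D_{\log}^2)^{-1/2}$ differ from $\sgn(D)$ by $C_0(\mathbb{R})$ functions of $D$; their difference is therefore $h(D)$ with $h\in C_0(\mathbb{R})$, so $a(F_D - F_{D_{\log}})\in \K_B(X_B)$ for $a\in\mathcal{A}$, the standard criterion ensuring both Fredholm modules represent the same $KK$-class, which by Theorem \ref{epsilonFred} is the class of $(\mathcal{A}, X_B, D)$. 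The main technical obstacle is controlling $P_\beta[Q_\beta, a]$ despite $P_\beta$ being an unbounded function of $D$ for $\beta < 1/2$; extracting the bounded phase $F$ and exploiting the uniform bound on the integrand underlying the proof of Lemma \ref{relcpt} is what resolves this.
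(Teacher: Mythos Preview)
Your proof is correct and takes essentially the same approach as the paper: the same reduction to $D_{\beta,\log}$ via a $C_0$-difference, the same three-term Leibniz decomposition of $[D_{\beta,\log},a]$ (your $[P_\beta,a]Q_\beta + P_\beta[Q_\beta,a]$ unpacks into the paper's terms \eqref{moeilijk}--\eqref{mindermoeilijk2}), and the same appeals to Lemmas \ref{weaklip}, \ref{absred}, \ref{relcpt}. The one small point you skip is the domain justification---the paper notes that $\Dom(D)$ is a core for $D_{\beta,\log}$ preserved by each $a\in\A$, which is what makes the Leibniz expansion legitimate.
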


\begin{proof} 
As the function
$$x\mapsto x(1+x^{2})^{-\frac{1}{2}}\log(1+(1+x^{2})^{\frac{1}{2}-\beta})-(1-2\beta)\sgnlog(x),$$
belongs to $C_0(\R)$, it follows that $D_{\beta, \log}-(1-2\beta)D_{\log}$ is an $A$-locally $B$-compact operator. It thus suffices to show that for some $\beta\in (0,1/2)$, the $\beta$-logarithmic transform $(\mathcal{A},X_B,D_{\beta,\log})$ is an ordinary unbounded Kasparov module with the commutators $[D_{\beta,\log},ab]$ being $B$-compact for $a,b\in\mathcal{A}$. Since $(\mathcal{A},X_B,D)$ is a higher order Kasparov module, there is an $0\leq \alpha<\frac{1}{2}$ such that for all $a\in \mathcal{A}$, $[D,a](1+D^{2})^{-\alpha}$ and $(1+D^{2})^{-\alpha}[D,a]$ are bounded. Take $\beta$ with $\alpha<\beta<\frac{1}{2}$.

The operator $D_{\beta,\log}$ has $A$-locally $B$-compact resolvent by construction.  As in the proof of Lemma \ref{logbdd}, since $(1+D^{2})^{-1/2}\log(1+(1+D^{2})^{\frac{1}{2}-\beta})$ is a bounded operator, $\Dom(D)$ is a core for $D_{\beta,\log}$ and each $a\in\A$ preserves this core. We thus need only prove boundedness and $B$-compactness of the relevant commutators. To this end we expand
\begin{align}
\label{moeilijk}
[D_{\beta,\log},a]&=D(1+D^{2})^{-\beta}[(1+D^{2})^{-\frac{1}{2}+\beta}\log(1+(1+D^{2})^{\frac{1}{2}-\beta},a]\\
&\label{mindermoeilijk1}
\quad\quad\quad +[D,a](1+D^{2})^{-\beta}(1+D^{2})^{-\frac{1}{2}+\beta}\log(1+(1+D^{2})^{\frac{1}{2}-\beta})\\
&\label{mindermoeilijk2}
\quad\quad\quad\quad\quad\quad +D[(1+D^{2})^{-\beta},a](1+D^{2})^{-\frac{1}{2}+\beta}\log(1+(1+D^{2})^{\frac{1}{2}-\beta}).
\end{align}
By Lemma \ref{relcpt} the first term is bounded and $B$-compact after multiplication from the left by $b$. The second term is bounded since $[D,a](1+D^{2})^{-\alpha}$ is bounded and Lemma \ref{weaklip} gives boundedness of the third term. 
For the terms \eqref{mindermoeilijk1} and \eqref{mindermoeilijk2}, multiplication from the right by $b$ makes them $B$-compact as
\[x\mapsto (1+x^{2})^{-\frac{1}{2}+\beta}\log(1+(1+x^{2})^{\frac{1}{2}-\beta}),\]
is a $C_{0}$-function. Since $\mathcal{A}$ is a $*$-algebra and $D$ is self-adjoint, we have that $[D,b^{*}]a^{*}$ is $B$-compact as well, and thus so are $a[D,b]$ and $[D,ab]$.
\end{proof}

\begin{corl} 
\label{cptcomm}
Every class in $KK_*(A,B)$ can be represented 
by an unbounded Kasparov module $(\mathcal{A},X_B,D)$ 
such that $\overline{AX_B}=X_B$ and for all 
$a\in \mathcal{A}$ the commutators 
$[D,a]$ are $B$-compact.
\end{corl}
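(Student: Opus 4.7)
The plan is to start from an ordinary unbounded representative of the given $KK$-class, apply the logarithmic transform of Theorem \ref{logtrans} to acquire $B$-compactness of commutators on products, and then cut down to the dense $*$-subalgebra generated by those products.

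First, given $x\in KK_*(A,B)$, I would fix an unbounded Kasparov module $(\A_0,X_B,D_0)$ representing $x$ (available via Baaj--Julg, or alternatively via any higher order representative followed by one application of Theorem \ref{logtrans}). By the usual reduction we may assume $\overline{A X_B}=X_B$, since the orthogonal complement of $\overline{A X_B}$ is an $A$-invariant submodule on which $A$ acts as zero and hence contributes only a degenerate summand. Applying Theorem \ref{logtrans} then yields $(\A_0,X_B,D)$ with $D:=(D_0)_{\log}$, an ordinary unbounded Kasparov module representing the same $KK$-class, and with the crucial extra feature that $[D,ab]\in\mathbb{K}_B(X_B)$ for every $a,b\in\A_0$.

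Next, I would define $\A\subseteq\A_0$ to be the linear span of $\{ab:a,b\in\A_0\}$. This is closed under the involution since $(ab)^{*}=b^{*}a^{*}$, and closed under multiplication by associativity, $(ab)(cd)=\bigl(a(bc)\bigr)d$, so it is a $*$-subalgebra of $\A_0$. Every element of $\A$ is a finite linear combination of such products, and hence has $B$-compact commutator with $D$ by Theorem \ref{logtrans}. The remaining axioms of an unbounded Kasparov module for $(\A,X_B,D)$ are inherited from $(\A_0,X_B,D)$.

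The one point requiring care is norm-density of $\A$ in $A$. I expect this to follow from Cohen's factorization theorem: any $a\in A$ factors as $a=bc$ for some $b,c\in A$, and approximating $b$ and $c$ by sequences in $\A_0$ produces products in $\A$ converging to $a$ in norm. With density established, $(\A,X_B,D)$ is the required unbounded Kasparov module: it is non-degenerate, represents the original class $x$, and has $B$-compact commutators $[D,a]$ for all $a\in\A$.
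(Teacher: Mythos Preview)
Your proposal is correct and follows essentially the same route as the paper: start from an unbounded representative with $\overline{AX_B}=X_B$, apply the logarithmic transform (Theorem~\ref{logtrans}), and pass to the span of products $\A:=\operatorname{span}\{ab:a,b\in\A_0\}$. The paper's proof is identical in structure; it simply cites \cite[Proposition 18.3.6]{Bl} and \cite[Lemma 1.4]{Kuc2} for the first step and asserts density of $\A_0^2$ without further comment.

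Two small remarks. First, your reduction to $\overline{AX_B}=X_B$ via ``the orthogonal complement contributes only a degenerate summand'' is not quite right as stated: closed submodules of Hilbert $C^*$-modules need not be orthogonally complemented, so one cannot simply split off $(\overline{AX_B})^\perp$. The paper avoids this by citing the references above, which establish the non-degenerate representative directly; you should do the same rather than sketch a decomposition that may fail. Second, your appeal to Cohen factorisation for the density of $\A$ in $A$ is a clean and explicit justification of a point the paper leaves implicit, and works exactly as you describe.
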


\begin{proof}
It follows from \cite[Proposition 18.3.6]{Bl} 
and \cite[Lemma 1.4]{Kuc2}) that any class 
in $KK_*(A,B)$ can be represented by an 
unbounded Kasparov module $(\mathcal{A}_0, X_B, D_0)$ 
with $\overline{AX_B}=X_B$. Then 
\[
\mathcal{A}_{0}^{2}
:=\textnormal{span}_{\mathbb{C}}
\left\{ab: a,b\in\mathcal{A}\right\},
\] 
is a dense $*$-subalgebra of $A$. 
By Theorem \ref{logtrans} the triple
\[
(\mathcal{A}, X_B, D):=(\mathcal{A}_{0}^{2},X_B, (D_0)_{\log}),
\]
is an unbounded Kasparov module for which all 
elements $a\in\mathcal{A}$ have $B$-compact 
commutators with $D$.
\end{proof}
If one drops the requirement that $\overline{AX_B}=X_B$, 
then Corollary \ref{cptcomm} is in fact implicitly proven 
in \cite{BJ} (as noted by Kaad \cite{Karece}), but that proof does not cover the stronger 
result presented here.
\begin{corl}
Let $\gamma:S^{1}\to S^{1}$ be a 
(non-isometric) diffeomorphism. The logarithmic 
dampening of the higher order spectral triple 
\eqref{PVextmod} in Proposition \ref{PVprod} 
gives an ordinary spectral triple representing the 
class $\partial [(C(S^{1}), L^{2}(S^{1}), i\frac{\d}{\d x})]\in K^{0}(C(S^{1})\rtimes_{\gamma}\mathbb{Z})$. 
\end{corl}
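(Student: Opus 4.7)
The plan is to apply Theorem \ref{logtrans} directly to the higher order spectral triple produced in Proposition \ref{PVprod}. Recall that Proposition \ref{PVprod} establishes, for any $0<s\leq 1$, that the triple
$$
\left(C^{\infty}(S^{1})\rtimes^{\alg}_\gamma\mathbb{Z},\ H\oplus H,\ D=\begin{pmatrix}0 & N_{s}+i\slashed{D}_{\log} \\ N_{s}-i\slashed{D}_{\log}&0\end{pmatrix}\right)
$$
is a higher order spectral triple of order $\frac{1+s}{s}$, and computes its $K$-homology class as $\partial[(C^{\infty}(S^{1}), L^{2}(S^{1}), i\tfrac{\d}{\d x})]\in K^{0}(C(S^{1})\rtimes_{\gamma}\mathbb{Z})$, where $\partial$ is the boundary map in the Pimsner--Voiculescu exact sequence.

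Next, I would invoke Theorem \ref{logtrans} with $B=\C$ applied to the higher order spectral triple above. This produces the logarithmically dampened triple $(C^{\infty}(S^{1})\rtimes^{\alg}_\gamma\mathbb{Z},\ H\oplus H,\ D_{\log})$ which, by the theorem, is an ordinary unbounded Kasparov module (hence, because $B=\C$, an ordinary spectral triple). Theorem \ref{logtrans} further asserts that the logarithmic transform preserves the $KK$-class; therefore the class represented by the resulting spectral triple is the same as that of the original higher order spectral triple, namely $\partial[(C^{\infty}(S^{1}), L^{2}(S^{1}), i\tfrac{\d}{\d x})]\in K^{0}(C(S^{1})\rtimes_{\gamma}\mathbb{Z})$.

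There is essentially no obstacle to this argument, as all the technical work has already been carried out: Proposition \ref{PVprod} verified the higher order spectral triple axioms and identified its $K$-homology class, while Theorem \ref{logtrans} provided the general procedure for converting a higher order Kasparov module into an ordinary one within the same $KK$-class. The only minor point worth remarking on is that, as an added bonus of Theorem \ref{logtrans}, the resulting ordinary spectral triple has the strong additional property that $[D_{\log},ab]$ is compact for all $a,b\in C^{\infty}(S^{1})\rtimes^{\alg}_\gamma\mathbb{Z}$, so the representative is analytically very close to a bounded Fredholm module.
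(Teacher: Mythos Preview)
Your proposal is correct and matches the paper's approach exactly: the corollary is stated without proof in the paper because it is an immediate consequence of applying Theorem~\ref{logtrans} to the higher order spectral triple of Proposition~\ref{PVprod}, which is precisely what you do.
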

\section{Review of local index formulae}
\label{sec:LIF}
The first and most important application of spectral triples
was to provide computationally tractable expressions for
the index pairing. This programme was initiated by Connes and Moscovici 
to enable them to study the transverse fundamental class of foliations, crossed products 
and more generally triangular structures on manifolds, \cite{CM}. 
The outcome was the first expression and proofs of the local index formula.

Since then, refinements and extensions have been developed 
by Higson \cite{hig} and in \cite{CGRS2,CNNR,CPRS2,CPRS3,CPRS4,RS}.
All the various statements of the local index formula rely on
two basic assumptions: smoothness and finite summability. 
As summability requires reference to the 
Schatten ideals in $\B(H)$, 
it has so far not been developed for unbounded Kasparov modules. 
Moreover, summability in the non-unital case is 
more technical (see \cite{CGRS1,CGRS2}). 
In this section we therefore restrict our attention 
to unital (twisted) spectral triples.
We remark that the discussion also extends 
to the semifinite setting (see more 
in \cite{CC,BeF,CPRS2,CPRS3,CPRS4,CGRS2}), 
but we refrain from this level of generality.
\subsection{Smoothness and summability}
Here we discuss the standard definitions of smoothness and summability.
\label{subb:smo-sum}
\begin{defn}
\label{qck} 
A unital spectral triple $(\A,\H,\D)$ is regular if for all $k\geq 1$
and all $a\in\A$ the operators $a$ and
$[\D,a]$ are in the domain of $\delta^k$, where
$\delta(T)=[|\D|,T]$ is the partial derivation on $\B(\H)$ defined
by $|\D|$. 
\end{defn}

\begin{defn}
\label{summability}
A unital spectral triple $(\A,\H,\D)$ is called finitely summable if
there is a $s>0$ such that $(i+D)^{-1}\in \mathcal{L}^s(\H)$.
If this is the case, we say that $(\A,\H,\D)$ is $s$-summable. 
If $(\A,\H,\D)$ is a finitely summable spectral triple, we call 
\begin{equation*}
p:=\inf\{s\in{\R}:{\rm Tr}((1+\D^2)^{-s/2})<\infty\}
\end{equation*}
the {\bf spectral
dimension} of $(\A,\H,\D)$.
\end{defn}

There are $C^{*}$-algebras that do not admit finitely summable spectral triples, even
when they do admit finitely summable Fredholm modules (more on this
later). We quote the following obstruction result. 

\begin{thm}[Connes, \cite{Co3}] 
\label{Connestrace}
Let $A$ be a unital $C^*$-algebra and
$(\A,\H,\D)$ a unital finitely summable spectral triple, with
$\A\subset A$ dense. Then there exists a 
tracial state $\tau:A\to\mathbb{C}$ on
$A$.
\end{thm}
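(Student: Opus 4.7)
The plan is to adapt Connes' averaging construction from \cite{Co3}. For each $s>p$, the operator $(1+D^2)^{-s/2}$ is positive and trace-class by the definition of the spectral dimension, so
\[
\varphi_s(a) := \frac{\Tr\bigl(a(1+D^2)^{-s/2}\bigr)}{\Tr\bigl((1+D^2)^{-s/2}\bigr)},\quad a\in A,
\]
is a state on $A$. The state space $S(A)$ is weak-$*$ compact by Banach--Alaoglu, so the net $\{\varphi_s\}_{s>p}$ admits a weak-$*$ accumulation point $\tau$ as $s\to p^+$, and this $\tau$ is automatically a state on $A$.

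The crux is showing $\tau$ is tracial. By density of $\A$ in $A$ and norm-continuity of $\tau$, it suffices to check $\tau(ab)=\tau(ba)$ for $a,b\in\A$. Cyclicity of the trace on $\mathcal{L}^1(\H)$ yields
\[
\varphi_s(ab)-\varphi_s(ba)=\frac{\Tr\bigl(a\,[b,(1+D^2)^{-s/2}]\bigr)}{\Tr\bigl((1+D^2)^{-s/2}\bigr)},
\]
so it is enough to establish
\[
\frac{\|[b,(1+D^2)^{-s/2}]\|_1}{\Tr\bigl((1+D^2)^{-s/2}\bigr)}\longrightarrow 0
\]
along some sequence $s_n\to p^+$. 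Boundedness of $[D,b]$ enters decisively here: the integral formula of Lemma \ref{intes} (applied to $(1+D^2)^{-s/2}$ after, if necessary, splitting the operator as a product of lower fractional powers to reach the range $0<s/2<1$) expresses the commutator as
\[
[b,(1+D^2)^{-s/2}] = -\tfrac{\sin(s\pi/2)}{\pi}\int_0^\infty\! \lambda^{-s/2}(1+D^2+\lambda)^{-1}\bigl(D[D,b]+[D,b]D\bigr)(1+D^2+\lambda)^{-1}\,\rd\lambda.
\]

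The main obstacle lies in the resulting trace-norm estimate: both numerator and denominator of the ratio diverge as $s\to p^+$, and I need the numerator to diverge strictly more slowly. Using the bound $\|D(1+D^2+\lambda)^{-1/2}\|\leq 1$ from Lemma \ref{intes}, the integrand above picks up an extra factor of $(1+D^2+\lambda)^{-1/2}$ relative to the integrand representing $(1+D^2)^{-s/2}$ itself, so a H\"older interpolation of Schatten norms converts this ``half-order gain'' into a quantitatively sharper trace-norm growth rate; inspecting the eigenvalue asymptotics of $(1+D^2)^{-1/2}$ encoded in the definition of $p$ then forces the ratio to zero along some sequence $s_n\to p^+$. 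Traciality on $\A$ propagates to $A$ by continuity, and positivity together with $\tau(1)=1$ are preserved under weak-$*$ limits, yielding the desired tracial state.
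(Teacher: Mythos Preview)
The paper does not supply its own proof of this result: it is quoted from Connes \cite{Co3}, with only the remark that the argument carries over to higher-order and semifinite triples. So there is no in-paper proof to compare against; what follows assesses your argument on its own terms.

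Your strategy --- form the states $\varphi_s$, extract a weak-$*$ accumulation point, and bound the traciality defect via the integral-formula commutator estimate --- is standard and correct in outline, but the limiting step has a real gap. You write that ``both numerator and denominator of the ratio diverge as $s\to p^{+}$.'' In fact the half-order gain in your integral formula gives a numerator of order $\Tr\bigl((1+\D^2)^{-(s+1)/2}\bigr)$, which tends to the \emph{finite} value $\Tr\bigl((1+\D^2)^{-(p+1)/2}\bigr)$; the argument therefore succeeds precisely when the denominator $\Tr\bigl((1+\D^2)^{-s/2}\bigr)$ diverges. That divergence is not automatic: the spectral dimension $p$ is defined as an infimum, and it may be attained. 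For example, if the singular values of $(1+\D^2)^{-1/2}$ are $\mu_n = (n(\log n)^2)^{-1}$ for $n\geq 2$ then $p=1$ while $\sum_n\mu_n<\infty$, so $\Tr\bigl((1+\D^2)^{-s/2}\bigr)\to\Tr\bigl((1+\D^2)^{-1/2}\bigr)<\infty$ and your ratio converges to a strictly positive constant along \emph{every} sequence $s_n\to p^{+}$. No appeal to ``eigenvalue asymptotics encoded in the definition of $p$'' rescues this case.

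One clean way around the difficulty is to replace $(1+\D^2)^{-s/2}$ by a heat operator and send $t\to 0^{+}$: finite summability forces $\Tr(\mathrm{e}^{-t|\D|})<\infty$ for every $t>0$, and since $\H$ is infinite-dimensional this trace always diverges as $t\to 0^{+}$, so the edge case cannot occur. The Duhamel formula then plays the role of your integral representation in controlling the commutator. The paper itself points to exactly this route in the paragraph following the theorem, citing \cite[Theorem~3.22]{GMR}.
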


This theorem was stated for ordinary spectral 
triples in \cite{Co3}, but the proof extends 
mutatis mutandis to the general compact 
higher order (and semi-finite) setting. 
More generally, if 
$(\A,\H,\D)$ is a (semifinite higher order) spectral triple with $\Tr(\mathrm{e}^{-t|D|})<\Tr(\chi_{[0,\infty)}(D))=\infty$ for all $t>0$ 
then there is a tracial state on $A$ (see \cite[Theorem 3.22]{GMR}).
One can conclude that algebras with no non-zero trace, 
such as the Cuntz algebras, do
not carry unital higher order (semi-finite) 
finitely summable spectral triples. 
In particular, the obstructions to finite summability 
of spectral triples remain when 
generalizing to higher order as well as to semi-finite 
spectral triples.
The lack of finitely summable spectral triples does not 
preclude the existence of finitely summable Fredholm 
modules (see \cite{EN, GM}), and we exploit this later.

\subsection{The index cocycle}
For any finitely summable bounded Fredholm module one can construct an associated index cocycle called its Connes-Chern character \cite[Chapter IV]{Connes}. When the Fredholm module comes from a finitely summable regular spectral triple 
$(\A,\H,\D)$ satisfying an additional meromorphicity assumption there is a
different representative of the Connes-Chern character in the finitely supported $(b,B)$-bicomplex called the residue cocycle.
Versions of this cocycle appear in the various
local index formulae in non-commutative geometry, 
\cite{CM0,CM,hig,CPRS2,CPRS3}.

Connes and Moscovici imposed the discrete dimension spectrum
assumption to prove their original version of the local index
formula, and we state this below.
The proof of the local index formula in
\cite{CPRS2,CPRS3} requires less restrictive 
hypotheses on the zeta
functions, but we will focus here on discrete dimension spectrum which implies the conditions of \cite{CPRS2,CPRS3}.
We will introduce some notation and definitions and then state the
odd local index formula using \cite{CPRS2}.

\begin{defn} 
Let $(\A,\H,\D)$ be a regular spectral triple. The
algebra $\B(\A)\subseteq \mathcal \B(\H)$ is the algebra of polynomials
generated by
 $\delta^n(a)$ and $\delta^n([\D,a])$ for $a\in\A$ and $n\geq 0.$
 A regular spectral triple $(\A,\H,\D)$ has {\bf discrete
dimension spectrum}
\index{Discrete dimension spectrum}
$\Sd\subseteq {\C}$ if $\Sd$ is a discrete set
and for all $b\in\B(\A)$ the function $\zeta_b(z):= {\rm Tr}(b(1+\D^2)^{-z})$ is
defined and holomorphic for ${\rm Re}(z)$ large, and analytically
continues to ${\C}\setminus \Sd$. We say the dimension spectrum is
{\bf simple} if this zeta function has poles of order at most one
for all $b\in\B(\A)$, {\bf finite}
if there is a $k\in{\N}$ such
that the function has poles of order at most $k$ for all
$b\in\B(\A)$ and {\bf infinite},
if it is not finite.
\end{defn}

Introduce multi-indices $\mathbbm{k}=(k_1,\dots,k_m)$, 
$k_i=0,1,2,\dots$, whose
length $m$ will always be clear from the context and let
$|\mathbbm{k}|=k_1+\cdots+k_m$. Define 
\begin{equation*} 
\alpha(\mathbbm{k})=\frac{1}{k_1!k_2!\cdots k_m!\,(k_1+1)(k_1+k_2+2)\cdots(|k|+m)}, 
\end{equation*}
and the numbers
$\tilde\sigma_{n,j}$ 
are defined by the equalities 
\begin{equation*}
\frac{\Gamma(n+z+1/2)}{\Gamma(z+1/2)}=\prod_{j=0}^{n-1}(z+j+1/2)=\sum_{j=0}^{n}z^j\tilde\sigma_{n,j}.
\end{equation*}

If $(\A,\H,\D)$ is a regular spectral triple and $T\in\B(\H)$
then $T^{(n)}$ is the $n^{th}$ iterated commutator with $\D^2$ (whenever defined),
that is, $T^{(n)}:=[\D^2, [\D^2,[\cdots,[\D^2,T]\cdots]]]$.

Now let $(\A,\H,\D)$ be a spectral triple with finite
dimension spectrum. For operators $b\in\B(\H)$ of the form
$$
b=a_0[\D,a_1]^{(k_1)}\cdots[\D,a_m]^{(k_m)}(1+\D^2)^{-m/2-|k|}
$$
we can define, for $j\in\mathbb{N}$, the functionals 
\begin{equation*} 
\tau_j(b):={\rm res}_{z=0}z^j\zeta_b(z).
\end{equation*}
The
 hypothesis of finite dimension spectrum
is clearly sufficient to define the residues. 
We adapt part of the statement of the odd local index formula
from \cite{CPRS2} to our situation.

\begin{thm}[Odd local index formula]\label{SFLIT}
Let $(\A,\H,\D)$ be an odd finitely summable regular spectral
triple with spectral dimension $p\geq 1$ and discrete, finite dimension spectrum. 
Let $P$
be the spectral projection of $\D$ corresponding to the interval
$[0,\infty)$.
Let $N=[p/2]+1$ where
$[\cdot]$ denotes the floor function (i.e.integer part), and let $u\in\A$ be unitary.
Then the index pairing can be computed by means of the formula
\begin{equation*}
\langle [(\A,\H,\D)],[u]\rangle={\rm Index}(PuP:P\H\to P\H)
=\sum_m \phi_m({\rm Ch}_m(u))
\end{equation*} 
where ${\rm Ch}_m(u)$ are the components of the Chern character of $u$ (see \cite{CPRS2}), and
for $a_0,\dots,a_m\in\A$ 
\begin{align*}
&\phi_m(a_0,\dots,a_m)=\sum_{|\mathbbm{k}|=0}^{2N-1-m}(-1)^{|\mathbbm{k}|} \alpha(k)\times\\
&\times\sum_{j=0}^{|\mathbbm{k}|+(m-1)/2}\tilde\sigma_{(|\mathbbm{k}|+(m-1)/2),j}\tau_j
\left(a_0[\D,a_1]^{(k_
1)}\cdots[\D,a_m]^{(k_m)}(1+\D^2)^{-|\mathbbm{k}|-m/2}\right).
\end{align*} 
The collection of functionals
$(\phi_m)_{m=1,{\rm odd}}^{2N-1}$ is a $(b,B)$-cocycle for $\A$. 
\end{thm}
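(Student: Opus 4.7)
The statement is essentially the odd local index formula of Connes--Moscovici as refined in \cite{CPRS2}, adapted from the semifinite setting to the ordinary compact setting. Rather than reproving it from scratch, the plan is to reduce to the resolvent cocycle framework of Carey--Phillips--Rennie--Sukochev and then verify that, under the discrete finite dimension spectrum hypothesis, it collapses to the explicit residue formula stated.

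\textbf{Step 1: The resolvent cocycle.} For $(\A,\H,\D)$ smoothly summable, one introduces a family of multilinear functionals $\phi_m^r$ on $\A$ defined, for $m$ odd with $1 \leq m \leq 2N-1$, by contour integrals of the form
\[
\phi_m^r(a_0,\dots,a_m) = \eta_m \int_\ell \lambda^{-p/2-r} \Tr\bigl(a_0 (\lambda-(1+\D^2))^{-1}[\D,a_1](\lambda-(1+\D^2))^{-1}\cdots [\D,a_m](\lambda-(1+\D^2))^{-1}\bigr)\,\rd\lambda,
\]
where $\ell$ is a vertical line separating the spectrum of $1+\D^2$ from $0$ and $\eta_m$ is an explicit constant. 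Regularity and $p$-summability ensure convergence for $\Re(r)$ large. The first task is to invoke \cite{CPRS2} (or \cite{CPRS3}) to conclude that $(\phi_m^r)_{m\text{ odd}}$ is a $(b,B)$-cocycle modulo functionals that vanish on Chern characters, and that it represents the Connes--Chern character of the Fredholm module associated to $(\A,\H,\D)$.

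\textbf{Step 2: Index interpretation.} Pair $(\phi_m^r)$ with the odd Chern character $\mathrm{Ch}(u)$ of a unitary $u \in \A$. Using the identification of the pairing between $K_1$ and odd cyclic cohomology with the Fredholm index of $PuP$ on $P\H$ (where $P = \chi_{[0,\infty)}(\D)$), together with the spectral flow interpretation in \cite{CPRS2}, one gets $\sum_m \phi_m^r(\mathrm{Ch}_m(u)) = \mathrm{Index}(PuP)$, independent of $r$ in the region of convergence.

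\textbf{Step 3: Passage to residues.} This is the step where the finite discrete dimension spectrum hypothesis enters. Expanding each resolvent $(\lambda-(1+\D^2))^{-1}$ in a pseudodifferential-style asymptotic series and commuting factors past one another produces operators of the form $a_0[\D,a_1]^{(k_1)}\cdots[\D,a_m]^{(k_m)}(1+\D^2)^{-|\mathbbm{k}|-m/2-z}$. The combinatorial coefficients $\alpha(\mathbbm{k})$ come from the Taylor expansion of the commutator identity (as in \cite{CM}), and the numbers $\tilde{\sigma}_{n,j}$ come from expanding $\Gamma(n+z+1/2)/\Gamma(z+1/2)$ when performing the $\lambda$-integral. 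The finite dimension spectrum hypothesis ensures that every $\zeta_b(z)$ admits meromorphic continuation with poles of bounded order, so that the residues $\tau_j$ are well defined and the asymptotic expansion terminates modulo holomorphic (hence residue-free) contributions. Collecting terms gives exactly the formula for $\phi_m$ in the statement.

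\textbf{Main obstacle.} Steps 1 and 2 are genuinely delicate analytic results from \cite{CPRS2}: the convergence of the resolvent integrals, the tracial estimates on products of commutators and resolvents, and the proof that the collection $(\phi_m^r)$ really is a $(b,B)$-cocycle whose pairing with $K_1$ computes the index. These are not obvious and constitute the bulk of the work in \cite{CPRS2}. Step 3 is essentially a residue computation; the main subtlety there is bookkeeping of the order of poles (which is controlled since finite dimension spectrum was assumed) and verifying that only the explicit finite sum appearing in the statement survives. For our purposes, since the statement is a verbatim specialization of \cite[Theorem~3.2]{CPRS2} to the type I compact setting (where the semifinite trace is the operator trace), no independent proof is required: one simply verifies that finitely summable, discrete finite dimension spectrum spectral triples satisfy the analytic hypotheses of \cite{CPRS2}, which is immediate from Definitions~\ref{qck} and~\ref{summability}.
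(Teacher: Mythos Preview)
Your proposal is correct and matches the paper's treatment: the paper does not prove this theorem at all but simply states it as a known result adapted from \cite{CPRS2}, which is exactly what you do in your final paragraph. Your Steps 1--3 give a helpful sketch of the CPRS2 argument, but the paper itself offers nothing beyond the citation.
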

\subsection{(Local) index theory for twisted spectral triples}
\label{subsec:ansatz}

The obstruction to finite summability expressed in Theorem \ref{Connestrace} and the dependency on finite summability in the local index formula of Theorem \ref{SFLIT} call for a 
different approach in  purely infinite situations. Theorem \ref{Connestrace} does not rule out the existence of finitely summable twisted spectral triples in the absence of a trace. 

Motivated by this issue, Moscovici gave an ansatz for a local index formula for twisted spectral triples in \cite{twistedmoscovici}. Moscovici's ansatz is a cyclic cochain in the $(b,B)$-bicomplex associated with a twisted spectral triple. Moscovici proved that his  ansatz  computes the index pairing with $K$-theory in a special case in \cite{twistedmoscovici}. In order to formulate Moscovici's ansatz, we need to adapt the notions of regularity, finite summability and dimension spectrum to the twisted setting. The notion of regularity we make use of is due to Matassa and Yuncken \cite{MY}.

\begin{defn}
Let $(\A,\H,\D,\sigma)$ be a weakly twisted spectral triple. 
\begin{itemize}
\item $(\A,\H,\D,\sigma)$ is said to be finitely summable if there is a $p>0$ such that $(i+\D)^{-1}\in \mathcal{L}^p(\H)$. In this case, we say that $(\A,\H,\D,\sigma)$ is $p$-summable.
\item $(\A,\H,\D,\sigma)$ is said to be regular if there is a $*$-algebra $\mathcal{B}\subseteq \mathbb{B}(\H)$ of bounded operators containing $\A$ and $[\D,\A]_\sigma$ to which $\sigma$ extends to a linear isomorphism $\theta:\mathcal{B}\to \mathcal{B}$ such that $\mathcal{B}$ is invariant under the derivation $\delta_\theta(b):=[|\D|,b]_\theta=|\D|b-\theta(b)|\D|$.
\end{itemize}
\end{defn}

\begin{rmk}
If $(\A,\H,\D,\sigma)$ is regular, $|\D|a-\sigma(a)|\D|=\delta_\theta(a)\in \mathcal{B}$ for $a\in \A$. In particular, regular twisted spectral triples satisfy the twisted Lipschitz condition.
\end{rmk}

In fact, Matassa-Yuncken \cite{MY} showed that a regular twisted spectral triple admits a twisted pseudo-differential calculus (see \cite[Definition 3.6]{MY}). We denote this twisted pseudo-differential calculus by $\Psi^*_{\sigma,\D}(\A)$. For full details on the twisted pseudo-differential calculus see \cite{MY}, but let us point out that $\Psi^*_{\sigma,\D}(\A)$ contains $\A$, $[\D,\A]_\sigma$, all powers of $|\D|$ and is closed under $\delta_\theta$ for a suitable linear isomorphism $\theta$ of $\Psi^*_{\sigma,\D}(\A)$ extending $\sigma$. If we can take $\theta$ to be an automorphism we say that $(\A,\H,\D,\sigma)$ is \emph{strongly regular}. 

Before going into the index theory of twisted spectral triples, 
we record some  basic results concerning regularity.

\begin{prop}
\label{strongreforsimsi}
Let $(\A,\H,\D,\sigma)$ be a twisted spectral triple with $\D$ invertible and $\sigma(a)=|\D|a|\D|^{-1}$. Then  $(\A,\H,\D,\sigma)$ is strongly regular and we can take $\Psi^*_{\sigma,\D}(\A)$ to be the filtered algebra generated by $F:=D|D|^{-1}$, $\A$, $[\D,\A]_\sigma$ and all complex powers of $|\D|$ with $\theta=\sigma$, the filtering coming from the order of the power of $|\D|$ and the one-parameter family of algebra automorphisms $(\Theta^z)_{z\in \C}$ is defined from conjugation by $|\D|^{z}$. 
\end{prop}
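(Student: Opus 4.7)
The plan is to verify directly that the $*$-algebra $\Psi := \Psi^*_{\sigma,\D}(\A)$ generated by $F$, $\A$, and $\{|\D|^z : z \in \C\}$ fulfils all requirements of the (strongly regular) twisted pseudo-differential calculus of Matassa-Yuncken, with $\theta := \sigma$ and one-parameter family $\Theta^z := \textnormal{Ad}_{|\D|^z}$.

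First I would record the structural simplifications arising from $\sigma = \textnormal{Ad}_{|\D|}$. Since $D$ is self-adjoint and invertible, $F = D|\D|^{-1}$ and each $|\D|^z$ are mutually commuting functions of $D$ (by Borel functional calculus), so $\Theta^z(F) = F$ and $\Theta^z(|\D|^w) = |\D|^w$. Using $\D = F|\D| = |\D|F$, the twisted commutator simplifies:
\[
[\D,a]_\sigma = F|\D|a - |\D|a|\D|^{-1}F|\D| = |\D|(Fa - aF) = |\D|[F,a],
\]
so $[\D,\A]_\sigma \subset \Psi$ is automatic, and no additional generators are required. Every word in the generators can then be brought into the normal form $T|\D|^z$, where $T$ is a polynomial in $F$ and the operators $\Theta^w(a)$ with $a \in \A$ and $w \in \C$, by moving powers of $|\D|$ to the right via $a|\D|^w = |\D|^w\Theta^{-w}(a)$. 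Assigning to such a normal form the order $\mathrm{Re}(z)$ defines a filtration compatible with multiplication, establishing the claimed filtered algebra structure.

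The main technical step is to verify that $\Theta^z$ is a well-defined $*$-automorphism of $\Psi$ for every $z \in \C$. Since $\sigma$ is a regular automorphism of $\A$, we have $\Theta^n(a) = \sigma^n(a) \in \A$ for every $n \in \Z$, and iteration of the twisted commutator bound gives that $a$ preserves $\Dom(|\D|^n)$ for every $n \geq 0$. A standard application of Hadamard's three lines theorem to the map $z \mapsto |\D|^z a |\D|^{-z}$, applied against GNS states as in the proof of Lemma \ref{logbdd} (see page \pageref{logbdd}), extends boundedness from integer to arbitrary complex $z$, with $\|\Theta^z(a)\|$ interpolating between the values at neighbouring integers. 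Multiplicativity, $*$-compatibility, and the group law $\Theta^z\circ\Theta^w = \Theta^{z+w}$ are then immediate, and $\Theta^z$ manifestly preserves the order and hence the filtration.

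Finally, closure under $\delta_\theta$ is automatic: for any $x = T|\D|^z$ in normal form,
\[
\sigma(x)|\D| = |\D|T|\D|^z|\D|^{-1}|\D| = |\D|T|\D|^z = |\D|x,
\]
so $\delta_\theta(x) = |\D|x - \sigma(x)|\D| = 0$ identically on $\Psi$. Thus $\Psi$ is trivially closed under $\delta_\theta$, completing the verification. The only nontrivial ingredient is the Hadamard-three-lines argument extending $\Theta^z$ from integers to all complex $z$; every other claim reduces to direct functional-calculus manipulation enabled by the commutativity of $F$ with all powers of $|\D|$.
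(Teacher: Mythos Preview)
Your proof is correct and hinges on the same key observation as the paper's: since $\sigma = \mathrm{Ad}_{|\D|}$, one has $\delta_\sigma(x) = |\D|x - |\D|x|\D|^{-1}|\D| = 0$ identically, so closure under $\delta_\theta$ is trivial. The paper's own proof consists of exactly this two-line computation followed by a citation of \cite[Definition 3.6]{MY}; you have additionally spelled out the normal form, the filtration, and the Hadamard-three-lines argument for the boundedness of $\Theta^z(a)$, which the paper leaves implicit in that reference. (One cosmetic slip: your commutation identity $a|\D|^w = |\D|^w\Theta^{-w}(a)$ moves $|\D|^w$ to the left rather than the right, but the normal-form argument goes through either way using $|\D|^w a = \Theta^w(a)|\D|^w$.)
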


\begin{proof}
The definitions given in the statement imply that
$$
\delta_\sigma(a):=[|\D|,a]_\sigma=|\D|a-|\D|a|\D|^{-1}|\D|=|\D|a-|\D|a=0.
$$
In particular, the algebra generated by $F$, $\A$, $[\D,\A]_\sigma$ and all powers of $|\D|$ is invariant under $\delta_\sigma$. It follows that the filtered algebra $\Psi^*_{\sigma,\D}(\A)$ satisfies the assumptions of \cite[Definition 3.6]{MY} and forms a twisted pseudo-differential calculus.
\end{proof}

\begin{prop}
\label{satofeasytwist}
Let $(\A,\H,\D,\sigma)$ be a regular weakly twisted spectral triple with $\D$ invertible, and set $\tilde{\sigma}(a):= |D|a|\D|^{-1}$. Then $\sigma(a)-\tilde{\sigma}(a)\in \Psi^{-1}_{\sigma,\D}(\A)$. Moreover, $(\A,\H,\D,\tilde{\sigma})$ is a weakly twisted spectral triple and its saturation  $(\A_{\tilde{\sigma}},\H,\D,\tilde{\sigma})$ is a strongly regular twisted spectral triple.
\end{prop}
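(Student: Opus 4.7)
The plan is to reduce everything to the key identity that, since $\tilde{\sigma}(a)=|D|a|D|^{-1}$, one has $\delta_{\tilde{\sigma}}(a)=|D|a-|D|a|D|^{-1}|D|=0$, so any twisted commutator built from $\tilde{\sigma}$ is ``exactly zero at top order''. Regularity of $(\A,\H,\D,\sigma)$ provides an isomorphism $\theta:\mathcal{B}\to\mathcal{B}$ extending $\sigma$ with $\delta_\theta(a)=|D|a-\theta(a)|D|\in\mathcal{B}$ for every $a\in\A$. Solving this identity for $\theta(a)$ gives
\begin{equation*}
\sigma(a)=\theta(a)=\tilde{\sigma}(a)-\delta_\theta(a)|D|^{-1}.
\end{equation*}
Since $\delta_\theta(a)\in\mathcal{B}\subseteq\Psi^0_{\sigma,\D}(\A)$ and $|D|^{-1}\in\Psi^{-1}_{\sigma,\D}(\A)$ by construction of the Matassa--Yuncken pseudo-differential calculus, I would conclude $\sigma(a)-\tilde{\sigma}(a)\in\Psi^{-1}_{\sigma,\D}(\A)$, which is the first claim.

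Next, I would verify that $(\A,\H,\D,\tilde{\sigma})$ is a weakly twisted spectral triple. The compact resolvent and $\A$-invariance of $\Dom(D)$ are inherited. For the bounded twisted commutator, I would substitute the identity of the previous paragraph:
\begin{equation*}
[D,a]_{\tilde{\sigma}}=Da-\tilde{\sigma}(a)D=Da-\sigma(a)D-\delta_\theta(a)|D|^{-1}D=[D,a]_\sigma-\delta_\theta(a)F,
\end{equation*}
which is bounded as a sum of bounded operators. One also needs to check that $\tilde{\sigma}$ is ``regular'' in the sense of Definition \ref{twisteddefn}, which follows directly from $\tilde{\sigma}(a)^*=|D|^{-1}a^*|D|=\tilde{\sigma}^{-1}(a^*)$.

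For the final (and main) claim, I would show that the saturation $(\A_{\tilde{\sigma}},\H,\D,\tilde{\sigma})$ admits a twisted pseudo-differential calculus with $\theta=\tilde{\sigma}$, i.e.\ is strongly regular. Here I would take as the candidate calculus the existing $\Psi^*_{\sigma,\D}(\A)$ together with all iterates $\tilde{\sigma}^k(a)=|D|^k a|D|^{-k}$, $k\in\Z$. The one-parameter family of algebra automorphisms $\Theta^z=\mathrm{Ad}_{|D|^z}$ that is already part of the twisted calculus provides the extension of $\tilde{\sigma}=\Theta^1$ to an \emph{algebra} automorphism of the calculus (as opposed to the mere linear extension $\theta$ in the definition of regularity), which is what strong regularity requires. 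Concretely, I would verify $\tilde{\sigma}^k(a)\in\Psi^0_{\sigma,\D}(\A)$ inductively from $\tilde{\sigma}(a)=\sigma(a)+\delta_\theta(a)|D|^{-1}$ and the fact that $\Psi^*_{\sigma,\D}(\A)$ is closed under $\delta_\theta$ and under multiplication by $|D|^{-1}$. This simultaneously shows that $\A\subseteq\cap_{k\in\Z}\Dom(\tilde{\sigma}^k)$, which is required to make the saturation $\A_{\tilde{\sigma}}$ meaningful via Proposition \ref{saturateprops}.

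The step I expect to be the main obstacle is the last one: checking that Matassa--Yuncken's axioms for a twisted pseudo-differential calculus are satisfied for the pair $(\A_{\tilde{\sigma}},\tilde{\sigma})$ inside $\Psi^*_{\sigma,\D}(\A)$. In particular, $\delta_{\tilde{\sigma}}$-invariance of this algebra requires care: while $\delta_{\tilde{\sigma}}$ vanishes on $\A$, it does not vanish on operators of lower order like $\delta_\theta(a)|D|^{-1}$, and one must check that applying $\delta_{\tilde{\sigma}}$ to a general element of the calculus lands back inside it. The key observation is that $\delta_{\tilde{\sigma}}$ and $\delta_\theta$ differ by multiplication by $|D|^{-1}$ composed with known operators (because $\sigma-\tilde\sigma$ is of order $-1$), so that the induction on order closes. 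Once this is in place, the strong regularity is automatic from the fact that $\Theta^z$ genuinely acts by algebra automorphisms.
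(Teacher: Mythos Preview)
Your argument for the first two claims is essentially the paper's: you make explicit the identity $\sigma(a)=\tilde{\sigma}(a)-\delta_\theta(a)|D|^{-1}$ (which the paper delegates to \cite[Lemma 3.7]{MY}), and from it you deduce both $\sigma(a)-\tilde{\sigma}(a)\in\Psi^{-1}_{\sigma,\D}(\A)$ and boundedness of $[D,a]_{\tilde{\sigma}}=[D,a]_\sigma-\delta_\theta(a)F$. This is fine.

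For the strong regularity of the saturation, however, you take a genuinely harder route than the paper. You propose to verify the Matassa--Yuncken axioms for $(\A_{\tilde{\sigma}},\tilde{\sigma})$ \emph{inside the existing calculus} $\Psi^*_{\sigma,\D}(\A)$, which forces you to confront the mismatch between $\delta_\theta$ and $\delta_{\tilde{\sigma}}$ and to run an induction on order. The paper sidesteps all of this: once the saturation $(\A_{\tilde{\sigma}},\H,\D,\tilde{\sigma})$ is a genuine twisted spectral triple with $\tilde{\sigma}$ literally equal to $\mathrm{Ad}_{|D|}$, Proposition~\ref{strongreforsimsi} applies to it directly and furnishes a \emph{fresh} calculus (generated by $F$, $\A_{\tilde{\sigma}}$, $[\D,\A_{\tilde{\sigma}}]_{\tilde{\sigma}}$ and complex powers of $|D|$) with $\theta=\tilde{\sigma}$ an algebra automorphism, because $\delta_{\tilde{\sigma}}(a)=0$ on $\A_{\tilde{\sigma}}$. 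Your approach would also work and has the small conceptual payoff of relating the two calculi, but it is more laborious; the paper's route is a one-line invocation of a result already established.
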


\begin{proof}
The first part of the result is a consequence of \cite[Lemma 3.7]{MY}. The second part of the proposition is proven as follows. The property that $\sigma(a)-\tilde{\sigma}(a)\in \Psi^{-1}_{\sigma,\D}(\A)$ ensures that $(\A,\H,\D,\tilde{\sigma})$ is a weakly twisted spectral triple satisfying the conditions of Proposition \ref{saturateprops} (see page \pageref{saturateprops}) so its saturation $(\A_{\tilde{\sigma}},\H,\D,\tilde{\sigma})$ is well defined. Strong regularity of its saturation follows from Proposition \ref{strongreforsimsi}. 
\end{proof}

\begin{rmk}
\label{satofeasytwistrmk}
In the terminology of \cite[Section 3]{MY}, $\sigma$ and $\tilde{\sigma}$ are equivalent twists. In particular, they can be equipped with related twisted pseudodifferential calculi. 
\end{rmk}

Continuing towards Moscovici's ansatz, we define dimension spectrum for regular twisted spectral triples. 

\begin{defn}
Let $(\A,\H,\D,\sigma)$ be a finitely summable regular weakly twisted spectral triple with twisted pseudo-differential calculus $\Psi^*_{\sigma,\D}(\A)$. We say that $(\A,\H,\D,\sigma)$ has discrete dimension spectrum if there is a discrete set $\Sd\subseteq \C$ such that the $\zeta$-functions 
$$
\zeta_T(z)
:=\mathrm{Tr}(T|\D|^{-2z}), \quad T\in  \Psi^*_{\sigma,\D}(\A),
$$
defined for $\mathrm{Re}(z)>>0$, have meromorphic extensions to $\C$ holomorphic outside $\Sd$. If there is an $N\in \N$ such that all poles of $\{\zeta_T: T\in \Psi^*_{\sigma,\D}(\A)\}$ are of order at most $N$ in $\Sd$, we say that $(\A,\H,\D,\sigma)$ has finite discrete dimension spectrum. If we can take $N=1$, we say that $(\A,\H,\D,\sigma)$ has simple discrete dimension spectrum.
For $T\in\Psi^*_{\sigma,\D}(\A)$ we write 
$T^{(1)}:=[\D^2,T]_{\sigma^2}$ and $T^{(k)}:=[\D^2,T^{(k-1)}]_{\sigma^2}$, and let
$\tau_j(T):=\mathrm{Res}_{z=0}z^j\Tr(T|D|^{-2z})$
\end{defn}

Moscovici introduced his ansatz for a twisted local index formula 
only for the case of simple dimension spectrum, but this is purely a matter of technical convenience. 
Adjusting Moscovici's constants as in the 
discussion of renormalisation 
in \cite[Section II.3]{CM} brings them in line with 
the renormalised formula presented in Theorem \ref{SFLIT}.

We stress that {\bf these constants do not affect 
our later arguments at all}. The crux of the issue for us is that,
for certain examples, all multilinear functionals defined using residues of zeta functions are identically zero.

\begin{defn}
\label{defnofloclalass}
Let $(\A,\H,\D,\sigma)$ be a finitely summable regular twisted spectral triple with discrete dimension spectrum. For $m,j\in \N$ and $\mathbbm{k}=(k_1,\ldots, k_m)\in \N^m$, we define the $m$-cochain $\phi_{m,j,\mathbbm{k}}$ on $\A$ by setting 
\begin{align*}
\phi_{m,j,\mathbbm{k}}(&a_0,a_1,\ldots,a_m)\\
&:=\tau_j\left(\gamma a_0[\D,\sigma^{-2k_1-1}(a_1)]_\sigma^{(k_1)}\cdots [\D,\sigma^{-2(k_1+k_2+\cdots +k_m)-1}(a_m)]_\sigma^{(k_m)}|\D|^{-2|\mathbbm{k}|-m}\right),
\end{align*}
for $a_0,a_1,\ldots, a_m\in \A$. Here $\gamma$ denotes the grading operator in the even case and $\gamma=F$ in the odd case. Adjusting the coefficients $c_{m,\mathbbm{k}}$ from \cite{twistedmoscovici} to take care of the renormalisation procedure of \cite{CM}, 
we define the $m$-cochain $\phi_m$ as 
$$
\phi_m(a_0,a_1,\ldots,a_m)
:=\sum_{\mathbbm{k}\in \N^m} (-1)^{|\mathbbm{k}|}\alpha(\mathbbm{k})
\sum_{j=0}^{|\mathbbm{k}|+(m-1)/2}\tilde{\sigma}_{(|\mathbbm{k}|+(m-1)/2),j}
\phi_{m,j,\mathbbm{k}}(a_0,a_1,\ldots,a_m).
$$
\end{defn}

The reader should note that due to the finite summability assumption, there are only finitely many non-zero terms in the sum defining $\phi_m$ and $\phi_m=0$ for $m>>0,$ sufficiently large. In \cite{twistedmoscovici}, the cochain $\phi_m$ is defined by means of a slightly different expression, which makes use of the fact that Moscovici assumes simple dimension spectrum. Our 
definition extends Moscovici's in a way that is consistent with the derivation from the twisted JLO cocycle, taking into account the
renormalisation procedure of \cite{CM}.

For $j=0,1$ the parity of the regular and finitely summable
 twisted spectral triple $(\A,\H,\D,\sigma)$ with discrete dimension spectrum,
we shall call the cochain $(\phi_{2m+j})_{m\in \N}$ 
in the $(b,B)$-bicomplex the \emph{Moscovici ansatz} 
for an index cocycle. We note that the 
cochain $(\phi_{2m+j})_{m\in \N}$ is only an ansatz, 
and it is unclear how generally this ansatz actually 
provides a cocycle, let alone an index cocycle. 
In the next section (Section \ref{thelackofloc}), 
we shall provide examples of regular finitely summable
 twisted spectral triples with finite discrete dimension spectrum 
 for which Moscovici's ansatz can not 
 represent an index cocycle.

\begin{rmk}[Computing index pairings using twisted spectral triples]

The only to date known local index formula for twisted spectral triples 
is due to Moscovici who considered twisted spectral triples arising from scaling automorphisms. Given an ordinary 
spectral triple $(\A,\H,\D)$,
a scaling automorphism is an automorphism of $\A$ implemented by a unitary $U\in\B(\H)$ such that there is a positive real number $\mu(U)$ for which $U\D U^*=\mu(U)\D$. If $\Gamma$ is a group of scaling automorphisms, $(\A\rtimes \Gamma,\H,\D,\sigma)$, with $\sigma(aU):=\mu(U)aU$ is a twisted spectral triple. 

In \cite{twistedmoscovici}, it was proven that for twisted spectral triples associated to scaling automorphisms  that 
\begin{equation}
\label{dodycpair}
\langle [(\phi_{2m+j})_{m\in \N}],{\rm Ch}(x)\rangle
=\langle [(\A,\H,\D,\sigma)],x\rangle, \quad \forall x\in K_*(A).
\end{equation}
Here the left hand side denotes the pairing of cyclic cohomology with the cyclic homology class given by the Chern character of the $K$-theory element $x$. The right hand side denotes the pairing of $K$-homology with $K$-theory. The counterexample alluded to in the preceding paragraph is found by providing a counterexample to the equality \eqref{dodycpair}.

The local index theory was worked out in detail by Ponge and Wang \cite{PW,PW2} for groups of conformal diffeomorphisms. To do this they reduced to the case of
trivial twist by choosing an invariant metric, and employed the local index formula for ordinary spectral triples.
This left open the question of whether Moscovici's ansatz computed the index.
As noted in \cite{PW,PW2}, 
there are very few examples.
\end{rmk}

\section{Vanishing of the twisted local index formula}
\label{thelackofloc}

One of the motivations to introduce twisted spectral triples is that twisting allows for the existence of Dirac operators with better spectral properties, as
discussed in the Introduction and \cite{CoM}. One sought after spectral property is finite summability, and one might hope for a local index formula for twisted spectral triples in the style of Connes-Moscovici's local index formula \cite{CM}. In \cite{twistedmoscovici}, Moscovici provided an ansatz for a cyclic cocycle that generalized Connes-Moscovici's local index cocycle. Moscovici's ansatz reproduces the index character for twisted spectral triples associated to so-called scaling automorphisms. 

In this subsection we discuss various examples showing that Moscovici's ansatz can not be extended to the case of a general finitely summable regular twisted spectral triple with finite discrete dimension spectrum. Our examples are highly regular odd twisted spectral triples pairing non-trivially with $K$-theory yet having all twisted higher residue cochains (appearing in Moscovici's ansatz) vanishing. Here ``high regularity'' means that all twisted commutators of the algebra elements with the Dirac operator in the twisted spectral triple are not just bounded but smoothing or even of finite rank -- leaving little hope for a general index formula in terms of residues of $\zeta$-functions.

\subsection{Set up and statement}
\label{subsub:statement}
The construction of our counterexamples,
and the proof of all their regularity properties follow a general pattern. The main idea is 
condensed in the following lemma. We will say that a 
Borel function $f:\R\to \R$ is polynomially bounded if there 
are constants $C$ and $N$ such that $|f(x)|\leq C(1+|x|)^N$ for all $x\in \R$.

\begin{lemma}
\label{asstoprovtauzero}
Let $(\A,\H,\D,\sigma)$ be a finitely summable regular twisted spectral triple with $\D$ invertible and $\sigma(a)=|\D|a|\D|^{-1}$. Assume that 
\begin{enumerate}
\item[a)] For any two polynomially bounded
Borel functions $f_1,f_2:\R\to \R$ and $a\in \A$ the twisted commutator $[\D,a]_{\sigma}$ preserves $\Dom(f_1(\D))$ and the operator $f_1(\D)[\D,a]_\sigma f_2(\D)$ extends to a trace class operator on $\H$.
\item[b)] There is a discrete subset $\mathcal{P}\subseteq \C$ such that for any $a\in \A$, the $\zeta$-functions $\zeta_a(z):=\mathrm{Tr}(a|\D|^{-2z})$ and $\zeta_{Fa}(z):=\mathrm{Tr}(Fa|\D|^{-2z})$ extend to meromorphic functions in $\C$ holomorphic outside $\mathcal{P}$ with poles in $\mathcal{P}$ of uniformly bounded order.
\end{enumerate}
Then $(\A,\H,\D,\sigma)$ is a strongly regular finitely summable twisted spectral triple with finite discrete dimension spectrum. Moreover, for $m>0$, $a_0,a_1,\ldots,a_m\in\A$ 
and $\mathbbm{k}\in \N^m$, the function 
\begin{equation}
\label{agnadaddaka}
z\mapsto \Tr\left(\gamma a_0[\D,\sigma^{-2k_1-1}(a_1)]_\sigma^{(k_1)}\cdots [\D,\sigma^{-2(k_1+k_2+\cdots +k_m)-1}(a_m)]_\sigma^{(k_m)}|\D|^{-2|\mathbbm{k}|-m-2z}\right)
\end{equation}
extends {\bf holomorphically} to 
all of $\C$. In particular, $\phi_m=0$ for $m>0$.
\end{lemma}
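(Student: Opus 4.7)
The plan has four ingredients.

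\emph{Strong regularity and dimension spectrum.} Proposition \ref{strongreforsimsi} applies directly to give strong regularity with $\Psi^*_{\sigma,\D}(\A)$ generated by $F:=\D|\D|^{-1}$, $\A$, $[\D,\A]_\sigma$ and complex powers of $|\D|$, with $\theta=\sigma$ implemented by conjugation by $|\D|$. For finite discrete dimension spectrum, I would expand an arbitrary $T\in\Psi^*_{\sigma,\D}(\A)$ as a finite sum of monomials in the generators. Using $\sigma^s(a)=|\D|^s a|\D|^{-s}$ to move all powers of $|\D|$ to one side, each monomial either contains at least one factor $[\D,a]_\sigma$ (giving an entire $\zeta$-function by the argument sketched below for the main claim) or else reduces to a finite sum of operators $a|\D|^s$ or $Fa|\D|^s$ with $a\in\A$, whose $\zeta$-functions are translates of $\zeta_a$ and $\zeta_{Fa}$ and are meromorphic on $\C$ with poles of uniformly bounded order in a shift of $\mathcal{P}$ by assumption (b).

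\emph{The algebraic identity.} The decisive observation is that $\sigma^2$ is conjugation by $|\D|^2=\D^2$, so for every $T\in\Psi^*_{\sigma,\D}(\A)$,
\[
[\D^2,T]_{\sigma^2}\,=\,\D^2 T\,-\,|\D|^2 T|\D|^{-2}\D^2\,=\,\D^2 T\,-\,|\D|^2 T\,=\,0.
\]
Consequently $T^{(k)}=0$ for every $k\geq 1$, so every multi-index $\mathbbm{k}\in\N^m$ with $|\mathbbm{k}|\neq 0$ contributes zero to \eqref{agnadaddaka}, and the argument reduces to the case $\mathbbm{k}=0$.

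\emph{The $\mathbbm{k}=0$ case.} Two elementary identities, both verified directly from $\sigma=\ad_{|\D|}$ and $\D=F|\D|=|\D|F$, are
\[
[\D,\sigma^{-k}(a)]_\sigma\,=\,|\D|^{-k}[\D,a]_\sigma|\D|^k,\qquad [\D,a]_\sigma\,=\,|\D|[F,a].
\]
Substituting into \eqref{agnadaddaka} and collapsing the resulting powers of $|\D|$ brings the operator inside the trace into the form
\[
\gamma\,a_0\,[F,a_1]\,|\D|^{-1}[F,a_2]\,|\D|^{-1}\cdots|\D|^{-1}[F,a_m]\,|\D|^{m-1-2z}.
\]
Writing the last factor as $|\D|^{-1}[\D,a_m]_\sigma|\D|^{m-1-2\textnormal{Re}(z)}\cdot|\D|^{-2i\,\textnormal{Im}(z)}$, assumption (a) applied with the polynomially bounded real Borel functions $f_1(x)=|x|^{-1}$ (bounded on the spectrum of $\D$ by invertibility of $\D$) and $f_2(x)=|x|^{m-1-2\textnormal{Re}(z)}$ shows that the first three factors form a trace class operator, and the unitary tail preserves this. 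Each interior factor $[F,a_i]|\D|^{-1}=|\D|^{-1}[\D,a_i]_\sigma|\D|^{-1}$ is trace class by the same reasoning, and the remaining factors are bounded and $z$-independent, so the full product is trace class for every $z\in\C$. Holomorphicity in $z$ then follows from the weak entirety of the family $z\mapsto|\D|^{-2z}$ together with a Morera argument based on local boundedness of the trace norm in $z$. Entirety of \eqref{agnadaddaka} forces every residue $\tau_j$ to vanish, and hence $\phi_m=0$ for every $m>0$.

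The main technical obstacle is not the algebra but the rigorous verification that the trace depends \emph{holomorphically} on $z\in\C$, not merely that it is trace-class-valued, keeping careful track of the polynomial-in-$\textnormal{Re}(z)$ growth of the functions $f_2$ above; the remainder of the argument is formal manipulation once the key identity $[\D^2,T]_{\sigma^2}=0$ is in hand.
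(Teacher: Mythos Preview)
Your argument is correct and in one respect cleaner than the paper's. The paper does not exploit the identity $[\D^2,T]_{\sigma^2}=\D^2T-|\D|^2T|\D|^{-2}\D^2=0$; instead it treats all $\mathbbm{k}$ at once by observing that, since $m>0$, every term in the expansion of \eqref{agnadaddaka} contains at least one factor of the form $[\D,b]_\sigma$, and then absorbs whatever powers of $|\D|$ surround it into polynomially bounded functions $f_i(\D)$ so that assumption (a) applies. Your observation kills the $\mathbbm{k}\neq 0$ contributions outright and is worth recording; it is specific to the twist $\sigma=\ad_{|\D|}$ and would not survive for a general regular twisted spectral triple, which is presumably why the paper does not use it.

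Two small corrections. First, your displayed reduction for $\mathbbm{k}=0$ has the intermediate powers of $|\D|$ wrong: from $[\D,\sigma^{-1}(a)]_\sigma=[F,a]\,|\D|$ one obtains
\[
\gamma\,a_0\,[F,a_1]\,|\D|\,[F,a_2]\,|\D|\cdots|\D|\,[F,a_m]\,|\D|^{\,1-m-2z},
\]
with $|\D|^{+1}$ between successive commutators rather than $|\D|^{-1}$. This does not affect the argument, since assumption (a) only requires the sandwiching functions to be polynomially bounded. Second, the holomorphy step you flag as the main obstacle is handled in the paper by a clean device that replaces your Morera sketch: for an arbitrary real $s$, absorb $|\D|^{s}$ into the last polynomially bounded factor so that the remaining tail is $|\D|^{-s-2z}$; the fixed part is then trace class by (a) independently of $z$, and $|\D|^{-s-2z}$ is bounded and the trace is holomorphic for $\mathrm{Re}(z)>-s/2$ by the standard argument for $\mathrm{Tr}(T\,|\D|^{-w})$ with $T\in\mathcal{L}^1$. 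Letting $s\to\infty$ gives entirety. Your approach and the paper's are equivalent once this trick is in hand.
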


\begin{proof}
Note that if $\sigma(a)=|\D|a|\D|^{-1}$, then 
\begin{equation}
\label{twistide}
[\D,a]_\sigma=\D a-|\D|a|\D|^{-1} \D=|\D|(Fa-aF)=|\D|[F,a].
\end{equation}

It follows from Proposition \ref{strongreforsimsi} 
that $(\A,\H,\D,\sigma)$ is strongly regular and 
that $\Psi^*_{\sigma,\D}(\A)$ is generated by 
$F$, $\A$, $[\D,\A]_\sigma$ and all powers of $|\D|$. 
We first show that $(\A,\H,\D,\sigma)$ has discrete 
dimension spectrum. It suffices to show that 
$\zeta_B(z)$ extends  to a meromorphic function in 
$\C$ holomorphic outside $\mathcal{P}$ with possible 
poles in $\mathcal{P}$ when $B$ is a product of elements 
from $\{F\}\cup\A\cup [\D,\A]_\sigma\cup\{|\D|^k: \; k\in \Z\}$. 
It follows from Assumption a) that if $B$ contains a 
factor from $[\D,\A]_\sigma$ then $\zeta_B(z)$ extends holomorphically to $\C$.

Modulo terms with 
factors in $[\D,\A]_\sigma$, we can write $B=F^j|\D|^za|\D|^w$ 
for some $j\in \Z$, $z,w\in \C$ since 
$\sigma(a)=|\D|a|\D|^{-1}$ and using \eqref{twistide}. 
It follows from Assumption b) that $\zeta_B(z)$ extends  
to a meromorphic function in $\C$ holomorphic 
outside $\mathcal{P}$ with possible poles in $\mathcal{P}$. 

It remains to show that for $m>0$, 
$a_0,a_1,\ldots,a_m\in\A$ and $\mathbbm{k}\in \N^m$, 
the function in Equation \eqref{agnadaddaka} is holomorphic for all $z\in \C$. Indeed, since $m>0$ we can for 
any $s\in \R$ write the expression 
$$
\gamma a_0[\D,\sigma^{-2k_1-1}(a_1)]_\sigma^{(k_1)}\cdots [\D,\sigma^{-2(k_1+k_2+\cdots +k_m)-1}(a_m)]_\sigma^{(k_m)}|\D|^{-2|\mathbbm{k}|-m-2z},
$$ 
as a sum of elements of the form 
$$
\gamma a_0f_0(\D)[\D,b_1]_\sigma f_1(\D)\cdots f_{m-1}(\D)[\D,b_m]_\sigma f_m(\D)|\D|^{-s-2z},
$$
for some elements $b_1,\ldots, b_m\in \A$ 
and  polynomially bounded Borel functions 
$f_0,f_1,\ldots, f_m:\R\to \R$. By Assumption b), 
there is a constant $s_0\in \R$ independent of $s$ 
such that the function
$$
z\mapsto \mathrm{Tr}\left(\gamma a_0f_0(\D)[\D,b_1]_\sigma f_1(\D)\cdots f_{m-1}(\D)[\D,b_m]_\sigma f_m(\D)|\D|^{-s-2z}\right)
$$ 
is holomorphic for $\mathrm{Re}(z)>s_0-s$. Therefore, the function in Equation \eqref{agnadaddaka} is holomorphic for $\mathrm{Re}(z)>s_0-s$, and since $s$ is arbitrary the function in Equation \eqref{agnadaddaka} extends holomorphically to all of $\C$.
We deduce that $\phi_{m,\mathbbm{k}}(a_0,a_1,\ldots,a_m)=0$ for all  $m>0$, 
$a_0,a_1,\ldots,a_m\in\A$ and $\mathbbm{k}\in \N^m$
and so $\phi_m$
is zero.
\end{proof}

The assumptions in Lemma \ref{asstoprovtauzero} 
are very strong and may seem unrealistic. Nevertheless,
we will construct examples of such twisted spectral triples below. The main ingredient comes from spectral triples with positive spectral dimension whose associated Fredholm modules are $0$-dimensional. The equality \eqref{dodycpair} is disproved by the following theorem.

\begin{thm}
\label{counterexthm}
Let $(\A,\H,\D,\sigma)$ be an odd twisted spectral triple satisfying all the assumptions of Lemma \ref{asstoprovtauzero} and 
such that for some $x\in K_1(A)$,
$$
\langle [(\A,\H,\D,\sigma)],x\rangle\neq 0.
$$
Then $(\A,\H,\D,\sigma)$ is a strongly 
regular finitely summable twisted spectral triple 
with finite discrete dimension spectrum for which the equality \eqref{dodycpair} fails.
\end{thm}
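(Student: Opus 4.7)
The plan is to recognise that Theorem \ref{counterexthm} is essentially a direct corollary of Lemma \ref{asstoprovtauzero} once one unpacks the indexing conventions of Moscovici's ansatz in the odd case. The structural part of the conclusion (strong regularity, finite summability, finite discrete dimension spectrum) will come directly from the lemma, and the failure of \eqref{dodycpair} will come from comparing the two sides: the left-hand side will be forced to vanish by the lemma, while the right-hand side is non-zero by hypothesis.

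First I would invoke Lemma \ref{asstoprovtauzero} itself. Its hypotheses are explicitly built into the statement of the theorem, and its conclusion already records that $(\A,\H,\D,\sigma)$ is a strongly regular finitely summable twisted spectral triple with finite discrete dimension spectrum. This takes care of all the regularity/summability claims with no further work, so there is nothing to do for the first half of the conclusion beyond citing the lemma.

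Next I would address the cohomological side. In the odd case $j=1$, Moscovici's ansatz is the cochain $(\phi_{2m+1})_{m\in\N}$, and by the second part of Lemma \ref{asstoprovtauzero} we have $\phi_m=0$ for every $m>0$. In particular $\phi_{2m+1}=0$ for every $m\in\N$, so the entire Moscovici cochain is identically zero as a multilinear functional on $\A$. Its cyclic cohomology class is therefore the zero class, and consequently
\[
\langle [(\phi_{2m+1})_{m\in\N}],\mathrm{Ch}(x)\rangle=0\quad\text{for every }x\in K_1(A).
\]
Combining this with the standing assumption that there exists $x\in K_1(A)$ with $\langle[(\A,\H,\D,\sigma)],x\rangle\neq 0$, the two sides of \eqref{dodycpair} disagree at this $x$, so the equality fails.

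The only subtlety worth flagging is the indexing: in the odd case the ansatz begins at $m=1$, with no $\phi_0$ constant term, so the vanishing statement $\phi_m=0$ for $m>0$ from Lemma \ref{asstoprovtauzero} is exactly what is needed and no separate treatment of an $m=0$ contribution is required. The genuine difficulty of this section of the paper is not in proving Theorem \ref{counterexthm} itself, which is essentially a corollary, but in producing concrete examples that simultaneously satisfy the strong hypotheses of Lemma \ref{asstoprovtauzero} and admit a non-trivial index pairing; that is the content of the subsequent Theorems \ref{thm:the-good}, \ref{badcounterexthm}, and \ref{maintwo}.
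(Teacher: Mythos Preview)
Your proposal is correct and follows essentially the same approach as the paper's own proof: invoke Lemma \ref{asstoprovtauzero} for the structural conclusions, observe that in the odd case every component $\phi_m$ of Moscovici's ansatz has $m>0$ and hence vanishes by the lemma, and conclude that the left-hand side of \eqref{dodycpair} is zero while the right-hand side is non-zero by hypothesis. The paper's proof is in fact just a two-sentence version of exactly this argument.
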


\begin{proof}
In the odd case we always have $m>0$, 
and so the cochain provided by Moscovici's ansatz is zero
by Lemma \ref{asstoprovtauzero}.
Hence it  can not compute the non-zero pairing.
\end{proof}

So our task is to find a twisted spectral triple pairing non-trivially with $K$-theory and satisfying the assumptions
of Lemma \ref{asstoprovtauzero}. We shall provide three examples where this phenomena occurs.
The examples that matter most in this context are the ones
that sidestep Connes' obstruction for finite summability. The construction relies on manipulating spectral triples  $(\A,\H,D)$  for which commutators
$[D|D|^{-1},a]$ with the phase are smoothing or even finite rank.

\subsection{The good}
\label{thegoodsubsection}
In this subsection and the subsequent two we construct counterexamples to the equality \eqref{dodycpair} consisting of finitely summable twisted spectral triples for purely infinite algebras.
 In the current section we present a simple, commutative counterexample, in which essentially all phenomena can be observed.

Consider the spectral triple for the circle
$$
\left(C^{\infty}(S^{1}), L^{2}(S^{1}), D=-i\frac{\d}{\d x}+P_0\right).
$$
Here $P_0$ denotes the orthogonal projection onto the space of constant functions. 
Let $F$ denote the phase of $-i\frac{\d}{\d x}+P_0$ and $z\in C(S^1)$ the coordinate function $z:S^1\to \C$. 
One readily verifies that $[F,z]=2P_0z$ is a smoothing operator of rank $1$ and norm $2$. 
It follows that commutators with the phase $F$ of 
$D=-i\frac{\d}{\d x}+P_0$ are finite rank for polynomials in $(z,\bar{z})$ 
and smoothing for
$C^\infty$-functions. 
Consider the twist given by
$$
\sigma(a)=|D|a|D|^{-1}.
$$
In the first instance we only obtain a weakly twisted spectral triple
but we can get a twisted spectral triple by taking the saturation
$\A$ as the $*$-algebra generated by $\cup_k\sigma^k(C^\infty(S^1))$. 
We know that (the restriction to $C^\infty(S^1)$ of) this twisted spectral triple represents the class $[-i\d/\d x]$ in $K$-homology, and so pairs non-trivially with $K_1(C(S^1))$. The twisted spectral triple $(\A,L^2(S^1),D,\sigma)$ is readily seen to be finitely summable. The fact that $\mathcal{A}$ is contained in the classical order zero pseudodifferential operators on $S^1$ guarantees that it is regular and has simple discrete dimension spectrum. 

Then $[D,a]_\sigma=|D|[F,a]$ is smoothing for $a\in C^\infty(S^1)$, and likewise if
we consider $\sigma^k(a)$. A brief calculation shows that for $a_{0},a_{1}\in\A$ we have
\begin{align*}
\Tr(a_0[D,\sigma^{-1}(a_1)]_\sigma |D|^{-(1+2z)})
&=\Tr(a_0[F,a_1]|D|^{-2z}),
\end{align*}
and the right hand side is holomorphic for all $z\in\C$ when 
$a_1\in C^\infty(S^1)$. Lemma \ref{asstoprovtauzero} shows that similar
comments apply for the other terms 
$$
\Tr(a_0[D,\sigma^{-1}(a_1)]_\sigma^{(k_1)} 
|D|^{-(2k_1+1+2z)}),
$$
appearing in the twisted local index formula. Hence the twisted 
local index formula can not compute the index pairing
as all the residue functionals will vanish identically.

\begin{thm}
\label{thm:the-good}
Let $(\A,L^2(S^1),D,\sigma)$ denote the regular finitely summable twisted
spectral triple with simple discrete dimension spectrum obtained from saturating the weakly twisted spectral triple $(C^\infty(S^1),L^2(S^1), D, \sigma)$ with $D:=-i\frac{\d}{\d x}+P_0$ and $\sigma(a):= |D|a|D|^{-1}$. The twisted spectral triple $(\A,L^2(S^1),D,\sigma)$ pairs non-trivially with $K_1(\A)$ but
the cochain $(\phi_m)$ provided by Moscovici's ansatz is the zero cochain.
Hence the equality \eqref{dodycpair} does not hold.
\end{thm}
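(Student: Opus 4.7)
The plan is to apply Theorem \ref{counterexthm} (see page \pageref{counterexthm}), which reduces the proof to (i) verifying the hypotheses of Lemma \ref{asstoprovtauzero} for the saturated triple, and (ii) exhibiting a class in $K_1(\bar{\A})$ that pairs non-trivially with the $K$-homology class. Since the twist $\sigma(a) = |D|a|D|^{-1}$ is exactly of the form assumed in Lemma \ref{asstoprovtauzero} and $D$ is invertible, Proposition \ref{strongreforsimsi} immediately gives strong regularity of the saturated triple, together with the twisted pseudodifferential calculus $\Psi^*_{\sigma,D}(\A)$.

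First I would observe that $\A$ is contained in the algebra $\Psi^0_{\mathrm{cl}}(S^1)$ of classical pseudodifferential operators of order zero: for $b \in C^\infty(S^1)$ the iterate $\sigma^k(b) = |D|^k b |D|^{-k}$ is of order zero because $[|D|,b]$ is a pseudodifferential operator of order $-1$ on $S^1$, and the saturation inherits this property by multiplicativity. This inclusion supplies, almost for free, all of the structural hypotheses: $\A$ preserves $\Dom(D) = H^1(S^1)$; $(\A,L^2(S^1),D,\sigma)$ is $p$-summable for every $p > 1$; Proposition \ref{saturateprops} is applicable; and by classical $\zeta$-function methods for pseudodifferential operators on closed manifolds, the function $\zeta_T(z) = \Tr(T|D|^{-2z})$ for every $T \in \Psi^*_{\sigma,D}(\A)$ admits a meromorphic extension to $\C$ with at most simple poles in a discrete set. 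This verifies simple discrete dimension spectrum and hypothesis b) of Lemma \ref{asstoprovtauzero}.

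Next, for hypothesis a), the identity \eqref{twistide} gives $[D,a]_\sigma = |D|[F,a]$, so it suffices to show $[F,a]$ is smoothing for every $a \in \A$; then $f_1(D)|D|[F,a]f_2(D)$ is trace class for polynomially bounded Borel $f_1,f_2$, since rapid decay of matrix coefficients absorbs polynomially bounded functional-calculus factors. The starting point is the explicit rank-one identity $[F,z] = 2|1\rangle\langle e^{-ix}|$, which by Fourier expansion implies $[F,b]$ is smoothing for every $b \in C^\infty(S^1)$, with matrix coefficients inheriting the rapid decay of the Fourier coefficients of $b$. Since $F$ commutes with $|D|$, we have $[F,\sigma^k(b)] = |D|^k [F,b] |D|^{-k}$, which multiplies these matrix coefficients by the polynomially bounded ratio of eigenvalues of $|D|$, preserving rapid decay and hence the smoothing property. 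Multiplicativity extends this to the full saturation $\A$.

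Finally, for non-triviality of the pairing, note that $\sigma(a) - a = [|D|,a]|D|^{-1}$ is a pseudodifferential operator of order $-1$ for $a \in C^\infty(S^1)$, hence compact, so the bounded transform of $D$ coincides modulo compacts with that of $-i\tfrac{\d}{\d x}$. As in Example \ref{circleexdef}, the class $[(\A, L^2(S^1), D, \sigma)]$ pulls back along $\iota:C(S^1)\hookrightarrow \bar\A$ to the fundamental class of $S^1$ in $K^1(C(S^1))$, and its pairing with $\iota_*[z] \in K_1(\bar\A)$ is the classical Toeplitz index $\pm 1 \neq 0$. Theorem \ref{counterexthm} then gives the failure of \eqref{dodycpair}. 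The main obstacle I anticipate is the uniform control of the smoothing property throughout the saturation $\A$, which algebraically mixes conjugations by arbitrary powers $|D|^k$ with products from $C^\infty(S^1)$; the commutativity $[F,|D|]=0$ and the rigid Fourier structure of $S^1$ make this tractable here, whereas the analogous verifications in the subsequent Fuchsian and Cuntz–Krieger examples require substantially more work.
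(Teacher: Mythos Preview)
Your proposal is correct and follows essentially the same route as the paper: verify the hypotheses of Lemma \ref{asstoprovtauzero} via the inclusion $\A\subseteq\Psi^0_{\rm cl}(S^1)$ (giving discrete dimension spectrum by classical $\zeta$-function theory) and the smoothing property of $[F,a]$ (transmitted to the saturation via $[F,\sigma^k(b)]=|D|^k[F,b]|D|^{-k}$), then invoke the Toeplitz index for non-triviality and conclude with Theorem \ref{counterexthm}. One minor slip: $[|D|,b]$ is a classical pseudodifferential operator of order $0$, not order $-1$; the correct argument for $\sigma^k(b)\in\Psi^0_{\rm cl}(S^1)$ is either direct order-counting for the conjugation $|D|^k b|D|^{-k}$, or the identity $\sigma(b)=b+[|D|,b]\,|D|^{-1}$ with $[|D|,b]\,|D|^{-1}\in\Psi^{-1}_{\rm cl}(S^1)$---your conclusion is unaffected.
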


One could argue that this counterexample is artificial, and introduces a twist where none is needed. After all, $C^{\infty}(S^{1})$ admits well-behaved, 
finitely summable spectral triples for which the Connes-Moscovici local index formula holds. In the next two sections we consider algebras for which no finitely summable spectral triples exist, and we construct finitely summable twisted spectral triples for which the twisted local index formala fails. While the details become more complicated, the essential story remains the same.

\subsection{The bad}

In this subsection we will 
construct finitely summable spectral triples on a class of $C^*$-algebras 
that in general do not admit finitely summable spectral triples. 

Consider a discrete subgroup $\Gamma\subseteq SU(1,1)$. The Lie group 
$$
SU(1,1):=\left\{\gamma=
\begin{pmatrix} a& b\\\bar{b}&\bar{a}\end{pmatrix}\in M_2(\C): 
|a|^2-|b|^2=1\right\},
$$
acts on the circle $S^1$ by M\"obius transformations  
$$
\gamma(z):=\frac{az+b}{\bar{b}z+\bar{a}}.
$$
We consider the algebra 
$\mathcal{A}_0:=C^\infty(S^1)\rtimes^{\rm alg} \Gamma$. As in example \ref{circleexdef}, 
we can realize $\mathcal{A}_0$ as a $*$-algebra of operators on 
$L^2(S^1)$ via the covariant representation 
\begin{equation}
\label{unifuchsrep}
\pi(f)\phi(z):=f(z)\phi(z), \, f\in C^\infty(S^1),\quad \pi(\gamma)\phi(z):=|\gamma'(z)|^{-\frac{1}{2}}\phi (\gamma z), \, \gamma\in\Gamma,
\end{equation}
of the $C^{*}$-dynamical system $(C(S^{1}),\Gamma)$, as in Equation \eqref{unirep}. 
In the case that $\Gamma$ is a nonelementary Fuchsian group of the first kind,
the crossed product $C(S^1)\rtimes_r \Gamma$ is purely infinite (see \cite[Proposition 3.1]{delaroche} and \cite[Lemma 3.8]{Lott}) 
and does not admit any finitely summable spectral triples. 

We consider the self-adjoint elliptic first order pseudodifferential operator 
$D:=-i\frac{\mathrm{d}}{\mathrm{d}x}+P_0$ on $L^2(S^1)$ as in Subsection \ref{thegoodsubsection}. 
The classical order zero pseudodifferential operators on 
$S^1$ will be denoted by $\Psi^0_{\rm cl}(S^1)$ 
and its algebraic crossed product with $\Gamma$ by 
$\Psi^0_{\rm cl}(S^1)\rtimes^{\rm alg} \Gamma$. 
We define the regular automorphism $\sigma$ of 
$\Psi^0_{\rm cl}(S^1)\rtimes^{\rm alg}\Gamma$ as 
$\sigma(a):=|D|a|D|^{-1}$. Since 
$\mathcal{A}_0\subseteq \Psi^0_{\rm cl}(S^1)\rtimes^{\rm alg} \Gamma$, 
we have that $\mathcal{A}_0\subseteq \cap_{k\in \Z} \Dom(\sigma^k)$. 
In particular, we can define the saturation
$$
\A:=(\A_0)_\sigma \subseteq \Psi^0_{\rm cl}(S^1)\rtimes^{\rm alg} \Gamma.
$$
We let $\Psi^{-\infty}(S^1)$ denote the $*$-algebra of 
smoothing operators on $L^2(S^1)$, that is, operators 
with Schwartz kernel in $C^\infty(S^1\times S^1)$.

\begin{prop}
\label{firstproperties}
The collection $(\A,L^2(S^1),D,\sigma)$ is a finitely summable regular twisted spectral triple such that for any $a\in \A$, the twisted commutator $[D,a]_\sigma\in \Psi^{-\infty}(S^1)$. Moreover, $\Psi^*_{\sigma,D}(\A)$ can be taken to be generated by $\A$, $\Psi^{-\infty}(S^1)$ and all complex powers of $|D|$. 
\end{prop}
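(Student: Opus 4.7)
The plan is to verify directly each axiom of a twisted spectral triple and the structure of the twisted pseudo-differential calculus, leveraging the pseudo-differential nature of elements of $\mathcal{A}_0=C^{\infty}(S^{1})\rtimes^{\alg}\Gamma$ and the simple behaviour of the sign $F=D|D|^{-1}$.

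First I would check the basic axioms and finite summability. Each $\pi(f)$ is multiplication by a smooth function and each $\pi(\gamma)$ is composition with an orientation-preserving diffeomorphism of $S^{1}$ times a smooth factor; both preserve every Sobolev space $H^{k}(S^{1})=\Dom(|D|^{k})$. Hence $\sigma^{k}(a)=|D|^{k}a|D|^{-k}$ for $a\in\mathcal{A}_0$ is bounded and preserves every Sobolev space, and so does every element of the saturation $\A$. The compactness $a(1+D^{2})^{-1/2}\in\mathbb{K}(L^{2}(S^{1}))$ is then immediate, and finite summability follows because the spectrum of $D$ consists of $\mathbb{Z}\setminus\{0\}$ together with the eigenvalue $1$ coming from the $P_{0}$ shift, all with unit multiplicity, so $(i+D)^{-1}\in\mathcal{L}^{p}$ for every $p>1$.

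The main technical step is to show $[F,a]\in\Psi^{-\infty}(S^{1})$ for all $a\in\mathcal{A}_0$. Using $|D|^{-1}D=F$ and $[F,|D|]=0$, one obtains the identity $[D,a]_\sigma = Da-|D|a|D|^{-1}D = |D|[F,a]$, so that the smoothing property for $[F,a]$ transfers to $[D,a]_\sigma$. For $a=\pi(f)$, the operator $F$ differs from the Hilbert transform on $S^{1}$ by a rank-one correction; the Schwartz kernel of the commutator has the form $K_{H}(x,y)(f(y)-f(x))$, and the factor $f(y)-f(x)$ absorbs the conormal singularity of $K_{H}$ on the diagonal, yielding a smooth kernel. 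For $a=\pi(\gamma)$ the assertion is equivalent to the classical Fourier-integral-operator statement that $\pi(\gamma)F\pi(\gamma)^{*}-F\in\Psi^{-\infty}(S^{1})$ for any orientation-preserving diffeomorphism $\gamma$ of $S^{1}$. To extend to the saturation I use $[F,\sigma^{k}(a_{0})]=|D|^{k}[F,a_{0}]|D|^{-k}$ (since $F$ commutes with $|D|$) together with stability of smoothing operators under composition with classical pseudo-differential operators. Because $[F,\cdot]$ is a derivation and $\Psi^{-\infty}(S^{1})$ is a two-sided $*$-ideal in $\mathbb{B}(L^{2}(S^{1}))$, the set of $a$ with $[F,a]\in\Psi^{-\infty}(S^{1})$ is a $*$-algebra containing every $\sigma^{k}(\mathcal{A}_0)$, and hence contains $\A$.

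Finally, to verify regularity and the structure of $\Psi^{*}_{\sigma,D}(\A)$, let $\mathcal{B}$ denote the $*$-algebra generated by $\A$, $\Psi^{-\infty}(S^{1})$ and all complex powers of $|D|$; by the previous step $\mathcal{B}\supseteq[D,\A]_\sigma$. Define $\theta:=\mathrm{Ad}_{|D|}$ on $\mathcal{B}$. Each generating set is $\theta$-invariant: powers of $|D|$ commute with $|D|$; $\A$ is $\theta$-invariant by the saturation construction; and smoothing operators are preserved under conjugation by the classical first-order elliptic operator $|D|$. Since $\theta(b)|D|=|D|b$ identically on $\mathcal{B}$, the twisted derivation $\delta_\theta(b)=|D|b-\theta(b)|D|$ vanishes, so $\mathcal{B}$ is trivially closed under $\delta_\theta$. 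Hence $\mathcal{B}$ is a valid twisted pseudo-differential calculus for $(\A,L^{2}(S^{1}),D,\sigma)$, proving regularity and the concluding assertion. The main obstacle in this plan is the smoothing statement $[F,\pi(\gamma)]\in\Psi^{-\infty}(S^{1})$, which rests on the diffeomorphism invariance of the Hilbert transform on $S^{1}$ modulo smoothing operators.
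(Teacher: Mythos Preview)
Your proof is correct, and broadly parallel to the paper's, but the key step---showing $[F,\pi(\gamma)]\in\Psi^{-\infty}(S^1)$---is handled differently. You invoke the general fact that the Hilbert transform on $S^1$ is diffeomorphism-invariant modulo smoothing operators, i.e.\ $\pi(\gamma)F\pi(\gamma)^*-F\in\Psi^{-\infty}(S^1)$ for \emph{any} orientation-preserving diffeomorphism. This is true (it follows from Egorov's theorem together with the observation that a classical order-zero operator squaring to $1$ with principal symbol $\mathrm{sgn}(\xi)$ is unique modulo $\Psi^{-\infty}$), and it gives a more general statement than needed. The paper instead exploits a feature specific to $\Gamma\subset SU(1,1)$: the non-unitarised action $\phi\mapsto\phi\circ\gamma$ preserves the Hardy space $H^2(S^1)=\mathrm{im}(F+1)/2$, since M\"obius transformations map holomorphic functions on the disc to holomorphic functions. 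Combined with the Leibniz rule and the smooth factor $|\gamma'|^{-1/2}$, this reduces everything to the already-established case of $[F,f]$ for $f\in C^\infty(S^1)$. The paper's route is more elementary and self-contained (pure complex analysis, no FIO calculus), while yours is more robust---it would work verbatim for any discrete group of orientation-preserving diffeomorphisms, not just M\"obius transformations---at the cost of importing a PsDO fact you flag but do not prove.

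For regularity and the description of $\Psi^*_{\sigma,D}(\A)$, you essentially reprove inline what the paper packages as Proposition~\ref{strongreforsimsi}: since $\sigma=\mathrm{Ad}_{|D|}$, the twisted derivation $\delta_\theta$ vanishes identically, so any $*$-algebra containing $\A$, $[D,\A]_\sigma$ and complex powers of $|D|$ and closed under $\mathrm{Ad}_{|D|}$ works. Your inclusion of $\Psi^{-\infty}(S^1)$ among the generators is harmless and consistent with the paper's statement.
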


\begin{proof}
We start by showing that $[D,a]_\sigma\in \Psi^{-\infty}(S^1)$ for all $a\in \A$. Note that for $a\in \mathcal{A}$, 
$$[D,a]_\sigma=|D|[F,a].$$
It therefore suffices to show that $[F,a]\in \Psi^{-\infty}(S^1)$ for $a\in \A$. Since $[F,\sigma^k(a)]=\sigma^k([F,a])$, the Leibniz rule implies that it suffices to show that $[F,a]\in \Psi^{-\infty}(S^1)$ for $a\in \A_0$. We note that the (non-unitarised)  action $\gamma\phi(z):=\phi(\gamma z)$ of $SU(1,1)$ on $L^{2}(S^{1})$ preserves 
$$\mathrm{im}(F+1)=H^2(S^1)=\{f\in L^2(S^1): f\mbox{  extends to a holomorphic function on the disc}\}.$$
Thus, since $z\mapsto |\gamma'(z)|^{-\frac{1}{2}}\in C^{\infty}(S^{1})$, Equation \eqref{unifuchsrep} and the Leibniz rule imply that it suffices to show that $[F,a]\in \Psi^{-\infty}(S^1)$ for $a\in C^\infty(S^1)$, which we have already noted to be true.

Since $D$ is an elliptic first order pseudodifferential operator, $(i\pm D)^{-1}\in \mathcal{L}^p(L^2(S^1))$ for all $p>1$. We can conclude that $(\A,L^2(S^1),D,\sigma)$ is a finitely summable twisted spectral triple. It is regular and admits the prescribed twisted pseudodifferential calculus by Proposition \ref{strongreforsimsi} (see page \pageref{strongreforsimsi}).
\end{proof}

\begin{prop}
The twisted spectral triple $(\A,L^2(S^1),D,\sigma)$ satisfies all the assumptions of Lemma \ref{asstoprovtauzero} (see page \pageref{asstoprovtauzero}). Moreover, $(\A,L^2(S^1),D,\sigma)$ has simple discrete dimension spectrum $\mathrm{Sd}:=1-\frac{1}{4}\N$.
\end{prop}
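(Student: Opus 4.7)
The plan is to verify the two hypotheses of Lemma \ref{asstoprovtauzero} and then identify the set of poles produced as the dimension spectrum.

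Condition (a) is essentially automatic from Proposition \ref{firstproperties}. Since $[D,a]_\sigma \in \Psi^{-\infty}(S^1)$, its Schwartz kernel is smooth, so $[D,a]_\sigma$ maps $L^2(S^1)$ into $C^\infty(S^1) \subseteq \Dom(f_1(D))$ for any polynomially bounded Borel $f_1$, yielding the domain stability claim. For the trace-class statement, pick integers $N_1,N_2$ large enough that $f_1(D)(1+D^2)^{-N_1}$ and $(1+D^2)^{-N_2}f_2(D)$ are bounded, and write
\begin{equation*}
f_1(D)[D,a]_\sigma f_2(D) = \bigl[f_1(D)(1+D^2)^{-N_1}\bigr]\bigl[(1+D^2)^{N_1}[D,a]_\sigma(1+D^2)^{N_2}\bigr]\bigl[(1+D^2)^{-N_2}f_2(D)\bigr].
\end{equation*}
The middle factor remains smoothing, hence trace class on the circle, while the outer factors are bounded.

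For condition (b) the decisive observation is that $\sigma^k = \textnormal{Ad}_{|D|^k}$ commutes with $|D|^{-2z}$ and preserves trace on $\mathcal{L}^1$, so by cyclicity one has the identity
\begin{equation*}
\Tr\bigl(\sigma^{K_1}(a_1)\cdots\sigma^{K_n}(a_n)|D|^{-2z}\bigr)=\Tr\bigl(a_1\sigma^{K_2-K_1}(a_2)\cdots\sigma^{K_n-K_1}(a_n)|D|^{-2z}\bigr).
\end{equation*}
Iterating, and using that $\sigma^k(a)-a$ is one order lower in the pseudodifferential filtration (so correction terms are handled by induction on order), reduces the analysis of $\zeta_a$ and $\zeta_{Fa}$ to zeta functions of elements of $\A_0 = C^\infty(S^1)\rtimes^{\alg}\Gamma$. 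By linearity this further reduces to the building blocks $\Tr(P\pi(\gamma)|D|^{-2z})$ and $\Tr(FP\pi(\gamma)|D|^{-2z})$ for $P\in\Psi^0_{\rm cl}(S^1)$ and $\gamma\in\Gamma$.

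For $\gamma = \textnormal{id}$, $P|D|^{-2z}$ is classical pseudodifferential on $S^1$, and its trace admits the usual meromorphic continuation with simple poles in $\tfrac12-\tfrac12\N\subseteq 1-\tfrac14\N$ arising term-by-term from the homogeneous symbol expansion of $P$. For $\gamma\neq\textnormal{id}$, $P\pi(\gamma)$ is a zeroth-order Fourier integral operator whose canonical relation is the graph of the symplectic lift of the M\"obius map $\gamma$, and
\begin{equation*}
\Tr(P\pi(\gamma)|D|^{-2z}) = \sum_{n\neq 0}|n|^{-2z}\int_{S^1} p(x,n)|\gamma'(x)|^{-1/2}\mathrm{e}^{in(\gamma(x)-x)}\,\d x + \textrm{const}.
\end{equation*}
For elliptic $\gamma$ the phase $\gamma(x)-x$ has no critical point on $S^1$ and the integral is Schwartz-decaying in $n$, so the contribution is entire. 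For hyperbolic and parabolic $\gamma$ a stationary-phase expansion at the fixed points produces an asymptotic series in half-integer powers of $|n|$ whose Mellin transform yields simple poles located within $1-\tfrac14\N$. The main obstacle lies in this last stationary-phase computation: one must verify simplicity of all poles, track the half-density weight $|\gamma'|^{-1/2}$ of \eqref{unifuchsrep} together with the order shifts introduced by the $|D|^k$-conjugations from the reduction above so that the poles land precisely in $1-\tfrac14\N$, and handle the degenerate critical points at parabolic fixed points separately. Once this is done, hypothesis (b) is verified and the collection of poles realizes the simple discrete dimension spectrum $\Sd = 1-\tfrac14\N$.
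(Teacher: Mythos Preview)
Your treatment of condition~(a) is correct and essentially identical to the paper's argument.

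For condition~(b), however, you are working much harder than necessary and leave the proof incomplete. The key observation you are missing is that no reduction from $\A$ back to $\A_0$ is needed at all: since conjugation by $|D|$ preserves the order filtration on $\Psi^*_{\rm cl}(S^1)\rtimes^{\rm alg}\Gamma$ (Egorov), the saturation $\A$ of $\A_0$ under $\sigma$ still sits inside $\Psi^0_{\rm cl}(S^1)\rtimes^{\rm alg}\Gamma$, and so does $F\A$. The paper then simply invokes \cite[Subsection~3.2]{twistedmoscovici}, where Moscovici already proved that for every $A\in\Psi^0_{\rm cl}(S^1)\rtimes^{\rm alg}\Gamma$ the function $z\mapsto\Tr(A|D|^{-2z})$ extends meromorphically with simple poles in $1-\tfrac14\N$. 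This one-line citation replaces your entire stationary-phase programme.

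Your reduction step also contains a slip: the claim that $\sigma^k(a)-a$ is one order lower in the filtration is false for the $\pi(\gamma)$-part. For $a=\pi(\gamma)$ one has $\sigma(a)-a=[|D|,\pi(\gamma)]|D|^{-1}$, and by Egorov $\pi(\gamma)^{-1}|D|\pi(\gamma)$ has principal symbol $|\gamma'|\cdot|\xi|$, so $[|D|,\pi(\gamma)]$ is genuinely of order~$1$ as a Fourier integral operator when $\gamma$ is not an isometry; hence $\sigma(a)-a$ remains of order~$0$. Fortunately this defect is irrelevant once you use the containment $\A+F\A\subseteq\Psi^0_{\rm cl}(S^1)\rtimes^{\rm alg}\Gamma$ directly. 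Finally, the stationary-phase analysis you outline---including the treatment of parabolic fixed points that is responsible for the $\tfrac14$-spacing of the dimension spectrum---is precisely what Moscovici carried out in the cited reference; you correctly identify this as the crux but do not do it, so as written your argument for~(b) is a sketch rather than a proof.
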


\begin{proof}
We begin by showing that for any two polynomially bounded Borel functions $f_1,f_2:\R\to \R$ we have that $[D,a]_\sigma$ preserves $\Dom(f_1(D))$ and $f_1(D)[D,a]_\sigma f_2(D)$ extends to a trace class operator on $L^2(S^1)$. First, we note that since $f_1$ and $f_2$ are polynomially bounded, there is an $N\geq 0$ such that $|f_1(x)|\leq C_1(1+|x|)^N$ and $|f_2(x)|\leq C_2(1+|x|)^N$ for some $C_1,C_2\geq 0$. In particular, 
\[\Dom(|D|^{N})\subset \Dom(f_{i}(D)), \quad i=1,2.\]
Since $[D,a]_{\sigma}\in \Psi^{-\infty}(S^{1})$ it holds that $[D,a]_{\sigma}:L^{2}(S^{1})\to \bigcap_{s\geq 0} \Dom(|D|^{s}),$ and it follows that
\[(1+|D|)^{s}f_{1}(D)[D,a]_{\sigma}f_{2}(D),\]
extends to a bounded operator for all $s\geq 0$. Since $(1+|D|)^{-s}$ is a trace class operator for $s>1$, it follows that
\[f_{1}(D)[D,a]_{\sigma}f_{2}(D)=(1+|D|)^{-s}(1+|D|)^{s}f_{1}(D)[D,a]_{\sigma}f_{2}(D),\]
is of trace class.

Next we show that for any $a\in \A$, the $\zeta$-functions $\zeta_a$ and $\zeta_{Fa}$ extend meromorphically to functions on $\C$ that are holomorphic outside $1-\frac{1}{4}\N$ with at most simple poles in $1-\frac{1}{4}\N$. It was shown in \cite[Subsection 3.2]{twistedmoscovici} that for $A\in \Psi^0_{\rm cl}(S^1)\rtimes^{\rm alg} \Gamma$ the function $\zeta_A(z):=\mathrm{Tr}(A|D|^{-2z})$ extends meromorphically to a function on $\C$ that is holomorphic outside $1-\frac{1}{4}\N$ with at most simple poles in $1-\frac{1}{4}\N$. We can now conclude the desired result from noting that $\A+F\A \subseteq \Psi^0_{\rm cl}(S^1)\rtimes^{\rm alg} \Gamma$. 
\end{proof}

Consider the class $x\in K_1(\A)$ of the unitary $u\in C^\infty(S^1)\subseteq \A$ defined by $u(z):=z$. By the index theorem for Toeplitz operators, $$
\langle [(\A,\H,\D,\sigma)],x\rangle=-1,
$$
Using Theorem \ref{counterexthm} we can now conclude the following theorem.

\begin{thm}
\label{badcounterexthm}
Let $(\A,L^2(S^1),D,\sigma)$ denote the regular finitely summable twisted
spectral triple with simple discrete dimension spectrum obtained from saturating the weakly twisted spectral triple $(C^\infty(S^1)\rtimes^{\rm alg}\Gamma,L^2(S^1),D,\sigma)$ with $D:=-i\frac{\d}{\d x}+P_0$ and $\sigma(a):= |D|a|D|^{-1}$. The twisted spectral triple $(\A,L^2(S^1),D,\sigma)$ pairs non-trivially with $K_1(\A)$ but
the cochain $(\phi_m)$ provided by Moscovici's ansatz is the zero cochain.
Hence the equality \eqref{dodycpair} does not hold.
\end{thm}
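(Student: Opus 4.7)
The plan is to assemble the conclusion from Theorem \ref{counterexthm}, which reduces the task to verifying two things for the saturated twisted spectral triple $(\A,L^2(S^1),D,\sigma)$: (i) the hypotheses of Lemma \ref{asstoprovtauzero} hold, and (ii) some class in $K_1(\A)$ pairs non-trivially with $(\A,L^2(S^1),D,\sigma)$.

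For (i), I will invoke the two propositions established immediately before the theorem. Proposition \ref{firstproperties} provides finite summability, regularity and, most crucially, the smoothing property $[D,a]_\sigma\in\Psi^{-\infty}(S^1)$ for all $a\in\A$, while the construction forces $D$ to be invertible and $\sigma(a)=|D|a|D|^{-1}$. From the smoothing property, assumption (a) of Lemma \ref{asstoprovtauzero} is immediate: for polynomially bounded Borel $f_1,f_2$, the operator $[D,a]_\sigma$ maps $L^2(S^1)$ into $\bigcap_{s\geq 0}\Dom(|D|^s)$, so $(1+|D|)^s f_1(D)[D,a]_\sigma f_2(D)$ is bounded for every $s$, and factoring out $(1+|D|)^{-s}$ with $s>1$ yields trace class. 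Assumption (b) is supplied by the second proposition, using $\A+F\A\subseteq \Psi^0_{\rm cl}(S^1)\rtimes^{\rm alg}\Gamma$ and Moscovici's analysis in \cite{twistedmoscovici} of the zeta functions on this classical crossed product; one obtains simple discrete dimension spectrum contained in $1-\tfrac{1}{4}\N$.

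For (ii), I take $x=[u]\in K_1(\A)$ where $u(z)=z$ is the standard generating unitary of $C^\infty(S^1)\subseteq \A$. By Proposition \ref{bddtwistedtransform}, the bounded transform $F=D|D|^{-1}$ makes $(\A,L^2(S^1),F)$ into a Fredholm module, and the pairing $\langle [(\A,L^2(S^1),D,\sigma)],[u]\rangle$ equals $\mathrm{index}(PuP)$, where $P=\tfrac{1}{2}(1+F)$ projects onto the Hardy space $H^2(S^1)$. The classical Toeplitz index theorem gives $-1$, which is non-zero. Equivalently, naturality of the index pairing with respect to the inclusion $\iota\colon C(S^1)\hookrightarrow A$ reduces the computation to the non-trivial pairing $\langle[(C^\infty(S^1),L^2(S^1),-i\d/\d x)],[u]\rangle$ on the circle.

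With (i) and (ii) in hand, Theorem \ref{counterexthm} applies and produces the full conclusion: Moscovici's cochain $(\phi_m)_{m\text{ odd}}$ vanishes identically on $\A$, yet the $K$-theoretic pairing is non-zero, so \eqref{dodycpair} fails. The main analytic content is really the smoothing property $[F,f]\in\Psi^{-\infty}(S^1)$ for $f\in C^\infty(S^1)$ and its $\Gamma$-equivariant upgrade (which used invariance of $H^2(S^1)$ under the M\"obius action up to a smooth cocycle), and this is already dispatched in Proposition \ref{firstproperties}; the possible delicate point is checking that saturation under $\sigma$ does not destroy the smoothing property, but this is immediate from $[F,\sigma^k(a)]=\sigma^k([F,a])$ together with the Leibniz rule.
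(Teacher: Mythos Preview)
Your proposal is correct and follows essentially the same approach as the paper: you invoke Proposition~\ref{firstproperties} and the subsequent proposition to verify the hypotheses of Lemma~\ref{asstoprovtauzero}, compute the index pairing with $u(z)=z$ via the Toeplitz index theorem to get $-1$, and then conclude by Theorem~\ref{counterexthm}. This is exactly the argument the paper gives; even your detailed justification of assumption~(a) via the factorisation $(1+|D|)^{-s}(1+|D|)^{s}f_1(D)[D,a]_\sigma f_2(D)$ and your saturation remark $[F,\sigma^k(a)]=\sigma^k([F,a])$ coincide with the paper's reasoning.
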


We note that the weakly twisted spectral triple $(C^\infty(S^1)\rtimes^{\rm alg}\Gamma, L^2(S^1),D,\sigma)$ whose saturation appears in Theorem \ref{badcounterexthm} is in fact the exponentiation (see page \pageref{expdefnein} of the Introduction) of the logarithmic dampening of a twisted spectral triple on $C^\infty(S^1)\rtimes^{\rm alg}\Gamma$ constructed as in Example \ref{circleexdef} (see page \pageref{circleexdef}). This coincidence indicates that the study of index cocycles associated to twisted spectral triples is highly sensitive to the choice of twist.

\subsection{The ugly}
\label{subsec:counterexample}

The algebras underlying our 
final family of counterexamples are saturations of crossed products
arising from the  free groups $\mathbb{F}_d$ acting
on the boundaries $\partial\mathbb{F}_{d}$ of their Cayley
graphs. Here $d>1$. The action of $\mathbb{F}$ on $\partial \mathbb{F}_d$ is amenable, so there is no distinction between the
full and reduced crossed products.

In order to obtain a twisted spectral triple for the purely infinite 
crossed product $C(\partial\mathbb{F}_{d})\rtimes\mathbb{F}_{d}$, 
we utilise the
fact that it is isomorphic to an explicit Cuntz-Krieger algebra.
We then exploit recent advances in the construction of spectral triples
and $K$-homology classes for these algebras  \cite{GM,GM2,GMR}. While these spectral
triples are not
finitely summable, commutators with the phase are again finite rank.
As in the previous two examples, through exponentiation, we construct finitely summable regular weakly twisted
spectral triples for $C(\partial\mathbb{F}_{d})\rtimes\mathbb{F}_{d}$ whose saturation satisfies the hypotheses of 
Lemma \ref{asstoprovtauzero}.

\subsubsection{The generators and relations picture}
We begin our exposition with background on Cuntz-Krieger algebras and their spectral triples.
Let $N\in \N_{>0}$ and $\pmb{A}=(A_{ij})_{i,j=1}^N$ denote an $N\times N$-matrix of $0$'s and $1$'s. For simplicity we assume that no row or column is $0$. The associated Cuntz-Krieger algebra $\mathcal{O}_{\pmb{A}}$ is defined as the universal unital $C^*$-algebra generated by $N$ partial isometries $S_1,S_2,\ldots, S_N$ satisfying the relations
$$S_i^*S_j=\delta_{ij}\sum_{k=1}^N A_{jk}S_kS_k^*\quad\mbox{and}\quad \sum_{k=1}^N S_kS_k^*=1.$$
Cuntz-Krieger algebras are nuclear. If $\pmb{A}$ is a primitive matrix, the $C^*$-algebra $\mathcal{O}_{\pmb{A}}$ is simple and purely infinite. In particular, if $\pmb{A}$ is primitive, there are no traces on $\mathcal{O}_{\pmb{A}}$ and a spectral triple on $\mathcal{O}_{\pmb{A}}$ is never finitely summable (by Theorem \ref{Connestrace} on page \pageref{Connestrace}, see also \cite{Co3}). The $K$-theory and $K$-homology of Cuntz-Krieger algebras have been computed (see for instance \cite{kaminkerputnam,Rae})
$$
K_*(\mathcal{O}_{\pmb{A}})\cong \begin{cases} 
\coker(1-\pmb{A}^T), \;&*=0,\\
\ker(1-\pmb{A}^T), \; &*=1,\end{cases},\quad K^{*+1}(\mathcal{O}_{\pmb{A}})\cong 
\begin{cases} 
\coker(1-\pmb{A}), \;&*=0,\\
\ker(1-\pmb{A}), \; &*=1.\end{cases}
$$
Here we consider $\pmb{A}$ to be a matrix acting 
on $\Z^N$ and $\coker(1-\pmb{A})=\Z^N/(1-\pmb{A})\Z^N$. 
By \cite{kaminkerputnam} that the index pairing 
$K_*(\mathcal{O}_{\pmb{A}})\times K^*(\mathcal{O}_{\pmb{A}})\to \Z$ 
(under the isomorphisms above) coincides with the pairing $\coker(1-\pmb{A}^T)\times \ker(1-\pmb{A})\to \Z$ 
induced from the inner product on $\Z^N$. 

\subsubsection{The shift space and groupoid picture}

If $\mu\in \{1,\ldots,N\}^k$ we call $\mu$ a word of length $|\mu|:=k$ in the alphabet $\{1,\ldots,N\}$. If the word $\mu=\mu_1\cdots \mu_k$ satisfies that $A_{\mu_j,\mu_{j+1}}=1$ for $j=1,\ldots, k-1$ we say that $\mu$ is {\bf admissible}. We define the empty word $\emptyset$ to be a word of length $0$ and we define it to be admissible. We introduce the notation $\mathcal{V}_{\pmb{A},k}$ for the set of all admissible words of length $k$ and $\mathcal{V}_{\pmb{A}}:=\cup_{k\in \N}\mathcal{V}_{\pmb{A},k}$. Similarly, we can consider infinite words $x=x_1x_2\cdots \in \{1,\ldots,N\}^{\N_{>0}}$. The set of infinite admissible words is denoted by $\Omega_{\pmb{A}}$ and is topologized as a compact Hausdorff space by its subspace topology $\Omega_{\pmb{A}}\subseteq \{1,\ldots,N\}^{\N_{>0}}$. The space $\Omega_{\pmb{A}}$ is totally disconnected. If $\pmb{A}$ is a primitive matrix, $\Omega_{\pmb{A}}$ contains no isolated point and $\Omega_{\pmb{A}}$ is a Cantor space.

For $\mu=\mu_1\cdots \mu_k\in \mathcal{V}_{\pmb{A}}$ we define $S_\mu:=S_{\mu_1}\cdots S_{\mu_k}$. The linear span $\mathcal{A}_0$ of the subset $\{S_\mu S_\nu^*: \mu,\nu\in \mathcal{V}_{\pmb{A}}\}$ is a dense $*$-subalgebra of $\mathcal{O}_{\pmb{A}}$. The linear span of the subset $\{S_\mu S_\mu^*: \mu\in \mathcal{V}_{\pmb{A}}\}$ is an abelian $*$-algebra whose closure is a maximally abelian subalgebra of $\mathcal{O}_{\pmb{A}}$ isomorphic to $C(\Omega_{\pmb{A}})$ when identifying $S_\mu S_\mu^*$ with the characteristic function $\chi_{C_\mu}\in C(\Omega_{\pmb{A}})$ of the cylinder set 
$$
C_\mu:=\{x=x_1x_2\cdots \in \Omega_{\pmb{A}}: x_1x_2\cdots x_k=\mu\}.$$
A more geometric approach to Cuntz-Krieger algebras stems from a description of $\mathcal{O}_{\pmb{A}}$ as a groupoid $C^*$-algebra over $\Omega_{\pmb{A}}$. The groupoid picture is useful for geometric constructions complementing the computational virtues of the description in terms of the generators $S_1,\ldots, S_N$. The space $\Omega_{\pmb{A}}$ carries a local homeomorphism 
$$\sigma_{\pmb{A}}:\Omega_{\pmb{A}}\to \Omega_{\pmb{A}}, \quad \sigma_{\pmb{A}}(x_1x_2x_3\cdots):=x_2x_3\cdots.$$
We define the groupoid $\mathcal{G}_{\pmb{A}}$ over $\Omega_{\pmb{A}}$ by
$$
\mathcal{G}_{\pmb{A}}:=\{(x,n,y)\in \Omega_{\pmb{A}}\times \Z\times \Omega_{\pmb{A}}:\; \exists k\ \mbox{such that}\  \sigma_{\pmb{A}}^{n+k}(x)=\sigma_{\pmb{A}}(y)\},
$$
with domain map $d(x,n,y):=y$, range mapping $r(x,n,y):=x$, unit $e(x)=(x,0,x)$ and composition $(x,n,y)(y,m,z):=(x,n+m,z)$. In the definition, it is implicit that $k, n+k\geq 0$. We also define the following integer valued functions on $\mathcal{G}_{\pmb{A}}$:
\begin{equation}
c_{\pmb{A}}(x,n,y):=n\quad\mbox{and}\quad 
\kappa_{\pmb{A}}(x,n,y):=\min\big\{k\geq \max(0,-n):\;\sigma_{\pmb{A}}^{n+k}(x)=\sigma_{\pmb{A}}^k(y)\big\}.
\end{equation}
We topologize $\mathcal{G}_{\pmb{A}}$ by declaring $d$ and $r$ to be local homeomorphisms and $c_{\pmb{A}}$ and $\kappa_{\pmb{A}}$ to be continuous. In this topology, $\mathcal{G}_{\pmb{A}}$ is by definition etale. We define a clopen basis for the topology indexed by $\mu,\nu\in \mathcal{V}_{\pmb{A}}$ as 
$$
C_{\mu,\nu}:=\{(x,n,y): \; n=|\mu|-|\nu|,\ x\in C_\mu, \; y\in C_\nu,\; \sigma_{\pmb{A}}^{|\mu|}(x)=\sigma_{\pmb{A}}^{|\nu|}(y)\}.
$$
The mapping $\mathcal{O}_{\pmb{A}}\to C^*(\mathcal{G}_{\pmb{A}})$ defined by $S_\mu S_\nu^*\mapsto \chi_{C_{\mu,\nu}}$ is a $*$-isomorphism. The image of the $*$-algebra $\A_0$ generated by $S_1,S_2,\ldots, S_N$ coincides with the space of compactly supported locally constant functions on $\mathcal{G}_{\pmb{A}}$ denoted by $C^\infty_c(\mathcal{G}_{\pmb{A}})$.

\subsubsection{The construction of spectral triples for Cuntz-Krieger algebras}
\label{subsub:spec-trip}

Since $\mathcal{G}_{\pmb{A}}$ is an \'{e}tale groupoid over $\Omega_{\pmb{A}}$, there is a conditional expectation $\Phi:\mathcal{O}_{\pmb{A}}\to C(\Omega_{\pmb{A}})$. Explicitly, it is defined from the property $\Phi(S_\mu S_\nu^*):=\delta_{\mu,\nu}S_\mu S_\mu^*$. We let $\Xi_{\pmb{A}}$ denote the completion of $\mathcal{O}_{\pmb{A}}$ as a right $C(\Omega_{\pmb{A}})$-Hilbert $C^*$-module in the inner product defined from $\Phi$. It was shown in \cite{GMR} that $\Xi_{\pmb{A}}$ decomposes as a direct sum of the finitely generated projective $C(\Omega_{\pmb{A}})$-submodules $\Xi_{n,k}:=C(c_{\pmb{A}}^{-1}(n)\cap \kappa_{\pmb{A}}^{-1}(k))\subseteq \Xi_{\pmb{A}}$. Moreover, we define a self-adjoint regular operator $\D_\psi$ densely on $\Xi_{\pmb{A}}$ by for $f\in C_c(\mathcal{G}_{\pmb{A}})$ setting
$$
\D_\psi f(x,n,y):= \psi(n,\kappa_{\pmb{A}}(x,n,y))f(x,n,y), \quad\mbox{where}\quad \psi(n,k):=\begin{cases}
n, \; &k=0,\\
-|n|-k,\; &k>0. \end{cases}.
$$
Note that if $k=0$, then $n\geq 0$ is automatic from 
$c_{\pmb{A}}+\kappa_{\pmb{A}}\geq 0$. Equivalently, letting $p_{n,k}$ denote the orthogonal projection onto $\Xi_{n,k}$, $\D_\psi$ is the closure of the densely defined operator $\sum_{n,k} \psi(n,k)p_{n,k}$. Note that $\D_\psi$ is invertible on the orthogonal complement of $\Xi_{(0,0)}$ since $\psi(n,k)\neq 0$ for all $n,k$ with equality if and only if $n=k=0$.

It was proven in \cite[Section 5]{GM} that $(\A_0,\Xi_{\pmb{A}},\D_{\psi})$ is an unbounded $(\mathcal{O}_{\pmb{A}},C(\Omega_{\pmb{A}}))$-Kasparov module. 

Recall that $\mathcal{A}_0$ is the $*$-algebra generated by the generators $S_1,\ldots, S_N$. We identify $\mathcal{A}_0=C^\infty_c(\mathcal{G}_{\pmb{A}})$ -- the space of compactly supported locally constant functions. 
Let us define the relevant spectral triple on $\mathcal{O}_{\pmb{A}}$. Pick a point $t\in \Omega_{\pmb{A}}$ and define the discrete set $\mathcal{V}_t:=d^{-1}(t)$. We identify 
\begin{equation}
\mathcal{V}_t:=\{(x,n)\in \Omega_{\pmb{A}}\times \Z: \; \exists k\ \mbox{such that}\  \sigma_{\pmb{A}}^{n+k}(x)=\sigma_{\pmb{A}}^k(t)\}.
\label{eq:tee}
\end{equation}
We can consider $\mathcal{V}_t$ as a subset of $\mathcal{V}_{\pmb{A}}\times \Z$ via the embedding $(x,n)\mapsto (x_1\cdots x_{n+\kappa_{\pmb{A}}(x,n,t)}, n)$. If $n+\kappa_{\pmb{A}}(x,n,t)=0$, we interpret $x_1\cdots x_{n+\kappa_{\pmb{A}}(x,n,t)}$ as the empty word. We define the operator $\D_t$ on $\ell^2(\mathcal{V}_t)$ as 
$$
\D_t\delta_{(x,n)}:=\psi(n,\kappa_{\pmb{A}}(x,n,t))\,\delta_{(x,n)}.
$$
It is proved in \cite{GM} that 
the collection $(\mathcal{A}_0,\ell^2(\mathcal{V}_t),\D_t)$ is a spectral triple coinciding with the unbounded Kasparov product of the unbounded $(\mathcal{O}_{\pmb{A}},C(\Omega_{\pmb{A}}))$-Kasparov module $(\A_0,\Xi_{\pmb{A}},\D,\psi)$ with the representation $\pi_t:C(\Omega_{\pmb{A}})\to \C$ given by point evaluation in $t$.

\begin{prop}
\label{propoflione}
Let $\pmb{A}=(A_{ij})_{i,j=1}^N$ be an $N\times N$-matrix of $0$'s and $1$'s with no row or column being $0$. Take $t\in \Omega_{\pmb{A}}$. The odd spectral triple  $(C^\infty_c(\mathcal{G}_{\pmb{A}}),\ell^2(\mathcal{V}_t),\D_t)$ constructed in Subsubsection \ref{subsub:spec-trip} above satisfies the following properties:
\begin{enumerate}
\item $(C^\infty_c(\mathcal{G}_{\pmb{A}}),\ell^2(\mathcal{V}_t),\D_t)$ is $\mathrm{Li}_1$-summable, i.e. $\mathrm{Tr}(\mathrm{e}^{-s|\D_t|})$ is finite for large enough $s>>0$;
\item The phase $F_t:=2\chi_{[0,\infty)}(\D_t)-1$ is such that for any two  Borel functions $f_1,f_2:\R\to \R$ and $a\in C^\infty_c(\mathcal{G}_{\pmb{A}})$ the commutator $[F_t,a]$ preserves $\Dom(f_1(\D_t))$ and the operator 
$$
f_1(\D_t)[F_t,a]f_2(\D_t),
$$
extends to a trace class operator on $\ell^2(\mathcal{V}_t)$;
\item Under the isomorphism $K^1(\mathcal{O}_{\pmb{A}})\cong \Z^N/(1-\pmb{A})\Z^N$ the class of $[(\mathcal{A}_0,\ell^2(\mathcal{V}_t),\D_t)]$ is mapped to the class $\delta_j\mod (1-\pmb{A})\Z^N$, where $j$ is the first letter of $t$ and $\delta_j$ denotes the $j$'th basis vector in $\Z^N$.
\end{enumerate}
\end{prop}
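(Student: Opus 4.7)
The plan is to prove each of the three parts by explicit analysis in the orthonormal eigenbasis $\{\delta_{(x,n)}\}_{(x,n)\in \mathcal{V}_t}$ of $\D_t$, exploiting the combinatorial structure of the subshift encoded by $\pmb{A}$.

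For part (1), I would first partition $\mathcal{V}_t = \mathcal{V}_t^+ \sqcup \mathcal{V}_t^-$ according to whether $\kappa_{\pmb{A}}(x,n,t)=0$ or $>0$. On $\mathcal{V}_t^+$, eigenvectors with eigenvalue $n\in \N$ correspond to admissible words of length $n$ whose last letter can precede the initial letter of $t$; their number is bounded by row sums of $\pmb{A}^n$, hence by $C\rho(\pmb{A})^n$, where $\rho(\pmb{A})$ is the spectral radius of $\pmb{A}$. A similar count applies on $\mathcal{V}_t^-$ for the eigenvalue $-(|n|+k)$ over pairs with $\kappa_{\pmb{A}}(x,n,t)=k$. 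Summing over eigenvalues, one obtains $\mathrm{Tr}(\mathrm{e}^{-s|\D_t|})<\infty$ for $s>\log\rho(\pmb{A})$, which is $\mathrm{Li}_1$-summability.

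For part (2), linearity reduces matters to $a=S_\mu S_\nu^* = \chi_{C_{\mu,\nu}}$, and such an $a$ acts on basis vectors as a partial shift: $a\delta_{(x,n)}$ vanishes unless $\nu$ is a prefix of $x$, in which case it equals $\delta_{(x',n')}$ with $n'=n+|\mu|-|\nu|$ and $x'$ an explicit admissible infinite word. The key observation is that for $|n|$ sufficiently large (depending only on $|\mu|,|\nu|$), the pairs $(x,n)$ and $(x',n')$ both lie in the same stratum $\mathcal{V}_t^\pm$, so $F_t$ acts identically on $\delta_{(x,n)}$ and $\delta_{(x',n')}$, and the commutator $[F_t,a]$ vanishes there. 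Hence $[F_t,a]$ is supported on finitely many basis indices and is finite rank. Its range lies in a finite-dimensional span of $\D_t$-eigenvectors, on which Borel functional calculus acts by scalar multiplication, so $f_1(\D_t)[F_t,a]f_2(\D_t)$ is finite rank and, in particular, trace class.

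For part (3), I would use that $(\mathcal{A}_0,\ell^2(\mathcal{V}_t),\D_t)$ arises as the unbounded Kasparov product of the $(\mathcal{O}_{\pmb{A}},C(\Omega_{\pmb{A}}))$-module $(\mathcal{A}_0,\Xi_{\pmb{A}},\D_\psi)$ with the point-evaluation $\pi_t:C(\Omega_{\pmb{A}})\to\C$, so in $K$-homology the class equals $[\pi_t]\otimes_{C(\Omega_{\pmb{A}})}[(\mathcal{A}_0,\Xi_{\pmb{A}},\D_\psi)]$ and depends only on the stratum of $t$. To match this with the Kaminker--Putnam presentation $K^1(\mathcal{O}_{\pmb{A}})\cong \coker(1-\pmb{A})$, I would pair with explicit $K_1$-generators coming from $\ker(1-\pmb{A}^T)$ and compute the Fredholm index $\mathrm{Index}(P_+ u P_+)$ using the finite-rank commutator description of part (2); this reduces the pairing to finite matrix arithmetic in which only the first letter $j$ of $t$ enters, producing the class $[\delta_j]$. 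The hardest step will be part (3): keeping careful track of the Kaminker--Putnam isomorphism and performing the pairing computation in a way that identifies $[\delta_j]$ precisely. Parts (1) and (2) are essentially direct combinatorial and finite-rank arguments once the spectral decomposition $\mathcal{V}_t^+ \sqcup \mathcal{V}_t^-$ is in place; the only delicate point is the zero eigenvalue $\psi(0,0)=0$, but this contributes a single basis vector and is absorbed into bounded perturbations of $\D_t$ that do not affect summability or the $K$-homology class.
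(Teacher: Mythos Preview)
Your overall strategy agrees with the paper's. Part (1) is argued the same way (eigenvalue-multiplicity counting and a geometric series), with your use of $\rho(\pmb{A})$ a harmless refinement of the paper's cruder bound $N$. For part (3) the paper simply invokes \cite[Theorem 5.2.3]{GM}; your proposal to compute Toeplitz indices against $K_1$-generators directly is precisely what is carried out later in the paper for the free-group case (proof of Theorem~\ref{maintwo}), so it is a legitimate route.

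There is, however, a real gap in your argument for part (2). You assert that for $|n|$ sufficiently large the basis vector $\delta_{(x,n)}$ and its image under $a=S_\mu S_\nu^*$ lie in the same stratum $\mathcal{V}_t^\pm$, and then conclude that $[F_t,a]$ is supported on finitely many basis indices. The first claim is correct (take $n\ge|\nu|$ or $n<|\nu|-|\mu|$), but it does \emph{not} imply the second: for each of the finitely many remaining values of $n$ there are still infinitely many $(x,n)\in\mathcal{V}_t$, one for every value of $\kappa_{\pmb{A}}(x,n,t)$, and you have said nothing about why the commutator vanishes on almost all of these. The missing observation is that if $(x,n)\in\mathcal{V}_t^-$ but $(x',n')\in\mathcal{V}_t^+$, then necessarily $0\le n'<|\mu|$, and the condition $\sigma_{\pmb{A}}^{n'}(x')=t$ forces $\sigma_{\pmb{A}}^{|\nu|}(x)=\sigma_{\pmb{A}}^{|\mu|-n'}(t)$, so that $x$ is uniquely determined by $n'$; thus the exceptional set has at most $|\mu|$ elements (and likewise for the opposite transition). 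The paper sidesteps this by reducing via the Leibniz rule to the generators $S_j$, for which the exceptional set is visibly a single point, and cites \cite{GM} for the details.
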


\begin{proof}
We start by proving item 1): $\mathrm{Li}_1$-summability. We compute that 
\begin{align*}
\mathrm{Tr}(\mathrm{e}^{-s|\D_t|})&=\sum_{(x,n)\in \mathcal{V}_t} \mathrm{e}^{-s|\psi(n,\kappa_{\pmb{A}}(x,n,t)|}=\\
&=\sum_{n\in \Z}\sum_{k=\max(0,-n)}^\infty \#\{(x,n)\in \mathcal{V}_t: \kappa_{\pmb{A}}(x,n,t)=k\} \mathrm{e}^{-s|\psi(n,k)|}.
\end{align*}
If $(x,n)\in \mathcal{V}_t$ satisfies $\kappa_{\pmb{A}}(x,n,t)=k$, then $x$ is determined by the first $n+k$ letters of $x$ and $t$. Therefore, we estimate 
$$\#\{(x,n)\in \mathcal{V}_t: \kappa_{\pmb{A}}(x,n,t)=k\} \leq N^{n+k}.$$
We can estimate 
$$\mathrm{Tr}(\mathrm{e}^{-s|\D_t|})\leq \sum_{n\in \Z}\sum_{k=\max(0,-n)}^\infty N^{n+k}\mathrm{e}^{-s(|n|+k+1)},$$
which is finite if $s>\log(N)$. 

Next, we prove item 2). We identify $C_c(\mathcal{V}_t\times \mathcal{V}_t)$ with a space of finite rank operators on $\ell^2(\mathcal{V}_t)$ by $K\delta_{(x,n)}:=\sum_{(x',n')} K((x',n'),(x,n))\delta_{(x',n')}$, for $K\in C_c(\mathcal{V}_t\times \mathcal{V}_t)$. It is clear that for any two Borel functions $f_1,f_2:\R\to \R$ and $K\in C_c(\mathcal{V}_t\times \mathcal{V}_t)$, the operator $f_1(\D_t)Kf_2(\D_t)$ again belongs to $C_c(\mathcal{V}_t\times \mathcal{V}_t)$, e.g. is of finite rank. By \cite[Proof of Theorem 5.2.3]{GM}, it holds that $[F_t,S_j]\in C_c(\mathcal{V}_t\times \mathcal{V}_t)$ for any $j$ and therefore $[F_t,a]\in C_c(\mathcal{V}_t\times \mathcal{V}_t)$ for any $a\in \mathcal{A}_0$. 
Hence item 2) is true. 
Item 3) is proved in \cite[Theorem 5.2.3]{GM}.
\end{proof}

The remainder of this subsection will use the ingredients of the spectral triple
in Proposition \ref{propoflione} to construct twisted spectral triples
satisfying the conditions of Lemma \ref{asstoprovtauzero}.

\begin{lemma}
\label{explem}
Let $\pmb{A}=(A_{ij})_{i,j=1}^N$ be an $N\times N$-matrix of $0$'s and $1$'s with only non-zero rows and columns. Take $t\in \Omega_{\pmb{A}}$ and consider the odd spectral triple  $(\A_0,\ell^2(\mathcal{V}_t),\D_t)$ from Proposition \ref{propoflione}. For any $s\in \C$ and $a\in \A_0$, it holds that $a\Dom(\mathrm{e}^{s|\D_t|})\subseteq \Dom(\mathrm{e}^{s|\D_t|})$ and the operator $\mathrm{e}^{s|\D_t|}a\mathrm{e}^{-s|\D_t|}$ extends to a bounded operator on $\ell^2(\mathcal{V}_t)$.
\end{lemma}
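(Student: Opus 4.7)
By linearity it suffices to treat $a=S_\mu S_\nu^*$ for admissible words $\mu,\nu$. The first step is to unwind the action of such an $a$ on the orthonormal basis $\{\delta_{(x,n)}\}_{(x,n)\in\mathcal{V}_t}$: from the generators-and-relations picture one checks that $S_\mu S_\nu^*\delta_{(x,n)}=0$ unless $x$ begins with $\nu$ and the concatenation $\mu\cdot\sigma_{\pmb{A}}^{|\nu|}(x)$ is admissible, in which case $S_\mu S_\nu^*\delta_{(x,n)}=\delta_{(y,m)}$ with
\[
y:=\mu\cdot\sigma_{\pmb{A}}^{|\nu|}(x),\qquad m:=n+|\mu|-|\nu|.
\]
Since $|\D_t|$ acts diagonally with eigenvalue $|\psi(n,\kappa_{\pmb{A}}(x,n,t))|=|n|+\kappa_{\pmb{A}}(x,n,t)$, the conjugate $e^{s|\D_t|}ae^{-s|\D_t|}$ permutes the eigenbasis (up to zeros) with a scalar factor $e^{s\Delta(x,n)}$, where
\[
\Delta(x,n):=\bigl(|m|+\kappa_{\pmb{A}}(y,m,t)\bigr)-\bigl(|n|+\kappa_{\pmb{A}}(x,n,t)\bigr).
\]

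The core of the argument is the uniform estimate $|\Delta(x,n)|\leq 2(|\mu|+|\nu|)$. The contribution $\bigl||m|-|n|\bigr|\leq ||\mu|-|\nu||$ is immediate. For the $\kappa$-contribution, set $w:=\sigma_{\pmb{A}}^{|\nu|}(x)$, so that $x=\nu w$ and $y=\mu w$. For any $k$ with $n+k\geq|\nu|$ and $m+k\geq|\mu|$ one has
\[
\sigma_{\pmb{A}}^{n+k}(x)=\sigma_{\pmb{A}}^{n+k-|\nu|}(w)=\sigma_{\pmb{A}}^{m+k-|\mu|}(w)=\sigma_{\pmb{A}}^{m+k}(y),
\]
so $\sigma_{\pmb{A}}^{n+k}(x)=\sigma_{\pmb{A}}^k(t)$ is equivalent to $\sigma_{\pmb{A}}^{m+k}(y)=\sigma_{\pmb{A}}^k(t)$. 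Thus $\kappa_{\pmb{A}}(x,n,t)$ and $\kappa_{\pmb{A}}(y,m,t)$ agree outside an initial regime of width at most $\max(|\mu|,|\nu|)$, giving $|\kappa_{\pmb{A}}(y,m,t)-\kappa_{\pmb{A}}(x,n,t)|\leq |\mu|+|\nu|$ and hence the claimed bound on $|\Delta(x,n)|$.

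Consequently $\|e^{s|\D_t|}ae^{-s|\D_t|}\|\leq e^{2|s|(|\mu|+|\nu|)}\|a\|$ for $a=S_\mu S_\nu^*$, so by linearity every $a\in\A_0$ satisfies $\|e^{s|\D_t|}ae^{-s|\D_t|}\|\leq C(a,s)<\infty$ and this operator extends to a bounded adjointable operator on $\ell^2(\mathcal{V}_t)$. Domain invariance is then automatic: for $v\in\Dom(e^{s|\D_t|})$, the identity $e^{s|\D_t|}(av)=\bigl(e^{s|\D_t|}ae^{-s|\D_t|}\bigr)\bigl(e^{s|\D_t|}v\bigr)$ exhibits $av\in\Dom(e^{s|\D_t|})$. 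The main obstacle is the boundary analysis of $\kappa_{\pmb{A}}$ when $n+\kappa<|\nu|$ or $m+\kappa<|\mu|$, so that $\kappa$ is computed ``inside'' the prefix $\nu$ or $\mu$ with no direct analogue on the other side; but in either regime the corresponding $\kappa$ is itself at most $\max(|\mu|,|\nu|)$, so the discrepancy is a priori controlled and is absorbed into the constant.
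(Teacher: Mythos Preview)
Your approach is correct and uses the same mechanism as the paper: $a\in\A_0$ acts as a partial shift on the eigenbasis of $|\D_t|$, changing the eigenvalue $|n|+\kappa_{\pmb{A}}(x,n,t)$ by a uniformly bounded amount, so conjugation by $e^{s|\D_t|}$ introduces only a bounded scalar multiplier. The paper streamlines the argument by reducing to the generators $a=S_j$, using that the conclusion is multiplicative (since $e^{s|\D_t|}abe^{-s|\D_t|}=(e^{s|\D_t|}ae^{-s|\D_t|})(e^{s|\D_t|}be^{-s|\D_t|})$) and stable under adjoints (replace $s$ by $-\bar s$); for $a=S_j$ one has $(y,m)=(jx,n+1)$ and a direct check gives $\big|\kappa_{\pmb{A}}(jx,n+1,t)-\kappa_{\pmb{A}}(x,n,t)\big|\leq 1$, hence the paper's bound $|\Delta|\leq 2$. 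Your direct treatment of $S_\mu S_\nu^*$ is an equally valid alternative and yields the explicit constant $2(|\mu|+|\nu|)$.

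One correction: your closing assertion that in the boundary regime ``the corresponding $\kappa$ is itself at most $\max(|\mu|,|\nu|)$'' is false as written---when $n\ll 0$ one has $\kappa_{\pmb{A}}(x,n,t)\geq -n$, which can be arbitrarily large. The right observation is that if $n+\kappa_{\pmb{A}}(x,n,t)<|\nu|$ then the tail relation $\sigma_{\pmb{A}}^{n+\kappa}(x)=\sigma_{\pmb{A}}^{\kappa}(t)$ forces $w=\sigma_{\pmb{A}}^{|\nu|-n}(t)$, so $k':=|\nu|-n$ is a valid witness for $(y,m)$ (indeed $m+k'=|\mu|$ and $\sigma_{\pmb{A}}^{|\mu|}(y)=w=\sigma_{\pmb{A}}^{k'}(t)$), giving $\kappa_{\pmb{A}}(y,m,t)\leq |\nu|-n$; together with $\kappa_{\pmb{A}}(x,n,t)\geq\max(0,-n)$ this yields $\kappa_{\pmb{A}}(y,m,t)-\kappa_{\pmb{A}}(x,n,t)\leq|\nu|$, and the symmetric argument gives the reverse inequality with $|\mu|$. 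This repairs (and slightly sharpens) your bound.
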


\begin{proof}
It suffices to consider $a=S_j$ for some $j$. A short computation shows that 
$$S_j\delta_{(x,n)}=\begin{cases}
\delta_{(jx,n+1)},\; &\mbox{if $jx$ is admissible},\\
0,  \; &\mbox{if $jx$ is not admissible}. \end{cases}$$
Therefore, 
\begin{align*}
\mathrm{e}^{s|\D_t|}S_j \delta_{(x,n)}
&=\begin{cases}
\mathrm{e}^{s|\psi(n+1,\kappa_{\pmb{A}}(jx,n+1,t))|}\delta_{(jx,n+1)},\; &\mbox{if $jx$ is admissible},\\
0,  \; &\mbox{if $jx$ is not admissible}. 
\end{cases}
\end{align*}
Since 
$$
\Big||\psi(n+1,\kappa_{\pmb{A}}(jx,n+1,t))|-|\psi(n,\kappa_{\pmb{A}}(x,n,t))|\Big|\leq 2,
$$
we conclude that 
$S_j\Dom(\mathrm{e}^{s|\D_t|})\subseteq \Dom(\mathrm{e}^{s|\D_t|})$ 
and $\mathrm{e}^{s|\D_t|}S_j\mathrm{e}^{-s|\D_t|}$ defines a bounded operator.
\end{proof}

The ingredients to construct a twisted 
spectral triple are now all in place. For $t\in \Omega_{\pmb{A}}$
and $F_t$ as defined in Proposition \ref{propoflione} we define the self-adjoint operator 
\begin{equation}
\D_{{\rm af},t}:=F_t\mathrm{e}^{|\D_t|}.
\label{eq:daft}
\end{equation}
The ``af" stands for {\bf a}ctually {\bf f}initely-summable.
Indeed $(i+\D_{{\rm af},t})^{-1}\in \mathcal{L}^p(\ell^2(\mathcal{V}_t))$ for any $p>\log(N)$ by the proof of  Item 1) in Proposition \ref{propoflione} (see page \pageref{propoflione}). Using Lemma \ref{explem}, for each  $s\in \C$ we can define a homomorphism $\alpha_s:\mathcal{A}_0\to \mathbb{B}(\ell^2(\mathcal{V}_t))$ as 
$$\alpha_s(a):=|\D_{{\rm af},t}|^{is}a|\D_{{\rm af},t}|^{-is}=\mathrm{e}^{is|\D_t|}a\mathrm{e}^{-is|\D_t|}.$$
We write $\sigma:=\alpha_{-i}$. Define $\A$ as the saturation of $\A_0$ under $\sigma$, i.e. $\A$ is the algebra generated by $\cup_{k\in \Z}\sigma^k(\A_0)$. Since $\sigma(a)^*=\sigma^{-1}(a^*)$, $\A$ is a $*$-algebra of bounded operators on $\ell^2(\mathcal{V}_t)$. 

\begin{prop}
\label{itemonefromprpf}
Let $\pmb{A}=(A_{ij})_{i,j=1}^N$ be an $N\times N$-matrix of $0$'s and $1$'s, with no row or column being $0$, and take $t\in \Omega_{\pmb{A}}$. The collection $(\A,\ell^2(\mathcal{V}_t),\D_{{\rm af},t},\sigma)$ is a strongly regular finitely summable twisted spectral triple. Moreover, for any two Borel functions $f_1,f_2:\R\to \R$ and $a\in \A$ the twisted commutator $[\D_{{\rm af},t},a]_{\sigma}$ preserves $\Dom(f_1(\D_{{\rm af},t}))$ and the operator $f_1(\D_{{\rm af},t})[\D_{{\rm af},t},a]_\sigma f_2(\D_{{\rm af},t})$ extends to a trace class operator on $\H$.
\end{prop}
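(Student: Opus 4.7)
The plan is to verify all claims in three stages: reduce everything to a single algebraic identity for the twisted commutator, observe that all algebra elements act as ``weighted partial permutations'' on the canonical basis $\{\delta_v\}_{v\in\mathcal{V}_t}$, and exploit the fact that this basis consists of joint eigenvectors of $F_t$ and $|\D_t|$.

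First, since $F_t=2\chi_{[0,\infty)}(\D_t)-1$ satisfies $F_t^2=1$, commutes with $|\D_t|$, and $\mathrm{e}^{|\D_t|}$ is invertible, $\D_{{\rm af},t}=F_t\mathrm{e}^{|\D_t|}$ is self-adjoint and invertible with $\D_{{\rm af},t}^{-1}=F_t\mathrm{e}^{-|\D_t|}$, $|\D_{{\rm af},t}|=\mathrm{e}^{|\D_t|}$, and phase $F_t$. Item 1 of Proposition \ref{propoflione} gives $\mathrm{Tr}(\mathrm{e}^{-s|\D_t|})<\infty$ for $s>\log N$, so $(i+\D_{{\rm af},t})^{-1}\in\mathcal{L}^p$ for every $p>\log N$, which is finite summability. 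Since $\sigma(a)=\mathrm{e}^{|\D_t|}a\mathrm{e}^{-|\D_t|}=|\D_{{\rm af},t}|a|\D_{{\rm af},t}|^{-1}$, once bounded twisted commutators are established, strong regularity with $\Psi^*_{\sigma,\D_{{\rm af},t}}(\A)$ the filtered algebra generated by $F_t$, $\A$, $[\D_{{\rm af},t},\A]_\sigma$ and complex powers of $|\D_{{\rm af},t}|$ follows from Proposition \ref{strongreforsimsi}.

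Next, a direct calculation using $\mathrm{e}^{-|\D_t|}F_t=F_t\mathrm{e}^{-|\D_t|}$ gives the key identity
\[
[\D_{{\rm af},t},a]_\sigma=F_t\mathrm{e}^{|\D_t|}a-\mathrm{e}^{|\D_t|}aF_t=\mathrm{e}^{|\D_t|}[F_t,a],
\]
valid for any $a\in\End^*(\ell^2(\mathcal V_t))$ with $a\Dom(\mathrm{e}^{|\D_t|})\subseteq\Dom(\mathrm{e}^{|\D_t|})$. For $a_0\in\A_0$, Item 2 of Proposition \ref{propoflione} gives $[F_t,a_0]\in C_c(\mathcal{V}_t\times\mathcal{V}_t)$ (finite-support kernel). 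For general $a=\sigma^{k_1}(a_1)\cdots\sigma^{k_n}(a_n)\in\A$, the twisted Leibniz rule produces a finite sum whose typical term contains exactly one factor $[\D_{{\rm af},t},\sigma^{k_i}(a_i)]_\sigma=\mathrm{e}^{|\D_t|}\sigma^{k_i}([F_t,a_i])$, where we used $F_t\mathrm{e}^{k|\D_t|}=\mathrm{e}^{k|\D_t|}F_t$. The central structural observation is that every generator $S_\mu S_\nu^*$ maps each basis vector to a scalar multiple of a single basis vector (inspecting the formulas from Subsubsection \ref{subsub:spec-trip}), and conjugation by $\mathrm{e}^{k|\D_t|}$ only rescales, since basis vectors are $|\D_t|$-eigenvectors. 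Hence every element of $\A$ acts by a weighted partial bijection on $\{\delta_v\}$. It follows that for any $K\in C_c(\mathcal V_t\times\mathcal V_t)$ and any $R\in\A$ the product $KR$, and also $RK$, lies in $C_c(\mathcal V_t\times\mathcal V_t)$; multiplying on either side by $\mathrm{e}^{s|\D_t|}$ is well defined on the finite-dimensional range and merely rescales the entries. Applying these facts to each summand of the Leibniz expansion yields $[\D_{{\rm af},t},a]_\sigma\in C_c(\mathcal V_t\times\mathcal V_t)$, and in particular it is bounded, so $(\A,\ell^2(\mathcal V_t),\D_{{\rm af},t},\sigma)$ is a twisted spectral triple.

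Finally, for the trace class statement, I would note that $F_t$ and $|\D_t|$ commute and are both diagonalised by $\{\delta_v\}_{v\in\mathcal V_t}$, hence so is $\D_{{\rm af},t}=F_t\mathrm{e}^{|\D_t|}$ and any Borel function $f_i(\D_{{\rm af},t})$ acts diagonally on this basis. Sandwiching an operator with finite-support kernel by diagonal operators preserves finite support, so
\[
f_1(\D_{{\rm af},t})\,[\D_{{\rm af},t},a]_\sigma\, f_2(\D_{{\rm af},t})\in C_c(\mathcal{V}_t\times\mathcal{V}_t),
\]
which is finite rank, hence trace class. Domain preservation is automatic: the range of $[\D_{{\rm af},t},a]_\sigma$ is a finite-dimensional subspace spanned by basis vectors, which lie in $\Dom(f_1(\D_{{\rm af},t}))$ for every Borel $f_1$. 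The main subtlety is purely bookkeeping: organising the Leibniz expansion so that the $\mathrm{e}^{|\D_t|}$ factors can be commuted to one side using $\sigma$ while keeping the finite-support $C_c$ factor isolated.
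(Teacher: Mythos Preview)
Your proposal is correct and follows essentially the same approach as the paper: both arguments hinge on the identity $[\D_{{\rm af},t},a]_\sigma=|\D_{{\rm af},t}|[F_t,a]=\mathrm{e}^{|\D_t|}[F_t,a]$, on the fact that $[F_t,a_0]\in C_c(\mathcal{V}_t\times\mathcal{V}_t)$ for $a_0\in\A_0$, and on the observation that functions of $\D_t$ act diagonally in the basis $\{\delta_v\}$ so that conjugation/multiplication by them preserves $C_c$. The only cosmetic difference is that the paper applies the \emph{ordinary} Leibniz rule directly to $[F_t,\cdot]$ and uses $[F_t,\sigma^k(a)]=\sigma^k([F_t,a])\in C_c$, whereas you route through the \emph{twisted} Leibniz rule for $[\D_{{\rm af},t},\cdot]_\sigma$ together with the weighted-partial-bijection observation; both reach the same $C_c$ conclusion. (One small imprecision: a general element of $\A$ is a \emph{linear combination} of products and therefore not literally a single weighted partial bijection, but your conclusion that $\A\cdot C_c\subseteq C_c$ and $C_c\cdot\A\subseteq C_c$ is unaffected by this since $C_c$ is a vector space.)
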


\begin{proof}
That $(\A,\ell^2(\mathcal{V}_t),\D_{{\rm af},t},\sigma)$ is a twisted spectral triple follows from noting that for $a\in \mathcal{A}$ 
\begin{equation}
\label{twicomide}
[\D_{{\rm af},t},a]_{\sigma}=|\D_{{\rm af},t}|[F_t,a],
\end{equation}
which is bounded because for any $a\in \mathcal{A}_0$ and $k\in \Z$,
$$[F_t,\sigma^{k}(a)]=|\D_{{\rm af},t}|^{k}[F_t,a]|\D_{{\rm af},t}|^{-k},$$
which in turn belongs to $C_c(\mathcal{V}_{t}\times \mathcal{V}_t)$ because $[F_t,a]\in C_c(\mathcal{V}_{t}\times \mathcal{V}_t)$ by the proof of Proposition \ref{propoflione} (see page \pageref{propoflione}). By Proposition \ref{strongreforsimsi} (see page \pageref{strongreforsimsi}), $(\A,\ell^2(\mathcal{V}_t),\D_{{\rm af},t},\sigma)$ is strongly regular. The twisted spectral triple $(\A,\ell^2(\mathcal{V}_t),\D_{{\rm af},t},\sigma)$ is finitely summable since $(\A,\ell^2(\mathcal{V}_t),\D_{t})$ is $\mathrm{Li}_1$-summable. The last property stated in proposition follows from the identity \eqref{twicomide} and Item 2) in Proposition \ref{propoflione}.
\end{proof}

Thus for any Cuntz-Krieger algebra we can construct a 
strongly regular twisted spectral triple satisfying condition a) of Lemma
\ref{asstoprovtauzero}.
To prove condition b) of Lemma \ref{asstoprovtauzero} for some of these twisted spectral triples, we 
specialise to a particular family of Cuntz-Krieger algebras.

\subsubsection{The counterexample}
\label{sub:def-props}

The case of interest to us is the Cuntz-Krieger algebra coinciding with the action of the free group on $d$ generators acting on its Gromov boundary. We consider the $2d\times 2d$-matrix 
\begin{equation}
\label{aforfree}
\pmb{A}:=\begin{pmatrix} 
1&0&1&1&\cdots &1&1\\
0&1&1&1&\cdots &\vdots&\vdots\\
1&1&1&0&\cdots &\vdots&\vdots\\
1&1&0&1&\cdots &\vdots&\vdots\\
\vdots&\vdots&&&\ddots&\\
\vspace{1.5mm}
1&1&\cdots &&&1&0\\
1&1&1&\cdots&&0&1
\end{pmatrix}.
\end{equation}
In other words, decomposing $\pmb{A}$ into $2\times 2$-blocks we have the unit $2\times 2$-matrix on all diagonal entries and the $2\times 2$-matrix with all entries $1$ in all other positions. In this case, 
$$K_*(\mathcal{O}_{\pmb{A}})\cong K^{*+1}(\mathcal{O}_{\pmb{A}})\cong 
\begin{cases}
\Z^d\oplus \Z/(d-1)\Z, \;&*=0,\\
\Z^d,\;&*=1.
\end{cases}$$
For more details, see \cite[Proposition 3.4.6 and 3.4.7]{GM} and references therein.

We identify the alphabet $\{1,2,\ldots, 2d\}$ with the alphabet $\{a_1,b_1,a_2,b_2,\ldots, a_d,b_d\}$. We think of $\{a_1,b_1,a_2,b_2,\ldots, a_d,b_d\}$ as a symmetric generating set for $\mathbb{F}_d$ with $a_j=b_j^{-1}$. A word $\mu$ on the alphabet $\{a_1,b_1,a_2,b_2,\ldots, a_d,b_d\}$ is admissible for $\pmb{A}$ if and only if $\mu$ thought of as a product of its letters in $\mathbb{F}_d$ is a reduced word in the generating set $\{a_1,b_1,a_2,b_2,\ldots, a_d,b_d\}$. 

Therefore, identifying a finite admissible word $\mu$ with its product in $\mathbb{F}_d$ induces a bijection of sets $\mathcal{V}_{\pmb{A}}\to \mathbb{F}_d$. 
The space $\Omega_{\pmb{A}}$ can be identified with the space $\partial \mathbb{F}_d$ of all infinite paths in $\mathbb{F}_d$ and coincides with its Gromov boundary. 

Let $\lambda_\mu\in C(\partial \mathbb{F}_d)\rtimes \mathbb{F}_d$ denote the unitary corresponding to the group element in $\mathbb{F}_d$ defined by $\mu$ and $\chi_{C_\mu}\in C(\partial \mathbb{F}_d)\rtimes \mathbb{F}_d$ the characteristic function of the cylinder set $C_\mu$.
By \cite[Section 2]{spielbergaren}, the mapping defined from $S_{a_j}\mapsto \lambda_{a_j}(1-\chi_{C_{b_j}})$ and $S_{b_j}\mapsto \lambda_{b_j}(1-\chi_{C_{a_j}})$ gives an isomorphism $\mathcal{O}_{\pmb{A}}\to C(\partial \mathbb{F}_d)\rtimes \mathbb{F}_d$.

We can describe the space $\ell^2(\mathcal{V}_t)$ and the spectral triple $(\mathcal{A}_0,\ell^2(\mathcal{V}_t),\D_t)$ in terms of the free group in this case. We write $C^\infty(\partial \mathbb{F}_d)$ for the space of locally constant functions; it is generated by the cylinder functions $\{\chi_{C_\mu}: \mu\in \mathbb{F}_d\}$. First note that $\mathcal{A}_0$ concides with the algebraic crossed product $C^\infty(\partial \mathbb{F}_d)\rtimes^{\rm alg} \mathbb{F}_d$. For an element $\mu\in \mathbb{F}_d$ and $t\in \partial\mathbb{F}_d$ we define $\ell(\mu,t)$ as the number of cancellations occuring to put the product $\mu t$ in reduced form. A short computation shows that the map 
\begin{equation}
\varphi_t:\mathbb{F}_d\to \mathcal{V}_t, \quad \mu\mapsto (\mu t, |\mu|-2\ell(\mu,t),t),
\label{eq:tee-map}
\end{equation}
is a bijection of sets. Under $\varphi_t$ and the identification $\A_0=C^\infty(\partial \mathbb{F}_d)\rtimes^{\rm alg} \mathbb{F}_d$, an element $a\lambda_\gamma\in \A_0$, where $a\in C^\infty(\partial\mathbb{F}_d)$ and $\gamma\in \mathbb{F}_d$, acts on $\ell^2(\mathbb{F}_d)$ as 
$$a\lambda_\gamma\delta_{\mu}:=a(\gamma \mu t)\delta_{\gamma\mu}.$$
Moreover, 
$$\kappa_{\pmb{A}}(\mu t, |\mu|-2\ell(\mu,t),t)=\ell(\mu,t).$$
Therefore, $(\mathcal{A}_0,\ell^2(\mathcal{V}_t),\D_t)$ is unitarily equivalent to $(C^\infty(\partial \mathbb{F}_d)\rtimes^{\rm alg} \mathbb{F}_d,\ell^2(\mathbb{F}_d),\D_{\mathbb{F}_d,t})$ where $\D_{\mathbb{F}_d,t}:=\varphi_t^{-1}\D_t\varphi_t$ and we compute that for $\mu\in \mathbb{F}_d$ we have
$$
\D_{\mathbb{F}_d,t}\delta_\mu=\psi\big(|\mu|-2\ell(\mu,t),\ell(\mu,t)\big)\,\delta_\mu.
$$
In particular, $|\D_{\mathbb{F}_d,t}|\delta_\mu
=\big(\big||\mu|-2\ell(\mu,t)\big|+\ell(\mu,t)\big)\delta_\mu$.

Define the function 
$$
\Psi_t(\mu):=\mathrm{e}^{\big||\mu|-2\ell(\mu,t)\big|+\ell(\mu,t)}.
$$
In this case, we can identify $\D_{{\rm af},t}$ with the following operator on $\ell^2(\mathbb{F}_d)$:
\begin{equation}
\label{daftfreea}
\D_{{\rm af},t}\delta_\mu=\big(2\chi_{\{0\}}(\ell(\mu,t))-1\big)\,\Psi_t(\mu)\delta_\mu.
\end{equation}
The reader should note that the action of $C(\partial \mathbb{F}_d)\rtimes \mathbb{F}_d$ factors over the action of $C_b(\mathbb{F}_d)\rtimes \mathbb{F}_d$ on $\ell^2(\mathbb{F}_d)$ and the inclusion $i_t:C(\partial \mathbb{F}_d)\rtimes \mathbb{F}_d\hookrightarrow C_b(\mathbb{F}_d)\rtimes \mathbb{F}_d$ defined from the equivariant $*$-monomorphism $i_t:C(\partial \mathbb{F}_d)\to C_b(\mathbb{F}_d)$ given by $i_t(a)(\mu):=a(\mu t)$. It is clear from the definition that for $b\in C_b(\mathbb{F}_d)$ and $\gamma\in \mathbb{F}_d$, 
$$\sigma(b\lambda_\gamma)\delta_\mu=\Psi_t(\gamma\mu)\Psi_t(\mu)^{-1}b(\gamma \mu )\lambda_\gamma \delta_\mu.$$
We conclude that $\sigma$ is a partially defined homomorphism $C_b(\mathbb{F}_d)\rtimes \mathbb{F}_d\to C_b(\mathbb{F}_d)\rtimes \mathbb{F}_d$. Moreover, the $C^*$-closure $A$ of $\A$ is an intermediate $C^*$-algebra
$$C(\partial \mathbb{F}_d)\rtimes \mathbb{F}_d=\mathcal{O}_{\pmb{A}}\subseteq A\subseteq C_b(\mathbb{F}_d)\rtimes \mathbb{F}_d.$$

The proof that the twisted spectral triple 
$(C^\infty(\partial\mathbb{F}_d)\rtimes^{{\rm alg}}\mathbb{F}_d,
\ell^2(\mathbb{F}_d),D_{{\rm af},t})$
constructed as in
Proposition \ref{itemonefromprpf} satisfies condition b) of 
Lemma \ref{asstoprovtauzero}
is presented below in the appendix.

The verification of the detailed holomorphy statements found in the appendix uses brute force calculation.
We see no direct way of proving condition b) of 
Lemma \ref{asstoprovtauzero} for a general Cuntz-Krieger algebra. It does however seem quite 
likely, to us, that condition b) of 
Lemma \ref{asstoprovtauzero} holds for a more general class of 
Cuntz-Krieger algebras.

\begin{thm}
\label{maintwo}
Take a fixed point $t\in \partial \mathbb{F}_d$. Let $(\A,\ell^2(\mathbb{F}_d),\D_{{\rm af},t},\sigma)$ denote the regular finitely summable twisted
spectral triple with finite discrete dimension spectrum obtained from saturating the weakly twisted spectral triple $(C^\infty(\partial \mathbb{F}_d)\rtimes^{\rm alg}\mathbb{F}_d,\ell^2(\mathbb{F}_d),\D_{{\rm af},t},\sigma)$ with $\D_{{\rm af},t}$ as in Equation \eqref{daftfreea} and $\sigma(a):= |\D_{{\rm af},t}|a|\D_{{\rm af},t}|^{-1}$. The twisted spectral triple $(\A,\ell^2(\mathbb{F}_d),\D_{{\rm af},t},\sigma)$ pairs non-trivially with $K_1(\A)$ but
the cochain $(\phi_m)$ provided by Moscovici's ansatz is the zero cochain.
Hence the equality \eqref{dodycpair} does not hold.
\end{thm}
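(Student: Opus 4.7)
The plan is to apply Theorem \ref{counterexthm} to $(\A,\ell^2(\mathbb{F}_d),\D_{{\rm af},t},\sigma)$. Since $|\D_{{\rm af},t}|=\mathrm{e}^{|\D_t|}\geq 1$ is strictly positive, $\D_{{\rm af},t}$ is invertible and the twist is of the form $\sigma(a)=|\D_{{\rm af},t}|a|\D_{{\rm af},t}|^{-1}$ required by Lemma \ref{asstoprovtauzero}. Thus it suffices to verify the two hypotheses (a) and (b) of Lemma \ref{asstoprovtauzero}, together with non-triviality of the index pairing with $K_1(\A)$. Strong regularity and finite summability of $(\A,\ell^2(\mathbb{F}_d),\D_{{\rm af},t},\sigma)$ are already established in Proposition \ref{itemonefromprpf}, while the finite discrete dimension spectrum will follow from Lemma \ref{asstoprovtauzero} once (a) and (b) are in hand.

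Condition (a) is precisely the trace class statement of Proposition \ref{itemonefromprpf}. Its proof rests on Item (2) of Proposition \ref{propoflione}, namely that $[F_t,a]\in C_c(\mathcal{V}_t\times \mathcal{V}_t)$ for $a\in\A_0$, a property which by saturation under $\sigma$ extends to give a uniform smoothing bound on all of $\A$ after conjugation by bounded functions of $\D_{{\rm af},t}$.

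For the non-triviality of the index pairing, I use that strict positivity of $|\D_{{\rm af},t}|$ ensures $\D_{{\rm af},t}|\D_{{\rm af},t}|^{-1}=F_t$. Hence the restriction of $(\A,\ell^2(\mathbb{F}_d),\D_{{\rm af},t},\sigma)$ to $\A_0 = C^\infty(\partial \mathbb{F}_d)\rtimes^{\rm alg}\mathbb{F}_d$ yields, after transport via the bijection $\varphi_t$ of Equation \eqref{eq:tee-map}, the same bounded Fredholm module as the unbounded spectral triple $(\A_0,\ell^2(\mathcal{V}_t),\D_t)$. By Item (3) of Proposition \ref{propoflione}, its $K$-homology class is represented by $\delta_j\bmod (1-\pmb{A})\Z^{2d}$ with $j$ the first letter of $t$. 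A direct linear algebra computation with the matrix $\pmb{A}$ of Equation \eqref{aforfree} (whose diagonal $2\times 2$ blocks are $I_2$ and whose off-diagonal blocks are the all-ones matrix) shows that $\delta_j$ is non-zero in $\coker(1-\pmb{A})\cong K^1(\mathcal{O}_{\pmb{A}})$ and pairs non-trivially, via the Kaminker-Putnam identification, with some class in $K_1(\mathcal{O}_{\pmb{A}})\subseteq K_1(\A)$.

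The main obstacle is condition (b): the meromorphic extension to $\C$ of $\zeta_a(z)=\Tr(a|\D_{{\rm af},t}|^{-2z})$ and $\zeta_{F_ta}(z)$ for $a\in\A$, with uniformly bounded pole orders. To achieve this I would reduce to monomials $\chi_{C_\nu}\lambda_\gamma\in\A_0$ and expand in the orthonormal basis $\{\delta_\mu\}_{\mu\in\mathbb{F}_d}$. Only diagonal matrix coefficients survive the trace, forcing $\gamma=e$ and yielding
$$\zeta_{\chi_{C_\nu}}(z)=\sum_{\mu\in E_\nu}\Psi_t(\mu)^{-2z},$$
where $E_\nu=\{\mu\in\mathbb{F}_d:\mu t\in C_\nu\}$ and $\Psi_t$ is the function introduced after Equation \eqref{daftfreea}. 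Stratifying $E_\nu$ by the pair $(|\mu|,\ell(\mu,t))$ and counting lattice points in each stratum via the tree structure of $\mathbb{F}_d$ (whose branching factor is $2d-1$) recasts the sum as a rational function in $\mathrm{e}^{-2z}$ with denominators of the form $1-(2d-1)\mathrm{e}^{-2z}$. This yields the desired meromorphic extension with uniformly bounded pole orders, and is the content of the appendix.
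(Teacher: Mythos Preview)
Your overall plan---apply Theorem \ref{counterexthm}, verify conditions (a) and (b) of Lemma \ref{asstoprovtauzero}, and exhibit a non-trivial index pairing---matches the paper's structure, and your treatment of condition (a) via Proposition \ref{itemonefromprpf} is exactly right.

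For the index pairing you take a genuinely different route. The paper does not invoke Item (3) of Proposition \ref{propoflione} or Kaminker--Putnam duality; instead it picks the unitary $\lambda_\gamma\in\A_0\subset\A$ for a generator $\gamma$ and computes $\mathrm{ind}(P\lambda_\gamma P)$ directly on $P\ell^2(\mathbb{F}_d)\cong\ell^2(\mathcal{V}_{\pmb{A},t_1})$, obtaining $\mathrm{ind}(P\lambda_\gamma P)=A_{\gamma^{-1},t_1}-A_{\gamma,t_1}$, which is $\pm 1$ when $\gamma=t_1^{\pm1}$. Your abstract approach is in principle sound, but note that the Kaminker--Putnam statement recorded in the paper describes the $\coker(1-\pmb{A}^T)\times\ker(1-\pmb{A})$ pairing, i.e.\ the $K_0\times K^0$ case; extending it to $K_1\times K^1$ requires either citing more of \cite{kaminkerputnam} or a separate argument. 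The paper's direct computation avoids this and yields the explicit pairing value.

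There is a genuine gap in your sketch of condition (b). You propose to reduce to monomials $\chi_{C_\nu}\lambda_\gamma\in\A_0$ and argue that the trace forces $\gamma=e$. But $\A$ is the \emph{saturation} of $\A_0$: a typical element is a finite sum of products $\sigma^{k_1}(a_1)\cdots\sigma^{k_m}(a_m)$ with $a_i\in\A_0$, and $\zeta_a(z)$ for such $a$ unfolds (after cycling the trace) to a multi-parameter expression of the form
\[
\mathrm{Tr}\bigl(a_1\,\mathrm{e}^{-s_1|\D_t|}\,a_2\,\mathrm{e}^{-s_2|\D_t|}\cdots a_m\,\mathrm{e}^{-s_m|\D_t|}\bigr)
\]
evaluated at certain integer shifts of $s_1+\cdots+s_m=2z$. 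The diagonal condition then forces only that the \emph{product} $\gamma_1\cdots\gamma_m=e$, not each $\gamma_i=e$. This is precisely why the paper formulates Proposition \ref{singprop} in terms of these multi-parameter heat traces and why the appendix must analyse all $m\geq 1$ simultaneously (via Proposition \ref{dickotomy}, the decomposition into terms \eqref{firstterm}--\eqref{thirdterm}, and the Toeplitz computation of Subsection \ref{toeplsuzise}). Your single-parameter stratification by $(|\mu|,\ell(\mu,t))$ is the right idea for $m=1$, but does not by itself handle the saturation; the fixed-point hypothesis on $t$ is also used only in the analysis of the term \eqref{thirdterm}, which your sketch does not mention.
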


\begin{proof}
Set $A:=\overline{\A}\subseteq \mathbb{B}(\ell^2(\mathbb{F}_d))$. We need to verify that $(\A,\ell^2(\mathbb{F}_d),\D_{{\rm af},t},\sigma)$ satisfies the assumptions of Lemma \ref{asstoprovtauzero} (see page \pageref{asstoprovtauzero}) and that for some $x\in K_1(A)$,
$$
\langle [(\A,\ell^2(\mathbb{F}_d),\D_{{\rm af},t},\sigma)],x\rangle\neq 0.
$$
The twisted spectral triple $(\A,\ell^2(\mathbb{F}_d),\D_{{\rm af},t},\sigma)$ satisfies assumption a) of Lemma \ref{asstoprovtauzero} by Proposition \ref{itemonefromprpf} (see page \pageref{itemonefromprpf}).
As mentioned we leave the proof of assumption b) of Lemma \ref{asstoprovtauzero} to the appendix, but note that
it uses that $t$ is a fixed point.

As for $\langle [(\A,\ell^2(\mathbb{F}_d),\D_{{\rm af},t},\sigma)],x\rangle\neq 0$,
we take $x:=[\lambda_\gamma]$ where 
$\gamma\in\{a_1,b_1,\ldots,a_d,b_d\}$ is one of the 
generators of $\mathbb{F}_d$. By the definition of the index pairing, 
$\langle [(\A,\ell^2(\mathbb{F}_d),\D_{{\rm af},t},\sigma)],x\rangle$ 
coincides with the index pairing 
$\langle [(\A,\ell^2(\mathbb{F}_d),F_t)],x\rangle$ 
between the $K$-homology class 
$[(\A,\ell^2(\mathbb{F}_d),F_t)]\in K^1(A)$ and $x\in K_1(A)$. 
Again, going to the definition, the index pairing 
$\langle [(\A,\ell^2(\mathbb{F}_d),F_t)],x\rangle$ is by 
definition the index of the operator 
$P\lambda_\gamma P: P\ell^2(\mathbb{F}_d)\to P\ell^2(\mathbb{F}_d)$. 
Computing, we see that for $x=[\lambda_\gamma]$
$$
\langle [(\A,\ell^2(\mathbb{F}_d),\D_{{\rm af},t},\sigma)],x\rangle
=\mathrm{ind}\big(P\lambda_\gamma P:\ell^2(\mathcal{V}_{\pmb{A},t_1})\to \ell^2(\mathcal{V}_{\pmb{A},t_1})\big).
$$
Let us compute this index. We write $\pmb{A}=(A_{i,j})_{i,j=1}^{2d}$ for the matrix in Equation \eqref{aforfree} (i.e. $O_{\pmb{A}}\cong C(\partial\mathbb{F}_d)\rtimes\mathbb{F}_d$). Note that $\mathcal{V}_{\pmb{A}}=\mathbb{F}_d$. For $\mu \in \mathcal{V}_{\pmb{A},t_1}$ 
we have that 
$$
P\lambda_\gamma P\delta_\mu=\begin{cases}
\delta_{\gamma\mu}, \;& |\mu|>0,\\
A_{\gamma,t_1}\delta_\gamma, \;& |\mu|=0. 
\end{cases}
$$
In particular, $\dim\ker P\lambda_\gamma P=1-A_{\gamma,t_1}$. Therefore, 
$$
\mathrm{ind}(P\lambda_\gamma P)
=\dim\ker P\lambda_\gamma P-\dim\ker P\lambda_{\gamma^{-1}} P
=A_{\gamma^{-1},t_1}-A_{\gamma,t_1}
=\delta_{\gamma^{-1},t_1}-\delta_{\gamma,t_1}.
$$
We see that for $\gamma=t_1^{\pm 1}$, 
$\langle [(\A,\ell^2(\mathbb{F}_d),\D_{{\rm af},t},\sigma)],x\rangle\neq 0$. 
\end{proof}

\begin{appendix}

\section{Meromorphic extension of heat traces for the free group}

\subsection{Setting up the proof of holomorphy}
\label{sub:holo}

We are now close to showing that the twisted spectral triple from 
Subsection \ref{sub:def-props} 
satisfies all the assumptions of Theorem \ref{counterexthm} (see page \pageref{counterexthm}) and so
provides a counterexample to the equality \eqref{dodycpair}. So far, Proposition \ref{propoflione} allows us to control the index pairing and Proposition \ref{itemonefromprpf} allows for proving all the assumptions of Lemma \ref{asstoprovtauzero} except for condition b). 

We shall prove this last remaining piece in the special case of 
the free group;
this result is based on the 
following proposition. We use the notation 
$P:=(F_t+1)/2=\chi_{[0,\infty)}(\D_{{\rm af},t})=\chi_{[0,\infty)}(\D_t)$. 
We introduce the notation
$$
\Z^m_0:=\Big\{\mathbbm{k}=(k_1,\ldots,k_m)\in \Z^m: \sum_{j=1}^m k_j=0\Big\}.
$$

\begin{prop}
\label{singprop}
The twisted spectral triple 
$(\A,\ell^2(\mathcal{V}_t),\D_{{\rm af},t},\sigma)$ constructed 
above satisfies all the assumptions of Lemma \ref{asstoprovtauzero} 
if there is a finite set $\mathcal{P}_0\subseteq \R$ satisfying the two conditions:
\begin{enumerate}
\item For any $m\in \N$ and $\mu_1,\nu_1,\mu_2,\nu_2\ldots,\mu_m,\nu_m\in \mathcal{V}_{\pmb{A}}$, the function 
\begin{equation}
(s_1,s_2,\ldots, s_m)\mapsto \mathrm{Tr}(S_{\mu_1}S_{\nu_1}^*\mathrm{e}^{-s_1|\D_t|}\cdots S_{\mu_m}S_{\nu_m}^*\mathrm{e}^{-s_m|\D_t|}),
\label{eq:the-function}
\end{equation}
extends meromorphically from 
$\mathrm{Re}(s_1+\cdots+s_m)>>0$ to $\C^m$,
and is holomorphic outside a subvariety $V_m(\mu_1,\nu_1,,\ldots,\mu_m,\nu_m)\subseteq \C^m$ where it has an order $2$ singularity and 
$$
\{s\in \C: \exists \mathbbm{k}\in \Z^{m}_0: \; \mathbbm{k}+(0_{m-1},s)\in V_m(\mu_1,\nu_1,\ldots,\mu_m,\nu_m)\}\subseteq \mathcal{P}_0+\pi i\Z;
$$
\item For any $m\in \N$ and $\mu_1,\nu_1,\mu_2,\nu_2\ldots,\mu_m,\nu_m\in \mathcal{V}_{\pmb{A}}$, the function 
$$
(s_1,s_2,\ldots, s_m)\mapsto \mathrm{Tr}(PS_{\mu_1}S_{\nu_1}^*P\mathrm{e}^{-s_1|\D_t|}\cdots PS_{\mu_m}S_{\nu_m}^*P\mathrm{e}^{-s_m|\D_t|}),
$$
extends meromorphically from $\mathrm{Re}(s_1+\cdots+s_m)>>0$ to $\C^m$ being holomorphic outside a subvariety $V_m^P(\mu_1,\nu_1,\ldots,\mu_m,\nu_m)\subseteq \C^m$ where it has an order $1$ singularity and 
$$
\{s\in \C: \exists \mathbbm{k}\in \Z^{m}_0: \; \mathbbm{k}+(0_{m-1},s)\in V_m^P(\mu_1,\nu_1,,\ldots,\mu_m,\nu_m)\}\subseteq \mathcal{P}_0+\pi i\Z.
$$
\end{enumerate}
\end{prop}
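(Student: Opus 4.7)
The plan is to reduce condition (b) of Lemma \ref{asstoprovtauzero} to Conditions 1 and 2, since condition (a) is already given by Proposition \ref{itemonefromprpf}. Because $|\D_{\rm af,t}| = e^{|\D_t|}$, we have $|\D_{\rm af,t}|^{-2z}=e^{-2z|\D_t|}$ and $\sigma^k(b) = e^{k|\D_t|}be^{-k|\D_t|}$. The algebra $\A$ is spanned by products of the form $a = \sigma^{k_1}(S_{\mu_1}S_{\nu_1}^*)\cdots\sigma^{k_m}(S_{\mu_m}S_{\nu_m}^*)$, so by linearity it suffices to verify the meromorphic extension of $\zeta_a(z) = \mathrm{Tr}(ae^{-2z|\D_t|})$ and $\zeta_{F_ta}(z) = \mathrm{Tr}(F_tae^{-2z|\D_t|})$ for each such monomial.

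I would handle $\zeta_a$ first. Substituting $\sigma^{k_i}(b)=e^{k_i|\D_t|}be^{-k_i|\D_t|}$, using cyclicity of the trace, and collecting exponential factors yields
\[
\zeta_a(z) = \mathrm{Tr}\bigl(S_{\mu_1}S_{\nu_1}^*e^{-s_1|\D_t|}\cdots S_{\mu_m}S_{\nu_m}^*e^{-s_m|\D_t|}\bigr)
\]
evaluated at $(s_1,\ldots,s_m) = \mathbbm{k} + (0_{m-1}, 2z)$ with $\mathbbm{k} = (k_1-k_2,\ldots,k_{m-1}-k_m,k_m-k_1) \in \Z^m_0$. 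Condition 1 then produces the desired meromorphic extension in $z$, with singularities contained in the discrete set $\tfrac{1}{2}(\mathcal{P}_0 + \pi i \Z)$ and poles of order at most $2$.

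Next I would address $\zeta_{F_t a}(z) = 2\mathrm{Tr}(Pae^{-2z|\D_t|}) - \zeta_a(z)$, using $F_t=2P-1$. Since $[P,e^{-s|\D_t|}]=0$ and $P^2 = P$, cyclicity gives $\mathrm{Tr}(Pae^{-2z|\D_t|}) = \mathrm{Tr}(PaPe^{-2z|\D_t|})$. Propagating the outer $P$'s through the exponential factors and cycling yields
\[
\mathrm{Tr}(Pae^{-2z|\D_t|}) = \mathrm{Tr}\bigl(PS_{\mu_1}S_{\nu_1}^*e^{-s_1|\D_t|}\cdots S_{\mu_m}S_{\nu_m}^*Pe^{-s_m|\D_t|}\bigr)
\]
for the same shifted $s_i$'s. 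To bring this into the form of Condition 2, I would insert $1 = P + (1-P)$ at each of the $m-1$ positions between consecutive letters. Orthogonality $P(1-P)=0$ combined with $[P,e^{-s|\D_t|}]=0$ forces each inserted projection to persist symmetrically on both sides of the adjacent exponential factor, giving a decomposition $\sum_{\epsilon \in \{0,1\}^{m-1}} X_\epsilon$. The all-zero term $X_{(0,\ldots,0)}$ matches Condition 2 exactly. For each remaining $\epsilon$, substituting $1-P = 1 - P$ at every position where $\epsilon_i = 1$ and expanding produces a finite signed sum of traces in which each inter-letter position carries either $P$ (a sub-pattern of Condition 2, after neighboring $P$'s collapse) or no projection at all (a sub-pattern of Condition 1, after surrounding $P$'s annihilate via cyclicity and orthogonality).

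Assembling all contributions, $\mathrm{Tr}(Pae^{-2z|\D_t|})$ becomes a finite $\Z$-linear combination of evaluations of the multivariable functions from Conditions 1 and 2 at lattice-shifted points of the form $\mathbbm{k}'+(0_{m'-1},2z)$ with $\mathbbm{k}'\in\Z^{m'}_0$ and $m' \leq m$. The hypotheses of Conditions 1 and 2 on the singular loci then give $\zeta_{F_ta}$ meromorphic on $\C$ with singularities contained in $\tfrac{1}{2}(\mathcal{P}_0 + \pi i \Z)$ and poles of order at most $2$. The main obstacle is the combinatorial bookkeeping of the inclusion-exclusion expansion and verifying that every resulting term genuinely reduces to either a full-$P$ sandwich or to an unprojected chain; this hinges on $[P,e^{-s|\D_t|}]=0$ and the cyclic collapse of adjacent projections along each segment $e^{-s_i|\D_t|}$.
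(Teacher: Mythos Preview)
Your treatment of $\zeta_a$ is correct and matches the paper exactly. The gap is in your handling of $\zeta_{F_ta}$. After writing $\mathrm{Tr}(Pa Pe^{-2z|\D_t|})=\mathrm{Tr}(Pb_1e^{-k_1|\D_t|}\cdots b_mPe^{-(2z+k_m)|\D_t|})$ and inserting $1=P+(1-P)$ at each gap, you claim that expanding each $(1-P)$ as $1-P$ leaves only terms of the shapes covered by Conditions 1 or 2. This is not so. The outer positions are forced to carry $P$, so every sub-term after expansion still has at least those two $P$'s; you never reach a genuinely unprojected chain (Condition~1 shape). And a mixed term such as $\mathrm{Tr}(Pb_1Pe^{-k_1|\D_t|}b_2e^{-k_2|\D_t|}b_3Pe^{-s_3|\D_t|})$ cannot be collapsed into Condition~2 shape for a smaller $m'$, because absorbing the middle exponential forces you to replace $b_3$ by $\sigma^{-k_2}(b_3)$, which lies in $\A$ but not in the span of the $S_\mu S_\nu^*$, and Conditions 1--2 are only assumed for the latter. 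No amount of cyclicity or orthogonality removes this obstruction.

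The missing idea---the one the paper uses---is that Proposition~\ref{itemonefromprpf} does more than supply condition (a): since $[F_t,b]\in C_c(\mathcal{V}_t\times\mathcal{V}_t)$ is finite rank for $b\in\A_0$, so is $[P,b]=\tfrac12[F_t,b]$. One then telescopes: inserting the interior $P$'s one at a time, each error picks up a factor $Pb_i(1-P)=P[P,b_i]$, which is finite rank with support in finitely many basis vectors. Because $|\D_t|$ is diagonal and the $b_j$ shift basis vectors, the trace of such an error is a \emph{finite} sum of terms $c\,e^{-ns}$ and hence an entire function of $s$. Thus the difference between $\mathrm{Tr}(Pb_1e^{-k_1|\D_t|}\cdots b_mPe^{-s_m|\D_t|})$ and the fully sandwiched expression of Condition~2 is entire, and Condition~2 alone handles $\zeta_{F_ta}$. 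Your inclusion--exclusion bookkeeping is not needed, and as written does not close.
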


\begin{proof}
As observed in the paragraph preceding the proposition, it remains to prove condition b) of Lemma \ref{asstoprovtauzero}. In other words, we must show that there is a discrete set $\mathcal{P}\subseteq \C$ such that for all $m\in \N$, $\mu_1,\nu_1,\mu_2,\nu_2\ldots,\mu_m,\nu_m\in \mathcal{V}_{\pmb{A}}$ and $\mathbbm{l}=(l_1,l_2,\ldots,l_m)\in \Z^m$ the functions
\begin{align}
\label{funofsos}
s\mapsto \mathrm{Tr}(\sigma^{l_1}(S_{\mu_1}S_{\nu_1}^*)\sigma^{l_2}(S_{\mu_2}S_{\nu_2}^*)\cdots \sigma^{l_m}(S_{\mu_m}S_{\nu_m}^*)\mathrm{e}^{-s|\D_t|}),\\
\label{funofsosII}
s\mapsto \mathrm{Tr}(F\sigma^{l_1}(S_{\mu_1}S_{\nu_1}^*)\sigma^{l_2}(S_{\mu_2}S_{\nu_2}^*)\cdots \sigma^{l_m}(S_{\mu_m}S_{\nu_m}^*)\mathrm{e}^{-s|\D_t|})
\end{align}
admits a meromorphic extension to $\C$, holomorphic outside 
$\mathcal{P}$ with at most simple poles in $\mathcal{P}$. 
We can write 
\begin{align*}
&\mathrm{Tr}(\sigma^{l_1}(S_{\mu_1}S_{\nu_1}^*)\sigma^{l_2}(S_{\mu_1}S_{\nu_1}^*)\cdots \sigma^{l_m}(S_{\mu_m}S_{\nu_m}^*)\mathrm{e}^{-s|\D_t|})\\
&=\mathrm{Tr}(S_{\mu_1}S_{\nu_1}^*\mathrm{e}^{-k_1|\D_t|}S_{\mu_2}S_{\nu_2}^*\mathrm{e}^{-k_2|\D_t|}\cdots \mathrm{e}^{-k_{m-1}|\D_t|}S_{\mu_m}S_{\nu_m}^*\mathrm{e}^{-(s+k_m)|\D_t|}),
\end{align*}
where $k_j=l_j-l_{j+1}$ and $k_m:=l_m-l_1$. It follows
from 1.  that the function \eqref{funofsos} is meromorphic in $\C$, holomorphic outside and with order $2$ poles in the set 
$$
\mathcal{P}'=\cup_m \big\{s\in \C: \exists \mathbbm{k}\in \Z^{m}: \; \mathbbm{k}+(0_{m-1},s)\in V_m(\mu_1,\nu_1,,\ldots,\mu_m,\nu_m)\big\},
$$
which is discrete as it is contained in $\mathcal{P}_0+\pi i\Z$. 

As for the function in \eqref{funofsosII}, we note that the argument in the previous paragraph reduces the problem to finding a discrete 
subset $\mathcal{P}''\subseteq \C$ such that the function 
$$
s\mapsto \mathrm{Tr}(P\sigma^{l_1}(S_{\mu_1}S_{\nu_1}^*)\sigma^{l_2}(S_{\mu_2}S_{\nu_2}^*)\cdots \sigma^{l_m}(S_{\mu_m}S_{\nu_m}^*)\mathrm{e}^{-s|\D_t|}),
$$
extends meromorphically to $\C$, holomorphic outside 
$\mathcal{P}''$ and with at most simple poles in $\mathcal{P}''$. 
An induction argument using Proposition \ref{itemonefromprpf} 
shows that the function 
\begin{align*}
s\mapsto \mathrm{Tr}&(P\sigma^{l_1}(S_{\mu_1}S_{\nu_1}^*)\sigma^{l_2}(S_{\mu_2}S_{\nu_2}^*)\cdots \sigma^{l_m}(S_{\mu_m}S_{\nu_m}^*)\mathrm{e}^{-s|\D_t|})\\
&-\mathrm{Tr}(PS_{\mu_1}S_{\nu_1}^*P\mathrm{e}^{-k_1|\D_t|}PS_{\mu_2}S_{\nu_2}^*P\mathrm{e}^{-k_2|\D_t|}\cdots \mathrm{e}^{-k_{m-1}|\D_t|}PS_{\mu_m}S_{\nu_m}^*P\mathrm{e}^{-(s+k_m)|\D_t|}),
\end{align*}
is an entire function, here $k_j=l_j-l_{j+1}$ and $k_m:=l_m-l_1$ as above. Therefore, we can take 
$$
\mathcal{P}''
=\bigcup_m \big\{s\in \C: \exists \mathbbm{k}\in \Z^{m}: \; \mathbbm{k}+(0,s)\in V_m^P(\mu_1,\nu_1,,\ldots,\mu_m,\nu_m)\big\},
$$
which is discrete as it is contained in $\mathcal{P}_0+\pi i\Z$.  
We conclude that the functions in \eqref{funofsos} and \eqref{funofsosII} are meromorphic, holomorphic outside 
$\mathcal{P}:=\mathcal{P}'\cup \mathcal{P}''$ with at most order 
$2$ poles in $\mathcal{P}$.
\end{proof}

We note that since the spectrum of $\D_t$ coincides with the integers, the set $\mathcal{P}$ in Proposition \ref{singprop} must be invariant under translation by $2\pi i \Z$.

We see no reason for the assumptions 
of Proposition \ref{singprop} to fail for a general Cuntz-Krieger 
algebra. However, due to the lack of a better approach than brute force calculation, 
we see no way of proving these assumptions in general. 

For the free group and a fixed point $t$, we prove Assumption 1) of Proposition \ref{singprop} below in Subsection \ref{sub:heat} and Assumption 2) of Proposition \ref{singprop} below in Subsection \ref{toeplsuzise}. This proves the omitted step in the proof of Theorem \ref{maintwo}.

\subsection{Heat trace computations on $\A$ for the free group}
\label{sub:heat}

In this subsubsection we will prove that Assumption 1) of Proposition \ref{singprop} is fulfilled for the action of the free group on its Gromov boundary, as described in Subsection \ref{sub:def-props}.

\begin{prop}
\label{kappacompprop}
Let $\mu\in \mathcal{V}_{\pmb{A}}$, $t\in \Omega_{\pmb{A}}$ and $(x,n)\in \mathcal{V}_t$ be an element with $A_{\mu_{|\mu|},x_1}=1$. Define $\mu\wedge t,\,\sigma\wedge t\in \mathcal{V}_t$ as the longest finite words 
for which $\mu=\mu_0(\mu\wedge t)$ for some $\mu_0\in \mathcal{V}_{\pmb{A}}$ and $t=(\sigma\wedge t)t_0$ for some $t_0\in \Omega_{\pmb{A}}$. Then 
$$\kappa(\mu x,n+|\mu|,t)=\begin{cases}
\kappa(x,n,t), \; &\mbox{if}\;\;\kappa(x,n,t)>-n,\\
\kappa(x,n,t)-|\mu\wedge t|, \; &\mbox{if}\;\;\kappa(x,n,t)=-n.\end{cases}$$
\end{prop}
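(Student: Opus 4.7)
The plan is to work directly from the minimum definition of $\kappa$ and track which shifts $k$ are admissible after prepending $\mu$. The initial observation is that
\[
\sigma_{\pmb{A}}^{n+|\mu|+k}(\mu x)=\sigma_{\pmb{A}}^{n+k}(x) \quad \text{whenever } k\geq -n,
\]
so in the range $k\geq -n$ the matching equation defining $\kappa(\mu x,n+|\mu|,t)$ coincides verbatim with the one defining $\kappa(x,n,t)$. Hence on that range the infimum is exactly $\kappa(x,n,t)$, and the whole problem collapses to deciding whether any $k$ in the strictly smaller range $\max(0,-(n+|\mu|))\leq k<-n$ also satisfies the equation.

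In the first case, $\kappa(x,n,t)>-n$, the hypothesis translates into $x\neq \sigma_{\pmb{A}}^{-n}(t)$, because $k=-n$ is excluded from the minimum defining $\kappa(x,n,t)$. For any candidate $k<-n$ in the admissible window, $\sigma_{\pmb{A}}^{n+|\mu|+k}(\mu x)$ is the final $-n-k$ letters of $\mu$ followed by $x_1x_2\cdots$; matching this with $\sigma_{\pmb{A}}^k(t)=t_{k+1}t_{k+2}\cdots$ on the positions strictly after $|\mu|$ forces the tail equation $x=\sigma_{\pmb{A}}^{-n}(t)$, which contradicts the case assumption. So no $k<-n$ is admissible, and $\kappa(\mu x,n+|\mu|,t)=\kappa(x,n,t)$.

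In the second case, $\kappa(x,n,t)=-n$, we have $x=\sigma_{\pmb{A}}^{-n}(t)$, and the tail part of the matching becomes automatic. At level $k<-n$ the matching reduces to the purely finite condition that the suffix of $\mu$ of length $\ell:=-n-k$ equals the subword $t_{-n-\ell+1}\cdots t_{-n}$ of $t$. The largest such $\ell$, subject to the admissibility bound $\ell\leq \min(-n,|\mu|)$, is by the definition of $\mu\wedge t$ exactly $|\mu\wedge t|$, producing the smallest admissible $k=-n-|\mu\wedge t|$. This gives $\kappa(\mu x,n+|\mu|,t)=-n-|\mu\wedge t|=\kappa(x,n,t)-|\mu\wedge t|$.

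There is no deep obstacle here; the proof amounts to unpacking both infima and performing a case split on whether the first match in the definition of $\kappa(x,n,t)$ already occurs at the boundary $k=-n$ or only strictly above it. The only delicate point is the index bookkeeping across the splitting position $|\mu|$ of $\mu x$, which is handled by the admissibility hypothesis $A_{\mu_{|\mu|},x_1}=1$ ensuring $\mu x\in \Omega_{\pmb{A}}$, together with the automatic inequality $|\mu\wedge t|\leq |\mu|$ coming from the suffix description of $\mu\wedge t$, which places the output $k=-n-|\mu\wedge t|$ in the required range $k\geq \max(0,-(n+|\mu|))$.
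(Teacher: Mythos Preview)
Your argument is correct and follows the same route as the paper's proof: both hinge on the observation that for $k\geq -n$ the shifted equation $\sigma_{\pmb{A}}^{n+|\mu|+k}(\mu x)=\sigma_{\pmb{A}}^k(t)$ reduces to $\sigma_{\pmb{A}}^{n+k}(x)=\sigma_{\pmb{A}}^k(t)$, followed by the same case split on whether $\kappa(x,n,t)$ lies at the boundary $-n$ or strictly above it. Your write-up is in fact more explicit than the paper's in the first case, where you spell out why no $k<-n$ can satisfy the equation (the tail forces $x=\sigma_{\pmb{A}}^{-n}(t)$), whereas the paper simply asserts minimality.
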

\begin{proof}
The proof is elementary combinatorics, 
and is presaged in \cite[Proposition 4.3]{GM2}.
Let $k\geq 0$ be an integer with $k+n+|\mu|\geq 0$ that 
satisfies $\sigma_{\pmb{A}}^{n+|\mu|+k}(\mu x)= \sigma_{\pmb{A}}^k(t)$. 
If $k+n\geq 0$, then $\sigma_{\pmb{A}}^{n+|\mu|+k}(\mu x)= \sigma_{\pmb{A}}^k(t)$ is equivalent to $\sigma_{\pmb{A}}^{n+k}( x)=\sigma_{\pmb{A}}^k(t)$. In particular, if $\kappa_{\pmb{A}}(x,n,t)>-n$ then $k:=\kappa(x,n,t)$ is the minimal $k$ for which $\sigma_{\pmb{A}}^{n+|\mu|+k}(\mu x)= \sigma_{\pmb{A}}^k(t)$. If $\kappa_{\pmb{A}} (x,n,t)=-n$ then $n$ must be negative and $x=\sigma_{\pmb{A}}^{-n}(t)$ so $\kappa_{\pmb{A}} (\mu x, n+|\mu|,t)$ is the minimal $k$ for which $\sigma_{\pmb{A}}^{n+|\mu|+k}(\mu \sigma_{\pmb{A}}^{-n}(t))= \sigma_{\pmb{A}}^k(t)$ which by the definition of $|\mu\wedge t|$ is given $k=-n-|\mu\wedge t|$. 
\end{proof}

To proceed with our argument, we fix finite words 
$\mu_1,\nu_1,\mu_2,\nu_2\ldots,\mu_m,\nu_m\in \mathcal{V}_{\pmb{A}}$. 
We will first focus on the expression $\mathrm{Tr}(S_{\mu_1}S_{\nu_1}^*\mathrm{e}^{-s_1|\D_t|}\cdots S_{\mu_m}S_{\nu_m}^*\mathrm{e}^{-s_m|\D_t|})$, for $s_1,s_2,\ldots,s_m>>0$. For a finite word $\nu\in \mathcal{V}_{\pmb{A}}$ and $x=\nu x_0\in C_\nu$, we write 
$$\nu^*x:=x_0.$$
Similarly, if $\rho=\nu\rho_0\in \mathcal{V}_{\pmb{A}}$, we write $\nu^*\rho:=\rho_0$. For a finite word $\nu=\nu_1\nu_2\cdots\nu_k$, we write $\bar{\nu}:=\nu_k\nu_{k-1}\cdots \nu_2\nu_1$. We compute that 
\begin{align*}
\mathrm{Tr}(S_{\mu_1}&S_{\nu_1}^*\mathrm{e}^{-s_1|\D_t|}\cdots S_{\mu_m}S_{\nu_m}^*\mathrm{e}^{-s_m|\D_t|})
= \sum_{(x,n)\in \mathcal{V}_t}\langle\delta_{(x,n)}|S_{\mu_1}S_{\nu_1}^*\cdots S_{\mu_m}S_{\nu_m}^*\delta_{(x,n)}\rangle\times \\
&\times\prod_{j=1}^m\exp\Big({-s_j\big(\big|n+\sum_{i=j+1}^m |\mu_i|-|\nu_i|\big|+\kappa(\mu_{j+1}\bar{\nu}_{j+1}^*\cdots \mu_m\bar{\nu}_m^*x,n+\sum_{i=j+1}^m |\mu_i|-|\nu_i|,t)\big)}\Big).
\end{align*}
An empty sum is interpreted as $0$ and $\mu_{j+1}\bar{\nu}_{j+1}^*\cdots \mu_m\bar{\nu}_m^*x$ is interpreted as $x$ for $j=m$. Note that whenever $\langle\delta_{(x,n)}|S_{\mu_1}S_{\nu_1}^*\cdots S_{\mu_m}S_{\nu_m}^*\delta_{(x,n)}\rangle$ is non-zero, the infinite word $\mu_{j+1}\bar{\nu}_{j+1}^*\cdots \mu_m\bar{\nu}_m^*x$ is well-defined for $j=1,\ldots, m$. In fact, we can reduce the possible words that can appear further
using the next result.

\begin{prop}
\label{dickotomy}
Let $\mu_1,\nu_1,\mu_2,\nu_2\ldots,\mu_m,\nu_m\in \mathcal{V}_{\pmb{A}}$ be finite words and $t\in \Omega_{\pmb{A}}$. Exactly one of the following two statements holds:
\begin{enumerate}
\item There are finite words $\rho_1,\ldots, \rho_q$ and coefficients $c_1,\ldots, c_q\in \Z$ such that 
$$
S_{\mu_1}S_{\nu_1}^*\cdots S_{\mu_m}S_{\nu_m}^*=\sum_{l=1}^q c_lS_{\rho_l}S_{\rho_l}^*=\sum_{l=1}^q c_l\chi_{C_{\rho_l}}.
$$
\item $\langle\delta_{(x,n)}|S_{\mu_1}S_{\nu_1}^*\cdots S_{\mu_m}S_{\nu_m}^*\delta_{(x,n)}\rangle=0$ for all $(x,n)\in \mathcal{V}_t$.
\end{enumerate}
\end{prop}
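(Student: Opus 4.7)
The plan is to realise the product $S_{\mu_1}S_{\nu_1}^*\cdots S_{\mu_m}S_{\nu_m}^*$ as the characteristic function of a single open bisection of the \'etale groupoid $\mathcal{G}_{\pmb{A}}$, and to observe that the bisections arising in this way are either contained in the unit space $\Omega_{\pmb{A}}$ or disjoint from it.

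First I would invoke the groupoid picture. Under the isomorphism $\mathcal{O}_{\pmb{A}}\cong C^*(\mathcal{G}_{\pmb{A}})$, the element $S_{\mu_j}S_{\nu_j}^*$ corresponds to $\chi_{C_{\mu_j,\nu_j}}$ on the clopen bisection $C_{\mu_j,\nu_j}$. Since the convolution product of characteristic functions of open bisections in an \'etale groupoid is again the characteristic function of their product bisection, iterating yields $S_{\mu_1}S_{\nu_1}^*\cdots S_{\mu_m}S_{\nu_m}^*=\chi_B$ for a single open bisection $B\subseteq\mathcal{G}_{\pmb{A}}$.

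Next, by induction on $m$, the bisection $B$ takes the form $B=\{(\alpha z,d,\beta z):z\in Z\}$ where $\alpha,\beta\in\mathcal{V}_{\pmb{A}}$ are admissible words (possibly empty), $d=|\alpha|-|\beta|=\sum_{i=1}^m(|\mu_i|-|\nu_i|)$, and $Z\subseteq\Omega_{\pmb{A}}$ is a clopen set contained in the relevant images of $C_\alpha$ and $C_\beta$ under iterated shifts. The inductive step amounts to analysing $C_{\alpha,\beta}\cdot C_{\mu,\nu}$ by cases on the relationship between $\beta$ and $\mu$: if they are incomparable as finite words the product is empty; if $\beta=\mu\mu'$ the product equals $\{(\alpha z,d+|\mu|-|\nu|,\nu\mu' z):z\in Z'\}$ for a suitable clopen $Z'$; and symmetrically if $\mu=\beta\beta'$.

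Finally, I would carry out the case analysis. Using the identification of $\mathcal{V}_t$ with $d^{-1}(t)$, the operator $\chi_B$ acts on $\delta_{(x,n)}$ by sending it to $\delta_{(\alpha\beta^*x,\,n+d)}$ when $x\in C_\beta$ with $\beta^*x\in Z$, and to zero otherwise. Hence the diagonal matrix element $\langle\delta_{(x,n)}|\chi_B\delta_{(x,n)}\rangle$ is non-zero only when $d=0$ and $\alpha\beta^*x=x$, i.e. $\alpha=\beta$. If $\alpha=\beta$ then $B$ lies in the unit space and $\chi_B=\chi_{\alpha Z}$ decomposes as a finite linear combination of cylinder characteristic functions $\sum_l c_l\chi_{C_{\rho_l}}$, giving option 1. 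Otherwise every diagonal matrix element vanishes, giving option 2. The main obstacle is the combinatorial bookkeeping in the inductive step, which requires a careful case analysis of how prefixes of $\beta$ interact with $\mu$ under the Cuntz-Krieger relation $S_i^*S_j=\delta_{ij}\sum_k A_{jk}S_kS_k^*$; once the bisection form is established, the dichotomy falls out mechanically.
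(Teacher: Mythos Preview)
Your approach is correct and is essentially the content of the paper's proof, which simply cites the Cuntz--Krieger relations and \cite[Lemma 1.1]{kpr}: the product $S_{\mu_1}S_{\nu_1}^*\cdots S_{\mu_m}S_{\nu_m}^*$ is either zero or a single $S_\alpha S_\beta^*$, and the dichotomy then amounts to whether $\alpha=\beta$. Your groupoid formulation (products of characteristic functions of basic bisections are again characteristic functions of basic bisections) is precisely the geometric restatement of that lemma; in fact your inductive hypothesis is slightly more general than needed, since the product $C_{\mu,\nu}\cdot C_{\sigma,\tau}$ is always either empty or exactly some $C_{\alpha,\beta}$, so the clopen set $Z$ is always of the simplest possible form and the decomposition in case 1 reduces to a single cylinder $\chi_{C_\alpha}$.
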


The proposition follows from the Cuntz-Krieger relations 
on the generators, and can be deduced from \cite[Lemma 1.1]{kpr}
or the more general setting described in \cite{Rae}. 
We henceforth assume that 
$\langle\delta_{(x,n)}|S_{\mu_1}S_{\nu_1}^*\cdots S_{\mu_m}S_{\nu_m}^*\delta_{(x,n)}\rangle\neq 0$ for some $(x,n)$ 
and pick finite words $\rho_1,\ldots, \rho_q$ and coefficients 
$c_1,\ldots, c_q\in \Z$ as in Proposition \ref{dickotomy}. 
Observe that we can always 
choose the $\rho_l$ longer if needed. We introduce the notation 
$$
\sigma_{j,l}:=\mu_{j+1}\bar{\nu}_{j+1}^*\cdots \mu_m\bar{\nu}_m^*\rho_l,
$$
upon assuming that $\rho_l$ is chosen long enough for this definition to make sense. Note that 
$$
\sum_{i=j+1}^m |\mu_i|-|\nu_i|=|\sigma_{j,l}|-|\rho_l|,
$$
for any $l$. We write 
\begin{align}
\nonumber
\mathrm{Tr}(S_{\mu_1}S_{\nu_1}^*&\mathrm{e}^{-s_1|\D_t|}\cdots S_{\mu_m}S_{\nu_m}^*\mathrm{e}^{-s_m|\D_t|})\\
\nonumber
=& \sum_{l=1}^q\sum_{(\rho_lx,n)\in \mathcal{V}_t}c_l\prod_{j=1}^m\mathrm{e}^{-s_j(|n+|\sigma_{j,l}|-|\rho_l||+\kappa(\sigma_{j,l}x,n+|\sigma_{j,l}|-|\rho_l|,t))}\\
\nonumber
=& \sum_{l=1}^q\sum_{n\in \Z}\sum_{k=\max(0,-n)}^\infty \sum_{\kappa(\rho_lx,n)=k}c_l\prod_{j=1}^m\mathrm{e}^{-s_j(|n+|\sigma_{j,l}|-|\rho_l||+\kappa(\sigma_{j,l}x,n+|\sigma_{j,l}|-|\rho_l|,t))}\\
\nonumber
=&\sum_{l=1}^q\sum_{n\in \Z}\sum_{k=\max(1,-n+1)}^\infty \sum_{\kappa(\rho_lx,n,t)=k}c_l\prod_{j=1}^m\mathrm{e}^{-s_j(|n+|\sigma_{j,l}|-|\rho_l||+\kappa(\sigma_{j,l}x,n+|\sigma_{j,l}|-|\rho_l|,t))}\\
\nonumber
&+\sum_{l=1}^q\sum_{n=0}^\infty\sum_{\kappa(\rho_lx,n,t)=0}c_l\prod_{j=1}^m\mathrm{e}^{-s_j(|n+|\sigma_{j,l}|-|\rho_l||+\kappa(\sigma_{j,l}x,n+|\sigma_{j,l}|-|\rho_l|,t))}\\
\nonumber
&+\sum_{l=1}^q\sum_{n=-\infty}^{-1}\sum_{\kappa(\rho_lx,n,t)=-n}c_l\prod_{j=1}^m\mathrm{e}^{-s_j(|n+|\sigma_{j,l}|-|\rho_l||+\kappa(\sigma_{j,l}x,n+|\sigma_{j,l}|-|\rho_l|,t))}\\
\label{firstterm}
&=\sum_{l=1}^q\sum_{n\in \Z}\sum_{k=\max(1,-n+1)}^\infty \sum_{\kappa(x,n,t)=k,A_{\rho_{l,|\rho_l|},x_1}=1}c_l\prod_{j=1}^m\mathrm{e}^{-s_j(|n+|\sigma_{j,l}|-|\rho_l||+k)}\\
\label{secondterm}
&+\sum_{l=1}^q\sum_{n=0}^\infty\sum_{\rho_l x\in \sigma_{\pmb{A}}^{-n}(\{t\})}c_l\prod_{j=1}^m\mathrm{e}^{-s_j(|n+|\sigma_{j,l}|-|\rho_l||)}\\
\label{thirdterm}
&+\sum_{l=1}^q\sum_{n=-\infty}^{-1}\sum_{\kappa(\rho_lx,n,t)=-n}c_l\prod_{j=1}^m\mathrm{e}^{-s_j(|n+|\sigma_{j,l}|-|\rho_l||+\kappa(\sigma_{j,l}x,n+|\sigma_{j,l}|-|\rho_l|,t))}.
\end{align}
In the last step, we used Proposition \ref{kappacompprop}.
The three terms \eqref{firstterm}, \eqref{secondterm}, \eqref{thirdterm} will
now be examined separately.

\subsubsection{The expression in \eqref{firstterm}} To study 
\eqref{firstterm}, we  make use of the following elementary counting argument.

\begin{prop}
Let $\pmb{A}$ be as in Subsection \ref{sub:def-props}. 
For $k\geq \max(1,-n+1)$, 
$\rho\in \mathcal{V}_{\pmb{A}}=\mathbb{F}_d$ and $t\in \Omega_{\pmb{A}}=\partial \mathbb{F}_d$,
\begin{align*}
&\#\{x\in \Omega_{\pmb{A}}:\; \sigma_{\pmb{A}}^{n+k}(x)=\sigma_{\pmb{A}}^k(t), \,x_{n+k}\neq t_k, \, A_{\rho_{|\rho|},x_1}=1\}\\
&\qquad=\begin{cases}
(2d-2)^2(2d-1)^{n+k-2}, \;&n+k>1\\
2d-2,\; &n+k=1 \end{cases}.
\end{align*}
\end{prop}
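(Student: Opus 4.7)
The plan is to reduce the counting problem to enumerating reduced words in $\mathbb{F}_d$ with prescribed constraints at the two endpoints, and to evaluate the resulting sum via the transfer-matrix formalism.

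Any $x\in\Omega_{\pmb{A}}$ satisfying $\sigma_{\pmb{A}}^{n+k}(x)=\sigma_{\pmb{A}}^k(t)$ has its tail $x_{n+k+j}=t_{k+j}$ ($j\geq 1$) completely determined, so the cardinality in question equals the number of tuples $(x_1,\ldots,x_{n+k})$ from the alphabet $\{a_1,b_1,\ldots,a_d,b_d\}$ subject to: internal reducedness $x_{i+1}\neq x_i^{-1}$ for $1\leq i<n+k$; junction reducedness $x_{n+k}\neq t_{k+1}^{-1}$ (needed for $x\in\Omega_{\pmb{A}}$); and the two prescribed conditions $x_{n+k}\neq t_k$ and $x_1\neq\rho_{|\rho|}^{-1}$. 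Admissibility of $t$ forces $t_k\neq t_{k+1}^{-1}$, so the last-position exclusion set has at least two distinct elements.

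For $n+k=1$ only the letter $x_1$ is chosen, avoiding the set $\{\rho_{|\rho|}^{-1},t_1,t_2^{-1}\}$ of size at most three; the claimed value $2d-2$ is the count in the case where exactly two of these three letters coincide, which I would verify from the structural form of the prefixes $\rho=\rho_l$ in Equation \eqref{firstterm} coming from the Cuntz--Krieger decomposition of $S_{\mu_1}S_{\nu_1}^*\cdots S_{\mu_m}S_{\nu_m}^*$, together with the fixed-point structure of $t$. For $n+k\geq 2$, writing $T:=\pmb{A}$, the count becomes
\[
\sum_{\alpha\neq\rho_{|\rho|}^{-1}}\;\sum_{\beta\notin\{t_k,\,t_{k+1}^{-1}\}}(T^{n+k-1})_{\alpha,\beta},
\]
which I would evaluate using the spectral decomposition $T=J-P$, where $J$ is the all-ones $2d\times 2d$ matrix and $P$ is the permutation matrix of the involution $i\mapsto i^{-1}$; this gives spectrum $\{2d-1,+1,-1\}$ of multiplicities $1$, $d$, and $d-1$ respectively.

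The main obstacle is the delicate bookkeeping of boundary coincidences. The interaction between the internal reducedness $x_{n+k}\neq x_{n+k-1}^{-1}$ and the external exclusions $\{t_k,t_{k+1}^{-1}\}$ causes the number of available choices at the final position to depend on $x_{n+k-1}$, and similarly at the first position the exclusion $\rho_{|\rho|}^{-1}$ interacts with $x_2^{-1}$. Collecting these contributions across all admissible intermediate sequences is where the $\pm 1$-eigenvalue contributions to $T^{n+k-1}$ intervene, and exploiting the periodicity of the fixed point $t$ is what collapses the closed-form expression to the clean product $(2d-2)^2(2d-1)^{n+k-2}$ rather than a messier sum with lower-order correction terms depending on boundary coincidences.
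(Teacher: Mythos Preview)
The paper's argument is a single sentence: it rewrites the set as $\{\gamma\in\mathbb{F}_d : |\gamma|=n+k,\ \gamma_{n+k}\neq t_k,\ A_{\rho_{|\rho|},\gamma_1}=1\}$ and says the count follows by ``a word counting in the free group'' --- no transfer matrix, no spectral decomposition, no case analysis of boundary coincidences.

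Your reduction to constrained reduced words is correct and in fact more careful than the paper's: you spot the junction constraint $x_{n+k}\neq t_{k+1}^{-1}$ (needed for $x\in\Omega_{\pmb{A}}$) that the paper's restatement silently drops. But from there your proposal is a plan, not a proof --- every substantive step is deferred (``I would verify'', ``I would evaluate''), and the crucial assertion, that the fixed-point structure of $t$ collapses the answer to the clean product $(2d-2)^2(2d-1)^{n+k-2}$, is not substantiated. If you actually carry out your spectral computation with one excluded letter at the start and two at the end (set $m=n+k$), the generic count is
\[
(2d-3)(2d-1)^{m-1}+\tfrac{2}{2d}\big((2d-1)^{m-1}-(-1)^{m-1}\big)+\{\text{Kronecker-delta terms}\},
\]
whose leading coefficient in $(2d-1)^{m-1}$ is $\tfrac{(2d-1)(d-1)}{d}$, whereas the claimed formula has leading coefficient $\tfrac{(2d-2)^2}{2d-1}$; these differ for every $d\ge 2$, fixed-point $t$ or not. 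So the exact identity does \emph{not} drop out of your approach without an additional coincidence between $\rho_{|\rho|}$ and the letters of $t$ that the stated hypotheses do not guarantee. What the downstream application (Proposition~\ref{firsttermholo}) actually uses is only the growth $\mathrm{const}\cdot(2d-1)^{n+k}+O(1)$, and that your method does deliver: the bounded discrepancies contribute entire functions of $s$ and do not alter the pole set $\{0,\log(2d-1)\}+\pi i\Z$. You should either flag the exact formula as a leading-order statement or pin down the extra coincidence needed, rather than asserting the spectral machinery yields it.
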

\begin{proof}
The proposition follows from noting that 
\begin{align*}
\#\{x\in \Omega_{\pmb{A}}:\; &\sigma_{\pmb{A}}^{n+k}(x)=\sigma_{\pmb{A}}^k(t), \,x_{n+k}\neq t_k, \, A_{\rho_{|\rho|},x_1}=1\}\\
&=\#\{\gamma\in\mathbb{F}_d :\; |\gamma|=n+k, \,\gamma_{n+k}\neq t_k, \, A_{\rho_{|\rho|},\gamma_1}=1\},
\end{align*}
and a word counting in the free group. 
\end{proof}
We proceed with computing the expression in \eqref{firstterm} for a fixed $l\in \{1,\ldots, q\}$. 
\begin{align}
\nonumber
\sum_{n\in \Z}&\sum_{k=\max(1,-n+1)}^\infty \sum_{\kappa(x,n,t)=k,A_{\rho_{l,|\rho_l|},x_1}=1}\prod_{j=1}^m\mathrm{e}^{-s_j(|n+|\sigma_{j,l}|-|\rho_l||+k)}\\
\nonumber
=&\sum_{n\in \Z}\sum_{k=\max(1,-n+1)}^\infty \!\!\!\!\!\!\!\!\!\!\#\{x\in \Omega_{\pmb{A}}:\, \sigma_{\pmb{A}}^{n+k}(x)=\sigma_{\pmb{A}}^k(t), \,x_{n+k}\neq t_k, \, A_{\rho_{|\rho|},x_1}\!\!=1\}\prod_{j=1}^m\mathrm{e}^{-s_j(|n+|\sigma_{j,l}|-|\rho_l||+k)}\\
\nonumber
=&\sum_{n=1}^\infty\sum_{k=1}^\infty (2d-2)^2(2d-1)^{n+k-2}\prod_{j=1}^m\mathrm{e}^{-s_j(|n+|\sigma_{j,l}|-|\rho_l||+k)}+(2d-2)\prod_{j=1}^m\mathrm{e}^{-s_j(||\sigma_{j,l}|-|\rho_l||+1)}\\
\nonumber
&+\sum_{k=2}^\infty (2d-2)^2(2d-1)^{k-2}\prod_{j=1}^m\mathrm{e}^{-s_j(||\sigma_{j,l}|-|\rho_l||+k)}+\sum_{n=1}^\infty (2d-2)\prod_{j=1}^m\mathrm{e}^{-s_j(|n-|\sigma_{j,l}|+|\rho_l||+n+1)}\\
\label{longcomp}
&+\sum_{n=1}^\infty\sum_{k=n+2}^\infty (2d-2)^2(2d-1)^{-n+k-2}\prod_{j=1}^m\mathrm{e}^{-s_j(|n-|\sigma_{j,l}|+|\rho_l||+k)}
\end{align}

Expressions of this type can be computed quite easily by means of geometric series. We summarize the relevant computations
in the next lemma, leaving the details to the reader. 

\begin{lemma}
\label{computationofsums}
Let $a,b,c,d_1,\ldots, d_m\in \Z$, $\lambda>0$ be parameters and $s_1,\ldots, s_m\in \C$ complex variables with $\mathrm{Re}(s_1+s_2+\cdots +s_m)$ sufficiently large. Set $d:=\min_j(d_j)$. Then the following computations hold.
\begin{align}
\sum_{n=a}^\infty\sum_{k=b}^\infty \lambda^{cn+k} \mathrm{e}^{-\sum_{j=1}^m s_j(|n+d_j|+k)}=&\frac{\lambda^b\mathrm{e}^{-b\sum_{j=1}^ms_j}}{1-\lambda\mathrm{e}^{-\sum_{j=1}^ms_j} }\sum_{n=a}^{-d}\lambda^{cn} \mathrm{e}^{-\sum_{j=1}^m s_j|n+d_j|}\\
\nonumber
&+\frac{\lambda^{b-cd}\mathrm{e}^{-\sum_{j=1}^ms_j(b+d_j-d)}}{(1-\lambda\mathrm{e}^{-\sum_{j=1}^ms_j})(1-\lambda^c\mathrm{e}^{-\sum_{j=1}^ms_j}) }\\
\sum_{n=a}^\infty\sum_{k=n+b}^\infty \lambda^{cn+k}\mathrm{e}^{-\sum_{j=1}^m s_j(|n+d_j|+k)}=&\frac{\lambda^b}{1-\lambda\mathrm{e}^{-\sum_{j=1}^ms_j} }\sum_{n=a}^{-d}\lambda^{(c+1)n} \mathrm{e}^{-\sum_{j=1}^m s_j(|n+d_j|+n+b)}\\
\nonumber
&+\frac{\lambda^{b-(c+1)d}\mathrm{e}^{-\sum_{j=1}^ms_j(b+d_j-2d)}}{(1-\lambda\mathrm{e}^{-\sum_{j=1}^ms_j})(1-\lambda^{c+1}\mathrm{e}^{-2\sum_{j=1}^ms_j}) }\\
\sum_{n=a}^\infty \lambda^{cn}\mathrm{e}^{-\sum_{j=1}^m s_j|n+d_j|}=&\sum_{n=a}^{-d}\lambda^{cn} \mathrm{e}^{-\sum_{j=1}^m s_j|n+d_j|}+\frac{\lambda^{-cd}\mathrm{e}^{-\sum_{j=1}^ms_j(d_j-d)}}{1-\lambda^c\mathrm{e}^{-\sum_{j=1}^ms_j} }
\\
\sum_{k=b}^\infty \lambda^{k}\mathrm{e}^{-\sum_{j=1}^m s_jk}=&\frac{\lambda^{b}\mathrm{e}^{-b\sum_{j=1}^ms_j}}{1-\lambda\mathrm{e}^{-\sum_{j=1}^ms_j} }
\end{align}
\end{lemma}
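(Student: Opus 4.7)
The plan is to reduce each of the four identities to the elementary geometric series identity by splitting the $n$-summation range at $n=-d$ where $d=\min_j d_j$. The key observation is that for $n\geq -d$ one has $n+d_j\geq 0$ for every $j$, so $|n+d_j|=n+d_j$, and the summand becomes a pure geometric progression in $n$ with ratio $\lambda^c\mathrm{e}^{-\sum s_j}$. For $n<-d$ the absolute values are genuinely active, and one simply retains the finite partial sum in the closed form. The hypothesis that $\mathrm{Re}(\sum_j s_j)$ is sufficiently large guarantees that $|\lambda^c\mathrm{e}^{-\sum s_j}|<1$ (and likewise $|\lambda\mathrm{e}^{-\sum s_j}|<1$), so all geometric sums converge absolutely and Fubini-type interchanges are legitimate.

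The fourth identity, $\sum_{k=b}^\infty \lambda^k\mathrm{e}^{-k\sum s_j}$, is the bare geometric series. The third identity is then obtained by splitting $\sum_{n=a}^\infty = \sum_{n=a}^{-d-1}+\sum_{n=-d}^\infty$, recognising the tail as $\mathrm{e}^{-\sum s_j d_j}\sum_{n=-d}^\infty(\lambda^c\mathrm{e}^{-\sum s_j})^n$ and summing. For the first identity, the $k$-dependence factors entirely through $\lambda^k\mathrm{e}^{-k\sum s_j}$, so the inner $k$-sum separates and is evaluated by the fourth identity; the resulting $n$-sum is then handled by the third.

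The second identity is the only case where the two summations couple, since $k\geq n+b$. Here I would first perform the inner $k$-sum: for fixed $n$, $\sum_{k=n+b}^\infty \lambda^k\mathrm{e}^{-k\sum s_j}=\lambda^{n+b}\mathrm{e}^{-(n+b)\sum s_j}/(1-\lambda\mathrm{e}^{-\sum s_j})$. The outer sum then takes the form
\begin{equation*}
\frac{\lambda^b\mathrm{e}^{-b\sum s_j}}{1-\lambda\mathrm{e}^{-\sum s_j}}\sum_{n=a}^\infty \lambda^{(c+1)n}\mathrm{e}^{-\sum s_j(|n+d_j|+n)},
\end{equation*}
which I again split at $n=-d$. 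The tail is now a geometric series with ratio $\lambda^{c+1}\mathrm{e}^{-2\sum s_j}$, producing exactly the denominator $(1-\lambda\mathrm{e}^{-\sum s_j})(1-\lambda^{c+1}\mathrm{e}^{-2\sum s_j})$ in the stated formula.

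There is no real obstacle: the whole argument is linear algebra with geometric series. The only delicate point is careful bookkeeping at the split index $n=-d$ (the apparent off-by-one between the finite and infinite pieces in the stated formulae is absorbed into the factor $\lambda^{-cd}\mathrm{e}^{-\sum s_j(d_j-d)}$), and the absolute-convergence check which legitimises the separation of the $n$ and $k$ sums in each case.
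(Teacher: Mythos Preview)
Your approach is correct and is precisely what the paper intends: the authors state that ``expressions of this type can be computed quite easily by means of geometric series'' and explicitly leave the details to the reader, so there is no proof in the paper to compare against. Your plan---sum the geometric series in $k$, then split the $n$-sum at $n=-d$ where the absolute values become inactive and the tail is again geometric---is exactly the right mechanism, and your remark about the bookkeeping at $n=-d$ shows you have seen the only place requiring care.
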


We define
$$d_l^\pm:=\min_j \pm (|\sigma_{j,l}|-|\rho_l|).$$
Using Lemma \ref{computationofsums}, we compute the 
five expressions in \eqref{longcomp}, obtaining
\begin{align*}
&\;\;\frac{(2d-2)^2}{1-(2d-1)\mathrm{e}^{-\sum_{j=1}^m s_j}}\sum_{n=1}^{-d_l^+} (2d-1)^{n-1} \mathrm{e}^{-\sum_{j=1}^m s_j(|n+|\sigma_{j,l}|-|\rho_l||+1)}\\
&+\frac{(2d-2)^2}{2d-1}\frac{\mathrm{e}^{-\sum_{j=1}^m s_j(1+|\sigma_{j,l}|-|\rho_l|-d^+_l)}}{(1-(2d-1)\mathrm{e}^{-\sum_{j=1}^m s_j})^2}+(2d-2)\mathrm{e}^{-\sum_{j=1}^m s_j(1+|\sigma_{j,l}|-|\rho_l|)}\\
&+\frac{(2d-2)\mathrm{e}^{-\sum_{j=1}^m s_j(2+|\sigma_{j,l}|-|\rho_l|)}}{1-(2d-1)\mathrm{e}^{-\sum_{j=1}^m s_j}}\\
&+(2d-2)\sum_{n=1}^{-d_l^-} \mathrm{e}^{-\sum_{j=1}^m s_j(|n-|\sigma_{j,l}|+|\rho_l||+n+1)}\\
&+\frac{(2d-2)\mathrm{e}^{-\sum_{j=1}^m s_j(1+|\sigma_{j,l}|-|\rho_l|-2d^-_l)}}{1-\mathrm{e}^{-2\sum_{j=1}^m s_j}}+\frac{(2d-2)^4}{(2d-1)^2}\sum_{n=1}^{-d_l^-} \mathrm{e}^{-\sum_{j=1}^m s_j(|n-|\sigma_{j,l}|+|\rho_l||+n+2)}\\
&+\frac{(2d-2)^2\mathrm{e}^{-\sum_{j=1}^m s_j(2+|\sigma_{j,l}|-|\rho_l|-2d^-_l)}}{(1-\mathrm{e}^{-2\sum_{j=1}^m s_j})(1-(2d-1)\mathrm{e}^{-\sum_{j=1}^m s_j})}.
\end{align*}
Using the partial fraction decompositions 
$(1-w^2)^{-1}=\frac{1}{2}(1-w)^{-1}-\frac{1}{2}(1+w)^{-1}$ and 
$$
\frac{1}{(1-w^2)(1-(2d-1)w)}
=\frac{(2d-1)^2}{((2d-1)^2-1)}\frac{1}{(1-(2d-1)w)}-\frac{1}{(4d-2)}\frac{1}{(1-w)}
+\frac{1}{4d}\frac{1}{(1+w)},
$$
we simplify this expression to
\begin{align*}
&\frac{1}{(1-(2d-1)\mathrm{e}^{-\sum_{j=1}^m s_j})^2}\frac{(2d-2)^2}{2d-1}\mathrm{e}^{-\sum_{j=1}^m s_j(1+|\sigma_{j,l}|-|\rho_l|-d^+_l)}\\
&+\frac{1}{1-(2d-1)\mathrm{e}^{-\sum_{j=1}^m s_j}}\bigg((2d-2)^2\sum_{n=1}^{-d_l^+} (2d-1)^{n-1} \mathrm{e}^{-\sum_{j=1}^m s_j(|n+|\sigma_{j,l}|-|\rho_l||+1)}\\
&\qquad\qquad\qquad\qquad\qquad\qquad\qquad+(2d-2)\mathrm{e}^{-\sum_{j=1}^m s_j(2+|\sigma_{j,l}|-|\rho_l|)}\\
&\qquad\qquad\qquad\qquad\qquad\qquad\qquad+\frac{(2d-2)^2(2d-1)^2}{(2d-1)^2-1}\mathrm{e}^{-\sum_{j=1}^m s_j(2+|\sigma_{j,l}|-|\rho_l|-2d^-_l)}\bigg)\\
&+\frac{1}{1-\mathrm{e}^{-\sum_{j=1}^m s_j}}\bigg((d-1)\mathrm{e}^{-\sum_{j=1}^m s_j(1+|\sigma_{j,l}|-|\rho_l|-2d^-_l)}-\frac{(2d-2)^2}{4d-2}\mathrm{e}^{-\sum_{j=1}^m s_j(2+|\sigma_{j,l}|-|\rho_l|-2d^-_l)}\bigg)\\
&+\frac{1}{1+\mathrm{e}^{-\sum_{j=1}^m s_j}}\bigg((d-1)\mathrm{e}^{-\sum_{j=1}^m s_j(1+|\sigma_{j,l}|-|\rho_l|-2d^-_l)}+\frac{(2d-2)^2}{4d-2}\mathrm{e}^{-\sum_{j=1}^m s_j(2+|\sigma_{j,l}|-|\rho_l|-2d^-_l)}\bigg)\\
&+(2d-2)\mathrm{e}^{-\sum_{j=1}^m s_j(1+|\sigma_{j,l}|-|\rho_l|)}+(2d-2)\sum_{n=1}^{-d_l^-} \mathrm{e}^{-\sum_{j=1}^m s_j(|n-|\sigma_{j,l}|+|\rho_l||+n+1)}\\
&+\frac{(2d-2)^4}{(2d-1)^2}\sum_{n=1}^{-d_l^-} \mathrm{e}^{-\sum_{j=1}^m s_j(|n-|\sigma_{j,l}|+|\rho_l||+n+2)}.
\end{align*}
The reader should note that the expressions in the last two lines are entire functions. In the first four lines, only the pre-factors are not entire functions.

\begin{prop}
\label{firsttermholo}
Let $\pmb{A}$ be as in Subsection \ref{sub:def-props} 
and $t\in \Omega_{\pmb{A}}$. The function 
$$
s=(s_1,\dots,s_m)\mapsto
\sum_{k=\max(1,-n+1)}^\infty \sum_{\kappa(x,n,t)
=k,A_{\rho_{l,|\rho_l|},x_1}=1}
\prod_{j=1}^m\mathrm{e}^{-s_j\big(\big|n+|\sigma_{j,l}|-|\rho_l|\big|+k\big)}
$$
appearing in \eqref{firstterm} is holomorphic outside the 
set $\{s\in \C^m: \sum_{j=1}^m s_j\in \mathcal{P}_0+\pi i \Z\}$ 
where $\mathcal{P}_0:=\{0,\log(2d-1)\}$. The order of singularities on the 
subvariety 
$\{s\in \C^m: \sum_{j=1}^m s_j\in \mathcal{P}_0+\pi i \Z\}$ is at most $2$.
\end{prop}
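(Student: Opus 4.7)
The plan is to observe that all the computational work has effectively been done in the explicit calculation leading up to the statement, so the proof reduces to a careful accounting of the pole structure in the final simplified expression. The five sums composing \eqref{longcomp} are precisely of the forms handled by Lemma \ref{computationofsums} with $\lambda = 2d-1$ and $c=1$, once one rewrites
$$
|n + |\sigma_{j,l}| - |\rho_l||+k \quad\text{and}\quad |n - |\sigma_{j,l}| + |\rho_l|| + n + k
$$
in the normal form $|n + d_j| + k$, respectively $|n+d_j| + n + k$, appearing in the lemma. Applying Lemma \ref{computationofsums} converts each infinite sum to the sum of a finite sum over $n \in \{1,\dots,-d_l^{\pm}\}$, which is manifestly entire in $s$, plus a closed-form rational expression in $\mathrm{e}^{-\sum_j s_j}$ whose only denominator factors are $1 - (2d-1)\mathrm{e}^{-\sum_j s_j}$ (at most squared) and $1 - \mathrm{e}^{-2\sum_j s_j}$.

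Next I would apply the two partial-fraction identities displayed in the text to split every occurrence of $(1 - \mathrm{e}^{-2\sum_j s_j})^{-1}$ and $((1 - \mathrm{e}^{-2\sum_j s_j})(1-(2d-1)\mathrm{e}^{-\sum_j s_j}))^{-1}$ into sums with denominators among $1 - (2d-1)\mathrm{e}^{-\sum_j s_j}$, $1 - \mathrm{e}^{-\sum_j s_j}$ and $1 + \mathrm{e}^{-\sum_j s_j}$. This produces exactly the displayed formula immediately preceding the statement of the proposition, of the schematic form
$$
\frac{E_1(s)}{(1-(2d-1)\mathrm{e}^{-\sum_j s_j})^2}
+\frac{E_2(s)}{1-(2d-1)\mathrm{e}^{-\sum_j s_j}}
+\frac{E_3(s)}{1-\mathrm{e}^{-\sum_j s_j}}
+\frac{E_4(s)}{1+\mathrm{e}^{-\sum_j s_j}}
+E_5(s),
$$
where each $E_i$ is an entire function of $s$ (a finite $\C$-linear combination of exponentials $\mathrm{e}^{-\alpha \sum_j s_j}$, possibly multiplied by finite sums over $n$ of such exponentials).

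Finally I would read off the singular locus directly. The denominator $1 - (2d-1)\mathrm{e}^{-\sum_j s_j}$ vanishes precisely on $\{\sum_j s_j \in \log(2d-1) + 2\pi i \Z\}$; the denominator $1 - \mathrm{e}^{-\sum_j s_j}$ vanishes on $\{\sum_j s_j \in 2\pi i \Z\}$; and $1 + \mathrm{e}^{-\sum_j s_j}$ vanishes on $\{\sum_j s_j \in \pi i + 2\pi i \Z\}$. Their union is exactly $\{\sum_j s_j \in \mathcal{P}_0 + \pi i \Z\}$ with $\mathcal{P}_0 = \{0,\log(2d-1)\}$, and the only denominator appearing to the second power is the first, so the singularities have order at most $2$.

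The main obstacle is entirely bookkeeping: one must align the shifts in each of the five summation patterns of \eqref{longcomp} with the indexing in Lemma \ref{computationofsums}, and verify that the finite-$n$ remainders really are entire (they are finite sums of exponentials whose exponents are bounded since $1 \le n \le -d_l^\pm$). No new analytic input is required beyond what is recorded in Lemma \ref{computationofsums} and the partial-fraction identities.
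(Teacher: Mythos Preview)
Your proposal is correct and follows essentially the same approach as the paper: the paper does not give a separate proof of this proposition, but simply states it as a summary of the explicit computation that precedes it, noting that ``only the pre-factors are not entire functions.'' Your accounting of which denominators arise from Lemma~\ref{computationofsums} and the partial-fraction identities, and your reading-off of the singular locus and pole orders, matches the paper's argument exactly.
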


\subsubsection{The expression in \eqref{secondterm}} 
We now turn to computing \eqref{secondterm}. For this term
we begin with the computation
\begin{align}
\nonumber
\sum_{n=0}^\infty&\sum_{\rho_lx\in \sigma_{\pmb{A}}^{-n}(\{t\})}\mathrm{e}^{-\sum_{j=1}^ms_j(|n+|\sigma_{j,l}|-|\rho_l||)}\\
\nonumber
=&\sum_{n=0}^{|\rho_l|-1}\sum_{\rho_lx\in \sigma_{\pmb{A}}^{-n}(\{t\})}\mathrm{e}^{-\sum_{j=1}^ms_j(|n+|\sigma_{j,l}|-|\rho_l||)}+\sum_{n=|\rho_l|}^\infty\sum_{\rho_lx\in \sigma_{\pmb{A}}^{-n}(\{t\})}\mathrm{e}^{-\sum_{j=1}^ms_j(|n+|\sigma_{j,l}|-|\rho_l||)}\\
\label{simplsec}
=&\sum_{n=0}^{|\rho_l|-1}\delta_{\rho_{l,n+1}\cdots \rho_{l,|\rho_l|},t_{n+1}\cdots t_{|\rho_l|}}\mathrm{e}^{-\sum_{j=1}^ms_j(|n+|\sigma_{j,l}|-|\rho_l||)}+A_{\rho_{|\rho_l|},t_1}\mathrm{e}^{-\sum_{j=1}^m s_j|\sigma_{j,l}|}\\
\nonumber
&+\sum_{n=1}^\infty\#\{\mu\in \mathcal{V}_{\pmb{A}}: |\mu|=n, \; A_{\rho_{|\rho_l|},\mu_1}=A_{\mu_n,t_1}=1\}\mathrm{e}^{-\sum_{j=1}^ms_j(n+|\sigma_{j,l}|)}.
\end{align}
In the last step, we  used the bijection $\mu\mapsto \rho_l\mu \sigma^n(t)$
for fixed $\rho_l,\,n$ to identify the two sets $\{\mu\in \mathcal{V}_{\pmb{A}}: |\mu|=n, \, A_{\rho_{|\rho_l|},\mu_1}=A_{\mu_n,t_1}=1\}$ and $\{x: \rho_lx\in \sigma_{\pmb{A}}^{-(n+|\rho_l|)}(\{t\})\}$. 
A short word counting in $\mathbb{F}_d$ shows that 
$$
\#\{\mu\in \mathcal{V}_{\pmb{A}}: |\mu|=n, \; A_{\rho_{|\rho_l|},\mu_1}=A_{\mu_n,t_1}=1\}=
\begin{cases}
(2d-2)^2(2d-1)^{n-2}, \; &n>1,\\
2d-2+\delta_{\rho_{|\rho_l|},t_1}, \; &n=1.
\end{cases}
$$
Using the geometric series computations of 
Lemma \ref{computationofsums}, we compute \eqref{simplsec} to be
\begin{align}
\nonumber
=&\sum_{n=0}^{|\rho_l|-1}\delta_{\rho_{l,n+1}\cdots \rho_{l,|\rho_l|},t_{n+1}\cdots t_{|\rho_l|}}\mathrm{e}^{-\sum_{j=1}^ms_j(|n+|\sigma_{j,l}|-|\rho_l||)}+A_{\rho_{|\rho_l|},t_1}\mathrm{e}^{-\sum_{j=1}^m s_j|\sigma_{j,l}|}\\
\nonumber
&+(2d-2+\delta_{\rho_{|\rho_l|},t_1})\mathrm{e}^{-\sum_{j=1}^m s_j(1+|\sigma_{j,l}|)}+(2d-2)^2\sum_{n=2}^\infty(2d-1)^{n-2}\mathrm{e}^{-\sum_{j=1}^ms_j(n+|\sigma_{j,l}|)}\\
\nonumber
=&\sum_{n=0}^{|\rho_l|-1}\delta_{\rho_{l,n+1}\cdots \rho_{l,|\rho_l|},t_{n+1}\cdots t_{|\rho_l|}}\mathrm{e}^{-\sum_{j=1}^ms_j(|n+|\sigma_{j,l}|-|\rho_l||)}+A_{\rho_{|\rho_l|},t_1}\mathrm{e}^{-\sum_{j=1}^m s_j|\sigma_{j,l}|}\\
\nonumber
&+(2d-2+\delta_{\rho_{|\rho_l|},t_1})\mathrm{e}^{-\sum_{j=1}^m s_j(1+|\sigma_{j,l}|)}+\frac{(2d-2)^2}{2d-1}\frac{\mathrm{e}^{-\sum_{j=1}^ms_j(2+|\sigma_{j,l}|)}}{1-(2d-1)\mathrm{e}^{-\sum_{j=1}^m s_j}}.
\end{align}
The reader should note that only the last term is not an entire function.

\begin{prop}
\label{secondtermholo}
Let $\pmb{A}$ be as in Subsection \ref{sub:def-props} 
and $t\in \Omega_{\pmb{A}}$. The expression 
$$
\sum_{n=0}^\infty\sum_{\rho_l x\in \sigma_{\pmb{A}}^{-n}(\{t\})}\prod_{j=1}^m\mathrm{e}^{-s_j\big(\big|n+|\sigma_{j,l}|-|\rho_l|\big|\big)}
$$
appearing in \eqref{secondterm} is holomorphic outside the set 
$\{s\in \C^m: \sum_{j=1}^ms_j\in \log(2d-1)+2\pi i\Z\}$. 
In particular, it is holomorphic outside $\mathcal{P}_0+\pi i \Z$ where 
$\mathcal{P}_0:=\{0,\log(2d-1)\}$. The order of singularities on the 
subvariety $\{s\in \C^m: \sum_{j=1}^m s_j\in \log(2d-1)+2\pi i \Z\}$ 
is at most $1$.
\end{prop}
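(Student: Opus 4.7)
The plan is to read the conclusion directly off the explicit closed-form expression derived in the paragraph immediately preceding the statement. There the sum
$\sum_{n=0}^\infty \sum_{\rho_l x\in \sigma_{\pmb{A}}^{-n}(\{t\})}\prod_{j=1}^m \mathrm{e}^{-s_j(|n+|\sigma_{j,l}|-|\rho_l||)}$ is split into the contributions of $0\leq n\leq |\rho_l|-1$, $n=|\rho_l|$, and $n\geq|\rho_l|+1$; after the change of variable $n\mapsto n-|\rho_l|$ in the tail sum and the bijection $\mu\mapsto \rho_l\mu\sigma^n(t)$, a word count in $\mathbb{F}_d$ turns the tail sum into an explicit geometric series. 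All that remains is to inspect each of the resulting four summands.

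The first summand is a finite sum of exponentials in $s$ and is therefore entire. The next two summands, $A_{\rho_{|\rho_l|},t_1}\,\mathrm{e}^{-\sum_j s_j|\sigma_{j,l}|}$ and $(2d-2+\delta_{\rho_{|\rho_l|},t_1})\,\mathrm{e}^{-\sum_j s_j(1+|\sigma_{j,l}|)}$, are single exponentials and so are also entire. The only non-entire contribution comes from the geometric-series remainder
\[
\frac{(2d-2)^2}{2d-1}\,\frac{\mathrm{e}^{-\sum_{j=1}^m s_j(2+|\sigma_{j,l}|)}}{1-(2d-1)\,\mathrm{e}^{-\sum_{j=1}^m s_j}}.
\]
The numerator is entire and nowhere vanishing, while the denominator $1-(2d-1)\mathrm{e}^{-\sum_j s_j}$ is an entire function of $w:=\sum_{j=1}^m s_j$ with simple zeros exactly on the set $w\in \log(2d-1)+2\pi i\Z$. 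Consequently the whole summand is meromorphic on $\mathbb{C}^m$ with at most simple poles along the subvariety $\{s\in \mathbb{C}^m:\sum_{j=1}^m s_j\in \log(2d-1)+2\pi i\Z\}$, and is holomorphic outside that subvariety.

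Putting the four contributions together proves the stated meromorphicity, holomorphy outside $\{s:\sum_j s_j\in\log(2d-1)+2\pi i\Z\}$, and the upper bound $1$ on the pole order. Since $\log(2d-1)+2\pi i\Z\subseteq \mathcal{P}_0+\pi i\Z$ for $\mathcal{P}_0=\{0,\log(2d-1)\}$, this yields the inclusion claimed in the statement. There is no genuine obstacle here beyond bookkeeping: the whole content is that the computation preceding the statement has already separated out the unique geometric-series factor responsible for all singularities and identified its simple zero locus.
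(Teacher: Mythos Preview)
Your proposal is correct and matches the paper's approach exactly: the proposition is simply a summary of the explicit computation carried out in the paragraph preceding it, and you have correctly identified that the three entire summands plus the single geometric-series term $\frac{(2d-2)^2}{2d-1}\frac{\mathrm{e}^{-\sum_j s_j(2+|\sigma_{j,l}|)}}{1-(2d-1)\mathrm{e}^{-\sum_j s_j}}$ account for all the claimed analytic behaviour. The paper likewise notes that ``only the last term is not an entire function'' and states the proposition without further argument.
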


\subsubsection{The expression in \eqref{thirdterm}}
\label{thridridmem}

Finally, we study the term \eqref{thirdterm}). This is
the one place in the computation where we require that we have
chosen a fixed point for the shift map. To be clear:

{\bf We assume that $t$ is a fixed point for $\sigma_{\pmb{A}}$}. 
That is, for some  $\gamma_0\in \{a_1,b_1,\ldots, a_d,b_d\}$ we have $t_j=\gamma_0$ for all $j$.

We first make the simplification
\begin{align*}
\sum_{n=-\infty}^{-1}\sum_{\kappa(\rho_lx,n,t)=-n}\prod_{j=1}^m
\mathrm{e}^{-s_j\big(\big|n+|\sigma_{j,l}|-|\rho_l|\big|+\kappa(\sigma_{j,l}x,n+|\sigma_{j,l}|-|\rho_l|,t)\big)}\\
=\sum_{n=1}^{\infty}\sum_{\kappa(\rho_lx,-n,t)=n}
\mathrm{e}^{-\sum_{j=1}^ms_j\big(\big|n-|\sigma_{j,l}|+|\rho_l|\big|+\kappa(\sigma_{j,l}x,-n+|\sigma_{j,l}|-|\rho_l|,t)\big)}.
\end{align*}
We note that the condition $\kappa(\rho_lx,-n,t)=n$ is equivalent to $\rho_lx=\sigma^n(t)$. In particular, the condition $\kappa(\rho_lx,-n,t)=n$ determines $x$ uniquely from $t$. Moreover, if $\kappa(\rho_lx,-n,t)=n$ then 
$$
\kappa(\sigma_{j,l}x,-n+|\sigma_{j,l}|-|\rho_l|,t)=n-|\sigma_{j,l}|+|\rho_l|,
$$
by Proposition \ref{kappacompprop}.

With the assumption that $t$ is a fixed point, we can write 
\begin{align*}
\sum_{n=1}^{\infty}&\sum_{\kappa(\rho_lx,-n,t)=n}
\mathrm{e}^{-\sum_{j=1}^ms_j(|n-|\sigma_{j,l}|+|\rho_l||+n-|\sigma_{j,l}|+|\rho_l|)}\\
=&\sum_{n=1}^\infty \prod_{i=1}^{|\rho_l|}\delta_{\rho_i,\gamma_0}
\mathrm{e}^{-\sum_{j=1}^ms_j(|n-|\sigma_{j,l}|+|\rho_l||+n-|\sigma_{j,l}|+|\rho_l|)}\\
=&\prod_{i=1}^{|\rho_l|}\delta_{\rho_i,\gamma_0} \sum_{n=1}^{-d^-_l} 
\mathrm{e}^{-\sum_{j=1}^ms_j(|n-|\sigma_{j,l}|+|\rho_l||+n-|\sigma_{j,l}|+|\rho_l|)}\\
&+\prod_{i=1}^{|\rho_l|}\delta_{\rho_i,\gamma_0}\frac{\mathrm{e}^{-2\sum_{j=1}^ms_j(1-|\sigma_{j,l}|+|\rho_l|-d^-_l)}}{1-\mathrm{e}^{-2\sum_{j=1}^ms_j}}\\
=&\frac{1}{{1-\mathrm{e}^{-\sum_{j=1}^ms_j}}}\frac{\prod_{i=1}^{|\rho_l|}\delta_{\rho_i,\gamma_0}}{2}\mathrm{e}^{-2\sum_{j=1}^ms_j(1-|\sigma_{j,l}|+|\rho_l|-d^-_l)}\\
&-\frac{1}{{1+\mathrm{e}^{-\sum_{j=1}^ms_j}}}\frac{\prod_{i=1}^{|\rho_l|}\delta_{\rho_i,\gamma_0}}{2}\mathrm{e}^{-2\sum_{j=1}^ms_j(1-|\sigma_{j,l}|+|\rho_l|-d^-_l)}\\
&+\prod_{i=1}^{|\rho_l|}\delta_{\rho_i,\gamma_0} \sum_{n=1}^{-d^-_l} 
\mathrm{e}^{-\sum_{j=1}^ms_j(|n-|\sigma_{j,l}|+|\rho_l||+n-|\sigma_{j,l}|+|\rho_l|)}.
\end{align*}

\begin{prop}
\label{thirdtermholo}
Let $\pmb{A}$ be as in Subsection \ref{sub:def-props}  
and $t\in \Omega_{\pmb{A}}$ a fixed point. The expression 
$$
\sum_{n=-\infty}^{-1}\sum_{\kappa(\rho_lx,n,t)=-n}\prod_{j=1}^m
\mathrm{e}^{-s_j\big(\big|n+|\sigma_{j,l}|-|\rho_l|\big|+\kappa(\sigma_{j,l}x,n+|\sigma_{j,l}|-|\rho_l|,t)\big)}
$$
appearing in \eqref{thirdterm} is holomorphic 
outside the set 
$\{s\in \C^m: \sum_{j=1}^ms_j\in \log(2d-1)+\pi i\Z\}$. In particular, 
it is holomorphic outside $\mathcal{P}_0+\pi i \Z$ where 
$\mathcal{P}_0:=\{0,\log(2d-1)\}$. The order of singularities 
on the subvariety 
$\{s\in \C^m: \sum_{j=1}^m s_j\in \log(2d-1)+\pi i \Z\}$ are at most $1$.
\end{prop}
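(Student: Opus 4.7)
The strategy is direct evaluation, exploiting the fixed-point structure of $t$. First, I substitute $n \mapsto -n$ to convert the sum into one over $n\geq 1$. The minimality in the definition of $\kappa_{\pmb{A}}$ forces the equivalence $\kappa_{\pmb{A}}(\rho_l x,-n,t) = n$ if and only if $\rho_l x = \sigma_{\pmb{A}}^n(t)$; in particular $x$ is then uniquely determined as $\sigma_{\pmb{A}}^{n+|\rho_l|}(t)$, and the summand is non-zero only when $\rho_l$ agrees with the first $|\rho_l|$ letters of $\sigma_{\pmb{A}}^n(t)$.

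Next, I would apply Proposition \ref{kappacompprop} (second case) to reduce $\kappa_{\pmb{A}}(\sigma_{j,l}x,-n+|\sigma_{j,l}|-|\rho_l|,t)$ to $n - |\sigma_{j,l}| + |\rho_l|$, with any residual wedge terms $|\sigma_{j,l}\wedge t|$ evaluating cleanly thanks to $t$ being a fixed point. The key simplification is then the hypothesis $t = \gamma_0\gamma_0\gamma_0\cdots$: since $\sigma_{\pmb{A}}^n(t) = t$ for every $n$, the prefix-matching constraint from the previous paragraph collapses to the $n$-independent combinatorial factor $\prod_{i=1}^{|\rho_l|}\delta_{\rho_i,\gamma_0}$. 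Modulo this factor, the remaining sum reduces to
\[
\sum_{n=1}^{\infty}\exp\!\Big({-\sum_{j=1}^m s_j\big(\big|n-|\sigma_{j,l}|+|\rho_l|\big|+n-|\sigma_{j,l}|+|\rho_l|\big)}\Big).
\]

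Finally, I would split this into the finite head $1\le n\le -d^-_l$, which is an entire function of $s$, and the tail $n > -d^-_l$, on which the absolute value opens linearly so that the summand becomes a pure geometric progression in $w^{2}:=\mathrm{e}^{-2\sum_j s_j}$. The tail sums to an entire numerator divided by $1-w^{2}$, and the partial fraction $\tfrac{1}{1-w^{2}} = \tfrac{1}{2}\bigl(\tfrac{1}{1-w}+\tfrac{1}{1+w}\bigr)$ with $w=\mathrm{e}^{-\sum_j s_j}$ exhibits the only singularities as simple poles at $\sum_j s_j \in \pi i\,\Z \subseteq \mathcal{P}_0 + \pi i\,\Z$. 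The main bookkeeping obstacle is to verify that the double occurrence of $n-|\sigma_{j,l}|+|\rho_l|$, once inside and once outside an absolute value, does not compound into a second-order pole; on the tail both contributions become linear in $n$ and share the denominator $1-w^{2}$, so only first-order poles arise.
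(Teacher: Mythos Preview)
Your proposal is correct and follows the paper's own argument essentially step for step: the substitution $n\mapsto -n$, the identification $\kappa(\rho_l x,-n,t)=n\Leftrightarrow \rho_l x=\sigma_{\pmb{A}}^n(t)$, the application of Proposition~\ref{kappacompprop} to simplify $\kappa(\sigma_{j,l}x,\cdot,t)$, the reduction via the fixed-point hypothesis $t=\gamma_0\gamma_0\cdots$ to the factor $\prod_i\delta_{\rho_i,\gamma_0}$, and the split of the remaining one-variable sum into an entire finite head plus a geometric tail in $\mathrm{e}^{-2\sum_j s_j}$ with partial fractions over $(1\pm\mathrm{e}^{-\sum_j s_j})$. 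Your observation that the poles actually lie in $\pi i\,\Z$ (rather than $\log(2d-1)+\pi i\,\Z$ as the statement literally reads) matches the paper's computation and is the correct outcome; the stated set in the proposition appears to be a typo carried over from the preceding proposition, and what matters is the containment in $\mathcal{P}_0+\pi i\,\Z$, which you note.
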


\subsubsection{Summarizing the computation} 
\label{sub:sanity}
Let us summarize the computations of the last few pages. We pick a fixed point $t\in \Omega_{\pmb{A}}=\partial \mathbb{F}_d$. We have some words words $\mu_1,\nu_1,\mu_2,\nu_2\ldots,\mu_m,\nu_m\in \mathcal{V}_{\pmb{A}}$ fixed and either the relevant heat trace vanishes or we can pick finite words $\rho_1,\ldots, \rho_q$ and coefficients $c_1,\ldots, c_q\in \Z$ as in Proposition \ref{dickotomy}. For simplicity, we write $\vec{\mu}:=(\mu_1,\ldots,\mu_m)$, $\vec{\nu}:=(\nu_1,\ldots, \nu_m)$ and $\vec{\rho}:=(\rho_1,\ldots, \rho_q)$. We also write $\omega_j:=\sum_{i=j+1}^m |\mu_i|-|\nu_i|=|\sigma_{j,l}|-|\rho_l|$. In all else, we follow the notations above.

There is an entire function $f_{\vec{\mu},\vec{\nu},\vec{\rho}}$ on $\C^m$ such that 
\begin{align*}
\mathrm{Tr}(S_{\mu_1}S_{\nu_1}^*&\mathrm{e}^{-s_1|\D_t|}\cdots S_{\mu_m}S_{\nu_m}^*\mathrm{e}^{-s_m|\D_t|})\\
=&\frac{1}{(1-(2d-1)\mathrm{e}^{-\sum_{j=1}^m s_j})^2}\sum_{l=1}^qc_l\frac{(2d-2)^2}{2d-1}\mathrm{e}^{-\sum_{j=1}^m s_j(1+\omega_j-d^+_l)}\\
&+\frac{1}{1-(2d-1)\mathrm{e}^{-\sum_{j=1}^m s_j}}\sum_{l=1}^qc_l\bigg((2d-2)^2\sum_{n=1}^{-d_l^+} (2d-1)^{n-1} \mathrm{e}^{-\sum_{j=1}^m s_j(|n+\omega_j|+1)}\\
&\qquad\qquad\qquad\qquad\qquad\qquad\qquad+(2d-2)\mathrm{e}^{-\sum_{j=1}^m s_j(2+\omega_j)}\\
&\qquad\qquad\qquad\qquad\qquad\qquad\qquad+\frac{(2d-2)^2(2d-1)^2}{(2d-1)^2-1}\mathrm{e}^{-\sum_{j=1}^m s_j(2+\omega_j-2d^-_l)}\\
&\qquad\qquad\qquad\qquad\qquad\qquad\qquad+\frac{(2d-2)^2}{2d-1}\mathrm{e}^{-\sum_{j=1}^ms_j(2+|\sigma_{j,l}|)}\bigg)\\
&+\frac{1}{1-\mathrm{e}^{-\sum_{j=1}^m s_j}}\sum_{l=1}^qc_l\bigg((d-1)\mathrm{e}^{-\sum_{j=1}^m s_j(1+\omega_j-2d^-_l)}\\
&\qquad\qquad\qquad\qquad\qquad\qquad\qquad-\frac{(2d-2)^2}{4d-2}\mathrm{e}^{-\sum_{j=1}^m s_j(2+\omega_j-2d^-_l)}\\
&\qquad\qquad\qquad\qquad\qquad\qquad\qquad+\frac{\prod_{i=1}^{|\rho_l|}\delta_{\rho_i,\gamma_0}}{2}\mathrm{e}^{-2\sum_{j=1}^ms_j(1-\omega_j-d^-_l)}\bigg)\\
&+\frac{1}{1+\mathrm{e}^{-\sum_{j=1}^m s_j}}\sum_{l=1}^qc_l\bigg((d-1)\mathrm{e}^{-\sum_{j=1}^m s_j(1+\omega_j-2d^-_l)}\\
&\qquad\qquad\qquad\qquad\qquad\qquad\qquad+\frac{(2d-2)^2}{4d-2}\mathrm{e}^{-\sum_{j=1}^m s_j(2+\omega_j-2d^-_l)}\\
&\qquad\qquad\qquad\qquad\qquad\qquad\qquad-\frac{\prod_{i=1}^{|\rho_l|}\delta_{\rho_i,\gamma_0}}{2}\mathrm{e}^{-2\sum_{j=1}^ms_j(1-\omega_j-d^-_l)}\bigg)\\
&+f_{\vec{\mu},\vec{\nu},\vec{\rho}}(s_1,s_2,\ldots,s_m).
\end{align*}

In particular, we note that
\begin{align*}
\mathrm{Tr}(\sigma^{l_1}(S_{\mu_1}&S_{\nu_1}^*)\sigma^{l_2}(S_{\mu_1}S_{\nu_1}^*)\cdots \sigma^{l_m}(S_{\mu_m}S_{\nu_m}^*)\mathrm{e}^{-s|\D_t|})\\
=&\mathrm{Tr}(S_{\mu_1}S_{\nu_1}^*\mathrm{e}^{-(l_1-l_2)|\D_t|}S_{\mu_2}S_{\nu_2}^*\mathrm{e}^{-(l_2-l_3)|\D_t|}\cdots \mathrm{e}^{-(l_{m-1}-l_m)|\D_t|}S_{\mu_m}S_{\nu_m}^*\mathrm{e}^{-(s+l_m-l_1)|\D_t|})\\
=&\frac{1}{(1-(2d-1)\mathrm{e}^{-s})^2}\frac{(2d-2)^2}{2d-1}\sum_{l=1}^qc_l\mathrm{e}^{-\sum_{j=1}^{m} (l_j-l_{j+1})(1+\omega_j-d^+_l)-s}\\
&+\frac{1}{1-(2d-1)\mathrm{e}^{-s}}\sum_{l=1}^qc_l\bigg((2d-2)^2\sum_{n=1}^{-d_l^+} (2d-1)^{n-1} \mathrm{e}^{-\sum_{j=1}^{m} (l_j-l_{j+1})(|n+\omega_j|+1)-s(|n|+1)}\\
&\qquad\qquad\qquad\qquad\qquad+(2d-2)\mathrm{e}^{-\sum_{j=1}^{m} (l_j-l_{j+1})(2+o_j)-2s}\\
&\qquad\qquad\qquad\qquad\qquad+\frac{(2d-2)^2(2d-1)^2}{(2d-1)^2-1}\mathrm{e}^{-\sum_{j=1}^{m} (l_j-l_{j+1})(2+\omega_j-2d^-_l)-2s}\\
&\qquad\qquad\qquad\qquad\qquad+\frac{(2d-2)^2}{2d-1}\mathrm{e}^{-\sum_{j=1}^{m} (l_j-l_{j+1})(2+|\sigma_{j,l}|)-s(2+|\rho_l|)}\bigg)\\
&+\frac{1}{1-\mathrm{e}^{-s}}\sum_{l=1}^qc_l\bigg((d-1)\mathrm{e}^{-\sum_{j=1}^{m} (l_j-l_{j+1})(1+\omega_j-2d^-_l)-s}\\
&\qquad\qquad\qquad\qquad-\frac{(2d-2)^2}{4d-2}\mathrm{e}^{-\sum_{j=1}^{m} (l_j-l_{j+1})(2+\omega_j-2d^-_l)-2s}\\
&\qquad\qquad\qquad\qquad+\frac{\prod_{i=1}^{|\rho_l|}\delta_{\rho_i,\gamma_0}}{2}\mathrm{e}^{-2\sum_{j=1}^{m} (l_j-l_{j+1})(1-\omega_j-d^-_l)-2s}\bigg)\\
&+\frac{1}{1+\mathrm{e}^{-s}}\sum_{l=1}^qc_l\bigg((d-1)\mathrm{e}^{-\sum_{j=1}^{m} (l_j-l_{j+1})(1+\omega_j-2d^-_l)-s}\\
&\qquad\qquad\qquad+\frac{(2d-2)^2}{4d-2}\mathrm{e}^{-\sum_{j=1}^{m} (l_j-l_{j+1})(2+\omega_j-2d^-_l)-2s}\\
&\qquad\qquad\qquad\qquad-\frac{\prod_{i=1}^{|\rho_l|}\delta_{\rho_i,\gamma_0}}{2}\mathrm{e}^{-2\sum_{j=1}^{m} (l_j-l_{j+1})(1-\omega_j-d^-_l)-2s}\bigg)\\
&+f_{\vec{\mu},\vec{\nu},\vec{\rho}}(l_1-l_2,l_2-l_3,\ldots,l_{m-1}-l_m,s+l_m-l_1),
\end{align*}
for a entire function $f_{\vec{\mu},\vec{\nu},\vec{\rho}}$ on $\C^m$. This computation shows that 
$$
s\mapsto \mathrm{Tr}(\alpha^{l_1}(S_{\mu_1}S_{\nu_1}^*)\alpha^{l_2}(S_{\mu_1}S_{\nu_1}^*)\cdots \alpha^{l_m}(S_{\mu_m}S_{\nu_m}^*)\mathrm{e}^{-s|\D_t|}),
$$ 
has a meromorphic extension to $\C$ with poles 
of order at most $2$ in $\{0,\log(2d-1)\}+\pi i \Z$.

\subsection{Heat trace computations of Toeplitz operators for the free group}
\label{toeplsuzise}

In this subsection we will prove that Assumption 2) of Proposition \ref{singprop} is fulfilled for the action of the free group on its Gromov boundary, see Subsection \ref{sub:def-props}. 

We fix finite words $\mu_1,\nu_1,\mu_2,\nu_2\ldots,\mu_m,\nu_m\in \mathcal{V}_{\pmb{A}}$. Let us simplify the heat trace of Toeplitz operators $\mathrm{Tr}(PS_{\mu_1}S_{\nu_1}^*P\mathrm{e}^{-s_1|\D_t|}\cdots PS_{\mu_m}S_{\nu_m}^*P\mathrm{e}^{-s_m|\D_t|})$. The image of $P$ is spanned by the 
orthonormal basis
$$\{\delta_{(x,n)}: (x,n)\in \mathcal{V}_t, \; \kappa(x,n,t)=0\}.$$
In other words, an orthonormal 
basis of the image of $P$ is given by 
$(\delta_{(\mu t,|\mu|)})_{\mu \in \mathcal{V}_{\pmb{A},t_1}}$, where 
$$
\mathcal{V}_{\pmb{A},t_1}:=\{\mu\in \mathcal{V}_{\pmb{A}}:|\mu|=0\mbox{   or   } A_{\mu_{|\mu|},t_1}=1\}.
$$
Define the isometry $W_t:\ell^2( \mathcal{V}_{\pmb{A},t_1})\to \ell^2(\mathcal{V}_t)$ by $W_t\delta_\mu:=\delta_{(\mu t,|\mu|)}$. By construction, $P$ coincides with the range projection of $W_t$. 

For $j\in \{1,\ldots, N\}$, we define the operators $T_j:\ell^2( \mathcal{V}_{\pmb{A},t_1})\to \ell^2( \mathcal{V}_{\pmb{A},t_1})$ as $T_j:=W_t^*S_jW_t$. In terms of the orthonormal basis, 
$$
T_j\delta_{\mu}=\begin{cases} 
\delta_{j\mu}, \; &\mbox{if  }j\mu \in \mathcal{V}_{\pmb{A},t_1},\\
0,\; &\mbox{otherwise}.\end{cases}
$$
For a finite word $\mu=\mu_1\cdots \mu_k$ we write $T_\mu:=T_{\mu_1}\cdots T_{\mu_k}$. A short computation shows that for $\mu,\nu\in \mathcal{V}_{\pmb{A}}$, the operator
$$
T_\mu T_\nu^*-W_t^*S_\mu S_\nu^* W_t\in C_c( \mathcal{V}_{\pmb{A},t_1}\times  \mathcal{V}_{\pmb{A},t_1})
$$ 
and so is a finite rank operator in our orthonormal
basis $(\delta_\mu)_{\mu \in  \mathcal{V}_{\pmb{A},t_1}}$. 

One more ingredient is needed in this soup. Define the number operator $\mathfrak{N}$ densely on $\ell^2( \mathcal{V}_{\pmb{A},t_1})$ as the self-adjoint operator satisfying 
$$\mathfrak{N}\delta_\mu:=|\mu|\delta_\mu.$$
It is immediate that $\mathfrak{N}=W_t^*\D_tW_t$.

Combining the facts of the two previous paragraphs together, we see that the difference 
$$\mathrm{Tr}(PS_{\mu_1}S_{\nu_1}^*P\mathrm{e}^{-s_1|\D_t|}\cdots PS_{\mu_m}S_{\nu_m}^*P\mathrm{e}^{-s_m|\D_t|})-\mathrm{Tr}(T_{\mu_1}T_{\nu_1}^*\mathrm{e}^{-s_1\mathfrak{N}}\cdots T_{\mu_m}T_{\nu_m}^*\mathrm{e}^{-s_m\mathfrak{N}}),$$
extends to an entire function on $\C^m$. Compute $\mathrm{Tr}(T_{\mu_1}T_{\nu_1}^*\mathrm{e}^{-s_1\mathfrak{N}}\cdots T_{\mu_m}T_{\nu_m}^*\mathrm{e}^{-s_m\mathfrak{N}})$ yields
\begin{align*}
\mathrm{Tr}(T_{\mu_1}T_{\nu_1}^*&\mathrm{e}^{-s_1\mathfrak{N}}\cdots T_{\mu_m}T_{\nu_m}^*\mathrm{e}^{-s_m\mathfrak{N}})\\
&= \sum_{\mu \in  \mathcal{V}_{\pmb{A},t_1}}\langle\delta_{\mu}|T_{\mu_1}T_{\nu_1}^*\cdots T_{\mu_m}T_{\nu_m}^*\delta_{\mu}\rangle\mathrm{e}^{-\sum_{j=1}^m s_j(|\mu|+\sum_{i=j+1}^m |\mu_i|-|\nu_i|)}.
\end{align*}
Note that whenever $\langle\delta_{\mu}|T_{\mu_1}T_{\nu_1}^*\cdots T_{\mu_m}T_{\nu_m}^*\delta_{\mu}\rangle$ is non-zero, $|\mu|+\sum_{i=j+1}^m |\mu_i|-|\nu_i|\geq 0$. In fact, we can reduce the structure of the words $\mu_1,\nu_1,\mu_2,\nu_2\ldots,\mu_m,\nu_m\in \mathcal{V}_{\pmb{A}}$ in a similar fashion as in Proposition \ref{dickotomy}.

\begin{prop}
\label{dickotomytoep}
Let $\mu_1,\nu_1,\mu_2,\nu_2\ldots,\mu_m,\nu_m\in \mathcal{V}_{\pmb{A}}$ be finite words and $t\in \Omega_{\pmb{A}}$. Exactly one of the following two statements holds:
\begin{enumerate}
\item There is a finite rank operator $K\in C_c(\mathcal{V}_{\pmb{A},t_1}\times \mathcal{V}_{\pmb{A},t_1})$, finite words $\rho_1,\ldots, \rho_q$ and coefficients $c_1,\ldots, c_q\in \Z$ such that 
$$T_{\mu_1}T_{\nu_1}^*\cdots T_{\mu_m}T_{\nu_m}^*\delta_\mu=\sum_{l=1}^q c_l\chi_{C_{\rho_l}}(\mu t)+K\delta_\mu,\quad\forall\mu\in \mathcal{V}_{\pmb{A},t_1}.$$
\item The sequence $\big(\langle\delta_{\mu}|T_{\mu_1}T_{\nu_1}^*\cdots T_{\mu_m}T_{\nu_m}^*\delta_{\mu}\rangle\big)_{\mu\in \mathcal{V}_{\pmb{A},t_1}}$ is finitely supported.
\end{enumerate}
\end{prop}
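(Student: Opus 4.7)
The plan is to reduce the statement to Proposition \ref{dickotomy} via the compression formula $T_j=W_t^*S_jW_t$, exploiting the fact that $P$ commutes with the generators modulo finite rank operators. Throughout I will use that $W_tW_t^*=P$ and $W_t^*W_t=1$.

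First I would establish the key technical fact: for every generator $S_j$, the commutator $[P,S_j]\in C_c(\mathcal{V}_t\times\mathcal{V}_t)$. This is immediate from $2P=F_t+1$ together with the proof of item 2) of Proposition \ref{propoflione}, where it was shown that $[F_t,S_j]$ is finite rank. The Leibniz rule then gives $[P,S_\mu]$ and $[P,S_\nu^*]$ finite rank for any finite words $\mu,\nu$. I would also record that conjugating a finite rank operator on $\ell^2(\mathcal{V}_t)$ by $W_t^*$ and $W_t$ yields an element of $C_c(\mathcal{V}_{\pmb{A},t_1}\times\mathcal{V}_{\pmb{A},t_1})$.

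Next, I would expand
$$T_{\mu_1}T_{\nu_1}^*\cdots T_{\mu_m}T_{\nu_m}^*=W_t^*S_{\mu_1}PS_{\nu_1}^*P S_{\mu_2}P\cdots PS_{\nu_m}^*W_t$$
and move each internal $P$ to the far left using $[P,\,\cdot\,]\in C_c(\mathcal{V}_t\times\mathcal{V}_t)$. Each commutation produces a summand in which a finite rank operator is sandwiched between bounded operators, and hence stays finite rank. Using $PW_t=W_t$ at the outer ends, this yields
$$T_{\mu_1}T_{\nu_1}^*\cdots T_{\mu_m}T_{\nu_m}^*=W_t^*\bigl(S_{\mu_1}S_{\nu_1}^*\cdots S_{\mu_m}S_{\nu_m}^*\bigr)W_t+K'$$
with $K'\in C_c(\mathcal{V}_{\pmb{A},t_1}\times\mathcal{V}_{\pmb{A},t_1})$.

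Finally, I would apply Proposition \ref{dickotomy} to the Cuntz-Krieger product $S_{\mu_1}S_{\nu_1}^*\cdots S_{\mu_m}S_{\nu_m}^*$. In case (1) of that proposition, $S_{\mu_1}S_{\nu_1}^*\cdots S_{\mu_m}S_{\nu_m}^*=\sum_l c_l S_{\rho_l}S_{\rho_l}^*$. Since $S_{\rho_l}S_{\rho_l}^*$ acts diagonally on $\ell^2(\mathcal{V}_t)$ as multiplication by $\chi_{C_{\rho_l}}(x)$, one computes $W_t^*S_{\rho_l}S_{\rho_l}^*W_t\delta_\mu=\chi_{C_{\rho_l}}(\mu t)\delta_\mu$, which combined with $K:=K'$ yields item 1 of the proposition. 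In case (2), the diagonal matrix entries of $S_{\mu_1}S_{\nu_1}^*\cdots S_{\mu_m}S_{\nu_m}^*$ against every vector $\delta_{(x,n)}\in\ell^2(\mathcal{V}_t)$ vanish; in particular they vanish against the vectors $W_t\delta_\mu=\delta_{(\mu t,|\mu|)}$, so the diagonal of $T_{\mu_1}T_{\nu_1}^*\cdots T_{\mu_m}T_{\nu_m}^*$ reduces to the diagonal of $K'$, which is finitely supported, giving item 2. The mutual exclusivity is inherited from Proposition \ref{dickotomy}.

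The main obstacle is the bookkeeping in the middle step: one needs to verify that moving the internal $P$'s across the $S$-factors really does produce only finite rank corrections, which amounts to a short induction on $m$ using the Leibniz rule and the observation that products of bounded operators with a finite rank operator are finite rank. Everything else is essentially a direct transcription from the $S$-setting to the $T$-setting via the compression by $W_t$.
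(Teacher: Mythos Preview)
Your approach is correct and is exactly what the paper intends: Proposition \ref{dickotomytoep} is stated there without proof, the surrounding text having already recorded that $T_\mu T_\nu^*-W_t^*S_\mu S_\nu^*W_t\in C_c(\mathcal{V}_{\pmb{A},t_1}\times\mathcal{V}_{\pmb{A},t_1})$ and that $[F_t,S_j]\in C_c(\mathcal{V}_t\times\mathcal{V}_t)$, so your reduction to Proposition \ref{dickotomy} via compression by $W_t$ is precisely the argument left to the reader.

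One small point deserves care. Your assertion that compressing a \emph{finite rank} operator by $W_t^*,W_t$ lands in $C_c(\mathcal{V}_{\pmb{A},t_1}\times\mathcal{V}_{\pmb{A},t_1})$ is false in general: a rank-one operator built from vectors that are not finitely supported in the standard basis is finite rank but not in $C_c$. What is true, and what suffices here, is that every correction term already lies in $C_c(\mathcal{V}_t\times\mathcal{V}_t)$. Indeed each $[P,S_j]\in C_c(\mathcal{V}_t\times\mathcal{V}_t)$ by the proof of Proposition \ref{propoflione}, and left or right multiplication by $S_i$ or $S_i^*$ (which send basis vectors to basis vectors or to zero) preserves $C_c$, so the Leibniz expansion stays in $C_c$. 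Since $W_t$ and $W_t^*$ likewise act as partial permutations of basis vectors, compression then maps $C_c(\mathcal{V}_t\times\mathcal{V}_t)$ into $C_c(\mathcal{V}_{\pmb{A},t_1}\times\mathcal{V}_{\pmb{A},t_1})$. With this adjustment your argument is complete. (Your displayed expansion with one $P$ between each block $S_{\mu_j}$ and $S_{\nu_j}^*$ is also not literally what unwinding $T_{\mu_j}=T_{(\mu_j)_1}\cdots T_{(\mu_j)_{|\mu_j|}}$ gives---there is a $P$ between every \emph{letter}---but this does not affect the argument, since all internal $P$'s are removed by the same commutation.)
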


We see that either $\mathrm{Tr}(T_{\mu_1}T_{\nu_1}^*\mathrm{e}^{-s_1\mathfrak{N}}\cdots T_{\mu_m}T_{\nu_m}^*\mathrm{e}^{-s_m\mathfrak{N}})$ is an entire function, or there are finite words $\rho_1,\ldots, \rho_q$ and coefficients $c_1,\ldots, c_q\in \Z$ such that it differs from the expression 
$$\sum_{l=1}^q \sum_{n=0}^\infty\sum_{\mu\in \mathcal{V}_{\pmb{A},t_1},|\mu|=n}  c_l\chi_{C_{\rho_l}}(\mu t)\mathrm{e}^{-\sum_{j=1}^m s_j(n+\sum_{i=j+1}^m |\mu_i|-|\nu_i|)},$$
by an entire function. The last expression can be computed as
\begin{align*}
\sum_{l=1}^q &\sum_{n=0}^\infty\sum_{\mu\in \mathcal{V}_{\pmb{A},t_1},|\mu|=n}  c_l\chi_{C_{\rho_l}}(\mu t)\mathrm{e}^{-\sum_{j=1}^m s_j(n+\sum_{i=j+1}^m |\mu_i|-|\nu_i|)}\\
=&\sum_{l=1}^q \sum_{n=0}^{|\rho_l|+1}\sum_{\mu\in \mathcal{V}_{\pmb{A},t_1},|\mu|=n}  c_l\chi_{C_{\rho_l}}(\mu t)\mathrm{e}^{-\sum_{j=1}^m s_j(n+\sum_{i=j+1}^m |\mu_i|-|\nu_i|)}\\
&+\sum_{l=1}^q \sum_{n=|\rho_l|+2}^\infty  c_l\#\{|\mu|=n:\; \mu_1\cdots \mu_{|\rho_l|}=\rho_l, \; A_{\mu_n,t_1}=1\}\mathrm{e}^{-\sum_{j=1}^m s_j(n+\sum_{i=j+1}^m |\mu_i|-|\nu_i|)}\\
=&\sum_{l=1}^q \sum_{n=0}^{|\rho_l|+1}\sum_{\mu\in \mathcal{V}_{\pmb{A},t_1},|\mu|=n}  c_l\chi_{C_{\rho_l}}(\mu t)\mathrm{e}^{-\sum_{j=1}^m s_j(n+\sum_{i=j+1}^m |\mu_i|-|\nu_i|)}\\
&+\sum_{l=1}^q \sum_{n=|\rho_l|+2}^\infty  c_l(2d-2)^2(2d-1)^{n-|\rho_l|-2}\mathrm{e}^{-\sum_{j=1}^m s_j(n+\sum_{i=j+1}^m |\mu_i|-|\nu_i|)}\\
=&\sum_{l=1}^q \sum_{n=0}^{|\rho_l|+1}\sum_{\mu\in \mathcal{V}_{\pmb{A},t_1},|\mu|=n}  c_l\chi_{C_{\rho_l}}(\mu t)\mathrm{e}^{-\sum_{j=1}^m s_j(n+\sum_{i=j+1}^m |\mu_i|-|\nu_i|)}\\
&+\sum_{l=1}^q \sum_{n=0}^\infty  c_l(2d-2)^2(2d-1)^{n}\mathrm{e}^{-\sum_{j=1}^m s_j(n+|\rho_l|+2+\sum_{i=j+1}^m |\mu_i|-|\nu_i|)}\\
=&\sum_{l=1}^q \sum_{n=0}^{|\rho_l|+1}\sum_{\mu\in \mathcal{V}_{\pmb{A},t_1},|\mu|=n}  c_l\chi_{C_{\rho_l}}(\mu t)\mathrm{e}^{-\sum_{j=1}^m s_j(|\mu|+\sum_{i=j+1}^m |\mu_i|-|\nu_i|)}\\
&+(2d-2)^2\sum_{l=1}^q c_l\frac{(2d-1)\mathrm{e}^{-\sum_{j=1}^m s_j(|\rho_l|+2+\sum_{i=j+1}^m |\mu_i|-|\nu_i|)}}{1-(2d-1)\mathrm{e}^{-\sum_{j=1}^m s_j}}.\\
\end{align*}
The first term is entire.

\begin{prop}
\label{toeplitztermholo}
Let $\pmb{A}$ be as in Subsection \ref{sub:def-props} and $t\in \Omega_{\pmb{A}}$ a fixed point. The function 
$$
(s_1,s_2,\ldots, s_m)\mapsto \mathrm{Tr}(PS_{\mu_1}S_{\nu_1}^*P\mathrm{e}^{-s_1|\D_t|}\cdots PS_{\mu_m}S_{\nu_m}^*P\mathrm{e}^{-s_m|\D_t|})
$$
extends to a meromorphic function on $\C^m$ 
which is holomorphic outside the set 
$\{s\in \C^m: \sum_{j=1}^ms_j\in \log(2d-1)+2\pi i\Z\}$. 
In particular, it is holomorphic outside $\mathcal{P}_0+\pi i \Z$ where 
$\mathcal{P}_0:=\{0,\log(2d-1)\}$. The order of singularities 
on the subvariety $\{s\in \C^m: \sum_{j=1}^m s_j\in \log(2d-1)+\pi i \Z\}$ 
are at most $1$.
\end{prop}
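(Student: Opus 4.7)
The plan is to reduce the Toeplitz heat trace on $\ell^2(\mathcal{V}_t)$ to an ordinary heat trace on $\ell^2(\mathcal{V}_{\pmb{A},t_1})$ involving only the number operator $\mathfrak{N}$, where a direct geometric series computation exhibits the meromorphic structure. The key observation is that the range of $P=\chi_{[0,\infty)}(\D_t)$ is spanned by $\{\delta_{(x,n)}:\kappa_{\pmb{A}}(x,n,t)=0\}$, and the map $\mu\mapsto (\mu t,|\mu|)$ identifies $\mathcal{V}_{\pmb{A},t_1}$ with the indexing set. Via the associated isometry $W_t:\ell^2(\mathcal{V}_{\pmb{A},t_1})\to \ell^2(\mathcal{V}_t)$ with final projection $P$, define $T_j:=W_t^*S_jW_t$ and $\mathfrak{N}:=W_t^*\D_tW_t$. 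Because $\kappa_{\pmb{A}}(\mu t,|\mu|,t)=0$ whenever $\mu\in\mathcal{V}_{\pmb{A},t_1}$, one checks directly that $T_\mu T_\nu^*-W_t^*S_\mu S_\nu^*W_t$ is finitely supported in the basis $(\delta_\mu)$ for every $\mu,\nu\in \mathcal{V}_{\pmb{A}}$.

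From this, the difference
\[
\Tr(PS_{\mu_1}S_{\nu_1}^*P\mathrm{e}^{-s_1|\D_t|}\cdots PS_{\mu_m}S_{\nu_m}^*P\mathrm{e}^{-s_m|\D_t|})-\Tr(T_{\mu_1}T_{\nu_1}^*\mathrm{e}^{-s_1\mathfrak{N}}\cdots T_{\mu_m}T_{\nu_m}^*\mathrm{e}^{-s_m\mathfrak{N}})
\]
is entire in $(s_1,\ldots,s_m)$: expanding the product by inserting the rank-one correction $T_\mu T_\nu^*-W_t^*S_\mu S_\nu^*W_t$ in each slot produces finitely many terms each of which is of the form $\Tr(K\mathrm{e}^{-\sum_j s_j \mathfrak{N}})$ or similar with $K$ a finite rank operator supported in the range of $P$, and such traces are entire. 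It therefore suffices to analyze the second trace.

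Apply the dichotomy of Proposition \ref{dickotomytoep}: either $T_{\mu_1}T_{\nu_1}^*\cdots T_{\mu_m}T_{\nu_m}^*$ is finitely supported (giving an entire contribution), or it equals $\sum_{l=1}^q c_l\chi_{C_{\rho_l}}(\cdot\, t)+K$ with $K$ finite rank. Writing $\omega_j:=\sum_{i=j+1}^m(|\mu_i|-|\nu_i|)$ and splitting the length-$n$ sum at $n=|\rho_l|+2$, the head is a finite sum (entire), and the tail uses the combinatorial count
\[
\#\{\mu\in\mathcal{V}_{\pmb{A},t_1}:|\mu|=n,\ \mu_1\cdots\mu_{|\rho_l|}=\rho_l\}=(2d-2)^2(2d-1)^{n-|\rho_l|-2}\quad (n\geq |\rho_l|+2),
\]
which feeds a single geometric series in $(2d-1)\mathrm{e}^{-\sum_j s_j}$. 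Summing it yields a denominator $1-(2d-1)\mathrm{e}^{-\sum_j s_j}$ and thus a simple pole on $\{\sum_j s_j\in\log(2d-1)+2\pi i\Z\}$, as claimed.

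The main obstacle is the bookkeeping: one must verify that every "error" contribution (both from the dichotomy remainder $K$ and from the finite-rank differences $T_\mu T_\nu^*-W_t^*S_\mu S_\nu^*W_t$ distributed across the $m$ factors) really is entire, and that no other denominators appear. Crucially, compressing by $P$ restricts $\kappa_{\pmb{A}}$ to $0$, so the three-part decomposition \eqref{firstterm}--\eqref{thirdterm} of Subsection \ref{sub:heat} collapses to its middle term \eqref{secondterm}: the contributions \eqref{firstterm} and \eqref{thirdterm}, which produced the double pole at $\log(2d-1)$ and the extra poles at $\{0\}+\pi i\Z$, are absent here. This explains why only the simple pole at $\log(2d-1)+2\pi i\Z$ survives, matching the statement. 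The fixed-point hypothesis on $t$ is not needed for this proposition since the term \eqref{thirdterm} (the only place where $t$ being a fixed point was used) does not enter.
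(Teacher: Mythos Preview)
Your approach is essentially the same as the paper's: both reduce to the Toeplitz picture via the isometry $W_t$, replace $PS_\mu S_\nu^*P$ by $T_\mu T_\nu^*$ up to a finite-rank correction, invoke the dichotomy of Proposition~\ref{dickotomytoep}, and then sum a single geometric series in $(2d-1)\mathrm{e}^{-\sum_j s_j}$ to isolate the simple pole.

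Two small remarks. First, your phrase ``rank-one correction'' is a slip: the difference $T_\mu T_\nu^*-W_t^*S_\mu S_\nu^*W_t$ is merely finite rank (it lies in $C_c(\mathcal{V}_{\pmb{A},t_1}\times\mathcal{V}_{\pmb{A},t_1})$), not rank one; your argument works unchanged with this correction. Second, your closing observation that the fixed-point hypothesis on $t$ is not actually used in this particular computation is correct and worth noting---the hypothesis is carried over from the global setup of the appendix (where it is genuinely needed for the term~\eqref{thirdterm}), but the Toeplitz count $\#\{|\mu|=n:\mu_1\cdots\mu_{|\rho_l|}=\rho_l,\ A_{\mu_n,t_1}=1\}$ and the identification of $\mathrm{im}\,P$ with $\ell^2(\mathcal{V}_{\pmb{A},t_1})$ are valid for arbitrary $t\in\Omega_{\pmb{A}}$. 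Your explanation of \emph{why} the Toeplitz case is cleaner---that compression by $P$ kills the contributions analogous to \eqref{firstterm} and \eqref{thirdterm}---is a helpful conceptual addition not made explicit in the paper.
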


\end{appendix}


\begin{thebibliography}{9999}


\bibitem{adelaroche} C. Anantharaman-Delaroche, \emph{$C^*$-Alg\`ebres de Cuntz-Krieger et groupes fuchsiens.}  Operator Theory, Operator Algebras and Related Topics (Timi\c{s}oara, 1996) 17--35. 
Theta Found., Bucharest, 1997.
\bibitem{delaroche} C. Anantharaman-Delaroche, \emph{Purely infinite $C^*$-alg\`ebras arising from dynamical systems},  Bull. Soc. Math. France, {\bf 125} (1997), 199-225.
\bibitem{BJ} S. Baaj, P. Julg, \emph{Th\'{e}orie bivariante de Kasparov
et op\'{e}rateurs non born\'{e}s dans les $C^*$-modules hilbertiens},
C. R. Acad. Sci. Paris S\'{e}r. {\bf I} {\bf 296} (21) (1983), 875--878.


\bibitem{BMR} J. Bellissard, M. Marcolli, K. Reihani, 
    {\em Dynamical systems on spectral metric spaces},
    arXiv:1008.4617 


\bibitem{BeF} M. T. Benameur, T. Fack \emph{Type II noncommutative geometry,
I. Dixmier trace in von Neumann algebras},  Adv. Math. {\bf 199} (2006), 29--87.


\bibitem{Bl} B. Blackadar, {\em $K$-Theory for Operator Algebras}, 2nd ed., CUP, 1998.


\bibitem{BM} C. Bourne and B. Mesland, \emph{Index theory and topological phases of aperiodic lattices}, to appear in Ann. Henri Poincar\'{e}, DOI:10.1007/s00023-019-00764-9, arXiv:1807.03972.

\bibitem{CGRS1} A. Carey, V. Gayral, A. Rennie, F. Sukochev,
\emph{Integration for locally compact noncommutative spaces}, J. Funct. Anal., {\bf 263} (2012), 383--414.

\bibitem{CGRS2} A. Carey, V. Gayral, A. Rennie, F. Sukochev, {\it Index theory for locally compact noncommutative geometries}, Memoirs of the Amer. Math. Soc., {\bf 231}  (2014).







\bibitem{CNNR} A. Carey, S. Neshveyev, R. Nest, A. Rennie, \emph{Twisted cyclic
theory, equivariant $KK$-theory and KMS states}, J. Reine Angew. Math. {\bf 650} (2011), 161--191.

\bibitem{CP1} A. Carey, J. Phillips, \emph{Unbounded Fredholm
modules and spectral flow}, Canadian J. Math., {\bf 50}(4)(1998),
673--718.



\bibitem{CPRS2} A. Carey, J. Phillips, A. Rennie, F. Sukochev, 
{\em The local index formula in semifinite von Neumann algebras I:
spectral flow}, Adv. Math. {\bf 202} (2006), 451--516.

\bibitem{CPRS3} A. Carey, J. Phillips, A. Rennie, F. Sukochev, 
{\em The local index formula in semifinite von Neumann algebras II:
the even case}, 
Adv. Math. {\bf 202} (2006), 517--554.

\bibitem{CPRS4} A. Carey, J. Phillips, A. Rennie, F. Sukochev, {\em The Chern character of semifinite spectral triples},
J. Noncommut. Geom., {\bf 2}  (2)  (2008), 141--193.


\bibitem{Co3} A. Connes, \emph{Compact metric spaces, Fredholm modules
and hyperfiniteness}, Ergodic Th. Dynam. Sys. {\bf 9}(1989),
207--220.



\bibitem{Connes} A. Connes, {\it Noncommutative geometry}, Academic Press, San Diego, 1994.



\bibitem{C4} 
A. Connes, {\em Geometry from the spectral point of view}, Lett. 
Math. Phys. {\bf 34} (1995) 203--238.

\bibitem{CC} A. Connes, J. Cuntz, {\em Quasi homomorphismes, cohomologie cyclique et 
positivit\'{e}}, Commun. Math. Phys. {\bf 114} (1988), 515--526.

\bibitem{CM0} A. Connes, H. Moscovici,
{\em Transgression and the Chern character of finite-dimensional $K$-cycles},
Commun. Math. Phys. {\bf 155} (1993), 103--122.


\bibitem{CM} A. Connes, H. Moscovici, 
{\em The local index formula in noncommutative geometry}, Geom. Funct. Anal. {\bf 5} 
(1995) 174--243.


\bibitem{CoM}  A. Connes, H. Moscovici,  
    {\em Type III and spectral triples}, Traces in number theory, geometry and quantum fields, 
Aspects Math., Vieweg (2008), 57--71. 


\bibitem{cuntzandktheo} J. Cuntz, \emph{$K$-theory and $C\sp{\ast} $-algebras}, Algebraic $K$-theory, number theory, geometry and analysis (Bielefeld, 1982),  Lecture Notes in Math. {\bf 1046}, Springer, Berlin, 1984, 55--79.



\bibitem{DGMBor} R. J. Deeley, M. Goffeng, B. Mesland, \emph{The bordism group of unbounded $KK$-cycles}, J. Topol. Anal. {\bf 10} (2) (2018), 355--400.


\bibitem{DGMW} R. J. Deeley, M. Goffeng, B. Mesland, M.F. Whittaker, {\em Wieler solenoids, Cuntz-Pimsner algebras and $K$-theory},  Ergodic Th. Dynam. Sys. {\bf 38} (8) (2018), 2942-2988.

\bibitem{EN} H. Emerson, B. Nica, \emph{$K$-homological finiteness and hyperbolic groups},  J. Reine Angew. Math. {\bf 745} (2018), 189--229.







\bibitem{GM} M. Goffeng, B. Mesland, \emph{Spectral triples and finite summability on Cuntz-Krieger algebras}, Doc. Math. {\bf 20} (2015), 89--170.

\bibitem{GM2} M. Goffeng, B. Mesland, \emph{Spectral triples on $O_N$}, 2016 MATRIX Annals, 183--201, MATRIX Book Ser. {\bf 1}, Springer, Cham, 2018.

\bibitem{GMR} M. Goffeng, B. Mesland, A. Rennie, {\em Shift-tail equivalence and an unbounded representative of the 
Cuntz-Pimsner extension},  Ergodic Th. Dynam. Sys. {\bf 38} (4) (2018), 1389-1421. 

\bibitem{GU}
M. Goffeng, A. Usachev, \emph{Dixmier traces and residues on weak operator ideals}, arXiv:1710.08260.

\bibitem{GRU}
M. Goffeng, A. Usachev, A. Rennie \emph{Constructing KMS states from infinite-dimensional spectral triples}, J. Geom. Phys., {\bf 143} (2019), 107--149.


\bibitem{grensing} M. Grensing, \emph{Universal cycles and homological invariants of locally convex algebras}, J. Funct. Anal. {\bf 263} (8) (2012),  2170--2204.

\bibitem{HK1} T. Hadfield, U. Kr\"{a}hmer, {\em Twisted Homology of
Quantum $SL(2)$}, $K$-theory {\bf 34} (4) (2005), 327-360.

\bibitem{HK2} T. Hadfield, U. Kr\"{a}hmer, {\em Twisted Homology of Quantum $SL(2)$ - Part II}, J. $K$-Theory, {\bf 6} (1)  (2010), 69-98.

\bibitem{hig} N. Higson \emph{The local index formula in noncommutative 
geometry}, Contemporary Developments in Algebraic $K$-Theory, ICTP Lecture Notes,
{\bf 15} (2003), 444--536. 


\bibitem{KaLe2} J. Kaad, M. Lesch, \emph{A local-global principle for regular operators on Hilbert modules}, J. Funct. Anal., {\bf 262} no. 10 (2012), 4540--4569.

\bibitem{Ka} J. Kaad, {\em The unbounded Kasparov product by a differentiable module}, arXiv:1509.09063.




\bibitem{Karece} J. Kaad, {\em On the unbounded picture of $KK$-theory}, arXiv:1901.05161.



\bibitem{kaminkerputnam} J. Kaminker, I. Putnam, \emph{$K$-theoretic duality of shifts of finite type}, Comm. Math. Phys. {\bf 187} (3) (1997),  509--522.

\bibitem{Kuc2} D. Kucerovsky, \emph{A lifting theorem giving an isomorphims of $KK$-products in bounded and unbounded $KK$-theory}, J. Operator Theory {\bf 44} (2000), 255--275.

\bibitem{kpr} A. Kumjian, D. Pask and I. Raeburn, \emph{Cuntz-Krieger algebras of directed graphs}, Pacific J. Math. {\bf 184}, (1998)  161--174.



\bibitem{Lancebook}
E. C. Lance, {\em Hilbert $C^*$-Modules}, Cambridge University
Press, Cambridge, 1995.

\bibitem{Lott} J. Lott, \emph{Limit sets as examples in noncommutative geometry}, $K$-Theory {\bf 34} (2005), 283--326.

\bibitem{MY}  M. Matassa, R. Yuncken, {\em Regularity of twisted spectral triples and pseudodifferential calculi}, to appear in J. Noncomm. Geom.
arXiv:1705:04178.


\bibitem{MS} B. Mesland and M.H. Sengun, \emph{Hecke operators in $KK$-theory and $K$-homology of Bianchi groups}, to appear in J. Noncomm. Geom., arXiv:1610.06808.

\bibitem{twistedmoscovici} H. Moscovici, {\em Local index formula and twisted spectral triples}, Quanta of maths, 465--500, Clay Math. Proc. {\bf 11} (2010), Amer. Math. Soc., Providence, RI. 

\bibitem{Pierrot} F. Pierrot, {\em Bimodules de Kasparov non born\'es \'equivariants pour les groupo\"{i}des topologiques localement compacts,} C. R. Math. Acad. Sci. Paris {\bf 342} (9) (2006), 661--663.

 

\bibitem{PW}R. Ponge, H. Wang, {\em Noncommutative geometry and conformal geometry. I. Local index formula and conformal invariants}, J. Noncomm. Geom. {\bf 12} (4) (2018), 1573--1639.

\bibitem{PW2} R. Ponge, H. Wang, {\em Noncommutative geometry and conformal geometry II: Connes-Chern character and the local equivariant index theorem}, J. Noncomm. Geom. {\bf 10} (1) (2016), 303--374.


\bibitem{Rae} I. Raeburn, \emph{Graph algebras}, CBMS Regional Conference Series in
Mathematics, {\bf 103} (2005), Amer. Math. Soc., Providence, RI.

\bibitem{RRS} A. Rennie, D. Robertson, A.Sims, \emph{The extension class and KMS-states for Cuntz-Pimsner algebras of some bi-Hilbertian bimodules}, J. Top. Anal. {\bf 9} (2), 2017, 297-327.
\bibitem{RS} A. Rennie, R. Senior, {\em The resolvent cocycle in twisted cyclic cohomology and a local index formula for the Podles sphere}, J. 
Noncommut. Geom., {\bf 8} (1) (2014), 1--43. 
\bibitem{spielbergaren} J. Spielberg, \emph{Free-product groups, Cuntz-Krieger algebras, and covariant maps}, Internat. J. Math. {\bf 2} (4) (1991), 457--476

\bibitem{Charlotte} C. Wahl, {\em On the noncommutative spectral flow}, J. Ramanujan Math. Soc. {\bf 22} (2), 2007, 135-187.




\end{thebibliography}
\end{document}